\newtheorem{theorem}{Theorem}[section]
\newtheorem{lemma}[theorem]{Lemma}
\newtheorem{proposition}[theorem]{Proposition}
\newtheorem{corollary}[theorem]{Corollary}
\newtheorem{remark}[theorem]{Remark}
\theoremstyle{definition}
\theoremstyle{remark}
\numberwithin{equation}{section}
\newcommand\bes{\begin{eqnarray}}
\newcommand\ees{\end{eqnarray}}
\newcommand\bess{\begin{eqnarray*}}
	\newcommand\eess{\end{eqnarray*}}
\newcommand{\lf}{\left}
\newcommand{\rr}{\right}
\newcommand{\R}{{\mathbb R}}
\newcommand{\dd}{\displaystyle}
\newcommand{\td}{\tilde}
\newcommand{\wtd}{\widetilde}
\newcommand\yy{\infty}
\newcommand{\ol}{\overline}
\newcommand{\rd}{{\rm d}}
\begin{document}

 \pagestyle{myheadings}
%\markboth{}{}

\date{\today}
\title[Nonlocal diffusion equation with free boundary and radial symmetry]{The high dimensional Fisher-KPP nonlocal diffusion equation with free boundary and radial symmetry$^{\S}$}

\author[Y. Du and W. Ni]{Yihong Du\textsuperscript{$\dag$} and \ \ Wenjie Ni\textsuperscript{$\dag$}}
	
\thanks{\textsuperscript{$\dag$}School of Science and Technology, University of New England, Armidale, NSW 2351, Australia.}
\thanks{$^\S$Research of both authors was  supported by the Australian Research Council.}

\maketitle

\begin{abstract} We study the radially symmetric  high dimensional Fisher-KPP nonlocal diffusion equation with free boundary, and reveal some fundamental differences from its  one dimensional version
 considered in \cite{cdjfa} recently.
Technically, this high dimensional problem is much more difficult to treat since it involves two kernel functions which arise from the original kernel function $J(|x|)$ in  rather implicit ways. 
By introducing new techniques, we are able to determine the long-time dynamics of the model, including
firstly finding the threshold condition on the kernel function that governs the onset of accelerated spreading, and the determination of the spreading speed when it is finite. Moreover, for two important classes of kernel functions, sharp estimates of the spreading profile are obtained. More precisely, for kernel functions with compact support, we show that logarithmic shifting occurs from the finite wave speed propagation, which is strikingly different from the one dimension case;  for kernel functions $J(|x|)$ behaving like $|x|^{-\beta}$ for  $x\in\R^N$ near infinity, we obtain the rate of accelerated spreading when $\beta\in (N, N+1]$, which is the exact range of $\beta$ where accelerated spreading is possible. These sharp estimates are obtained by constructing subtle upper and lower solutions, based on careful analysis of the involved kernel functions.

\bigskip

\noindent
{\bf Key words:} {\sf Nonlocal diffusion, free boundary, spreading speed, accelerated spreading.}
\smallskip

\noindent
{\bf AMS subject classification:} 35K20, 35R35, 35R09.
\end{abstract}

\tableofcontents

\section {Introduction}
 In \cite{cdjfa}, the authors studied the following one dimensional nonlocal diffusion problem with free boundaries 
\begin{equation}\label{f1}
\begin{cases}
\dd u_t=d \int_{g(t)}^{h(t)}J(x-y)u(t,y)\rd y-du(t,x)+f(t,x, u), & t>0,\; x\in  (g(t), h(t)),\\
u(t,g(t))= u(t,h(t))=0, & t>0,\\
\dd g'(t)= -\mu \int_{g(t)}^{h(t)}\int_{-\yy}^{g(t)}J(x-y)u(t,x) dyd x, & t>0,\\[3mm]
\dd h'(t)= \mu \int_{g(t)}^{h(t)}\int_{h(t)}^{\yy}J(x-y)u(t,x) dyd x, & t>0,\\
g (0)=-h_0,\; h(0)=h_0,\; u(0,x)=u_0(x), &x\in [-h_0,h_0],
\end{cases}
\end{equation}
where $d, \mu, h_0$ are given positive constants, $ f$ is a smooth function satisfying $f(t,x,0)\equiv 0$, and the initial function $u_0(x)$ is continuous
and $u_0(x)>0$ in $(-h_0, h_0)$, $u_0(\pm h_0)=0$. When $f\equiv 0$, \eqref{f1} was studied in \cite{CQW}.

In \eqref{f1} the basic assumptions on the kernel function $J$ are
\begin{equation}\label{J0}
\mbox{ $J\in C(\R)\cap L^\infty(\R)$, is nonnegative, even, $J(0)>0$ and } \int_{\R}J(x)dx=1.
\end{equation}

Under suitable additional assumptions on $f$, it was shown in \cite{cdjfa} that \eqref{f1} has a unique solution $(u,g,h)$ defined for all $t>0$. Moreover, if $f$ is of Fisher-KPP type (see {\bf (f)} below for details), then the long-time dynamical behaviour of \eqref{f1} is characterised by a ``spreading-vanishing dichotomy":

As $t\to\infty$, either $[g(t), h(t)]$ converges to a finite interval $[g_\infty, h_\infty]$ and $u(t,x)$ converges to 0 uniformly (the \underline{vanishing} case), or $[g(t), h(t)]$ converges to $\R$ and $u(t,x)\to u^*$ which is the unique positive zero of $f(u)$ (the \underline{spreading} case).
This resembles the behaviour of the corresponding local diffusion model of \cite{DL}.

When spreading happens, the spreading speed of \eqref{f1} was determined in \cite{dlz2019, dn-speed}, which reveals significant differences from the local diffusion model of \cite{DL}; namely, depending on the behaviour of the kernel function $J(x)$, accelerated spreading may happen to \eqref{f1}. More precisely, if $J(x)$ satisfies additionally
\begin{equation}\label{J1}
\int_0^\infty xJ(x)dx<\infty,
\end{equation}
then the spreading has a finite speed: $\lim_{t\to\infty} h(t)/t=-\lim_{t\to\infty} g(t)/t=c_0$ for some $c_0>0$ uniquely determined by the so called semi-wave solution of \eqref{f1}; if \eqref{J1} is not satisfied, then accelerated spreading happens: $\lim_{t\to\infty} h(t)/t=-\lim_{t\to\infty} g(t)/t=\infty$.

When $J(x)\approx |x|^{-\gamma}$ near $\infty$ for some $\gamma>0$, namely $ c_1 |x|^{-\gamma}\leq J(x)\leq c_2 |x|^{-\gamma}$ for some positive constants $c_1, c_2$ and all large $|x|$,  it is easily seen that \eqref{J0} implies $\gamma>1$, and \eqref{J1} is equivalent to $\gamma>2$. The results in \cite{dn-speed} applied to \eqref{f1} then give the following conclusions:
\[\left\{\begin{array}{rll}
c_0t+g(t), c_0t-h(t)&\approx \ 1 & \mbox{ if } \gamma>3,\\
c_0t+g(t), c_0t-h(t)&\approx \ \ln t & \mbox{ if } \gamma=3,\\
c_0t+g(t), c_0t-h(t)&\approx \  t^{3-\gamma} & \mbox{ if } \gamma\in (2, 3),\\
-g(t),\ h(t) &\approx \ t\ln t & \mbox{ if } \gamma=2,\\
-g(t),\ h(t) &\approx\  t^{1/(\gamma-1)} & \mbox{ if } \gamma\in (1,2).
\end{array}\right.
\]

\bigskip

In this paper, we consider the high dimensional version of \eqref{f1} with radial symmetry. Here, ``with radial symmetry" means that the kernel function $J(x)$,
the initial function $u_0(x)$ and the nonlinear term $f(t,x,u)$ are all radially symmetric in $x\in\R^N$, $N\geq 2$. The population range $\Omega(t)$ then is a ball of radius $h(t)$, namely $\Omega(t)=B_{h(t)}:=\{x\in \R^N: |x|<h(t) \}$, with $h(t)$ an unknown function to be determined with the population density function $u(t,x)$, which is radially symmetric in $x$ too. For convenience, we will write $J=J(|x|)$, $u=u(t, |x|)$, etc.
 Then the radially symmetric version of \eqref{f1} in $\R^N$ is given by
 \begin{equation}\label{1.3}
\begin{cases}
\dd u_t=d \int_{B_{h(t)}}J(|x-y|)u(t,|y|)\rd y-du(t,|x|)+f(t, |x|,u), & t>0,\; x\in  B_{h(t)},\\
u(t,|x|)=0, & t>0,\; x\in \partial B_{h(t)},\\
h'(t)= \dd  \frac{ \mu}{|\partial B_{h(t)}|} \int_{B_{h(t)}} \int_{\R^N\backslash B_{h(t)}}J(|x-y|)u(t,|x|)\rd y\rd x, & t>0,\\
h(0)=h_0,\ u(0,|x|)=u_0(|x|), &x\in \ol B_{h_0}.
\end{cases}
\end{equation}

It is easy to check that $ \int_{B_{h(t)}}J(|x-y|)u(t,|y|)\rd y$ depends only on $|x|$. To see how the equation for $h'(t)$ in \eqref{1.3} is obtained, let us recall
that, in \eqref{f1}, the free boundary equations are obtained from the assumption that the expansion of the population range $[g(t), h(t)]$ is at a rate proportional to the outward flux of the population at the boundary of $[g(t), h(t)]$. In the current setting, the range boundary is the sphere $\partial B_{h(t)}$, 
and from the nonlocal dispersal rule governed by the  kernel function $J(|x-y|)$, 
the total population mass, at time $t$, moved out of $B_{h(t)}$ through $\partial B_{h(t)}$  per unit time is 
\begin{align}\label{1.2}
M^*(t)=\int_{B_{h(t)}}\int_{\R^N\backslash B_{h(t)}}J(|x-y|) u(t,|x|)\rd y\rd x.
\end{align}
Therefore the expansion rule of $B_{h(t)}$ gives
\begin{align*}
\dd h'(t)=\mu \frac{M^*(t)}{|\partial B_{h(t)}|}=\frac{\mu}{|\partial B_{h(t)}|}  \int_{B_{h(t)}}\int_{\R^N\backslash B_{h(t)}}J(|x-y|)u(t,|x|)\rd y\rd x.
\end{align*} 

For \eqref{1.3}, our basic assumptions on   the kernel function $J(|x|)$ are
\begin{itemize}
	\item[\textbf{(J):}] \ \ \   $J\in C(\R_+)\cap L^\infty(\R_+)$ is nonnegative,  $J(0)>0$,   $\displaystyle\int_{\R^N} J(|x|) \rd x=1$.
\end{itemize}
Here and throughout the paper, $\R_+$ denotes $[0,\infty)$.

  For $r:=|x|$ with $x\in \R^N$ and $\rho>0$,   denote 
\begin{align*}
\td J(r,\rho)=\td J(|x|,\rho):=\int_{\partial B_\rho} J(|x-y|)\rd S_y.
\end{align*}
Then \eqref{1.3} can be rewritten into the equivalent form 
\begin{equation}\label{1.4}
\begin{cases}
\dd  u_t(t,r)=d \int_{0}^{h(t)} \td J(r,\rho)  u(t,\rho)\rd \rho-d u(t,\rho)+f(t,r,  u), & t>0,\; r\in  [0,h(t)),\\
 u(t,h(t))=0, & t>0,\\
\dd h'(t)=  \frac{\mu}{h^{N-1}(t)}\dd\int_{0}^{h(t)} \int_{h(t)}^{+\yy} \td J(r,\rho)r^{N-1} u(t,r)\rd \rho\rd r, & t>0,\\
h(0)=h_0,\ u(0,r)= u_0(r), & r\in [0,h_0].
\end{cases}
\end{equation}
(Here a universal constant is absorbed by $\mu$.)

We  require  the initial function $u_0$ to satisfy
\begin{equation}\label{u_0}
		u_0\in C(\ol B_{h_0})\ {\rm is\ radially\ symmetric},\ u_0=0\ {\rm on  }\ \partial B_{h_0}\ {\rm and}\	u_{0}>0\ {\rm in }\   B_{h_0}.
\end{equation}

The function $f(t,r, u)$ is assumed to satisfy
\begin{equation}\label{f}
\left\{
\begin{array}{l}
f \mbox{ is continuous}, \ f(t,r,0)\equiv 0,\  f(t,r,u) \mbox{ is locally Lipschitz  in $u\in \R_+$}\\
\mbox{ uniformly for $(t,r)\in \R_+\times\R_+$, and there exists $K_0>0$ such that}\\ \mbox{ $f(t,r,u)\leq 0$ for $u\geq K_0$ and all $t, r\geq 0$.}\end{array}
\right.
\end{equation}

\begin{theorem}[\underline{Existence and uniqueness}]\label{th1.1}
Suppose   {\rm \textbf{(J)}}, \eqref{u_0}  and \eqref{f} are satisfied. Then problem  \eqref{1.3}, or equivalently \eqref{1.4}, admits a unique positive solution $(u,h)$  defined for all $t>0$. 
\end{theorem}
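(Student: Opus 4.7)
The plan is to follow the classical contraction-mapping strategy of \cite{CQW, cdjfa}, suitably adapted to the radially symmetric high-dimensional setting where the effective spatial kernel is $\td J(r,\rho)$ rather than $J(|x-y|)$. A preliminary step is to record the basic regularity of $\td J$: from {\bf (J)}, dominated convergence yields $\td J\in C(\R_+\times \R_+)$ with $\td J(r,\rho)\le \|J\|_\yy |\partial B_\rho|$, and the normalisation
\begin{align*}
\int_0^\yy \td J(r,\rho)\rd\rho=\int_{\R^N}J(|x-y|)\rd y=1\qquad \text{for every } r\ge 0.
\end{align*}
These properties will play the role that \eqref{J0} plays in the treatment of the 1D problem \eqref{f1}.

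For local-in-time existence, I would fix constants $M>0$ and $T>0$ to be chosen later, and work on the space
\begin{align*}
X_T := \lf\{h\in C^1([0,T]): h(0)=h_0,\; 0\le h'\le M\rr\}
\end{align*}
equipped with the $C^1$ metric. For each $h\in X_T$, the $u$-equation in \eqref{1.4} is, for fixed $r$, a linear-plus-Lipschitz nonautonomous ODE in $t$; writing it in Duhamel form
\begin{align*}
u(t,r)=u_0(r)e^{-dt}+\int_0^t e^{-d(t-s)}\lf[d\int_0^{h(s)}\td J(r,\rho)u(s,\rho)\rd\rho+f(s,r,u(s,r))\rr]\rd s
\end{align*}
(valid for $r\in[0,h(t)]$, and extended by $0$ for $r>h(t)$) and applying Banach's fixed point theorem on $C(\{(t,r):0\le t\le T,\,0\le r\le h(t)\})$ after shrinking $T$ produces a unique $u=u_h$; the a priori bound $u_h\le K:=\max(K_0,\|u_0\|_\yy)$ then follows from {\bf (f)} by comparison with a scalar ODE. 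Defining next
\begin{align*}
(\Gamma h)(t):=h_0+\int_0^t \frac{\mu}{h^{N-1}(s)}\int_0^{h(s)}\int_{h(s)}^\yy\td J(r,\rho)r^{N-1}u_h(s,r)\rd \rho\rd r\, \rd s,
\end{align*}
one verifies that $\Gamma$ maps $X_T$ into itself for a suitable choice of $M$ and small $T$, using $h(s)\ge h_0>0$ and the uniform bound on $u_h$.

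The central technical step -- and the one I expect to be the main obstacle -- is to show that $\Gamma$ is a contraction on $X_T$. Comparing $(\Gamma h_1)'-(\Gamma h_2)'$ requires controlling four distinct sources of $h$-dependence simultaneously: the prefactor $h^{-(N-1)}$, the upper limit of the $r$-integral, the lower limit of the $\rho$-integral, and the density $u_{h_i}$ itself. The first three can be absorbed using the continuity of $\td J$ on compact sets together with the smoothness of $\rho\mapsto \rho^{1-N}$ on $[h_0,\yy)$; the fourth requires a Lipschitz estimate $\|u_{h_1}-u_{h_2}\|_\yy \le C(T)\|h_1-h_2\|_{C^1}$ with $C(T)\to 0$ as $T\to 0$, which is read off from the Duhamel formula above by a second application of the Banach-fixed-point argument. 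Once $T$ is chosen small enough, $\Gamma$ becomes a contraction and supplies a unique fixed point $(u,h)$ on $[0,T]$. Finally, global extension is routine: the bound $u_h\le K$ persists on every interval of existence, and using $\int_{h(t)}^\yy \td J(r,\rho)\rd\rho\le 1$ one obtains
\begin{align*}
h'(t)\le \frac{\mu K}{h^{N-1}(t)}\int_0^{h(t)}r^{N-1}\rd r=\frac{\mu K}{N}h(t),
\end{align*}
so $h$ remains finite on every bounded time interval. Iterating the local construction then yields the unique global solution asserted in the theorem.
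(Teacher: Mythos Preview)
Your contraction-mapping strategy is correct in outline and would yield the theorem, but the paper executes it differently in two respects worth noting. First, rather than working with $h'$ directly, the paper rewrites the free-boundary equation as an equation for $[h^N(t)]'$ (see \eqref{1.4e}), defines $\tilde h(t):=h_0^N+N\mu\int_0^t(\cdots)\,\rd\tau$, and then sets $\hat\Gamma(h):=\tilde h^{1/N}$. This absorbs the prefactor $h^{-(N-1)}$ into the change of variables, so the contraction estimate becomes $\|\tilde h_1-\tilde h_2\|_{C([0,s])}\le Cs\|h_1-h_2\|_{C([0,s])}$ followed by one application of the $N$-th root Lipschitz bound; no $C^1$ metric is needed, and three of your ``four sources of $h$-dependence'' collapse into one. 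Second, the paper first performs an even extension in $r$ so that the problem matches the structure of \cite{cdjfa} almost verbatim, allowing it to invoke the $u$-solvability step (your Duhamel argument) by reference rather than redoing it.

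One technical imprecision in your write-up: the Duhamel formula you record, with initial datum $u_0(r)$ at $t=0$, is only correct for $r\in[0,h_0]$. For $r\in(h_0,h(t)]$ the point first enters the domain at time $t^*=h^{-1}(r)>0$ and the boundary condition forces $u(t^*,r)=0$; your formula with $u_0(r)=0$ (extended) does not reproduce this because the nonlocal integral $\int_0^{h(s)}\td J(r,\rho)u(s,\rho)\rd\rho$ is generally positive for $s<t^*$. The fixed-point map must instead be set up on $C(\overline{\Omega_h})$ with the constraint $u(t,h(t))=0$ built in, as in Lemma~2.3 of \cite{cdjfa}. This is a standard fix, not a structural flaw.
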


To study the long-time dynamical behaviour of \eqref{1.3}, we only consider Fisher-KPP type of $f$, namely $f=f(u)$ that satisfies
\begin{itemize}
	\item[$\mathbf{(f):}$] $\left\{
	\begin{array}{ll} \mbox{ $f$ is $C^1$, $f(0)=0<f'(0)$, there exists $u^*>0$ such that}\\
	\mbox{$f(u^*)=0>f'(u^*)$  and $(u^*-u)f(u)>0$ for $u\in (0,\infty)\setminus\{u^*\}$,}  \\
		 \mbox{  $\sigma f(u)\leq f(\sigma u)$ for all $\sigma\in [0,1]$ and $u\geq 0$.}
	\end{array}\right.$
\end{itemize}

\begin{theorem}[\underline{Spreading-vanishing dichotomy}]\label{th1.2}
	Suppose  {\rm \textbf{(J)}}, {\rm \bf{(f)}} and \eqref{u_0}  are satisfied.  Let $(u,h)$ be the solution of \eqref{1.3}. 	Then one of the following alternatives must occur {\rm :}
	\begin{itemize}
		\item[{\rm (i)}] {\bf Spreading:} $\lim_{t\to\infty} h(t)=\infty$ and 
		\[\mbox{ $ \lim_{t\to\yy}u(t,|x|)=u^*$ locally uniformly in $\R^N$,}\]
		\item[{\rm (ii)}] {\bf Vanishing:} $\lim_{t\to\infty} h(t)=h_\infty<\infty$ and 
		\[\mbox{$ \lim_{t\to\yy}u(t,|x|)=0$  uniformly for $x\in B_{h(t)}$.}
		\]
	\end{itemize}
\end{theorem}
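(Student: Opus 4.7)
The plan is to first establish monotonicity of $h(t)$ to set up the dichotomy, and then handle the two cases by separate comparison arguments. From the free boundary equation in \eqref{1.4}, $J\geq 0$ and $u>0$ on $[0,h(t))$ (by Theorem~\ref{th1.1}) force $h'(t)\geq 0$, so $h(t)$ increases to some $h_\yy\in(h_0,\yy]$. Either $h_\yy<\yy$ (vanishing) or $h_\yy=\yy$ (spreading), and one must identify the long-time limit of $u$ in each case.

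For the vanishing case, assume $h_\yy<\yy$ and show $u(t,\cdot)\to 0$ uniformly. Integrating the free boundary equation over $[0,\yy)$ yields
\[
h_\yy-h_0=\int_0^\yy \frac{\mu}{h^{N-1}(t)}\int_0^{h(t)}\int_{h(t)}^{\yy}\td J(r,\rho)\, r^{N-1}u(t,r)\,\rd\rho\,\rd r\,\rd t<\yy,
\]
so the inner double integral, call it $F(t)$, lies in $L^1(0,\yy)$. Suppose for contradiction there exist $\delta>0$, $t_n\to\yy$, and $r_n\in[0,h(t_n))$ with $u(t_n,r_n)\geq\delta$. Using the uniform bound $u\leq\max\{\|u_0\|_\yy,K_0\}$ together with the time regularity produced by the equation itself, the shifted functions $u_n(s,r):=u(t_n+s,r)$ are precompact, and along a subsequence they converge to a nonnegative bounded entire solution $\td u(s,r)$ of the limiting problem on $\R\times B_{h_\yy}$ with $\td u=0$ on $\partial B_{h_\yy}$ and $\td u(0,r_*)\geq\delta$ for some $r_*\in[0,h_\yy]$. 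A strong maximum principle for the nonlocal Fisher--KPP operator (which uses $J(0)>0$ together with the positivity of $\td J(r,\rho)$ on a neighbourhood of the diagonal $r=\rho$) then forces $\td u(s_0,r)\geq c_0>0$ on a fixed compact subset of $B_{h_\yy}$ for some $s_0>0$. This gives $F(t_n+s_0)\geq c_1>0$ along a subsequence, contradicting $F\in L^1(0,\yy)$. Hence $u(t,\cdot)\to 0$ uniformly.

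For the spreading case $h_\yy=\yy$, the upper bound $\limsup_{\ttt}u(t,r)\leq u^*$ follows by comparison with the ODE $\bar U'=f(\bar U)$, $\bar U(0)=\max\{\|u_0\|_\yy,u^*\}$, which converges to $u^*$ by \textbf{(f)}. For the lower bound, fix large $R>0$ and consider the auxiliary stationary problem
\[
d\int_{B_R}J(|x-y|)V(y)\,\rd y-dV(x)+f(V(x))=0\ \ \text{in}\ B_R,\qquad V|_{\partial B_R}=0.
\]
Since $f'(0)>0$ and $\int_{\R^N}J(|x|)\,\rd x=1$, the principal eigenvalue of the linearised nonlocal operator on $B_R$ is positive for $R\geq R_*$; the problem then admits a unique radial positive solution $V_R$, and $V_R\to u^*$ locally uniformly as $R\to\yy$. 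Pick $t_0$ with $h(t_0)>R$. On $B_R$ the dispersal term in \eqref{1.3} dominates the truncated one appearing in the associated parabolic auxiliary problem, and $u(t,\cdot)|_{\partial B_R}>0=V|_{\partial B_R}$, so $u$ is a supersolution of that auxiliary problem from time $t_0$ onward. The convergence-to-steady-state result for the nonlocal Fisher--KPP equation on $B_R$ gives $\liminf_{\ttt}u(t,r)\geq V_R(r)$ on $B_R$; sending $R\to\yy$ yields $\liminf u\geq u^*$ locally uniformly, which, combined with the upper bound, gives the stated convergence.

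The main obstacle is the execution of the compactness plus strong maximum principle step in the vanishing case, because the kernel $\td J(r,\rho)$ is defined only implicitly by integration over a sphere in $\R^N$, and one must verify enough pointwise positivity and continuity of $\td J$ (both in the limit equation and in the flux integral $F$) to ensure that a nontrivial limit $\td u$ genuinely forces $F$ to stay above a positive constant along a subsequence. Once these structural properties of $\td J$ are secured from assumption \textbf{(J)} through elementary geometric estimates, the rest of the proof closely parallels the one-dimensional treatment in \cite{cdjfa}, with the radial auxiliary problem on balls $B_R$ in the spreading step replacing its one-dimensional analogue.
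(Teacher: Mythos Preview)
Your spreading argument is essentially the paper's Lemma~3.9: the upper bound by comparison with the ODE $\bar U'=f(\bar U)$, and the lower bound by comparison with the fixed-domain parabolic problem on $B_R$ whose positive steady state $V_R\to u^*$ as $R\to\infty$ (the paper's Lemma~3.6 and Proposition~3.3), match the paper line by line.

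The vanishing argument, however, departs from the paper and contains a genuine gap. The paper (Lemma~3.7) never integrates the flux or invokes compactness. It argues via the principal eigenvalue: first one shows $\lambda_1(B_{h_\infty})\leq 0$ by contradiction---if $\lambda_1(B_{h_\infty})>0$ then $\lambda_1(B_{h(T)})>0$ for large $T$, the fixed-domain parabolic problem on $B_{h(T)}$ converges to a positive steady state $w_1^*$, comparison gives $u(t+T,\cdot)\geq \tfrac12 w_1^*>0$ on $B_{h(T)}$ for all large $t$, and inserting this uniform lower bound into the free-boundary equation forces $h'(t)\geq c>0$, contradicting $h_\infty<\infty$. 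Once $\lambda_1(B_{h_\infty})\leq 0$, a single comparison with the parabolic problem on the \emph{fixed} ball $B_{h_\infty}$ (which tends to $0$ by Proposition~3.3) finishes the proof.

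Your route has two problems. The minor one: ``$F(t_n+s_0)\geq c_1$ along a subsequence'' does not contradict $F\in L^1(0,\infty)$; you need $F\geq c_1/2$ on intervals of fixed positive length, which would follow from locally-uniform-in-$s$ convergence if you had it. The serious one is precisely that convergence: nonlocal diffusion does not regularise in space, so a uniform-in-$t$ modulus of continuity of $u(t,\cdot)$ is not available from the equation. Time regularity ($|u_t|\leq C$) gives equicontinuity only in $t$; differencing the equation in $r$ and applying Gronwall yields a modulus that grows like $e^{(L-d)^+t}$, which is useless unless $d$ dominates the Lipschitz constant of $f$. Without spatial precompactness you cannot extract the entire solution $\tilde u$ on which your strong maximum principle is to act, and the pointwise bound $u(t_n,r_n)\geq\delta$ does not by itself control the integral $F(t)$. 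The paper's eigenvalue argument bypasses this entirely: the uniform lower bound on $u$ over a whole ball comes from convergence of the auxiliary parabolic problem to its positive steady state, not from compactness of the orbit of $u$ itself.
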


\begin{theorem}[\underline{Spreading-vanishing criteria}]\label{th1.3}
	Suppose   the conditions in Theorem \ref{th1.2}  are satisfied, and $(u,h)$ is the solution of \eqref{1.3}.
	\begin{itemize}
		\item[{\rm (1)}] If $f'(0)\geq d$, then  spreading always happens.  
		\item[{\rm (2)}] 	If $f'(0)\in (0,d)$ then there exists $L_*>0$ such that 
		\begin{itemize}
			\item[{\rm (i)}] for $h_0\geq L_*$, spreading always happens,
				\item[{\rm (ii)}] for $0<h_0< L_*$, there is $\mu_*>0$ such that spreading  happens if and only if $\mu>\mu_*$.
		\end{itemize}
	\end{itemize}
\end{theorem}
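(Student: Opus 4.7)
The whole argument is driven by a principal eigenvalue analysis of the linearisation of \eqref{1.4} at $u\equiv 0$. For each $L>0$, introduce
\[
\mathcal{L}_L\phi(r):=d\int_0^L \td J(r,\rho)\phi(\rho)\rd\rho-d\phi(r)+f'(0)\phi(r),\qquad r\in[0,L],
\]
with principal eigenvalue $\lambda_p(L)$ (the eigenvalue admitting a positive eigenfunction, whose existence follows from standard Krein--Rutman-type theory for nonlocal operators in radial form). The first step is to show, via a variational characterisation and domain monotonicity, that $\lambda_p(L)$ is continuous and strictly increasing in $L$, with $\lim_{L\to 0^+}\lambda_p(L)=f'(0)-d$ and $\lim_{L\to\infty}\lambda_p(L)=f'(0)$; the limits follow by testing with constants and using $\int_{\R^N}J\,\rd x=1$.

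For Part (1), if $f'(0)\geq d$ then for the positive principal eigenfunction $\phi_L$ one has $\lambda_p(L)\phi_L=d\int_0^L\td J(r,\rho)\phi_L(\rho)\rd\rho+(f'(0)-d)\phi_L(r)>0$, so $\lambda_p(L)>0$ for every $L$. Scaling $\phi_{h_0}$ by a small positive constant and using the KPP subhomogeneity in \textbf{(f)} yields a positive stationary subsolution of \eqref{1.4} on $B_{h_0}$; the comparison principle for \eqref{1.3} then provides a positive lower bound for $u(t,0)$, and Theorem \ref{th1.2} rules out vanishing, so spreading must occur.

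For Part (2), since $f'(0)<d$, the above limits give a unique $L_*>0$ with $\lambda_p(L_*)=0$. Case (i): when $h_0\geq L_*$, monotonicity of $h(t)$ gives $\lambda_p(h(t))\geq 0$ for all $t$, and the same eigenfunction-based subsolution argument (carried out on $B_{h(t_1)}$ for some $t_1>0$ with $h(t_1)>L_*$ in the borderline case $h_0=L_*$, using $h'(t)>0$) excludes vanishing and hence gives spreading.

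Case (ii), with $0<h_0<L_*$, is the delicate case where the main effort lies. The comparison principle for \eqref{1.3} yields that $(u_\mu,h_\mu)$ is componentwise nondecreasing in $\mu$, so $\Sigma_v:=\{\mu>0:\mbox{vanishing}\}$ is a down-set and $\Sigma_s:=\{\mu>0:\mbox{spreading}\}$ an up-set. Nonemptiness of $\Sigma_s$ is obtained by taking $\mu$ so large that $h(t)$ surpasses $L_*$ in finite time, using a uniform lower bound for the double integral in the $h'$ equation while $u$ stays close to $u_0$; Case (i) applied from the first time $h(t_0)=L_*$ then gives spreading. The main obstacle is the nonemptiness of $\Sigma_v$: for small $\mu$ one must construct an upper solution $(\bar u,\bar h)$ with $\bar h(t)\leq \bar h_\infty<L_*$ for all $t$. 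This construction requires simultaneously (a) a positive stationary supersolution of the $u$-equation on $B_{\bar h_\infty}$, obtained from the principal eigenfunction associated with $\lambda_p(\bar h_\infty)<0$, and (b) a pointwise bound of the form $\bar h'(t)\leq C\mu$ controlled by the chosen $\bar u$, which forces $\bar h(t)<L_*$ when $\mu$ is small. Setting $\mu_*:=\sup\Sigma_v$, monotonicity then yields spreading for every $\mu>\mu_*$, while closedness of $\Sigma_v$ under monotone-in-$\mu$ limits, together with continuous dependence of the solution on $\mu$, shows $\mu_*\in\Sigma_v$, completing the ``if and only if'' statement in Case (ii).
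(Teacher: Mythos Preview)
Your overall strategy---analyse the principal eigenvalue $\lambda_p(L)$, exploit its monotonicity and the limits $f'(0)-d$ and $f'(0)$, then use monotonicity in $\mu$ for part (2)(ii)---matches the paper closely, and your treatment of (1), (2)(i), the nonemptiness of $\Sigma_s$, and the identification of $\mu_*$ is essentially the paper's argument (the paper packages (1) and (2)(i) into a lemma stating that $\lambda_p(h(t_0))\geq 0$ for some $t_0$ already forces spreading, but your subsolution route is equivalent).

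There is, however, a genuine gap in your plan for showing $\Sigma_v\neq\emptyset$. You propose a \emph{stationary} supersolution $\bar u(r)=M\phi_{\bar h_\infty}(r)$ together with the bound $\bar h'(t)\leq C\mu$. The latter bound is correct, but it only gives $\bar h(t)\leq h_0+C\mu\, t$, which tends to $\infty$ for every $\mu>0$; it does \emph{not} force $\bar h(t)<L_*$ for all $t$, no matter how small $\mu$ is. A time-independent $\bar u$ simply cannot trap the free boundary, because the integral in the $h'$-equation stays bounded away from zero.

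The paper's construction (Lemma~\ref{lemma3.8}) exploits precisely the sign $\lambda_p(\bar h_\infty)<0$ to build a \emph{time-decaying} supersolution
\[
\bar u(t,x)=M e^{-\delta t}\phi(x),\qquad \bar h(t)=h_0+\mu C\bigl[1-e^{-\delta t}\bigr],\qquad \delta:=-\tfrac12\lambda_p(\bar h_\infty)>0.
\]
The exponential decay of $\bar u$ makes the right-hand side of the $\bar h'$-inequality integrable in $t$ over $[0,\infty)$, so that $\bar h(t)\leq h_0+\mu C<L_*$ for all $t$ once $\mu$ is small. In your write-up you should replace the stationary barrier by this decaying one; the rest of your argument then goes through.
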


In Theorem \ref{th1.3}, $L_*$ is determined by an associated eigenvalue problem, which is independent of the initial function $u_0$. On the other hand, $\mu_*$ depends on $u_0$.

Next we examine the spreading speed of \eqref{1.3} when spreading occurs.  To this end, we need to introduce the following function, which will play a pivotal role. For any $l\in\R$, define
\begin{align}\label{J}
J_*(l):=\int_{\R^{N-1}}J(|(l,x')|)dx',
\end{align}
where $x'=(x_2,..., x_N)\in\R^{N-1}$.

It is easy to see that  \textbf{(J)} implies
\begin{equation*}
\begin{cases}
J_*\in C(\R)\cap L^\infty(\R)\ {\rm  is\ nonnegative,\ even},\  J_*(0)>0,\\
\displaystyle\int_{\R} J_*(l) \rd l=\int_{\R^N}J(|x|) \rd x=1.
\end{cases}
\end{equation*}
Moreover, 
a simple calculation yields
\begin{equation}\label{1.10}\begin{aligned}
J_*(l)&=\int_{\R^{N-1}}J(|(l,x')|)dx'=\int_0^\infty J(\sqrt{l^2+\rho^2})\omega_{N-1}\rho^{N-2}d\rho\\
&=\omega_{N-1}\int_{|l|}^\infty J(r)r(r^2-l^2)^{(N-3)/2}dr,
\end{aligned}
\end{equation}
where $\omega_k$ denotes the area of the unit sphere in $\R^k$. It then follows easily that
\begin{align*}
J_*(l_2)\leq J_*(l_1)\leq J_*(0)&=\omega_{N-1}\int_0^\infty J(r)r^{N-2}dr\\
&\leq \omega_{N-1}\left[\|J\|_\infty+\int_1^\infty J(r)r^{N-1}dr  \right]\\
&\leq \omega_{N-1} (\|J\|_\infty+\omega^{-1}_N)\ \ \  \mbox{ when  $N\geq 3$ and $l_2>l_1>0$},
\end{align*}
and for $N=2$, $l\geq 0$,
\begin{align*}
J_*(l)&=2\pi \int_l^\infty J(r)\frac r{\sqrt{r^2-l^2}}dr\leq 2\pi \left[\int_{l}^{\sqrt{l^2+1}}+\int_{\sqrt{l^2+1}}^\infty\right]J(r)\frac r{\sqrt{r^2-l^2}}dr\\
&\leq 2\pi\left[\|J\|_\infty\int_{l}^{\sqrt{l^2+1}}\frac r{\sqrt{r^2-l^2}}dr+\int_0^\infty J(r)rdr \right]\\
&=2\pi(\|J\|_\infty+\omega_2^{-1}).
\end{align*}
A direct calculation also gives
\begin{equation}\label{J-J_*}
\int_0^\infty J_*(l)ldl=\frac{\omega_{N-1}}{N-1}\int_0^\infty J(r)r^Ndr.
\end{equation}

It turns out that the threshold condition for \eqref{1.3} to have a finite spreading speed  is
\begin{itemize}
	\item[\textbf{(J1):}] \  $\dd\int_0^\infty J(r)r^Ndr <+\yy$.
\end{itemize}

By \cite[Theorem 1.2]{dlz2019} and \eqref{J-J_*},  we have the following conclusions about the associated one-dimensional semi-wave problem. 
\begin{proposition}\label{prop1.4}
Suppose {\rm \textbf{(J)}} and {\rm \textbf{(f)}}  hold. 
Then the following equations 
\begin{equation*}
\begin{cases}
\dd d\int_{-\yy}^{0}J_*(x-y) \phi(y) {\rm d}y-d\phi+c\phi'(x)+f(\phi) =0,&
x<0,\\[2mm]
\dd	\phi(-\yy)=u^*,\ \ \phi(0)={0},\\
\dd c=\mu\int_{-\yy}^{0} \int_{0}^{\yy}J_*(x-y)\phi(x)\rd y\rd x,
\end{cases}
\end{equation*}
admit a solution pair $(c,\phi)=(c_0,\phi_0)$  if and only if {\rm\textbf{(J1)} } is satisfied. Moreover, when  {\rm\textbf{(J1)} } holds, the solution pair is
unique, and $c_0>0$, $\phi_0(x)$ is strictly decreasing in $x$.
\end{proposition}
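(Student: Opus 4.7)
The approach is to recognize Proposition \ref{prop1.4} as a direct transcription of \cite[Theorem 1.2]{dlz2019} once the correct one-dimensional kernel is identified. Specifically, the semi-wave system displayed in the statement is precisely the one-dimensional semi-wave problem treated in \cite{dlz2019}, but with the one-dimensional kernel replaced by the function $J_*$ defined in \eqref{J}. The coefficients $d$ and $\mu$ and the nonlinearity $f$ occupy the same roles in both formulations, so the entire content of the proposition should follow by translation, provided one checks that $J_*$ satisfies the hypotheses required by the cited theorem and that the cited threshold condition translates correctly to \textbf{(J1)}.

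The first step is to verify that $J_*$ obeys the one-dimensional analogue of \textbf{(J)}: continuity, boundedness, nonnegativity, evenness, $J_*(0)>0$, and unit mass $\int_\R J_*(l)\,dl=1$. All of these properties have already been recorded in the discussion following \eqref{1.10}, where they were extracted from \textbf{(J)} together with the explicit representation of $J_*$ in \eqref{1.10}. With these in hand, \cite[Theorem 1.2]{dlz2019} produces a solution pair $(c_0,\phi_0)$ with $c_0>0$ and $\phi_0$ strictly decreasing, uniquely, precisely when the one-dimensional first-moment condition $\int_0^\infty J_*(l)\,l\,dl<\infty$ is satisfied.

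The second step is purely algebraic: by the identity \eqref{J-J_*},
\begin{equation*}
\int_0^\infty J_*(l)\,l\,dl=\frac{\omega_{N-1}}{N-1}\int_0^\infty J(r)r^N\,dr,
\end{equation*}
so the first-moment condition on $J_*$ is equivalent to \textbf{(J1)}. Combining the two steps yields the biconditional claimed in the proposition, together with the asserted uniqueness, positivity of $c_0$, and monotonicity of $\phi_0$.

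I do not expect any genuine obstacle beyond careful bookkeeping that the shape of the equations, the positions of $d$ and $\mu$, the boundary/limit conditions at $0$ and $-\infty$, and the sign conventions all match those in \cite{dlz2019}; the proof is essentially a reduction to the one-dimensional theory through the kernel $J_*$, not a new analytic argument.
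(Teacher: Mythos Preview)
Your proposal is correct and matches the paper's approach exactly: the paper states Proposition~\ref{prop1.4} as an immediate consequence of \cite[Theorem 1.2]{dlz2019} together with \eqref{J-J_*}, without giving any further proof. The verification that $J_*$ satisfies the one-dimensional kernel hypotheses is carried out in the text following \eqref{1.10}, precisely as you describe.
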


\begin{theorem}[\underline{Spreading speed}]\label{th1.6}
	Assume   the conditions in Theorem \ref{th1.2}  are satisfied, and spreading happens to  \eqref{1.3}. Then 
\begin{align*}
\lim_{t\to\yy} \frac{h(t)}{t}=\begin{cases}c_0 & {\rm if\ \mathbf{(J1)}\ is\ satisfied},\\
\yy &{\rm if\ {\rm \mathbf{(J1)}}\ is\ not \ satisfied},
\end{cases}
\end{align*}
where $c_0$ is given by Proposition \ref{prop1.4}.
\end{theorem}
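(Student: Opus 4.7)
My plan is to reduce the high-dimensional free boundary problem to the one-dimensional semi-wave problem of Proposition \ref{prop1.4} via comparison, using the asymptotic identity
\begin{equation*}
\tilde J(r,\rho)\longrightarrow J_*(r-\rho)\quad \text{as } r,\rho\to\infty \text{ with } |r-\rho| \text{ bounded,}
\end{equation*}
which follows from the spherical parametrization underlying \eqref{1.10} after substituting $s=\rho\sin\theta$ and passing to the flat-sphere limit. Combined with $r^{N-1}/h(t)^{N-1}\to 1$ for $r$ near $h(t)$, this makes the free-boundary equation in \eqref{1.4} coincide, to leading order as $h(t)\to\infty$, with the scalar equation in Proposition \ref{prop1.4}, and thus forces $h'(t)\to c_0$.

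\textbf{Upper bound under \textbf{(J1)}.} Given $\epsilon>0$, I would set $\bar h(t)=(c_0+\epsilon)t+M$ and $\bar u(t,r)=(1+\epsilon)\phi_0(r-\bar h(t))$ with $\phi_0$ as in Proposition \ref{prop1.4}, and take $M$ so large that (a) $\bar u(0,\cdot)\geq u(0,\cdot)$, and (b) all $o(1)$ errors arising from replacing $\tilde J(r,\rho)$ by $J_*(r-\rho)$ and from restricting $\rho$-integration to $[0,\bar h(t)]$ rather than $(-\infty,\bar h(t)]$ are dominated by $\epsilon$. The semi-wave identity of Proposition \ref{prop1.4} delivers the leading cancellation in the interior equation, the KPP inequality $f((1+\epsilon)\phi_0)\leq (1+\epsilon)f(\phi_0)$ absorbs the nonlinear mismatch, and the extra negative drift $\epsilon\phi_0'$ covers the geometric remainder. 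The free boundary inequality $\bar h'(t)=c_0+\epsilon\geq$ RHS reduces by the same asymptotic to the scalar identity of Proposition \ref{prop1.4} plus $o(1)$. Comparison then gives $h(t)\leq \bar h(t)$, and $\epsilon\to 0$ yields $\limsup h(t)/t\leq c_0$.

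\textbf{Lower bound and acceleration.} For the matching lower bound under \textbf{(J1)} I would use a truncated semi-wave from \cite{dlz2019}: there exist finite-interval semi-waves $(c_\epsilon,\phi_\epsilon)$ for the kernel $J_*$ on $[-\ell_\epsilon,0]$ with $c_\epsilon\to c_0$ as $\ell_\epsilon\to\infty$. Since spreading occurs (Theorem \ref{th1.2}), for some large $T$ one has $h(T)\gg \ell_\epsilon$ and $u(T,\cdot)$ close to $u^*$ on $[0,h(T)-\ell_\epsilon]$, so the ansatz $\underline h(t)=h(T)+(c_\epsilon-\epsilon)(t-T)$, $\underline u(t,r)=\phi_\epsilon(r-\underline h(t))$ (extended by a suitable subsolution constant on $[0,\underline h(t)-\ell_\epsilon]$) is a subsolution on $t\geq T$, provided $h(T)$ is large enough that the geometric errors are $O(1/h(T))<\epsilon$. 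Comparison and $\epsilon\to 0$ yield $\liminf h(t)/t\geq c_0$. When \textbf{(J1)} fails I would apply this lower-bound construction to the normalized truncation $J^R:=J\mathbf 1_{\{|x|\leq R\}}/\int_{|x|\leq R}J$, which satisfies \textbf{(J1)}; by \eqref{J-J_*} the moment $\int_0^\infty J^R_*(l)l\,dl$ tends to infinity as $R\to\infty$, so the monotonicity of the 1D semi-wave speed in the kernel (from \cite{dn-speed}) gives $c_0^R\to\infty$, and $J\geq (1+o(1))J^R$ forces $h(t)/t\to\infty$.

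\textbf{Main obstacle.} The principal technical point is making $\tilde J(r,\rho)\approx J_*(r-\rho)$ uniformly quantitative on the moving boundary region while carrying the curvature correction $r^{N-1}/h(t)^{N-1}$. In verifying the free-boundary inequality one must decompose the double integral into the near-boundary part (where the planar limit applies) and a remainder (contributions with $r$ or $\rho$ far from $h(t)$, including the region near $r=0$ where the flat-sphere approximation is invalid), and then bound the remainder by $o(\epsilon)$ using the decay of $J_*$ at infinity and of $u^*-\phi_0$ at $-\infty$. The careful treatment of this decomposition is what underlies the sharper spreading-profile estimates advertised in the abstract.
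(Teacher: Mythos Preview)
Your overall strategy---reduce to the one-dimensional semi-wave via comparison, using the planar-limit heuristic $\tilde J(r,\rho)\to J_*(r-\rho)$---matches the paper's, and your lower-bound and acceleration arguments (truncate $J$ to compact support, invoke monotonicity and Proposition~\ref{prop4.1}) are essentially what the paper does in Lemma~\ref{lemma3.8a} and Theorem~\ref{thm5.3}.

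The upper bound, however, has a real gap. You propose to build the supersolution from $\phi_0$ itself and absorb the geometric error ``$\tilde J(r,\rho)-J_*(r-\rho)$'' into the $\epsilon$-slack. The trouble is that for $J$ without compact support no inequality of the form $\tilde J(r,\rho)\le (1+o(1))J_*(r-\rho)$ is available: the sphere $\partial B_\rho$ is only locally flat, and its far side contributes $\tilde J_-(r,\rho)$, which is not controlled by $J_*(r-\rho)$ at all. The paper's Section~2 resolves this by (i) splitting $\tilde J=\tilde J_++\tilde J_-$, (ii) proving the one-sided bound $\tilde J_+(r,\rho)\le(1+\delta)J_\epsilon(r-\rho)$ for $\rho/r\in[1/2,1+\delta^2]$ (Lemma~\ref{lemma2.4}), where $J_\epsilon$ is the explicitly inflated kernel \eqref{2.5d}, and (iii) showing that the $\tilde J_-$ contribution is negligible in the integrated sense needed for the free-boundary inequality (Lemma~\ref{lemma2.6d}). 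Because the available bound is against $J_\epsilon$ rather than $J_*$, the supersolution must be built from the semi-wave $(c_n,\phi_n)$ for the enlarged kernel $J_n=(1+\sqrt{\epsilon_n})J_{\epsilon_n}$, with $c_n\downarrow c_0$ by Proposition~\ref{prop4.1}; using $\phi_0$ directly would leave an uncontrolled remainder.

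A second point you miss is that the supersolution inequality for the PDE is \emph{not} checked on all of $[0,\bar h(t)]$. The paper applies the comparison principle Lemma~\ref{lemma3.4a} with a moving inner boundary $r_*(t)=\bar g(t)\sim \bar h(t)/(1+\epsilon_n/2)$: on $[0,\bar g(t)]$ one verifies $\bar u\ge u$ directly from $\phi_n(-\infty)=u_n^*>u^*\ge u(t,\cdot)$, and only on $(\bar g(t),\bar h(t))$ is the differential inequality checked. This is essential because Lemma~\ref{lemma2.4} requires $\rho/r$ to stay in a narrow window, and because the $\tilde J_-$ piece must be absorbed by the $\int_{-\infty}^{2\bar h/3}J_n(r-\rho)\bar u\,d\rho$ term---a set-inclusion argument (the $Q_2\ge 0$ step in the proof of Theorem~\ref{thm5.5}) that only works for $r$ in the outer shell. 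Your ``decompose into near-boundary and remainder'' idea points in the right direction for the free-boundary inequality but does not address the interior PDE, where the same issue arises.

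Finally, a small arithmetic issue: with $\bar u=(1+\epsilon)\phi_0$ the leading term on the right of the free-boundary inequality is $(1+\epsilon)c_0$, so $\bar h'=c_0+\epsilon$ is not enough when $c_0\ge 1$; one needs something like $(1+2\epsilon)c_0$, as the paper takes.
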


To obtain sharper estimates of the spreading speed, we focus on two important classes of kernel functions. The first consists of those with compact support, and therefore {\bf (J1)} is automatically satisfied and the spreading has a finite speed $c_0$ determined by Proposition \ref{prop1.4}. We show that in such a case $c_0t-h(t)$ grows to infinity like $\ln t$.
  Recall that $\xi(t)\approx \eta(t)$ for all large $t$
means there exist positive constants $c_1, c_2$ and $T$ such that
\[
c_1\eta(t)\leq\xi(t)\leq c_2\eta(t) \mbox{ for } t\geq T.
\]

\begin{theorem}[\underline{Logarithmic shift}]\label{th1.7}
	Suppose   the conditions in Theorem \ref{th1.2}  hold, and moreover the kernel function $J$ has  compact support and $f$ is $C^2$. If spreading happens, then 	\begin{align*}
 c_0t-h(t)\approx \ln t \mbox{ for all large } t.
	\end{align*}
\end{theorem}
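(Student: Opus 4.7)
The plan is to prove matching bounds $c_1\ln t \le c_0 t - h(t)\le c_2\ln t$ for some $0<c_1\le c_2$ and all large $t$, by constructing explicit super- and subsolutions whose free boundaries have the form $c_0 t - K\ln t+O(1)$. The mechanism driving the logarithmic shift is the geometric discrepancy between the radial kernel $\tilde J(r,\rho)$ and its one-dimensional projection $J_*(r-\rho)$ from \eqref{J}: expanding $\tilde J(r,\rho)$ via the representation behind \eqref{1.10} yields, uniformly for $r-\rho$ in a fixed compact set contained in $\mathrm{supp}\,J_*$,
\[
\tilde J(r,\rho)=J_*(r-\rho)+\frac{1}{r}\,G(r-\rho)+O(r^{-2})\qquad(r\to\infty),
\]
for an explicit function $G$ with compact support. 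In the moving frame $\xi=r-h(t)$, the radial equation in \eqref{1.4} thus becomes the $1$D semi-wave equation driven by $J_*$ plus a perturbation of order $O(1/h(t))=O(1/t)$, whose time integral is of order $\ln t$.

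For the lower bound on $c_0 t-h(t)$ I would use an ansatz
\[
\bar u(t,r)=\min\bigl\{u^*,\ \phi_0(r-\bar h(t))+\epsilon\,\psi(r-\bar h(t))\bigr\},\qquad \bar h(t)=c_0 t-K\ln t+M,
\]
where $\phi_0$ is the semi-wave of Proposition~\ref{prop1.4}, $\psi\ge 0$ is a carefully chosen bounded profile on $(-\infty,0]$, and $\epsilon>0$ is small while $K,M>0$ are large. The quantity $\bar h'(t)=c_0-K/t$ must dominate the outward-flux integral appearing in the free-boundary equation of \eqref{1.4}, which by the kernel expansion above and the semi-wave identity from Proposition~\ref{prop1.4} equals $c_0$ plus a term of order $1/h(t)$. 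Taking $K$ larger than the coefficient produced by $G$ (and $M$ large enough to swallow initial-data errors), comparison gives $h(t)\le\bar h(t)$ for all large $t$. The upper bound on $c_0 t-h(t)$ is obtained by a symmetric subsolution of the form $(1-\epsilon)\phi_0(r-\underline h(t))-\epsilon\,\tilde\psi(r-\underline h(t))$, cut off to remain nonnegative, with $\underline h(t)=c_0 t-k\ln t-M$ for sufficiently small $k>0$ and large $M$. The Fisher-KPP property $\sigma f(u)\le f(\sigma u)$ together with the $C^2$ regularity of $f$, used to linearize the nonlinearity with a quadratic error bound, is what allows the subsolution inequality to hold to the required leading order.

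The main obstacle I anticipate is the quantitative verification of the free-boundary inequality: the $O(1/t)$ geometric correction must be \emph{exactly} overbalanced by $-K/t$ (respectively underbalanced by $-k/t$) after the contribution of the perturbation $\psi$ (respectively $\tilde\psi$) is accounted for in the flux integral. This forces $\psi$ to be tuned as a particular solution of the semi-wave equation linearized at $\phi_0$ with a specific inhomogeneity, so that its contribution to the flux has the prescribed sign and magnitude; producing such a $\psi$ requires detailed information about the linearized semi-wave operator, in particular the exponential decay rate of $\phi_0$ at $-\infty$ and the precise behavior of $\phi_0$ near $0$. Controlling the nonlinearity at second order so as not to create spurious contributions larger than $O(1/t)$ is a further delicate point, and is exactly where the $C^2$-regularity of $f$ is essential. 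The region near $r=0$ contributes only exponentially small corrections once $h(t)$ is large (since $\phi_0$ and its shifts decay at $-\infty$ at a controlled rate determined by the compact support of $J_*$), and can be handled by coarse bounds, but still has to be tracked through the comparison argument in \eqref{1.4}.
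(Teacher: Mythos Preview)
Your identification of the mechanism---that the $O(1/r)$ discrepancy between $\tilde J(r,\rho)$ and $J_*(r-\rho)$ produces an $O(1/t)$ correction to the free-boundary speed, which integrates to $\ln t$---is correct and is exactly what drives the paper's argument. But the ansatz with a \emph{constant} small $\epsilon$ cannot close the free-boundary inequality. With $\bar u=\min\{u^*,\phi_0+\epsilon\psi\}$ and $\psi\ge0$, the outward flux picks up a nonnegative contribution $\epsilon\cdot\mu\int_{-K_*}^0\psi(r)\int_0^\infty J_*(r-\rho)\,d\rho\,dr$ that does \emph{not} decay as $t\to\infty$; hence $\bar h'(t)=c_0-K/t$ cannot dominate the flux for large $t$, regardless of how $\psi$ is tuned. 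The same obstruction hits your subsolution: the factor $(1-\epsilon)$ depresses the flux by $\epsilon c_0$, an $O(1)$ defect, so $\underline h'(t)=c_0-k/t\le{}$flux also fails. The fix is to make the perturbation itself of order $1/t$. The paper simply takes $(1\pm\epsilon(t))\phi_0(r-h_\pm(t))$ with $\epsilon(t)=K_1(t+\theta)^{-1}$ and $h_\pm(t)=c_0(t+\theta)-K_2\ln(t+\theta)+O(1)$; then the geometric correction, the multiplicative perturbation, and the speed defect all live at scale $O(1/t)$ and can be balanced against one another. No linearized-equation profile $\psi$ is needed.

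Two further points your sketch misses. First, the sign of the leading geometric correction to the flux is not something ``$K$ large'' can overcome; it must be computed and shown to be \emph{negative} for the supersolution inequality $c_0-K/t\ge c_0+C_G/t$ to admit any solution $K>0$. The paper isolates this as $B=\int_{-K_*}^0\int_0^{K_*}(r+\rho)\phi_0(r)J_*(r-\rho)\,d\rho\,dr$ and proves $B<0$ by a symmetrization using the strict monotonicity of $\phi_0$ and the evenness of $J_*$; this is the heart of the upper bound. Second, for the subsolution one must verify $\underline u(t,r)\le u(t+t_0,r)$ at an inner matching radius $r\sim\mathrm{const}\cdot t$, which requires a preliminary quantitative estimate $u(t,r)\ge u^*-O(1/t)$ on that inner region (Lemma~\ref{lemma3.12} in the paper, proved via a separate piecewise-linear barrier). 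This matching is not negligible, since $(1-\epsilon(t))\phi_0(-\infty)=(1-\epsilon(t))u^*$ lies only $O(1/t)$ below $u^*$. The $C^2$ regularity of $f$ enters, incidentally, not to linearize but to quantify $(1+\epsilon)f(u)-f((1+\epsilon)u)\ge c\,\epsilon$ for $u$ near $u^*$, which is what closes the PDE inequality in the far field.
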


This result reveals a striking difference from  the one dimensional situation in \cite[Theorem 1.4]{dn-speed}, which gives $c_0t-h(t)\approx 1$ for all large $t$ when the kernel function $J$ has compact support. Note that for kernel functions which satisfy {\bf (J1)} but do not have compact support,  $c_0t-h(t)$ may go to infinity faster than  $\ln t$, as already observed in the one dimension case. 

\medskip

The second class  consists of kernel functions $J(r)$ which behave like $r^{-\beta}$ for large $r$, and we have the following result on the rate of accelerated spreading.

\begin{theorem}[\underline{Rate of accelerated spreading}]\label{th1.8}
		Suppose   the conditions in Theorem \ref{th1.2}  are satisfied, and there exists $\beta\in (N,N+1]$  such that  
	$ J(\eta)\approx \eta^{-\beta}$ for all large $\eta$. 
		If  spreading happens, then for all large $t$,
	\begin{equation*}
\begin{cases}
 h(t)\approx t^{1/(\beta-N)}&{\rm if}\ \beta\in (N,N+1),\\
h(t)\approx t\ln t&{\rm if}\ \beta=N+1.
\end{cases}
\end{equation*}
\end{theorem}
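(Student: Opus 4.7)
The plan is to bracket $h(t)$ from above and below by $R(t)$, where $R(t)=t^{1/(\beta-N)}$ for $\beta\in(N,N+1)$ and $R(t)=t\ln t$ for $\beta=N+1$, via explicit super- and sub-solutions of the free-boundary problem \eqref{1.4}. The heuristic reduction to a 1D problem with kernel $J_*$ is what identifies these exponents: substituting $r=|l|s$ in \eqref{1.10} and using $J(\eta)\approx\eta^{-\beta}$ gives $J_*(l)\approx|l|^{N-1-\beta}=|l|^{-\gamma}$ with $\gamma:=\beta-N+1\in(1,2]$, and the predicted rates exactly match the 1D rates for that $\gamma$ quoted in the introduction.

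The central analytical object is the total outgoing flux
\[
\mathcal{F}(R):=\int_{B_R}\int_{\R^N\setminus B_R}J(|x-y|)\,\rd y\,\rd x=\omega_N\int_0^R r^{N-1}\!\!\int_R^\infty \td J(r,\rho)\,\rd\rho\,\rd r,
\]
which (up to the factor $\mu/(\omega_N h^{N-1})$) controls $h'(t)$ once $u$ is pinched to a multiple of $u^*$. I would estimate $\mathcal{F}(R)$ by the change of variables $u=y-x$ together with the volume bound $|\{x\in B_R:x+u\notin B_R\}|\le\min(\omega_N|u|R^{N-1},|B_R|)$; splitting the $u$-integral at $|u|=2R$ and inserting $J(r)\approx r^{-\beta}$ gives
\[
\mathcal{F}(R)\approx\begin{cases}R^{2N-\beta},&\beta\in(N,N+1),\\[2pt]R^{N-1}\ln R,&\beta=N+1.\end{cases}
\]
Dividing by $R^{N-1}$ turns the free-boundary equation into the asymptotic ODE $h'\approx C\,h^{N-\beta+1}$ (respectively $h'\approx C\ln h$), whose solutions grow like $t^{1/(\beta-N)}$ (respectively $t\ln t$). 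This is the skeleton of both bounds.

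For the upper bound, take $\bar u\equiv u^*$ (trivially a super-solution of the $u$-equation since $f(u^*)=0$ and $\int_0^{\bar h}\td J\,\rd\rho\le 1$) and $\bar h(t)=A\,R(t)+h_0$ with $A$ chosen large enough that $\bar h'(t)\ge (\mu u^*/\omega_N)\mathcal{F}(\bar h(t))/\bar h^{N-1}(t)$ for $t\ge 1$; a comparison argument then forces $h(t)\le\bar h(t)$. For the lower bound I would first prove an interior positivity estimate: for every $\varepsilon\in(0,1)$ there exist $T,K>0$ such that $u(t,r)\ge(1-\varepsilon)u^*$ for $r\le h(t)-K$ and $t\ge T$. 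This follows from the spreading given by Theorem~\ref{th1.2}, the Fisher--KPP hypothesis, and comparison with a stationary problem on a fixed-radius interior ball. The narrow boundary layer $[h(t)-K,h(t)]$ where $u$ is not pinched to $u^*$ contributes only an $O(h(t)^{N-1})$ term to the flux integral in $h'(t)$, which is of lower order than $\mathcal{F}(h(t))$ in every case $\beta\le N+1$, so the same asymptotic ODE provides a sub-solution $\underline h(t)\approx R(t)$ and comparison yields $h(t)\ge\underline h(t)$.

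The main obstacle is the sharp two-sided control of $\mathcal{F}(R)$: the near-boundary region $|u|<2R$, contributing through the thin-shell geometry of $B_R$, and the far region $|u|>2R$, where the tail of $J$ dominates, must be balanced precisely, and the borderline case $\beta=N+1$ requires a careful accounting of logarithmic corrections in both regions. A secondary difficulty is the interior positivity estimate underpinning the sub-solution, which must be achieved without any finite spreading speed since \textbf{(J1)} fails here, so the semi-wave arguments available in the finite-speed regime cannot be used.
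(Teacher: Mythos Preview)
Your upper bound is essentially the paper's Lemma~\ref{lemma5.2b}: once $u\le 2u^*$ for large $t$, the free-boundary equation gives $h'(t)\le C\,\Phi(h(t))$ with $\Phi(h)=h^{N+1-\beta}$ or $\ln h$, and integrating yields the claimed growth. Your flux computation via $z=y-x$ and the slab bound $|\{x\in B_R:x+z\notin B_R\}|\le\min(C|z|R^{N-1},|B_R|)$ is a clean alternative to the paper's decomposition of $\int_0^h\int_h^\infty\td J\,\rd\rho\,\rd r$ into the three regions $I,II,III$ in Lemma~\ref{lemma5.1a}; both give the same two-sided estimate.

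The lower bound, however, has a genuine gap: the interior positivity estimate you invoke is \emph{false} in the accelerated regime. Fix $K>0$ and set $r=h(t)-K$; let $\sigma$ be the time with $h(\sigma)=r$. Since $u(\sigma,r)=0$ and $u_t\le d\,(2u^*)+\max_{[0,2u^*]}|f|=:C$, one has $u(t,r)\le C(t-\sigma)$. But $h(t)-h(\sigma)=K$ together with $h(t)/t\to\infty$ forces $t-\sigma\to 0$: otherwise $t-\sigma\ge\delta>0$ for all large $t$ would give $h(t)-h(t-\delta)\le K$, and iterating yields $h(t)\le h(T_0)+K(t-T_0)/\delta$, contradicting $h(t)/t\to\infty$. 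Hence $u(t,h(t)-K)\to 0$, and no fixed-width layer suffices. Your suggested proof via ``comparison with a stationary problem on a fixed-radius interior ball'' cannot help here: the ball $B_K(x_0)$ with $|x_0|=h(t)-K$ has been inside $B_{h(s)}$ only for $s\in[\sigma,t]$, an interval of vanishing length, so the solution has no time to approach the steady state on that ball.

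The paper handles the lower bound differently: it builds an explicit sub-solution pair $(\underline u,\underline h)$ where $\underline u$ is a tent function whose ramp has \emph{growing} width, namely $\underline h(t)/2$ for $\beta\in(N,N+1)$ (Lemma~\ref{lemma5.5}) and $(t+\theta)^\alpha$ with $\alpha\in(0,1)$ for $\beta=N+1$ (Lemma~\ref{lemma4.5}). The work lies in checking the differential inequality for $\underline u$, which needs a quantitative lower bound for $\int_0^{\underline h}\td J(r,\rho)\underline u(\rho)\,\rd\rho$ in the ramp region; this is supplied by Lemma~\ref{lemma5.4} (reduction to a compactly-supported kernel) together with a direct tail estimate \eqref{5.16a} giving $\td J(r,\rho)\ge c|\rho-r|^{N-1-\beta}$ in the relevant range. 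Your ODE strategy can be rescued only after one produces such a sub-solution, which already proves the interior lower bound you were trying to assume.
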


Let us note that when $J(\eta)\approx \eta^{-\beta}$ for all large $\eta$, condition {\bf (J)} implies $\beta>N$, and {\bf (J1)} holds if and only if $\beta>N+1$. Therefore Theorem \ref{th1.8} covers exactly the case that {\bf (J)} holds but {\bf (J1)} does not, which is the very case that accelerated spreading can happen.

For such $J$ with $\beta>N+1$, as {\bf (J1)} holds, by Theorem \ref{th1.6}, when spreading happens, $\lim_{t\to\infty} h(t)/t=c_0$ is finite. Then one natural question is to find the rate of $c_0t-h(t)$ as $t\to\infty$, similar to what was done in \cite{dn-speed} for the one dimension case. It turns out that this question is much more difficult to answer in high dimensions, and the techniques here are not enough to cover this case;
in view of the length of this paper, we have refrained to pursue it here.

\bigskip

One major difficulty in treating the high dimension radially symmetric problem \eqref{1.4} arises from the fact that the kernel function in \eqref{1.4} is given by
\[
\td J(r,\rho)=\td J(|x|,\rho):=\int_{\partial B_\rho} J(|x-y|)\rd S_y,
\]
which inherits the properties of the original kernel function $J(|x|)$ in a rather implicit way. Moreover, the kernel function which determines the spreading speed of \eqref{1.4} is given by
\[
J_*(l):=\int_{\R^{N-1}}J(|(l,x')|)dx',
\]
and therefore the spreading behaviour of \eqref{1.4} can be understood only if the relationship between $J$, $\td J$ and $J_*$ is reasonably clear. 

We note that such difficulties do not occur in the random diffusion case. Indeed,
the random diffusion counterpart of \eqref{1.4} has the form
\begin{equation}\label{1.4-local}
\begin{cases}
\dd  u_t(t,r)=d \left[u_{rr}(t,r)+\frac{N-1}r u_r(t,r)\right]+f(t,r,  u), & t>0,\; r\in  [0,h(t)),\\
u_r(t,0)= u(t,h(t))=0, & t>0,\\
\dd h'(t)=  \mu u_r(t, h(t)), & t>0,\\
h(0)=h_0,\ u(0,r)= u_0(r), & r\in [0,h_0],
\end{cases}
\end{equation}
and was examined in \cite{DG, DMZ}. The sole difference of \eqref{1.4-local} from the corresponding one dimension model is the additional term $\frac{N-1}r u_r(t,r)$ in the first equation of \eqref{1.4-local}.
When $f$ is of Fisher-KPP type, namely $f=f(u)$ satisfies {\bf (f)}, it follows from \cite{DG} that the long-time dynamics of \eqref{1.4-local} is roughly the same as that for the one dimension case considered in \cite{DL}, and when spreading happens, $\lim_{t\to\infty} h(t)/t=c_*$ for some $c_*>0$ determined by the semi-wave problem associated to the one dimensional model. Moreover, by \cite{DMZ}, there exists
another constant $\hat c>0$ independent of the dimension $N$ such that
\[
\lim_{t\to\infty}[h(t)-c_*t+(N-1)\hat c\ln t]=C
\]
for some constant $C$ depending on the initial function $u_0$. In contrast, when $J$ has compact support, our corresponding result Theorem \ref{th1.7} is not as precise yet.

It was shown in \cite{DG2} that when $\mu\to\infty$, the limiting problem of \eqref{1.4-local} is the corresponding Cauchy problem
\begin{equation}\label{Cauchy-local}\begin{cases}
u_t=d\Delta u+f(t, |x|, u) &\mbox{ for } (t,x)\in \R_+\times \R^N, \\
 u(0,x)=u_0(x) &\mbox{ for } x\in\R^N,
 \end{cases}
\end{equation}
which, since the pioneering works of Fisher \cite{F} and
Kolmogorov, Peterovski and Piskunov \cite{KPP}, has long been used to describe the propagation phenomena arising from invasion ecology and other problems. Similarly, it can be easily shown that when $\mu\to\infty$, the limiting problem of \eqref{1.3} is the nonlocal Cauchy problem
\begin{equation}\label{Cauchy}\begin{cases}
u_t=d\left[\dd\int_{\R^N}J(|x-y|)u(t,y)dy -u(t,x)\right]+f(t, |x|,u) &\mbox{ for } (t,x)\in\R_+\times \R^N,\\
 u(0,x)=u_0(x) &\mbox{ for } x\in\R^N.
 \end{cases}
\end{equation}
As a nonlocal extension of \eqref{Cauchy-local}, problem \eqref{Cauchy} and its various variations have been extensively studied in the last three decades (see, e.g., \cite{AC, A-V, bz2007, bcv2016, BGHP, CD, CDM, FG, FT, G, lcw2017, LZ, SZ, WZ, Y} and the references therein). When $f=f(u)$ is of Fisher-KPP type, the long-time behaviour of \eqref{Cauchy} 
with a compactly supported  initial function $u_0$ is roughly the same as \eqref{Cauchy-local}, namely
\begin{equation}\label{u-u^*}
\lim_{t\to\infty} u(t,x)=u^* \mbox{ locally uniformly for } x\in\R^N,
\end{equation}
where $u^*$ is the unique positive zero of $f(u)$ given in {\bf (f)}. However, differences arise when one looks at the spreading speed, where
 accelerated spreading can happen to \eqref{Cauchy} when the kernel function $J$ is fat-tailed, while \eqref{Cauchy-local} always spreads with a finite speed, determined by the minimal speed of its traveling wave solutions. The determination of the rate of accelerated spreading has been a difficult problem.
In space dimension one, the rate of accelerated spreading of \eqref{Cauchy} has been examined in several works (see, e.g., \cite{BGHP, G}), but no such result   appears available  for the case of higher space dimensions yet. On the other hand, in \cite{CR, ST}, for fractional Laplacian type nonlocal diffusion operators in any dimension $N\geq 1$, it was shown that the rate of accelerated spreading is given by $e^{[c+o(1)]t}$ for some $c>0$ depending on $N$ and the fractional Laplacian. It should be noted that  our basic condition {\bf (J)} here is not satisfied  by  the corresponding kernel function of the fractional Laplacian $(-\Delta)^s$, which is given by 
\[
J(|x|)=|x|^{-(N+2s)} \ (0<s<1).
\]
 It would be interesting to see what happens to \eqref{1.3} if the kernel function $J$ is allowed to behave like the kernel function of the fractional Laplacian. A related work with $f\equiv 0$ 
 can be found in \cite{dTEV}.
 
 The techniques developed here are useful to obtain sharp estimates of the spreading profile of \eqref{Cauchy} in high space dimensions, which will be considered in a separate work. Note that as population models, \eqref{1.3} and \eqref{1.4-local} have several advantages over  \eqref{Cauchy-local} and \eqref{Cauchy}. For example, they both give the precise spreading front of the species via the free boundaries, while \eqref{Cauchy-local} and \eqref{Cauchy} do not, since their solution $u(t,x)$ is positive for all $x\in\R^N$ once $t>0$; moreover, \eqref{Cauchy-local} and \eqref{Cauchy} predict consistant success of spreading (see \eqref{u-u^*}), but the long-time dynamics of  \eqref{1.3} and \eqref{1.4-local} is governed by a spreading-vanishing dichotomy, which seems more realistic.

\bigskip

The rest of the paper is organised as follows.   In Section 2 we prove some useful facts about the kernel function $J$ and the associated functions $\tilde J$ and $J_*$, which pave the way for further analysis of \eqref{1.4}.  In Section 3, we prove the well-posedness of \eqref{1.4} (Theorem \ref{th1.1}) and a comparison principle, which will be used in later sections. The arguments in this section are variations of those for the one dimension case in \cite{cdjfa, dn2020}, thanks to the preparations in Section 2.  The spreading-vanishing dichotomy and its governing
criteria (Theorems \ref{th1.2} and \ref{th1.3}) are proved in Section 4, following the approach of \cite{cdjfa}.  The spreading speed of  \eqref{1.4} is considered in Section 5, where Theorem \ref{th1.6} is proved. Compared to \cite{dlz2019} where similar results for the one dimensional case was proved, here the proof is much more difficult as the arguments rely on careful analysis of the relationship between $J$, $\tilde J$ and $J_*$. The most technical parts of the paper are Sections 6 and 7. Section 6 examines the asymptotic behaviour of $h(t)-c_0t$ for large $t$ when the kernel function has compact support, where Theorem \ref{th1.7} is proved by the construction of subtle upper and lower solutions, based on careful estimates of a variety of expressions involving $\td J$ and $J_*$. Section 7 is concerned with the rate of accelerated spreading  when the kernel function $J(r)$ is assumed to behave like $r^{-\beta}$ for large $r$ with $\beta\in (N, N+1)$, and Theorem \ref{th1.8} is proved; again this relies on the construction of suitable upper and lower solutions, based on  careful analysis of the behaviours of $\tilde J$ and $J_*$.

\section{Some basic facts on the kernel and associated functions}

In this section, we obtain some properties of the functions $J(r)$, $\tilde J(r,\rho)$ and $J_*(l)$, which will play important roles for our analysis in later sections. 

\begin{lemma}\label{lemma2.2} For $x\in\R^N\setminus \{0\}$ and $\rho>0$,
	let 
	\[
	\mbox{$\Gamma_+(x,\rho)=\{y\in \partial B_\rho:  y\cdot x\geq 0\}$ and $\Gamma_-(x,\rho)=\{y\in\partial B_\rho: y\cdot x\leq 0\}$.}
	\]
	 Then for $r=|x|$,
	\begin{align*}
	\td J_+(r,\rho):=&\int_{\Gamma_+(x,\rho)} J(|x-y|) \rd S_y\\
	=&\omega_{N-1}2^{3-N}\frac{\rho}{r^{N-2}}\int_{|\rho-r|}^{\sqrt{\rho^2+r^2}}  \Big(\lf[(\rho+r)^2-\eta^2\rr][\eta^2-(\rho-r)^2]\Big)^{\frac{N-3}2}\eta J(\eta) \rd \eta,
	\end{align*}
	\begin{align*}
	\td J_-(r,\rho):=&\int_{\Gamma_-(x,\rho)} J(|x-y|) \rd S_y\\
	=&\omega_{N-1}2^{3-N}\frac{\rho}{r^{N-2}}\int^{\rho+r}_{\sqrt{\rho^2+r^2}}  \Big(\lf[(\rho+r)^2-\eta^2\rr][\eta^2-(\rho-r)^2]\Big)^{\frac{N-3}2}\eta J(\eta) \rd \eta,
	\end{align*}
	and
	\begin{align*}
	\td J(r,\rho)= \omega_{N-1}2^{3-N}\frac{\rho}{r^{N-2}}\int^{\rho+r}_{|\rho-r|}  \Big(\lf[(\rho+r)^2-\eta^2\rr][\eta^2-(\rho-r)^2]\Big)^{\frac{N-3}2}\eta J(\eta) \rd \eta.
	\end{align*}
\end{lemma}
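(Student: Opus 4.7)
The plan is to reduce each of the surface integrals on $\partial B_\rho$ to a one-variable integral in $\eta=|x-y|$ via polar coordinates. By the rotational invariance of $J$, we may assume without loss of generality that $x=r\mathbf{e}_1$ with $\mathbf{e}_1$ the first coordinate vector. Parameterize $\partial B_\rho$ as $y=\rho(\cos\theta,\sin\theta\,\omega)$ with $\theta\in[0,\pi]$ and $\omega\in S^{N-2}$; the induced surface measure is $dS_y=\rho^{N-1}\sin^{N-2}\theta\,d\theta\,dS_\omega$, and $|x-y|^2=r^2+\rho^2-2r\rho\cos\theta$ depends only on $\theta$. Integrating out $\omega$ contributes the factor $\omega_{N-1}$, giving
\begin{equation*}
\tilde J(r,\rho)=\omega_{N-1}\rho^{N-1}\int_0^\pi J\!\left(\sqrt{r^2+\rho^2-2r\rho\cos\theta}\right)\sin^{N-2}\theta\,d\theta.
\end{equation*}

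Next I would substitute $\eta=\sqrt{r^2+\rho^2-2r\rho\cos\theta}$, which yields $\eta\,d\eta=r\rho\sin\theta\,d\theta$ and, after expanding $\sin^2\theta=1-\cos^2\theta$ and factoring,
\begin{equation*}
\sin\theta=\frac{\sqrt{[(\rho+r)^2-\eta^2][\eta^2-(\rho-r)^2]}}{2r\rho}.
\end{equation*}
Writing $\sin^{N-2}\theta\,d\theta=\sin^{N-3}\theta\cdot\sin\theta\,d\theta=\sin^{N-3}\theta\cdot\tfrac{\eta}{r\rho}d\eta$ and plugging in the above expression for $\sin\theta$ produces the factor $2^{3-N}(r\rho)^{-(N-3)}\{[(\rho+r)^2-\eta^2][\eta^2-(\rho-r)^2]\}^{(N-3)/2}$; once combined with the prefactor $\omega_{N-1}\rho^{N-1}/(r\rho)$ this simplifies to $\omega_{N-1}2^{3-N}\rho/r^{N-2}$, matching the claimed formula. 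The endpoint correspondence $\theta=0\leftrightarrow\eta=|\rho-r|$ and $\theta=\pi\leftrightarrow\eta=\rho+r$ gives the limits of integration in the $\tilde J$ identity.

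For the hemispherical versions, I restrict $\theta$ to the appropriate subinterval: the condition $y\cdot x\geq 0$ becomes $\cos\theta\geq 0$, i.e.\ $\theta\in[0,\pi/2]$, which under the substitution corresponds to $\eta\in[|\rho-r|,\sqrt{\rho^2+r^2}]$ (since $\cos\theta=0$ gives $\eta^2=\rho^2+r^2$), yielding the $\tilde J_+$ formula; the opposite hemisphere gives $\theta\in[\pi/2,\pi]$ and $\eta\in[\sqrt{\rho^2+r^2},\rho+r]$, yielding $\tilde J_-$. Additivity $\tilde J_++\tilde J_-=\tilde J$ matches the full-sphere identity.

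The only point that needs care is the case $N=2$, where $(N-3)/2=-1/2$ makes the integrand improper at both endpoints $\eta=|\rho-r|$ and $\eta=\rho+r$; these singularities correspond to $\sin\theta\to 0$ under the substitution, but since $\sin^{N-2}\theta=1$ is bounded on the $\theta$-side the formula remains valid as an improper Riemann integral. Everything else is a routine change of variables, so I expect no substantial obstacle beyond bookkeeping the exponents and the range of $\eta$.
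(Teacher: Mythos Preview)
Your proposal is correct and follows essentially the same approach as the paper: parameterize $\partial B_\rho$ by the polar angle $\theta$ relative to $x$, integrate out the $(N-2)$-sphere to pick up $\omega_{N-1}$, and change variables to $\eta=|x-y|$. The only cosmetic difference is that the paper makes the substitution in two steps (first $\xi=\sin(\theta/2)$, then $\eta=\sqrt{(\rho-r)^2+4r\rho\,\xi^2}$) whereas you go directly from $\theta$ to $\eta$; your route is slightly more efficient and your handling of the $N=2$ endpoint singularities is a nice remark the paper leaves implicit.
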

\begin{proof}
 For any  given $y\in\partial B_\rho$ and $x\in\partial B_r$ with $\rho, r>0$, let $\theta$ denote the angle between $y$ and $x$, namely $y\cdot x=\rho r\cos\theta$; then let $S_\theta$ denote the intersection of the hyperplane $H_\theta:=\{z\in\R^N: z\cdot x=\rho\cos\theta\}$ with the sphere $\partial B_\rho$, which clearly is an $N-2$ dimensional sphere of radius $\rho\sin\theta$. Then
\[
J(|x-y|)\equiv J\lf(\sqrt{\rho^2+r^2-2\rho r \cos \theta}\rr) \mbox{ for } y\in S_\theta,
\]
 and
	\begin{align*}
	\dd\td J(r,\rho)&=\int_{\partial B_\rho}J(|x-y|)dS_y=\int_0^\pi J\lf(\sqrt{\rho^2+r^2-2\rho r \cos \theta}\rr)|S_\theta|\rho d\theta\\
	=&\int_{0}^{\pi} J\lf(\sqrt{\rho^2+r^2-2\rho r \cos \theta}\rr) \omega_{N-1} (\rho\sin \theta)^{N-2}\rho\rd \theta\\
	=&\omega_{N-1}\int_{0}^{\pi}   \rho^{N-1}(\sin \theta)^{N-2} J\lf(\sqrt{(\rho-r)^2+2r\rho(1- \cos \theta )}\rr) \rd \theta\\
	=&\omega_{N-1}\int_{0}^{\pi}   \rho^{N-1}[2 \sin (\theta/2)\cos (\theta/2)]^{N-2} J\lf(\sqrt{(\rho-r)^2+4r\rho\sin^2 (\theta/2)}\rr) \rd \theta\\
	=&2^{N-1}\rho^{N-1}\omega_{N-1}\int_{0}^{1}   \xi^{N-2} (1-\xi^2)^{(N-3)/2}J\lf(\sqrt{(\rho-r)^2+4r\rho\, \xi^2}\rr)\rd \xi,
	\end{align*}
	where we have used $\xi=\sin (\theta/2)$. The change of variable $\eta=\sqrt{(\rho-r)^2+4r\rho\, \xi^2}$ then gives
	\begin{align}\label{2.3a}
	\dd\td J(r,\rho)=\omega_{N-1}2^{3-N}\frac{\rho}{r^{N-2}}\int^{\rho+r}_{|\rho-r|}  \Big(\lf[(\rho+r)^2-\eta^2\rr][\eta^2-(\rho-r)^2]\Big)^{\frac{N-3}2}\eta J(\eta) \rd \eta.
	\end{align}
				
Analogously, by the definition of  $\td J_+(r,\rho)$, we obtain
\begin{align*}
	\dd\td J_+(r,\rho)=&\omega_{N-1}\int_{0}^{\pi/2}  (\rho\sin \theta)^{N-2}J\lf(\sqrt{\rho^2+r^2-2\rho r \cos \theta}\rr) \rho\rd \theta\\
	=&2^{N-1}\rho^{N-1}\omega_{N-1}\int_{0}^{\sqrt{2}/2}   \xi^{N-2} (1-\xi^2)^{(N-3)/2}J\lf(\sqrt{(\rho-r)^2+4r\rho \,\xi^2}\rr)\rd \xi\\
=&\omega_{N-1}2^{3-N}\frac{\rho}{r^{N-2}}\int_{|\rho-r|}^{\sqrt{\rho^2+r^2}}  \Big(\lf[(\rho+r)^2-\eta^2\rr][\eta^2-(\rho-r)^2]\Big)^{\frac{N-3}2}\eta J(\eta) \rd \eta.
\end{align*}

Similarly,
\begin{align*}
\dd\td J_-(r,\rho)=&\omega_{N-1}\int_{\pi/2}^{\pi}  (\rho\sin \theta)^{N-2}J\lf(\sqrt{\rho^2+r^2-2\rho r \cos \theta}\rr) \rho\rd \theta\\
=&2^{N-1}\rho^{N-1}\omega_{N-1}\int_{\sqrt{2}/2}^1   \xi^{N-2} (1-\xi^2)^{(N-3)/2}J\lf(\sqrt{(\rho-r)^2+4r\rho \,\xi^2}\rr)\rd \xi\\
=&\omega_{N-1}2^{3-N}\frac{\rho}{r^{N-2}}\int^{\rho+r}_{\sqrt{\rho^2+r^2}}  \Big(\lf[(\rho+r)^2-\eta^2\rr][\eta^2-(\rho-r)^2]\Big)^{\frac{N-3}2}\eta J(\eta) \rd \eta.
\end{align*}
\end{proof}

Define
\begin{equation}\label{2.3}
\zeta_0(\eta)=\begin{cases}
0, & \eta<0,\\
 2\eta,  &\eta\in [0,1],\\
 2, & \eta>1,
\end{cases}
\end{equation}
and for $\epsilon>0$ and $l\in\R$, define
\begin{equation}\label{2.5d}
J_\epsilon(l):=\int_{\R^{N-1}} J(|(l, y')|)[1+\zeta_0(|(l, y')|-\epsilon^{-1})] \rd  y'.
\end{equation}

\begin{lemma}\label{lemma2.4} 
	For any given small numbers $\delta>0$ and $\epsilon>0$,  
	\begin{align*}
	\td J_+(r,\rho)\leq (1+\delta) J_\epsilon (r-\rho) \mbox{ for }  \rho\in \left [\frac r 2, (1+\delta^2)r\right ],\ r\geq (\delta\epsilon)^{-1},
	\end{align*} 
	where $\td J_+$ is given by Lemma \ref{lemma2.2}.
\end{lemma}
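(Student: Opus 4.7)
The plan is to compare the integrand of $\td J_+(r,\rho)$ given by Lemma \ref{lemma2.2} with that of $J_\epsilon(r-\rho)$, once the latter is rewritten as a one-dimensional integral in $\eta=|(l,y')|$. The same polar-coordinate calculation that produced \eqref{1.10} for $J_*$ yields
\begin{align*}
J_\epsilon(l)=\omega_{N-1}\int_{|l|}^{\yy} J(\eta)\,\eta\,(\eta^2-l^2)^{(N-3)/2}\big[1+\zeta_0(\eta-\epsilon^{-1})\big]\rd\eta.
\end{align*}
Writing $l=r-\rho$ so that $(\rho-r)^2=l^2$, the integration range $[|l|,\sqrt{\rho^2+r^2}]$ appearing in $\td J_+$ is contained in $[|l|,\yy)$ and both integrands are nonnegative. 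Cancelling the common factor $\omega_{N-1}\,\eta\,J(\eta)\,(\eta^2-l^2)^{(N-3)/2}$ thus reduces the lemma to the pointwise inequality
\[
A(\eta):=2^{3-N}\frac{\rho}{r^{N-2}}\big((\rho+r)^2-\eta^2\big)^{(N-3)/2}\leq (1+\delta)\big[1+\zeta_0(\eta-\epsilon^{-1})\big]=:B(\eta)
\]
for all $\eta\in[|l|,\sqrt{\rho^2+r^2}]$.

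For $N\geq 3$ the exponent $(N-3)/2$ is nonnegative, so $((\rho+r)^2-\eta^2)^{(N-3)/2}$ is maximised at $\eta=|l|$ with value $(4\rho r)^{(N-3)/2}$, which gives
\[
A(\eta)\leq 2^{3-N}\cdot\frac{\rho}{r^{N-2}}\cdot(4\rho r)^{(N-3)/2}=(\rho/r)^{(N-1)/2}\leq (1+\delta^2)^{(N-1)/2}.
\]
The right side is at most $1+\delta$ once $\delta$ is sufficiently small depending on $N$, and since $B(\eta)\geq 1+\delta$ always, the desired estimate follows.

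For $N=2$ the inequality becomes $\dd\frac{2\rho}{\sqrt{(\rho+r)^2-\eta^2}}\leq (1+\delta)\big[1+\zeta_0(\eta-\epsilon^{-1})\big]$, and the extra weight $\zeta_0$ is genuinely needed because the left side grows unboundedly as $\eta$ approaches $\sqrt{\rho^2+r^2}$. I split at $\eta=\epsilon^{-1}+1$. The hypothesis $r\geq(\delta\epsilon)^{-1}$ combined with $\epsilon\leq 1$ yields $\epsilon^{-1}+1\leq 2\delta r$, so for $\eta\leq\epsilon^{-1}+1$ we have $\eta^2\leq 4\delta^2 r^2$; setting $\tau=\rho/r\in[1/2,1+\delta^2]$, the target estimate $(\rho+r)^2-4\delta^2 r^2\geq 4\rho^2/(1+\delta)^2$ reduces to $g(\tau):=(1+\tau)^2-4\delta^2-4\tau^2/(1+\delta)^2\geq 0$. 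A short computation shows $g$ is concave on $[1/2,1+\delta^2]$ for $\delta<1$ with minimum $g(1+\delta^2)=8\delta+O(\delta^2)>0$ when $\delta$ is small, giving $A(\eta)\leq 1+\delta\leq B(\eta)$. For $\eta\geq\epsilon^{-1}+1$ we have $\zeta_0=2$ so $B(\eta)=3(1+\delta)$, while $\eta^2\leq\rho^2+r^2$ gives $(\rho+r)^2-\eta^2\geq 2\rho r$, hence $A(\eta)\leq\sqrt{2\rho/r}\leq\sqrt{2(1+\delta^2)}<3(1+\delta)$.

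The technical heart is the $N=2$ case: the factor $((\rho+r)^2-\eta^2)^{-1/2}$ has an integrable singularity at the upper endpoint $\sqrt{\rho^2+r^2}$ which the simpler kernel $J_*$ cannot absorb, and the carefully engineered weight $1+\zeta_0$ in the definition of $J_\epsilon$ is exactly what compensates for it and makes the estimate go through.
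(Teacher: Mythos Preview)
Your proof is correct and follows essentially the same strategy as the paper's: rewrite both $\td J_+(r,\rho)$ and $J_\epsilon(r-\rho)$ as one-dimensional integrals in $\eta$ with the common factor $\omega_{N-1}\,\eta J(\eta)\,(\eta^2-l^2)^{(N-3)/2}$, then bound the remaining factor. The paper organizes this by splitting both integrals---$J_\epsilon=W_1+W_2$ (the $J_*$-part and the $\zeta_0$-part) and $\td J_+=Q_1+Q_2$ at the moving threshold $\eta=\sqrt{(r-\rho)^2+4\delta^2 r\rho}$---and shows $Q_1\leq(1+\delta)W_1$, $Q_2\leq W_2$. You instead carry out a direct pointwise comparison $A(\eta)\leq B(\eta)$ with a fixed split at $\eta=\epsilon^{-1}+1$ for $N=2$. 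Both rest on the same two estimates of $(\rho+r)^2-\eta^2$ (close to $4\rho r$ for small $\eta$, at least $2\rho r$ throughout), and both need $\epsilon$ small enough that $r\geq(\delta\epsilon)^{-1}$ forces the split point to be of order $\delta r$. Your presentation is somewhat more streamlined: for $N\geq 3$ you observe that no split (hence no use of the weight $\zeta_0$) is needed at all, while the paper keeps the uniform four-step structure for all $N\geq 2$.
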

\begin{proof}
	
	\textbf{Step 1:} Split of  $J_\epsilon(r-\rho)$. 
	
	Denote 
	\begin{align*}
	G_\epsilon(|x|):=J(|x|)[1+\zeta_0(|x|-\epsilon^{-1})]\ \ \ \ {\rm for}\ x\in \R^N.
	\end{align*} 
Then
	\begin{align*}
	J_\epsilon(r-\rho)=&\int_{0}^{\yy} \omega_{N-1} \xi^{N-2}G_\epsilon(\sqrt{(r-\rho)^2+\xi^2}) \rd \xi\\
	=&\ \omega_{N-1}\int_{|\rho-r|}^{\yy}   [\eta^2-(\rho-r)^2]^{(N-3)/2} \eta G_\epsilon(\eta)\rd \eta\\
	=&\ \omega_{N-1}\int_{|\rho-r|}^{\yy}   [\eta^2-(\rho-r)^2]^{(N-3)/2} \eta J(\eta)\rd \eta\\
	&+\omega_{N-1}\int_{|\rho-r|}^{\yy}   [\eta^2-(\rho-r)^2]^{(N-3)/2} \eta J(\eta)\zeta_0(\eta-\epsilon^{-1})\rd \eta\\
	=:&\ W_1+W_2.
	\end{align*}

	\textbf{Step 2:} Split of  $\td J_+(r,\rho)$.
	
	By Lemma \ref{lemma2.2}, we have
	\begin{align*}
	&\td J_+(r,\rho)\\	=&\ \omega_{N-1}2^{3-N}\frac{\rho}{r^{N-2}}\int_{|\rho-r|}^{\sqrt{(r-\rho)^2+4\delta^2r\rho}}  \Big(\lf[(\rho+r)^2-\eta^2\rr][\eta^2-(\rho-r)^2]\Big)^{\frac{N-3}2}\eta J(\eta) \rd \eta
	\\
	&+\omega_{N-1}2^{3-N}\frac{\rho}{r^{N-2}}\int^{\sqrt{r^2+\rho^2}}_{\sqrt{(r-\rho)^2+4\delta^2r\rho}}  \Big(\lf[(\rho+r)^2-\eta^2\rr][\eta^2-(\rho-r)^2]\Big)^{\frac{N-3}2}\eta J(\eta) \rd \eta
	\\
	=: & \ Q_1+Q_2.
	\end{align*}

	\textbf{Step 3: } We prove $Q_1\leq (1+\delta)W_1$.

	For  $|\rho-r|\leq \eta\leq \sqrt{(r-\rho)^2+4\delta^2r\rho}$, we have
	\begin{align*}
	\lf[(\rho+r)^2-\eta^2\right]^{(N-3)/2}\leq \begin{cases} (4r\rho)^{(N-3)/2}, & N\geq 3,\\
	[4r\rho (1-\delta^2)]^{-1/2}, & N=2.
	\end{cases}
	\end{align*}
	Hence by the definitions of $Q_1$ and $W_1$ we have, for $\rho\in [r/2, (1+\delta^2)r]$,
	\begin{align*}
	Q_1\leq&\omega_{N-1}2^{3-N} \frac\rho{r^{N-2}}\frac{(4r\rho)^{(N-3)/2}}{\sqrt{1-\delta^2}}\int_{|\rho-r|}^{\sqrt{(r-\rho)^2+4\delta^2r\rho}}  
	 [\eta^2-(\rho-r)^2]^{(N-3)/2}\eta J(\eta) \rd \eta\\
	=&\frac{w_{N-1}}{\sqrt{1-\delta^2}}\left(\frac\rho r\right)^{(N-1)/2}\int_{|\rho-r|}^{\sqrt{(r-\rho)^2+4\delta^2r\rho}}  
	 [\eta^2-(\rho-l)^2]^{(N-3)/2}\eta J(\eta) \rd \eta\\
\leq &\frac {(1+\delta^2)^{(N-1)/2}}{\sqrt{1-\delta^2}}W_1\leq (1+\delta)W_1 \mbox{ sine $\delta>0$ is small}.
	\end{align*}

	\textbf{Step 4:}  We show $Q_2\leq W_2$. \smallskip

For  $ \sqrt{(r-\rho)^2+4\delta^2r\rho}\leq \eta\leq \sqrt{r^2+\rho^2}$, we have
	\begin{align*}
	\lf[(\rho+r)^2-\eta^2\right]^{(N-3)/2}\leq \begin{cases} (4r\rho)^{(N-3)/2}, & N\geq 3,\\
	(2r\rho)^{-1/2}, & N=2.
	\end{cases}
	\end{align*}
	Hence by the definitions of $Q_2$ and $W_2$ we have, for $\rho\in [r/2, (1+\delta^2)r]$,
	\begin{align*}
	Q_2\leq&\omega_{N-1}2^{3-N} \frac\rho{r^{N-2}}{(4r\rho)^{(N-3)/2}}{\sqrt{2}}\int^{\sqrt{r^2+\rho^2}}_{\sqrt{(r-\rho)^2+4\delta^2r\rho}}  
	 [\eta^2-(\rho-r)^2]^{(N-3)/2}\eta J(\eta) \rd \eta\\
	=&\ {w_{N-1}}{\sqrt{2}}\left(\frac\rho r\right)^{(N-1)/2}\int^{\sqrt{r^2+\rho^2}}_{\sqrt{(r-\rho)^2+4\delta^2r\rho}}  
	 [\eta^2-(\rho-r)^2]^{(N-3)/2}\eta J(\eta) \rd \eta\\
	 \leq &\ {w_{N-1}} 2\int^{\sqrt{r^2+\rho^2}}_{\max\{|\rho-r|,\sqrt{2}\delta r\}}  
	 [\eta^2-(\rho-r)^2]^{(N-3)/2}\eta J(\eta) \rd \eta.
	\end{align*}

On the other hand,
\[
W_2\geq \omega_{N-1} 2\int_{\max\{|\rho-r|, \epsilon^{-1}+1\}}^{\yy}   [\eta^2-(\rho-r)^2]^{(N-3)/2} \eta J(\eta)\rd \eta.
\]
	Therefore, $Q_2\leq W_2$ provided that $\sqrt 2\delta r\geq \epsilon^{-1}+1$, namely $r\geq R:=\frac{\epsilon^{-1}+1}{\sqrt 2 \delta}\in (0, (\delta\epsilon)^{-1}]$.
	
Thus	
	\[
	\td J_2(r,\rho)=Q_1+Q_2\leq (1+\delta)W_1+W_2\leq (1+\delta) J_\epsilon(r-\rho) \mbox{ for } \rho\in [r/2, (1+\delta^2)r],\; r\geq (\delta\epsilon)^{-1}.
	\]
	The proof is complete.
\end{proof}

%In this subsection, we consider the case that $J$ has a compact support, say  ${\rm supp} J\subset B_{K_*}$ for some $K_*>0$. 

\begin{proposition}\label{coro2.5}
	 Suppose that  $J$ has compact support, say  ${\rm supp} (J)\subset [0,K_*]$ for some $K_*>0$, and $J_*$ is given by \eqref{J}.
	Then  there exist constants $L_0>0$ and $C>0$  such that   for  $r\geq L_0$, 
	\begin{align*}
	&| \td J(r,\rho)- J_*(r-\rho)|\leq  Cr^{-1}&\mbox{ when }& \rho\in [r-K_*,r+K_*],\\
	& \td J(r,\rho)=J_*(r-\rho)=0 &\mbox{ when }& \rho\not\in [r-K_*,r+K_*].
	\end{align*}
\end{proposition}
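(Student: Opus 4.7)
The plan is to handle the two alternatives in turn: the vanishing outside $[r-K_*,r+K_*]$ will be immediate from the support hypothesis, while the $O(r^{-1})$ estimate inside will come from a direct comparison of the explicit integral representations of $\td J$ in Lemma \ref{lemma2.2} and of $J_*$ in \eqref{1.10}.

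For the vanishing statement, if $y\in\partial B_\rho$ and $|x|=r$ then $|x-y|\geq \bigl||\rho|-r\bigr|=|\rho-r|>K_*$, so $J(|x-y|)\equiv 0$ on $\partial B_\rho$ and hence $\td J(r,\rho)=0$. Analogously $|(r-\rho,x')|\geq |r-\rho|>K_*$ forces the integrand in \eqref{J} to vanish, giving $J_*(r-\rho)=0$.

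For $\rho\in[r-K_*,r+K_*]$ and $r$ large enough that $K_*\leq\rho+r$, the supports of the integrands in the formula of Lemma \ref{lemma2.2} and in \eqref{1.10} both reduce to $\eta\in[|r-\rho|,K_*]$. Subtracting yields
\[
\td J(r,\rho)-J_*(r-\rho)=\omega_{N-1}\int_{|r-\rho|}^{K_*}\bigl[A(r,\rho,\eta)-1\bigr]\bigl[\eta^2-(r-\rho)^2\bigr]^{(N-3)/2}\eta J(\eta)\,d\eta,
\]
where
\[
A(r,\rho,\eta):=2^{3-N}\,\frac{\rho}{r^{N-2}}\bigl[(\rho+r)^2-\eta^2\bigr]^{(N-3)/2}.
\]
The whole problem now reduces to showing $|A(r,\rho,\eta)-1|\leq C_0/r$ uniformly for $\eta\in[0,K_*]$ and $\rho\in[r-K_*,r+K_*]$, after which the bounded\-ness of $J_*$ on $\R$ (recorded just after \eqref{1.10}) will finish the argument via $|\td J(r,\rho)-J_*(r-\rho)|\leq (C_0/r)\,J_*(r-\rho)\leq C/r$.

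To establish the $A$-estimate I would factor $(\rho+r)^2-\eta^2=(\rho+r+\eta)(\rho+r-\eta)$; the bounds $|\rho-r|\leq K_*$ and $0\leq\eta\leq K_*$ make each factor equal to $2r\bigl(1+O(r^{-1})\bigr)$ for large $r$, so $[(\rho+r)^2-\eta^2]^{(N-3)/2}=(2r)^{N-3}\bigl(1+O(r^{-1})\bigr)$; combined with $\rho/r=1+O(r^{-1})$ and the cancellation $2^{3-N}\cdot 2^{N-3}=1$ this yields $A=1+O(r^{-1})$. The only point calling for care is the case $N=2$, where the exponent $(N-3)/2=-1/2$ is negative and one might fear a singularity; but $(\rho+r)^2-\eta^2\geq (2r-K_*)^2-K_*^2$ is bounded below by a positive multiple of $r^2$ once $r$ is large, so no blow-up is approached and the expansion $(1+x)^{-1/2}=1+O(x)$ remains legitimate. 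I do not anticipate any deeper obstacle.
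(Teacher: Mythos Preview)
Your proof is correct and follows essentially the same route as the paper: both subtract the explicit integral representations from Lemma~\ref{lemma2.2} and \eqref{1.10} and reduce the estimate to showing that the multiplier $A(r,\rho,\eta)=1+O(r^{-1})$ uniformly for $\eta\in[0,K_*]$, $|\rho-r|\leq K_*$. Your final step---recognizing the remaining integral as $J_*(r-\rho)$ itself and invoking its already-established boundedness---is in fact slightly cleaner than the paper's, which instead pulls out $\|J\|_\infty$ and bounds $\int_{|\rho-r|}^{K_*}[\eta^2-(\rho-r)^2]^{(N-3)/2}\eta\,d\eta$ by hand, splitting into the cases $N\geq 3$ and $N=2$.
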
 
\begin{proof}  
	From Lemma \ref{lemma2.2} and Lemma \ref{lemma2.4}, we see
	\begin{align*}
	&\td J(r,\rho)= \omega_{N-1}2^{3-N}\frac{\rho}{r^{N-2}}\int^{\rho+r}_{|\rho-r|}  \Big(\lf[(\rho+r)^2-\eta^2\rr][\eta^2-(\rho-r)^2]\Big)^{\frac{N-3}2}\eta J(\eta) \rd \eta,\\
	&J_*(r-\rho)=\omega_{N-1}\int_{|\rho-r|}^{\yy}   [\eta^2-(\rho-r)^2]^{(N-3)/2} \eta J(\eta)\rd \eta.
	\end{align*}
	Hence,
	\begin{align*}
	\td J(r,\rho)=J_*(r-\rho)=0 \mbox{ when \ $ |r-\rho|> K_*$, namely when $\rho \not\in [r-K_*,r+K_*].$}
	\end{align*}
	
	On the other hand, for   $r> K_*$ and $|r-\rho|\leq K_*$,
	\begin{align*}
	&| \td J(r,\rho)- J_*(r-\rho)|\\
	\leq &\ \omega_{N-1}\int_{|\rho-r|}^{K_*}   \lf| \frac{2^{3-N}\rho}{r^{N-2}}(\rho+r)^2-\eta^2)^{\frac{N-3}{2}}-1
	\rr|[\eta^2-(\rho-r)^2]^{(N-3)/2}\eta 
	J(\eta) \rd \eta\\
	\leq&\  \omega_{N-1} \|J\|_\infty M(r)\int_{|\rho-r|}^{K_*}[\eta^2-(\rho-r)^2]^{(N-3)/2}\eta d\eta, 
	\end{align*} 
	where
	\begin{align*}
	M(r):=&\ {\rm max} _{\{\rho\in [r-K_*, r+K_*], \eta\in [0, K_*]\}}\left| \frac{2^{3-N}\rho}{r^{N-2}}[(\rho+r)^2-\eta^2)^{\frac{N-3}{2}}-1  \right|\\
	=&\ {\rm max}_{\{\rho\in [r-K_*, r+K_*], \eta\in [0, K_*]\}}\left|2^{3-N}\frac{\rho}r \left[\Big(1+\frac\rho r\Big)^2-\Big(\frac\eta r\Big)^2\right]^{\frac{N-3}2}-1\right|\\
	 =&\ {\rm max}_{\{\xi\in [-K_*, K_*], \eta\in [0, K_*]\}}\left|\left(1+\frac{\xi}r\right) \left[\Big(1+\frac\xi{2 r}\Big)^2-\Big(\frac\eta {2r}\Big)^2\right]^{\frac{N-3}2}-1\right|\\
	 =&\ O(r^{-1}) \mbox{ as } r\to\infty,
	\end{align*}
	and
	\begin{align*}
	\int_{|\rho-r|}^{K_*}[\eta^2-(\rho-r)^2]^{\frac{N-3}2}\eta d\eta\leq
	\begin{cases}\dd \int_0^{K_*} \eta^{N-2} d\eta & \mbox{ if } N\geq 3,\\
	\dd \int_{|\rho-r|}^{K_*}\frac{\eta}{\sqrt{\eta+|\rho-r|}}\frac{1}{\sqrt{\eta-|\rho-r|}}d\eta\leq \int_0^{K_*}\frac{\sqrt{K_*}}{\sqrt{\xi}} d\xi & \mbox{ if } N=2.
	\end{cases}
	\end{align*}
	Therefore there exists $C>0$ such that 
	\[
	| \td J(r,\rho)- J_*(r-\rho)|\leq C r^{-1} \mbox{ for $|r-\rho|\leq K_*$ and all large $r$}.
	\]
	The proof is complete.
\end{proof}

\begin{lemma}\label{lemma2.6d}
	If  {\rm \textbf{(J1)}} holds, then for any  $\epsilon\in (0,1)$,  
	\begin{align}
	\lim_{R\to\infty}\int_0^{(1-\epsilon)R} \int_{R}^{\yy} \td J_+(r,\rho) \rd \rho \rd r=0,\ \lim_{R\to\infty}\int_0^R \int_{R}^{\yy} \td J_-(r,\rho) \rd \rho \rd r=0.
	\end{align}
\end{lemma}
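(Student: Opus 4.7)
The plan is to reduce both double integrals to volume integrals over half-spaces intersected with $B_R^c$, where elementary geometric lower bounds on $|x-y|$ will let us invoke hypothesis \textbf{(J1)} directly. The starting observation is that for any fixed $x \in \R^N$ with $|x|=r$, the spherical slicing identity $\int_R^\infty \int_{\partial B_\rho} g(y)\, dS_y\, d\rho = \int_{B_R^c} g(y)\, dy$ and the radial symmetry of $\td J_\pm$ in $x$ give
\begin{align*}
\int_R^\infty \td J_+(r,\rho)\, d\rho &= \int_{B_R^c \cap \{y\cdot x \geq 0\}} J(|x-y|)\, dy, \\
\int_R^\infty \td J_-(r,\rho)\, d\rho &= \int_{B_R^c \cap \{y\cdot x \leq 0\}} J(|x-y|)\, dy.
\end{align*}
Thus the analysis reduces to estimating these half-space integrals uniformly in $r$ and then integrating in $r$.

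For the first limit, I would observe that when $|x|=r \leq (1-\epsilon)R$, the closest point of $\overline{B_R^c}$ to $x$ is $Rx/|x|$, which lies in the half-space $\{y\cdot x \geq 0\}$. Hence $|x-y| \geq R-r \geq \epsilon R$ throughout the integration region. Changing variables $z=y-x$ and dropping the half-space and annular constraints gives
\begin{align*}
\int_R^\infty \td J_+(r,\rho)\, d\rho \;\leq\; \int_{|z|\geq \epsilon R} J(|z|)\, dz \;=\; \omega_N \int_{\epsilon R}^\infty J(\eta)\eta^{N-1}\, d\eta.
\end{align*}
Integrating over $r\in[0,(1-\epsilon)R]$ and using the trivial bound $R \eta^{N-1} \leq \epsilon^{-1} \eta^N$ on the tail gives
\begin{align*}
\int_0^{(1-\epsilon)R} \int_R^\infty \td J_+(r,\rho)\, d\rho\, dr \;\leq\; \frac{(1-\epsilon)\omega_N}{\epsilon}\int_{\epsilon R}^\infty J(\eta)\eta^N\, d\eta,
\end{align*}
which tends to $0$ as $R\to\infty$ by \textbf{(J1)}.

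For the second limit, the estimate is in fact uniform in $r\geq 0$: if $y\cdot x \leq 0$ and $|y|\geq R$, then $|x-y|^2 = r^2 - 2x\cdot y + |y|^2 \geq r^2+R^2 \geq R^2$, so $|x-y| \geq R$. The same change of variables yields
\begin{align*}
\int_R^\infty \td J_-(r,\rho)\, d\rho \;\leq\; \omega_N \int_R^\infty J(\eta)\eta^{N-1}\, d\eta,
\end{align*}
and integrating in $r\in[0,R]$ followed by the bound $R\cdot \eta^{N-1} \leq \eta^N$ on $[R,\infty)$ gives the upper bound $\omega_N \int_R^\infty J(\eta)\eta^N\, d\eta$, which vanishes as $R\to\infty$ by \textbf{(J1)}. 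There is no serious obstacle here: the only subtle point is recognizing that the half-space decomposition built into the definitions of $\td J_\pm$ is exactly what is needed to obtain the closest-point lower bound $|x-y|\geq R-r$ (respectively $|x-y|\geq R$), thereby converting the $r$-range $[0,(1-\epsilon)R]$ into the tail condition \textbf{(J1)}.
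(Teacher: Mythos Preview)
Your proof is correct and is genuinely more elementary than the paper's. The paper encloses $B_R$ (or a portion of it) in a rectangular box $Q_R^\delta$ (respectively $\tilde Q_R$), covers $\R^N\setminus Q_R^\delta$ by coordinate half-spaces $S^{(j)}$, and then expresses each half-space integral in terms of the one-dimensional kernel $J_*$; the conclusion is then drawn from $\int_0^\infty \xi J_*(\xi)\,d\xi<\infty$, which is equivalent to \textbf{(J1)} via \eqref{J-J_*}. Your argument bypasses $J_*$ entirely: the pointwise bounds $|x-y|\geq R-r$ on $B_R^c$ (triangle inequality) and $|x-y|\geq R$ on $B_R^c\cap\{y\cdot x\leq 0\}$ (expansion of $|x-y|^2$) reduce everything directly to the tail $\int_{cR}^\infty J(\eta)\eta^N\,d\eta$. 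This is shorter and uses \textbf{(J1)} on $J$ itself rather than on $J_*$. One small remark: for the $\td J_+$ part, the half-space constraint plays no role in your bound (the triangle inequality alone gives $|x-y|\geq R-r$ for any $y\in B_R^c$), so your argument in fact shows the slightly stronger statement $\int_0^{(1-\epsilon)R}\int_R^\infty \td J(r,\rho)\,d\rho\,dr\to 0$; the half-space is essential only for the $\td J_-$ estimate. The paper's route, while longer here, has the advantage of fitting into the $J_*$ framework used systematically in the rest of Section~2.
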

\begin{proof}
	For $R\geq r>0$, denote 
	\[
	x_r^1:=(r,0,..., 0)\in\R^N \mbox{ and	} \Omega_R^+:=\{y=(y_1,..., y_N): y_1>0, |y|>R\}.
	\]
	Then
	\[
	\int_R^\infty \td J_+(r,\rho)d\rho=\int_{\Omega_R^+}J(|x_r^1-y|dy.
	\]

	  For small $\delta\in (0,\epsilon)$ define 
	\begin{align*}
	Q_R^\delta:=\lf\{z=(z_1,z_2,\cdots,z_N): z_1\leq (1-\delta)R,\  |z_i|\leq \Lambda R,\ 2\leq i\leq N\rr\}
	\end{align*}
	with $\Lambda:=\sqrt{1-(1-\delta)^2}/\sqrt{N}$.
	Obviously,
		\begin{align*}
	\Omega_R^+\subset \R^N\setminus Q_R^\delta.
	\end{align*}
	Therefore
	\begin{align*}
	\int_R^\infty \td J_+(r,\rho)d\rho=\int_{\Omega_R^+}J(|x_r^1-y|dy\leq \int_{\R^N\setminus Q_R^\delta}J(|x_r^1-y|dy.
	\end{align*}
	The set $\R^N\setminus Q_R^\delta$ can be decomposed as follows:
	\begin{align*}
	\R^N\backslash Q_R^\delta= \cup_{i=1}^{N}S^{(i)} 
	\end{align*}
with overlapping sets
	\begin{align*}
	&S^{(1)}:=\lf\{z=(z_1,z_2,\cdots,z_N): z_1>(1-\delta)R\ {\rm and}\ z_i\in \R\ {\rm for}\ 2\leq i\leq N\rr\},\\
	& S^{(j)}:=\lf\{z=(z_1,z_2,\cdots,z_N): |z_j|>\Lambda R\ {\rm and}\ z_i\in \R\ {\rm for}\ i\neq j\rr\},\ 2\leq j\leq N.
	\end{align*}
	Thus, making use of the definition of $J_*$, we deduce
	\begin{align*}
	  \int_R^\infty \td J_+(r,\rho)d\rho\leq &\int_{\R^N\setminus Q_R^\delta}J(|x_r^1-y|dy 
	\leq \sum_{j=1}^N \int_{S^{(j)}} J(|x_r^1-y|dy \\
	=& \int_{(1-\delta)R}^{\yy} J_*(r-\rho) \rd \rho 
  +2(N-1)\int_{\Lambda R}^\infty J_*(\rho)d\rho\\
  =& \int_{(1-\delta)R-r}^{\yy} J_*(\xi) \rd \xi 
  +2(N-1)\int_{\Lambda R}^\infty J_*(\rho)d\rho.
	\end{align*}
	It follows that
	\begin{align*}
	\int_0^{(1-\epsilon)R} \int_R^\infty \td J_+(r,\rho)d\rho dr\leq 
	 \int_0^{(1-\epsilon)R} \int_{(1-\delta)R-r}^{\yy} J_*(\xi) \rd \xi dr  +2(N-1)(1-\epsilon)R\int_{\Lambda R}^\infty J_*(\rho)d\rho
  \end{align*}
  
  We have, due to {\bf (J1)},
	 \begin{align*}
   \int_0^{(1-\epsilon)R}\int_{(1-\delta)R-r}^{\yy} J_*(\xi) \rd \xi dr\leq & \int_{(\epsilon-\delta)R}^\infty [\xi-(\epsilon-\delta)R]J_*(\xi)d\xi\\
   \leq & \int_{(\epsilon-\delta)R}^\infty \xi J_*(\xi)d\xi \to 0 \mbox{ as } R\to\infty,
	\end{align*}
	and
	\[
	2(N-1)(1-\epsilon)R\int_{\Lambda R}^\infty J_*(\rho)d\rho\leq 2(N-1)(1-\epsilon)\frac{1}\Lambda \int_{\Lambda R}^\infty \rho J_*(\rho)d\rho \to 0 \mbox{ as } R\to\infty.
	\]
	Hence
	\[
	\int_0^{(1-\epsilon)R} \int_R^\infty \td J_+(r,\rho)d\rho dr \to 0 \mbox{ as } R\to\infty.
	\]
	
	Similarly, for $R\geq r>0$,  
	\[
	x_r^1:=(r,0,..., 0)\in\R^N \mbox{ and	} \Omega_R^-:=\{y=(y_1,..., y_N): y_1<0, |y|>R\},
	\]
	we have
	\[
	\int_R^\infty \td J_-(r,\rho)d\rho=\int_{\Omega_R^-}J(|x_r^1-y|dy.
	\]
	
	Let
\begin{align*}
	\tilde Q_R:=\lf\{z=(z_1,z_2,\cdots,z_N): z_1\geq -\frac R{2\sqrt N},  |z_i|\leq \frac R{2\sqrt N},\ 2\leq i\leq N\rr\}.
	\end{align*}
	Then 
	\[
	\Omega_R^-\subset\R^N\setminus \tilde Q_R=\cup_{j=1}^N\tilde S^{(j)},
	\]
	with
	\begin{align*}
	&\tilde S^{(1)}:=\lf\{z=(z_1,z_2,\cdots,z_N): z_1\leq -\frac R{2\sqrt N},  z_i\in\R \mbox{ for } i\not=1\rr\},\\
	&\tilde S^{(j)}:=\lf\{z=(z_1,z_2,\cdots,z_N):  |z_j|\geq \frac R{2\sqrt N} \mbox{ and } z_i\in\R \mbox{ for } i\not=j\rr\}, \ 2\leq j\leq N.
	\end{align*}
	Therefore
	\begin{align*}
	\int_R^\infty \td J_-(r,\rho)d\rho& =\int_{\Omega_R^-}J(|x_r^1-y|dy\leq \sum_{j=1}^N \int_{\tilde S^{(j)}}J(|x_r^1-y|dy\\
	&\leq \int_{-\infty}^{-\frac R{2\sqrt N}} J_*(r-\rho)d\rho+2(N-1)\int^{\infty}_{\frac R{2\sqrt N}}J_*(\rho)d\rho
	\\
	&=\int_{\frac R{2\sqrt N}+r}^\infty J_*(\xi)d\xi+2(N-1)\int^{\infty}_{\frac R{2\sqrt N}}J_*(\rho)d\rho.
	\end{align*}
	It follows that
	\begin{align*}
	\int_0^R\int_R^\infty \td J_-(r,\rho)d\rho dr	
		&\leq \int_0^R\int_{\frac R{2\sqrt N}+r}^\infty J_*(\xi)d\xi dr +2(N-1)R\int^{\infty}_{\frac R{2\sqrt N}}J_*(\rho)d\rho\\
		&\leq \int_{\frac R{2\sqrt N}}^\infty \xi J_*(\xi)d\xi+4(N-1)\sqrt N\int^{\infty}_{\frac R{2\sqrt N}}\rho J_*(\rho)d\rho\to 0 \mbox{ as } R\to\infty.
	\end{align*}
	The proof is completed. 
\end{proof}
\begin{lemma}\label{lemma2.7d}
	If 	  {\rm \textbf{(J1)}} holds, then
	\begin{align*}
	\lim_{R\to\yy}	\int_0^R \int_{R}^{\yy} \td J(r,\rho) \rd \rho \rd r=\int_{0}^{\yy} \eta J_*(\eta) \rd \eta.
	\end{align*}
\end{lemma}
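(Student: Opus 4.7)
The plan is to use Lemma~\ref{lemma2.6d} to localise the integration to $r$ close to $R$, then change variable $w = y - x_r^1$ and pass to the limit by a dominated-convergence argument based on careful estimates of the resulting area function.

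Writing $\td J = \td J_+ + \td J_-$, Lemma~\ref{lemma2.6d} tells us that for any fixed $\epsilon\in (0,1)$, both $\int_0^R \int_R^\infty \td J_-(r,\rho)\rd\rho\rd r$ and $\int_0^{(1-\epsilon)R}\int_R^\infty \td J_+(r,\rho)\rd\rho\rd r$ tend to $0$ as $R\to\infty$, so it suffices to analyse $\int_{(1-\epsilon)R}^R \int_R^\infty \td J_+(r,\rho)\rd\rho\rd r$. Taking $x_r^1 := (r, 0, \ldots, 0)$ and $\Omega_R^+ := \{y\in \R^N: y_1>0,\,|y|>R\}$, we have $\int_R^\infty \td J_+(r,\rho)\rd\rho = \int_{\Omega_R^+}J(|x_r^1-y|)\rd y$. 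The substitution $w = y - x_r^1$ (with $w = (w_1, w')$) together with Fubini yields
\[
\int_{(1-\epsilon)R}^R \int_R^\infty \td J_+(r,\rho)\rd\rho\rd r = \int_{\R^N} J(|w|)\,\Lambda_R(w)\,\rd w,
\]
where $\Lambda_R(w)$ denotes the Lebesgue measure of $\{r\in [(1-\epsilon)R, R]: w_1+r>0,\ (w_1+r)^2 + |w'|^2 > R^2\}$.

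Solving the quadratic inequality gives, for $|w'|<R$, $\Lambda_R(w) = \max(0,\,\min(\epsilon R,\, R + w_1 - \sqrt{R^2-|w'|^2}))$. Since $R - \sqrt{R^2-|w'|^2} \leq |w'|^2/R$, we deduce the pointwise limit $\Lambda_R(w) \to w_1^+ := \max(w_1, 0)$ as $R\to\infty$, the universal bound $\Lambda_R(w) \leq \epsilon R$, and the crucial uniform bound $\Lambda_R(w) \leq (1+\epsilon^2)|w|$ on the annulus $\{|w| \leq \epsilon^2 R\}$. Now split $\int_{\R^N}J(|w|)\Lambda_R(w)\rd w$ into three pieces. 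On $\{|w|\leq K\}$, dominated convergence (dominated by $(K+K^2/R_0)J(|w|)$ for large $R_0$) yields the limit $\int_{|w|\leq K,\,w_1>0} w_1 J(|w|)\rd w$, which tends to $\int_{w_1>0}w_1 J(|w|)\rd w = \int_0^\infty \eta J_*(\eta)\rd\eta$ as $K\to\infty$. On $\{K<|w|\leq \epsilon^2 R\}$, the uniform bound yields control by $(1+\epsilon^2)\int_{|w|>K}|w|J(|w|)\rd w = (1+\epsilon^2)\omega_N\int_K^\infty J(r)r^N\rd r$, which tends to $0$ as $K\to\infty$ by \textbf{(J1)}. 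On $\{|w|>\epsilon^2 R\}$, the crude bound gives an integral majorised by $\epsilon R\,\omega_N\int_{\epsilon^2 R}^\infty J(r)r^{N-1}\rd r \leq \epsilon^{-1}\omega_N\int_{\epsilon^2 R}^\infty J(r)r^N\rd r \to 0$ as $R\to\infty$, again by \textbf{(J1)}. Summing the three contributions and using $\int_0^\infty \eta J_*(\eta)\rd\eta = \frac{\omega_{N-1}}{N-1}\int_0^\infty J(r)r^N\rd r$ closes the argument.

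The main technical obstacle is establishing the uniform bound $\Lambda_R(w)\leq (1+\epsilon^2)|w|$ on the intermediate annulus: this bound is what permits us to dominate $J(|w|)\Lambda_R(w)$ on that annulus by the \textbf{(J1)}-integrable majorant $(1+\epsilon^2)|w|J(|w|)$, thereby handling the potentially slowly decaying tails of $J$. Without such a bound, the naive estimate $\Lambda_R(w)\leq \epsilon R$ would fail to give a uniformly integrable majorant.
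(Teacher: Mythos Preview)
Your proof is correct and takes a genuinely different route from the paper's. The paper handles the two inequalities separately: for the $\limsup$, it localises in both $r$ and $\rho$ via Lemma~\ref{lemma2.6d} and then invokes Lemma~\ref{lemma2.4} to compare $\td J_+(r,\rho)$ with the auxiliary kernel $(1+\delta)J_\epsilon(r-\rho)$, after which the contribution of $J_\epsilon-J_*$ is shown to vanish; for the $\liminf$, it simply uses the half-space inclusion $\{y_1>R\}\subset\{|y|>R\}$ to reduce directly to the one-dimensional $J_*$-integral. By contrast, you treat both directions at once: after localising in $r$ and changing variable $w=y-x_r^1$, your three-annulus splitting and the key uniform bound $\Lambda_R(w)\le(1+\epsilon^2)|w|$ on $\{|w|\le\epsilon^2 R\}$ yield a dominated-convergence argument that pins down the limit directly. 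Your approach bypasses Lemma~\ref{lemma2.4} and the auxiliary kernel $J_\epsilon$ entirely, making the argument more self-contained; the paper's approach reuses machinery already in place and cleanly separates the upper and lower bounds. One minor remark: your final appeal to \eqref{J-J_*} is not actually needed, since your piece on $\{|w|\le K\}$ already delivers $\int_0^\infty\eta J_*(\eta)\,\rd\eta$ in the limit $K\to\infty$.
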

\begin{proof} We complete the proof in two steps.

\textbf{Step 1}. We show that
\begin{equation}\label{step1}
	\limsup_{R\to\yy}	\int_0^R \int_{R}^{\yy} \td J(r,\rho) \rd \rho \rd r
	\leq \int_0^\infty \eta J_*(\eta)d\eta.
	\end{equation}

By Lemma \ref{lemma2.6d}, 	for any small $\epsilon_1>0$, 
	\begin{align*}
	\limsup_{R\to\yy}	\int_0^R \int_{R}^{\yy} \td J(r,\rho) \rd \rho \rd r =\limsup_{R\to\yy}	\int_{(1-\epsilon_1)R}^R \int_{R}^{\yy} \td J_+(r,\rho) \rd \rho \rd r,
	\end{align*}
	and
	\[
	\limsup_{R\to\infty}\int_{(1-\epsilon_1)R}^R \int_{(1+\epsilon_1)R}^{\yy} \td J_+(r,\rho) \rd \rho \rd r\leq \lim_{\tilde R\to\infty}\int_0^{(1+\epsilon_1)^{-1}\td R}\int_{\td R}^\infty \td J_+(r,\rho) \rd \rho \rd r=0.
	\]
	Therefore
	\begin{equation}\label{ep1}
	\limsup_{R\to\yy}	\int_0^R \int_{R}^{\yy} \td J(r,\rho) \rd \rho \rd r =\limsup_{R\to\yy}	\int_{(1-\epsilon_1)R}^R \int_{R}^{(1+\epsilon_1)R} \td J_+(r,\rho) \rd \rho \rd r.
	\end{equation}
	
	By Lemma \ref{lemma2.4},  for any small $\delta>0$ and $\epsilon>0$, we have
			\begin{align}\label{2.11}
	\td J_+(r,\rho)\leq (1+\delta)J_{\epsilon}(r-\rho) \mbox{ for $\rho\in [r/2, (1+\delta^2)r]$ and $r\geq (\delta\epsilon)^{-1}$.}
	\end{align}
	Therefore, if $\epsilon_1>0$ is sufficiently small, then \eqref{2.11} holds when 
	\[
	(1-\epsilon_1)R\leq r\leq R\leq \rho\leq (1+\epsilon_1)R \mbox{ and } R= 2(\delta\epsilon)^{-1}.
	\]
	We thus obtain
	\[
	\int_{(1-\epsilon_1)R}^R \int_{R}^{(1+\epsilon_1)R} \td J_+(r,\rho) \rd \rho \rd r
	\leq (1+\delta)\int_{(1-\epsilon_1)R}^R \int_{R}^{(1+\epsilon_1)R} J_\epsilon(r-\rho) \rd \rho \rd r.
	\]
	From \eqref{2.5d} we have
	\[
J_\epsilon(l)=J_*(l)+\xi^*_\epsilon(l) \mbox{ with } \xi^*_\epsilon(l):=\int_{\R^{N-1}} J(|(l, y')|)\zeta_0(|(l, y')|-\epsilon^{-1})] \rd  y'.
\]
	Clearly
	\begin{align*}
	\int_{(1-\epsilon_1)R}^R \int_{R}^{(1+\epsilon_1)R} J_*(r-\rho) \rd \rho \rd r&= \int_{-\epsilon_1 R}^0\int_0^{\epsilon_1 R}J_*(r-\rho)d\rho dr\\
	&\leq \int_{-\infty}^0\int_0^{\infty}J_*(r-\rho)d\rho dr=\int_0^\infty \eta J_*(\eta)d\eta.
	\end{align*}
	It follows that
	\[
	\int_{(1-\epsilon_1)R}^R \int_{R}^{(1+\epsilon_1)R} \td J_+(r,\rho) \rd \rho \rd r
	\leq (1+\delta)\left[ \int_0^\infty \eta J_*(\eta)d\eta+\int_{(1-\epsilon_1)R}^R \int_{R}^{(1+\epsilon_1)R} \xi^*_\epsilon(r-\rho) \rd \rho \rd r\right].
	\]
	Moreover, using $\epsilon^{-1}=\frac 12 \delta R$ and $\xi^*_\epsilon(s)\leq 2 J_*(s)$,\; $\xi^*_\epsilon(s)=0$ for $s\leq \epsilon^{-1}$, we obtain
	\begin{align*}
	&\int_{(1-\epsilon_1)R}^R \int_{R}^{(1+\epsilon_1)R} \xi^*_\epsilon(r-\rho) \rd \rho \rd r=\int_{-\epsilon_1R}^0 \int_{0}^{\epsilon_1R} \xi^*_\epsilon(r-\rho) \rd \rho \rd r\\
	&\leq 2 \int_{-\epsilon_1R}^0 \int_{ \epsilon^{-1}}^{\epsilon_1R-r} J_*(s) \rd s \rd r\leq 2 \epsilon_1 R\int_{\frac 12\delta R}^{\infty}J_*(s) \rd s\\
	&\leq 4\epsilon_1\delta^{-1} \int_{\frac 12 \delta R}^{\infty}s J_*(s) \rd s\to 0 \mbox{ as } R\to\infty.
	\end{align*}
	
	We thus obtain
	\[
	\limsup_{R\to\infty}\int_{(1-\epsilon_1)R}^R \int_{R}^{(1+\epsilon_1)R} \td J_+(r,\rho) \rd \rho \rd r
	\leq (1+\delta)\int_0^\infty \eta J_*(\eta)d\eta.
	\]
	Since $\delta>0$ can be arbitrarily small,  \eqref{step1}  now follows from \eqref{ep1}.
	
	\textbf{Step 2}. We show that 
	\begin{align*}
	\liminf_{R\to\yy}\int_0^R \int_{R}^{\yy} \td J(r,\rho) \rd \rho \rd r\geq  \int_0^\infty \eta J_*(\eta)d\eta,
	\end{align*}
	which then finishes the proof of the lemma. 
	
	From the definition of $\td J(r,\rho)$ and $J_*$, for $R\geq r>0$,
	\begin{align}\label{2.12}
	 \int_{R}^{\yy} \td J(r,\rho) \rd \rho \geq \int_{H_R} J(|x^1_r-y|) \rd y
	=\int_{R}^{\yy} J_*(r-\rho) \rd \rho,
	\end{align}
	where
	\begin{align*}
	x_r^1:=(r,0,..., 0)\in\R^N,\ H_R:=\{y=(y_1,y_2,\cdots,y_N): \ y_1>R,\ y_i\in \R\ {\rm for}\ 2\leq i\leq N \}.
	\end{align*}
	Therefore
	\begin{align*}
	\liminf_{R\to\yy}\int_0^R \int_{R}^{\yy} \td J(r,\rho) \rd \rho \rd r&\geq 
	\lim_{R\to\infty}\int_{0}^{R}\int_{R}^{\yy} J_*(r-\rho) \rd \rho\rd r\\
	&= \lim_{R\to\infty}\int_0^R\eta J_*(\eta)d\eta=\int_0^\infty\eta J_*(\eta)d\eta.
	\end{align*}
	The proof is now complete.
\end{proof}

\begin{theorem}\label{prop1}
	Assume {\rm \textbf{(J)}} holds. Then the following statements are equivalent:
	\begin{itemize}
		\item[{\rm (i)}] {\rm \textbf{(J1)}} holds, namely $\dd \int_0^\infty r^NJ(r)dr<\infty$, \vspace{0.2cm}
		\item[{\rm (ii)}] $\dd\int_{0}^{\yy} J_*(l) l\rd l<\yy$.
		\item[{\rm (iii)}] $\limsup_{R\to\yy}\dd\int_{0}^{R} \int_{R}^{+\yy} \td J(r,\rho)\rd \rho\rd r<\yy$.\vspace{0.2cm}
		\item[{\rm (iv)}] $\limsup_{R\to\yy}\dd\int_{0}^{R} \int_{R}^{+\yy}(r/R)^{N-1} \td J(r,\rho)\rd \rho\rd r<\yy$.
	\end{itemize} 
	Moreover, when  {\rm \textbf{(J1)}} holds, we have
	\[
	\int_{0}^{\yy} J_*(l) l\rd l=\lim_{R\to\yy}\dd\int_{0}^{R} \int_{R}^{+\yy} \td J(r,\rho)\rd \rho\rd r=\lim_{R\to\yy}\dd\int_{0}^{R} \int_{R}^{+\yy} (r/R)^{N-1}\td J(r,\rho)\rd \rho\rd r.
	\]
\end{theorem}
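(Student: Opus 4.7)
The plan is to deduce (i)$\Leftrightarrow$(ii) from the explicit identity \eqref{J-J_*}, to quote Lemma~\ref{lemma2.7d} for (ii)$\Rightarrow$(iii) and the first limit identification, and then to close the cycle by exploiting a single pointwise lower bound, namely
\[
\int_R^\yy \td J(r,\rho)\rd\rho\geq \int_R^\yy J_*(r-\rho)\rd\rho \qquad \text{for } 0\leq r\leq R,
\]
which was already established as \eqref{2.12} in the proof of Lemma~\ref{lemma2.7d}. Every remaining step is bookkeeping around this inequality.

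For (iii)$\Rightarrow$(ii), I would integrate \eqref{2.12} in $r$ over $[0,R]$, substitute $\eta=\rho-r$, and apply Fubini--Tonelli to obtain
\[
\int_0^R\int_R^\yy \td J(r,\rho)\rd\rho\rd r\geq \int_0^\yy J_*(\eta)\min(\eta,R)\rd\eta.
\]
The right-hand side is monotone increasing in $R$ with limit $\int_0^\yy \eta J_*(\eta)\rd\eta$ by monotone convergence, so a finite limsup on the left forces (ii) and simultaneously pins down the (iii)-limit as $\int_0^\yy \eta J_*(\eta)\rd\eta$, completing the first displayed equality.

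For the equivalence with (iv), the implication (iii)$\Rightarrow$(iv) is immediate from $(r/R)^{N-1}\leq 1$ for $r\in[0,R]$, and this comparison also gives $\limsup_{R\to\yy}$ of the (iv)-quantity $\leq \int_0^\yy \eta J_*(\eta)\rd\eta$. For the reverse direction (iv)$\Rightarrow$(ii), I would retain the weight and combine \eqref{2.12} with Fubini to get
\[
\int_0^R(r/R)^{N-1}\int_R^\yy \td J(r,\rho)\rd\rho\rd r\geq \int_0^\yy J_*(\eta)f_R(\eta)\rd\eta,\qquad f_R(\eta):=\int_{\max(R-\eta,0)}^R (r/R)^{N-1}\rd r.
\]
An elementary evaluation shows $f_R(\eta)\to\eta$ pointwise as $R\to\yy$, so Fatou's lemma gives $\int_0^\yy \eta J_*(\eta)\rd\eta\leq \liminf_{R\to\yy}$ of the (iv)-quantity. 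This produces both (ii) and the matching $\liminf$ lower bound for the (iv)-limit, which together with the $\limsup$ upper bound above yields the second displayed equality.

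The one mildly delicate point is the weighted case (iv): the factor $(r/R)^{N-1}$ degenerates near $r=0$, and although $f_R(\eta)\leq \eta$ always holds, monotonicity of $f_R(\eta)$ in $R$ can fail in dimensions $N\geq 3$ for small $R$. Invoking Fatou's lemma rather than monotone convergence side-steps this issue cleanly without needing a quantitative rate for $f_R(\eta)\to\eta$, and the rest of the argument reduces to a one-line substitution and the already-proved Lemma~\ref{lemma2.7d}.
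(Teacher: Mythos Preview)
Your proposal is correct and follows essentially the same skeleton as the paper's proof: (i)$\Leftrightarrow$(ii) via \eqref{J-J_*}, (ii)$\Rightarrow$(iii) and the first limit identity via Lemma~\ref{lemma2.7d}, and then \eqref{2.12} as the only analytic input for the remaining implications and the second limit.

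The one genuine difference is in how you handle the weighted statement (iv). The paper truncates the $r$-integral to $[R/2,R]$ (for (iv)$\Rightarrow$(ii)) and to $[(1-\epsilon)R,R]$ (for the liminf half of the second limit identity), on which the weight $(r/R)^{N-1}$ is bounded below by a constant, and then lets $\epsilon\to 0$. You instead keep the full integral, encode the weight in $f_R(\eta)=\int_{\max(R-\eta,0)}^R(r/R)^{N-1}\rd r$, and pass to the limit via Fatou. Your route is slightly cleaner in that a single application of Fatou simultaneously yields (iv)$\Rightarrow$(ii) and the liminf lower bound for the second limit, without an auxiliary $\epsilon$. One small inaccuracy in your commentary: $f_R(\eta)$ is in fact nondecreasing in $R$ for every $N\geq 2$ (one checks $\frac{\partial}{\partial R}\big[R(1-(1-\eta/R)^N)\big]\geq 0$ via $(1-x)^{N-1}[1+(N-1)x]\leq 1$), so monotone convergence would also have worked --- but this does not affect your argument, since Fatou suffices.
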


\begin{proof} By \eqref{J-J_*} and Lemma \ref{lemma2.7d} we see that 
	\begin{align*}
	&\textbf{(J1)}\ {\rm holds}  \Longleftrightarrow \int_{0}^{\yy} J_*(l) l\rd l<\yy,
	\nonumber\\
	&\textbf{(J1)}\ {\rm holds} \Longrightarrow \lim_{R\to\yy}\dd\int_{0}^{R} \int_{R}^{+\yy} \td  J(r,\rho)\rd \rho\rd r<\yy,
	\end{align*}
	and if {\rm \textbf{(J1)}} holds, then
	\begin{align}\label{2.13}
	\int_{0}^{\yy} J_*(l) l\rd l
	=\lim_{R\to\yy}\dd\int_{0}^{R} \int_{R}^{+\yy} \td  J(r,\rho)\rd \rho\rd r.
	\end{align}
	To finish the proof of Theorem \ref{prop1}, it remains to prove that 
	\begin{align}\label{2.14}
	\textbf{(J1)}\ {\rm holds} \Longleftarrow \limsup_{R\to\yy}\dd\int_{0}^{R} \int_{R}^{+\yy} \td  J(r,\rho)\rd \rho\rd r<\yy,
	\end{align}
	\begin{align}\label{2.15}
	&\textbf{(J1)}\ {\rm holds} \Longleftrightarrow \limsup_{R\to\yy} \dd\int_{0}^{R} (r/R)^{N-1}\int_{R}^{+\yy} \td  J(r,\rho)\rd \rho\rd r<\yy,
	\end{align}
	and 
	\begin{align}\label{2.16}
	\lim_{R\to\yy}\dd\int_{0}^{R} (r/R)^{N-1}\int_{R}^{+\yy} \td  J(r,\rho)\rd \rho\rd r =\int_{0}^{\yy} J_*(l) l\rd l \ \ {\rm if}\  \textbf{(J1)}\ {\rm holds}.
	\end{align}
	
	We now prove these in three steps.
	
	\textbf{Step 1}.  We prove \eqref{2.14}. 
	
	By \eqref{2.12} and change of order of integration,
	\begin{align*}
	\int_0^R \int_{R}^{\yy} \td J(r,\rho) \rd \rho \rd r\geq\int_{-R}^{0}\int_{0}^{\yy} J_*(r-\rho) \rd \rho\rd r\geq \int_0^R lJ_*(l)dl,
	\end{align*}
	which yields 
	\begin{align*}
	\yy> \limsup_{R\to\yy}\int_0^R \int_{R}^{\yy} \td J(r,\rho) \rd \rho \rd r\geq \int_{0}^{\yy} lJ_*(l)\rd l.
	\end{align*}
	Hence, due to \eqref{J-J_*},  \eqref{2.14} holds. 
	
	\textbf{Step 2}. We prove \eqref{2.15}. 
	
	If {\rm \textbf{(J1)}} holds, then
	\begin{align}\label{2.17}
	\limsup_{R\to\yy}\dd\int_{0}^{R} (r/R)^{N-1}\int_{R}^{+\yy} \td J(r,\rho)\rd \rho\rd r\leq 
	\limsup_{R\to\yy}\dd\int_{0}^{R} \int_{R}^{+\yy} \td J(r,\rho)\rd \rho\rd r<\yy.
	\end{align}
	
	On the other hand, if 
	\begin{align*}
	\limsup_{R\to\yy}\dd\int_{0}^{R} (r/R)^{N-1}\int_{R}^{+\yy} \td J(r,\rho)\rd \rho\rd r<\yy,
	\end{align*}
	then by \eqref{2.12},
	\begin{align*}
	\yy&>\limsup_{R\to\yy}\dd\int_{R/2}^{R} (r/R)^{N-1}\int_{R}^{+\yy} \td J(r,\rho)\rd \rho\rd r\geq 
	\limsup_{R\to\yy}2^{-(N-1)}\dd\int_{R/2}^{R}\int_{R}^{+\yy}   J_*(r-\rho)\rd \rho\rd r\\
	&=\limsup_{R\to\yy}2^{-(N-1)}\dd\int_{0}^{-R/2}\int_{0}^{+\yy}   J_*(r-\rho)\rd \rho\rd r\geq \limsup_{R\to\yy}2^{-(N-1)}\dd\int_0^{R/2}lJ_*(l)dl.
	\end{align*}
	Hence \eqref{2.15} holds. 
	
	\textbf{Step 3}.  We finally prove \eqref{2.16}.
	
	For any given $\epsilon>0$,  we have
	\begin{align*}
	\dd\int_{0}^{R} (r/R)^{N-1}\int_{R}^{+\yy} \td  J(r,\rho)\rd \rho\rd r&\geq \int_{(1-\epsilon)R}^{R} (r/R)^{N-1}\int_{R}^{+\yy} \td  J(r,\rho)\rd \rho\rd r\\
	\geq & (1-\epsilon)^{N-1}\dd\int_{(1-\epsilon)R}^{R} \int_{R}^{+\yy} \td  J(r,\rho)\rd \rho\rd r.
	\end{align*}
	By \eqref{2.12},
	\begin{align*}
	\dd\int_{(1-\epsilon)R}^{R}\int_{R}^{+\yy} \td  J(r,\rho)\rd \rho\rd r&\geq \int_{(1-\epsilon)R}^{R}\int_{R}^{\yy} J_*(r-\rho) \rd \rho\rd r\\
	&=\int_{-\epsilon R}^{0}\int_{0}^{\yy} J_*(r-\rho) \rd \rho\rd r \geq \int_0^{\epsilon R}l J_*(l)dl.
	\end{align*}
	Letting $R\to\yy$, we obtain
	\begin{align*}
	\liminf_{R\to\yy}\dd\int_{0}^{R} (r/R)^{N-1}\int_{R}^{+\yy} \td  J(r,\rho)\rd \rho\rd r\geq&(1-\epsilon)^{N-1} \int_{0}^{\yy} lJ_*(l)\rd l.
	\end{align*}
	Then by the arbitrariness of $\epsilon>0$, we see 
	\begin{align*}
	\liminf_{R\to\yy}\dd\int_{0}^{R} (r/R)^{N-1}\int_{R}^{+\yy} \td  J(r,\rho)\rd \rho\rd r\geq& \int_{0}^{\yy} lJ_*(l)\rd l.
	\end{align*}
	Combining this with \eqref{2.13} and \eqref{2.17} gives 
	\begin{align*}
	\liminf_{R\to\yy}\dd\int_{0}^{R} (r/R)^{N-1}\int_{R}^{+\yy} \td  J(r,\rho)\rd \rho\rd r= \int_{0}^{\yy} lJ_*(l)\rd l.
	\end{align*}
	The proof is now complete.
\end{proof}

\section{Well-poseness and comparison principle}
In this section we prove Theorem \ref{th1.1} and a comparison principle for \eqref{1.4}, where $J$ satisfies {\bf (J)} and $f$ satisfies \eqref{f}.

\subsection{Well-posedness}
With the preparations in Section 2, the existence and uniqueness of a global solution to \eqref{1.4} can be established by the approach in Section 2 of \cite{cdjfa} with minor modifications. 
We explain these in detail below. 

Define
\[
\hat J(\xi, \eta):=\frac 12 \tilde J(|\xi|,|\eta|) \mbox{ for } \xi,\ \eta\in \R.
\]
From the properties of $\tilde J$ we easily see that
$\hat J$ is continuous, 
\[
\hat J(\xi, 0)\equiv 0, \ \hat J(0,\eta)=\frac 12 \omega_N |\eta|^{N-1} J(0),\ \hat J(\xi,\eta)=\frac 12 \omega_N |\eta|^{N-1}[J(0)+o(1)] \mbox{ as } (\xi,\eta)\to (0,0),
\]
and by Lemma \ref{lemma2.2},
\[
\hat J(\xi,\xi)= \omega_{N-1}(2|\xi|)^{2-N}|\xi|\int_0^{2|\xi|}[4\xi^2-s^2]^{(N-3)/2}s^{N-2}J(s)ds>0 \mbox{ when } \xi\not=0.
\]
If we denote by $\hat u$ the even extension of $u$, namely
\[
\hat u(t, \xi):=u(t, |\xi|) \ \mbox{ for } \xi\in\R,
\]
then \eqref{1.4} is equivalent to
\begin{equation}\label{1.4e}
\begin{cases}
\dd  \hat u_t(t,\xi)=d \int_{-h(t)}^{h(t)} \hat J(\xi,\rho)  \hat u(t,\rho)\rd \rho-d \hat u(t,\xi)+f(t,|\xi|,  \hat u), & t>0,\; \xi \in  (-h(t),h(t)),\\
 u(t,\pm h(t))=0, & t>0,\\
\dd [h^N(t)]'= N \mu \dd\int_{-h(t)}^{h(t)} \int_{h(t)}^{+\yy} \hat J(\xi,\rho)|\xi|^{N-1} \hat u(t,\xi)\rd \rho\rd \xi, & t>0,\\
h(0)=h_0,\ u(0,\xi)= u_0(|\xi|), & \xi\in [-h_0,h_0].
\end{cases}
\end{equation}

Problem \eqref{1.4e} is close in form to the one dimensional problem in \cite{cdjfa}, with the following main differences:
\begin{itemize}
\item[(i)] The kernel function $J(|x-y|)$ in \cite{cdjfa} is replaced by $\hat J(\xi,\eta)$,
\item[(ii)] In the third equation, $h'(t)$ in \cite{cdjfa} is replaced by $[h^N(t)]'$, and the kernel function is now $\hat J(\xi,\eta)|\xi|^{N-1}$.
\end{itemize}
A close examination of the proof in \cite{cdjfa} for the existence and uniqueness results there shows that all the arguments  carry over
to \eqref{1.4e}, with only minor changes required. We indicate below the main steps and the changes needed.

In place of Lemma 2.2 in \cite{cdjfa}, we have the following result.

\begin{lemma}[Maximum principle]\label{lemma4.4} Let $T>0$, $d>0$,  and $g, h\in C([0,T])$ satisfy 
 $g(0)\leq h(0)$ and $g(t)<h(t)$ for $t\in (0, T]$.
	Denote $D_T:=\{(t,x): t\in (0, T],\; g(t)<x<h(t)\}$ and suppose that  $\phi$, $\phi_t\in C(\ol D_T)$, $c\in L^\yy(D_T)$,  and
\begin{equation*}
\begin{cases}
\dd \phi_t\geq d\int_{g(t)}^{h(t)}P(x,y)\phi(t,y) \rd y+c(t,x)\phi, & (t,x)\in D_T,\; \\
\phi(t,g(t))\geq 0, & t\in\Sigma_{\rm min}^g,\\
 \phi(t,h(t))\geq 0,  & t \in\Sigma_{\rm max}^h, \\
\phi(0,x)\geq 0, &x\in [g(0),h(0)],
\end{cases}
\end{equation*}
where 
\[\begin{cases}
\Sigma_{\rm min}^g=\left\{ t\in(0,T]: \mbox{There exists $\epsilon>0$ such that $g(t)<g(s)$ for $s\in [t-\epsilon, t)$}\right\},\\
\Sigma_{\rm max}^h=\left\{ t\in(0,T]: \mbox{There exists $\epsilon>0$ such that $h(t)>h(s)$ for $s\in [t-\epsilon, t)$}\right\},
\end{cases}
\]
and the kernel function $P$ satisfies
\begin{align*}
P\in C( \R^2)\cap L^\infty(\R^2),\ P\geq 0,\ P(x,x)>0 \mbox{ for almost all } x\in \R.
\end{align*}
Then  $\phi\geq 0$ on $\ol D_T$, and if additionally  $\phi(0,x)\not\equiv0$ in $[g(0),h(0)]$, then $\phi> 0$ in $D_T$. 
\end{lemma}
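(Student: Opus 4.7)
The plan is to prove the lemma in two steps, paralleling Lemma 2.2 of \cite{cdjfa} but taking careful account of the structural role of $\Sigma_{\min}^g$ and $\Sigma_{\max}^h$. Step 1 establishes the non-strict bound $\phi \geq 0$ on $\overline{D_T}$ via a perturbation that produces a \emph{strict} differential inequality. Step 2 upgrades to $\phi > 0$ in $D_T$ under the extra hypothesis $\phi(0,\cdot)\not\equiv 0$ by propagating zeros through the nonlocal kernel $P$.

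For Step 1, set $\phi_\epsilon(t,x) := \phi(t,x) + \epsilon e^{Kt}$ for small $\epsilon > 0$. Using that $g,h \in C([0,T])$ and $P \in L^\infty$, one has $d\int_{g(t)}^{h(t)} P(x,y)\,\rd y \leq d\|P\|_\infty \max_{[0,T]}(h-g) =: C_1$ on $\overline{D_T}$. Choosing $K > \|c\|_\infty + C_1$, a direct computation gives
\[
(\phi_\epsilon)_t - d\int_{g(t)}^{h(t)} P(x,y)\phi_\epsilon(t,y)\,\rd y - c(t,x)\phi_\epsilon \;\geq\; \epsilon e^{Kt}\bigl[K - c - d{\textstyle\int} P(x,y)\,\rd y\bigr] \;>\; 0,
\]
together with $\phi_\epsilon(0,\cdot) \geq \epsilon > 0$, $\phi_\epsilon(t,g(t))>0$ for $t \in \Sigma_{\min}^g$, and $\phi_\epsilon(t,h(t))>0$ for $t \in \Sigma_{\max}^h$. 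I argue by contradiction: let $t_0 \in (0,T]$ be the infimum of times at which $\phi_\epsilon$ takes a nonpositive value, and pick $x_0 \in [g(t_0), h(t_0)]$ realising this, so by continuity $\phi_\epsilon(t_0,x_0)=0$ with $\phi_\epsilon(t_0,\cdot) \geq 0$ on $[g(t_0),h(t_0)]$. The strict inequality at $(t_0,x_0)$ yields $(\phi_\epsilon)_t(t_0,x_0) > 0$. To reach a contradiction I derive $(\phi_\epsilon)_t(t_0,x_0) \leq 0$ by a left-sided comparison. In the interior case $x_0 \in (g(t_0),h(t_0))$ this is immediate, since $x_0 \in [g(s), h(s)]$ for $s$ in a left-neighbourhood of $t_0$ and $\phi_\epsilon(s,x_0) > 0 = \phi_\epsilon(t_0,x_0)$ there. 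In the boundary case $x_0 = g(t_0)$ the hypothesis forces $t_0 \notin \Sigma_{\min}^g$; unwinding the defining property of $\Sigma_{\min}^g$, there is a sequence $s_n \uparrow t_0$ with $g(s_n) \leq g(t_0) = x_0 \leq h(s_n)$, so $\phi_\epsilon(s_n,x_0) > 0 = \phi_\epsilon(t_0,x_0)$ and the continuity of $(\phi_\epsilon)_t$ gives $(\phi_\epsilon)_t(t_0,x_0) \leq 0$. The case $x_0=h(t_0)$ is symmetric via $\Sigma_{\max}^h$. This contradiction yields $\phi_\epsilon > 0$ on $\overline{D_T}$, and letting $\epsilon \to 0$ gives $\phi \geq 0$.

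For Step 2, assume $\phi(0,\cdot)\not\equiv 0$ and suppose for contradiction that $\phi(t^*,x^*)=0$ for some $(t^*,x^*) \in D_T$. Since $x^* \in (g(t^*),h(t^*))$ (strict), continuity of $g,h$ gives $x^* \in (g(t),h(t))$ for $t$ near $t^*$, and $\phi \geq 0$ forces $t^*$ to be a local minimum of $t \mapsto \phi(t,x^*)$, hence $\phi_t(t^*,x^*) \leq 0$. The differential inequality then gives
\[
0 \;\geq\; \phi_t(t^*,x^*) \;\geq\; d\int_{g(t^*)}^{h(t^*)} P(x^*,y)\phi(t^*,y)\,\rd y,
\]
so $\phi(t^*,y)=0$ wherever $P(x^*,y)>0$. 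Using the continuity of $P$ together with the fact that $\{x : P(x,x) > 0\}$ is an open subset of $\R$ whose complement is closed of measure zero (hence dense), a propagation-and-connectedness argument as in \cite{cdjfa} spreads the zero of $\phi(t^*,\cdot)$ to all of $[g(t^*),h(t^*)]$. Propagating the same vanishing backward in time via an integrated form of the inequality yields $\phi(0,\cdot) \equiv 0$, the desired contradiction.

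The main obstacle is the free-boundary first-zero analysis in Step 1: the boundary hypothesis is imposed only on $\Sigma_{\min}^g$ and $\Sigma_{\max}^h$, so if the first zero of $\phi_\epsilon$ lands on the lateral boundary one has no direct inequality to lean on. The saving trick is to convert the negation $t_0 \notin \Sigma_{\min}^g$ (which is forced by $\phi_\epsilon(t_0,g(t_0))=0$) into the existence of a left-approaching sequence of times at which $x_0$ still lies in the spatial domain, this being precisely what the definition of $\Sigma_{\min}^g$ encodes. This matching of the boundary hypothesis to the free-boundary geometry is what makes the maximum principle hold in the present generality. A secondary technical point is in Step 2, where the condition $P(x,x)>0$ only a.e.\ (rather than everywhere) requires the density of $\{P(x,x)>0\}$ and the continuity of $P$ to push the zero propagation through the measure-zero bad set.
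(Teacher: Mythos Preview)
Your proposal is correct and follows essentially the same route as the paper: the perturbation $\phi + \epsilon e^{Kt}$ with $K$ large enough to force a strict differential inequality, then a first-touching-zero argument (using the structure of $\Sigma_{\min}^g$, $\Sigma_{\max}^h$ at the lateral boundary) together with kernel-driven propagation for the strict positivity. The paper's version differs only cosmetically---it cites \cite[Lemma~3.1]{dn2020} for the core argument and then separately treats the degenerate start $g(0)=h(0)$ (Cases~2 and~3), whereas your single perturbation step already subsumes that case since $\phi_\epsilon(0,\cdot)\geq\epsilon>0$ regardless.
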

\begin{proof}
{\bf Case 1}:  $g(0)<h(0)$. Noting that $P(x,x)>0$ for $x\in \R$,  we can repeat the arguments in  \cite[Lemma 3.1]{dn2020} to show the desired conclusion.

{\bf Case 2}:  $g(0)=h(0)$ and $\phi(0,g(0))>0$.   By the continuity of $\phi$, there is $t_1>0$ such that 
\begin{align*}
\phi(t,x)>0\ \mbox{ for } \ t\in [0,t_1],\ x\in [g(t),h(t)].
\end{align*}  
Then viewing $t_1$ as the initial time, we obtain the desired  conclusion from Case 1. 

{\bf Case 3}: $g(0)=h(0)$ and $\phi(0,g(0))=0$. Let $\psi(t,x)=\phi(t,x)+\epsilon e^{At}$ for some positive constants $\epsilon$ and $A$. Then 
\begin{align*}
&\dd \psi_t(t,x)-d\int_{g(t)}^{h(t)}P(x,y)\psi(t,y) \rd y-c(t,x)\psi\\
&=\dd \phi_t(t,x)-d\int_{g(t)}^{h(t)}P(x,y)\phi(t,y) \rd y -c(t,x)\phi\\
&\ \ \ \ \ +\epsilon Ae^{At}-d\epsilon e^{At}\int_{g(t)}^{h(t)}P(x,y)\rd y-c(t,x)\epsilon e^{At}\\
&\geq \Big(A-d[h(t)-g(t)]\|P\|_{L^\yy(\R^2)}-c(t,x))\epsilon e^{At}>0\ \   {\rm for}\ (t,x)\in  D_T,
\end{align*}
provided that $A\geq d\max_{t\in [0,T]}[h(t)-g(t)]\|P\|_{L^\yy(\R^2)}+\|c\|_{L^\yy(D_T)}$. Since $\psi(0,g(0))=\psi(0,h(0))>0$, by the conclusion in Case 2, we see
\begin{align*}
\psi(t,x)> 0 \ \mbox{ for }\ \ (t,x)\in \ol D_T,
\end{align*}
which yields, by letting $\epsilon\to 0$, 
\begin{align*}
\phi(t,x)\geq 0\  \mbox{ for }\ \ (t,x)\in \ol D_T. 
\end{align*}
The proof is complete.
\end{proof}

The next result is the corresponding version of Lemma 2.3 in \cite{cdjfa}.

 \begin{lemma}\label{lem3.2}
Suppose that {\rm \bf (J)} and \eqref{f} hold, $h_0>0$ and $u_0$ satisfies \eqref{u_0}.
 Then for any $T>0$ and $h\in\mathbb H_{h_0, T}:=\Big\{h\in C([0,T])~:~h(0)=h_0,
\; \inf_{0\le t_1<t_2\le T}\frac{h(t_2)-h(t_1)}{t_2-t_1}>0\Big\}$,
 the following problem
\begin{equation}
\left\{
\begin{aligned}
&v_t=d\int_{-h(t)}^{h(t)}\hat J(r,\rho)v(t,\rho)d\rho-dv+f(t,|r|,v),
& &0<t< T,~r\in(-h(t),h(t)),\\
&v(t,\pm h(t))=0,& &0<t< T,\\
&v(0,r)=u_0(|r|),& &r\in[-h_0,h_0]
\end{aligned}
\right.
\label{201}
\end{equation}
admits a unique  solution, denoted by $V_{h}(t,r)$.
 Moreover $V_{h}$ satisfies
\begin{equation}
0<V_{h}(t,r)\le \max\left\{\max_{-h_0\le r\le h_0}u_0(|r|),
~K_0\right\}~\mbox{ for } 0<t< T,~r\in(-h(t),h(t)),
\label{v-bound}
\end{equation}
where $K_0$ is defined in  assumption \eqref{f}.
\end{lemma}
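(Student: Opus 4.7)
The plan is to adapt the fixed-point argument of \cite[Lemma 2.3]{cdjfa} to the present kernel $\hat J$. The properties of $\hat J$ that make the argument work are: $\hat J$ is continuous and bounded, $\hat J\ge 0$, $\int_{-\infty}^{\infty}\hat J(r,\rho)\,d\rho\le 1$ (which follows from $\hat J(r,\rho)=\tfrac12 \td J(|r|,|\rho|)$ and $\int_0^\infty \td J(r,\rho)\,d\rho=\int_{\R^N}J(|x-y|)\,dy=1$), and $\hat J(\rho,\rho)>0$ for $\rho\ne 0$ (noted just before Lemma \ref{lemma4.4}). Strict monotonicity of $h\in\mathbb H_{h_0,T}$ will be used to convert the moving-boundary condition into an integral equation on a single domain.

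First, for $|r|\le h(T)$ let $\tau_r\in[0,T]$ be defined by $h(\tau_r)=|r|$ when $|r|>h_0$ and by $\tau_r:=0$ otherwise, and set $\sigma_r:=u_0(|r|)$ for $|r|\le h_0$ and $\sigma_r:=0$ for $|r|>h_0$. Strict monotonicity of $h$ makes $r\mapsto\tau_r$ continuous, and $\sigma_r$ is continuous across $|r|=h_0$ since $u_0(h_0)=0$. Applying Duhamel's formula to the ODE $v_t+dv=(\cdots)$ in $t$ at fixed $r$ (treating the integral and reaction terms as sources), a continuous function $v$ on $\Omega_T:=\{(t,r):0\le t\le T,\ |r|\le h(t)\}$ solves \eqref{201} if and only if it is a fixed point of
\begin{equation*}
\mathcal F(w)(t,r):=e^{-d(t-\tau_r)}\sigma_r+\int_{\tau_r}^{t}e^{-d(t-s)}\Bigl[d\int_{-h(s)}^{h(s)}\hat J(r,\rho)w(s,\rho)\,d\rho+f(s,|r|,w(s,r))\Bigr]ds.
\end{equation*}
Set $K:=\max\{\max_{|r|\le h_0}u_0(|r|),K_0\}$, let $L_K$ be the Lipschitz constant of $f$ in $u$ on $[0,K]$, and fix $T_0\in(0,T]$ with $T_0(d+L_K)\le 1/2$. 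The next step is to verify $\mathcal F$ is a contraction on the closed convex set
\begin{equation*}
\mathbb X_{T_0}:=\bigl\{w\in C(\Omega_{T_0}):\ 0\le w\le K,\ w(t,\pm h(t))=0,\ w(0,r)=u_0(|r|)\bigr\},
\end{equation*}
with the uniform norm.

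The inclusion $\mathcal F(\mathbb X_{T_0})\subset\mathbb X_{T_0}$ follows from $\int_{-h(s)}^{h(s)}\hat J(r,\cdot)\le 1$, the sign conditions $f(\cdot,\cdot,0)=0$ and $f(\cdot,\cdot,u)\le 0$ for $u\ge K_0$ (which give $0\le\mathcal F(w)\le K$), and the identities $\sigma_{\pm h(t)}=0$, $\tau_{\pm h(t)}=t$ (which give the moving-boundary condition $\mathcal F(w)(t,\pm h(t))=0$). The same two ingredients deliver
\begin{equation*}
\|\mathcal F(w_1)-\mathcal F(w_2)\|_{C(\Omega_{T_0})}\le T_0(d+L_K)\|w_1-w_2\|_{C(\Omega_{T_0})},
\end{equation*}
so Banach's fixed-point theorem produces a unique $V_h\in\mathbb X_{T_0}$ solving \eqref{201} on $[0,T_0]$. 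Since $T_0$ depends only on $d$ and $L_K$ (not on the starting time), restarting at $t=T_0,2T_0,\ldots$ with the same $K$ extends $V_h$ uniquely to all of $[0,T]$ with $0\le V_h\le K$.

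Finally, strict positivity inside the moving domain follows from Lemma \ref{lemma4.4} applied to $V_h$ with $P=\hat J$, lower boundary $-h(t)$, upper boundary $h(t)$, and coefficient $c(t,r):=f(t,|r|,V_h)/V_h$ where $V_h>0$ (extended by $0$ elsewhere), which lies in $L^\infty$ with $\|c\|_\infty\le L_K$. The hypotheses of Lemma \ref{lemma4.4} are met: $-h_0<h_0$, $-h(t)<h(t)$, $\Sigma_{\min}^{-h}=\Sigma_{\max}^h=(0,T]$ by strict monotonicity of $h$, $\hat J(\rho,\rho)>0$ for $\rho\ne 0$, and $u_0(|r|)\not\equiv 0$ on $[-h_0,h_0]$. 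I expect the only technical point is continuity of $\mathcal F(w)$ across the moving boundary $|r|=h(t)$ and across $|r|=h_0$; both are guaranteed by continuity of $\tau_r$, the matching $\sigma_r=0$ on $\{|r|\ge h_0\}$, and the vanishing of the integral term when $\tau_r=t$. All remaining steps are direct transcriptions of the arguments in \cite{cdjfa}.
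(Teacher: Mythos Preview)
Your overall strategy---recasting \eqref{201} as an integral equation via Duhamel, running a contraction on a short time interval, iterating, and then invoking Lemma~\ref{lemma4.4} for strict positivity---is exactly the route the paper takes (it simply defers to \cite[Lemma 2.3]{cdjfa}). The positivity step and the boundary/initial compatibility are handled correctly.

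There is, however, a genuine gap in the invariance claim $\mathcal F(\mathbb X_{T_0})\subset\mathbb X_{T_0}$. The sign condition $f(\cdot,\cdot,u)\le 0$ for $u\ge K_0$ controls $f(s,|r|,w(s,r))$ only when $w(s,r)\ge K_0$; for $w(s,r)\in(0,K_0)$ the reaction term can be strictly positive, and then $\mathcal F(w)$ may exceed $K$ (take $f(u)=Mu(1-u)$, $K=K_0=1$, $w\equiv\tfrac12$, $M$ large). The lower bound $\mathcal F(w)\ge 0$ fails for the mirror reason. So the stated ``sign conditions'' do not yield $0\le\mathcal F(w)\le K$.

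The repair is standard and does not alter the architecture. One option: replace $f$ by the truncation $\tilde f(t,r,u):=f\bigl(t,r,\max\{0,\min\{u,K\}\}\bigr)$, which is globally $L_K$-Lipschitz; the contraction then runs in all of $C(\Omega_{T_0})$ with no a~priori pointwise bound, and the fixed point $V_h$ is shown a~posteriori to satisfy $0\le V_h\le K$ by applying Lemma~\ref{lemma4.4} to $V_h$ and to $K-V_h$ (using $\int_{-h}^{h}\hat J(r,\cdot)\,d\rho\le 1$ and $f(\cdot,\cdot,K)\le 0$). Since $\tilde f=f$ on $[0,K]$, this $V_h$ solves \eqref{201}, and uniqueness follows because any bounded solution is a fixed point of the truncated map. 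The alternative, closer to \cite{cdjfa}, is to define the map so that for each fixed $r$ it returns the ODE solution of $v_t=d\!\int\hat J(r,\rho)w(s,\rho)\,d\rho-dv+f(t,|r|,v)$, with $f$ evaluated at the \emph{output}; invariance $0\le v\le K$ is then a scalar ODE comparison, and the contraction still closes for small $T_0$. Either way, only the justification of the $[0,K]$ bounds needs rewriting.
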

\begin{proof}
This is almost identical to the proof of Lemma 2.3 in \cite{cdjfa}; we omit the details.
\end{proof}

The following theorem shows that \eqref{1.4} is wellposed, which clearly implies Theorem \ref{th1.1}.

\begin{theorem}
Suppose that {\rm \bf (J)} and \eqref{f} hold. Then for any given $h_0>0$ and $u_0$
satisfying \eqref{u_0}, problem \eqref{1.4e} admits a unique
 solution $(u(t,r), h(t))$ defined for all $t>0$. Moreover, for any $T>0$, $
 h\in\mathbb H_{h_0, T}$ and $u\in \mathbb{X}_{u_0, h}:=\Big\{\phi\in C(\overline\Omega_h)~:~\phi\ge0~\text{in}
~\Omega_h,~\phi(0,r)=u_0(|r|)~\text{for}~r\in [-h_0,h_0]\Big\}$, where $\Omega_h:=\left\{(t,r)\in\mathbb{R}^2: 0<t\leq T,~-h(t)<r<h(t)\right\}$.
\label{thm2.3}
\end{theorem}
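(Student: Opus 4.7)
The plan is to adapt the contraction-mapping scheme developed in Section~2 of \cite{cdjfa} to the equivalent symmetric formulation \eqref{1.4e}, using the kernel $\hat J$ and the free-boundary equation governing $h^N(t)$. Fix $T>0$ to be chosen small. For any $h\in\mathbb{H}_{h_0,T}$, Lemma \ref{lem3.2} produces a unique solution $V_h\in \mathbb{X}_{u_0,h}$ of \eqref{201}, which by \eqref{v-bound} satisfies $0<V_h\le K_1:=\max\{\|u_0\|_\infty,K_0\}$. Define the operator $\Gamma:\mathbb{H}_{h_0,T}\to C([0,T])$ by
$$\Gamma(h)(t):=\Bigl[h_0^N+N\mu\int_0^t\!\int_{-h(s)}^{h(s)}\!\int_{h(s)}^{\infty}\hat J(\xi,\rho)|\xi|^{N-1}V_h(s,\xi)\,d\rho\,d\xi\,ds\Bigr]^{1/N}.$$
Any fixed point of $\Gamma$ gives, together with $u(t,r)=V_h(t,r)$, a solution of \eqref{1.4e}.

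The second step is to identify a closed convex subset $\Sigma_{T,M}\subset \mathbb{H}_{h_0,T}$ on which $\Gamma$ is a contraction. A natural choice is the set of non-decreasing $h$ with $h(0)=h_0$ and $0\le h(t)-h_0\le Mt$ for a constant $M$ chosen so that the bound $\hat J\in L^\infty$, together with the estimate $\int_{B_{h(t)}}\!\int_{\R^N\setminus B_{h(t)}}J(|x-y|)\,dy\,dx\le Ch^{N-1}(t)$, implies $\Gamma(h)(t)-h_0\le Mt/2$ for $h\in\Sigma_{T,M}$ and $T$ small. For Lipschitz continuity, one shows
$$\|\Gamma(h_1)-\Gamma(h_2)\|_{C([0,T])}\le LT\,\|h_1-h_2\|_{C([0,T])},$$
which reduces to the continuous-dependence estimate $\|V_{h_1}-V_{h_2}\|_\infty\le C\|h_1-h_2\|_{C([0,T])}$, obtained exactly as in \cite{cdjfa} by flattening the moving domain via the change of variable $\xi=h(t)y$ and applying Gronwall's inequality to the resulting integral equation. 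Since $h(t)\ge h_0>0$ on $\Sigma_{T,M}$, the $N$-th root in the definition of $\Gamma$ is uniformly Lipschitz on the relevant range, so the extra nonlinearity it introduces is harmless.

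Choosing $T=T_*>0$ small, the Banach fixed-point theorem yields a unique $h\in\Sigma_{T_*,M}$ with $\Gamma(h)=h$, hence a unique local solution to \eqref{1.4e} on $[0,T_*]$. Standard uniqueness at the fixed level of the original problem follows since any two solutions on a common interval both lie in such a $\Sigma_{T_*,M}$ and are forced to coincide by the contraction property (plus uniqueness in Lemma \ref{lem3.2}). To extend the solution to all $t>0$, one combines the a priori bound $V_h\le K_1$ with the free-boundary equation to obtain $h'(t)\le C\mu K_1$ uniformly, so $h(t)$ cannot blow up in finite time; a standard continuation argument then produces a global solution.

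The main obstacle relative to the one-dimensional case in \cite{cdjfa} is that the dispersal kernel is no longer the translation-invariant $J(\xi-\rho)$ but the non-translation-invariant $\hat J(\xi,\rho)=\tfrac12\td J(|\xi|,|\rho|)$, which degenerates at $\xi=0$ and carries the additional weight $|\xi|^{N-1}$ in the free-boundary equation. This is exactly what Lemma \ref{lemma2.2} and the preparation in Section~2 take care of: they guarantee $\hat J\in C(\R^2)\cap L^\infty(\R^2)$, $\hat J(\xi,\xi)>0$ for $\xi\neq 0$, and provide the integrability properties needed to run the one-dimensional estimates verbatim. Lemma \ref{lemma4.4} supplies the maximum principle in the symmetric setting, allowing the strict positivity $V_h>0$ on $\{0<t\le T,\ |r|<h(t)\}$ that is used to ensure $\Gamma(h)$ is strictly increasing.
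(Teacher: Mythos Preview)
Your approach is the paper's: define $\Gamma$ via Lemma~\ref{lem3.2}, find an invariant closed set on which $\Gamma$ contracts, then extend globally. Two technical points need patching, however, and the paper handles them explicitly.

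First, your invariant set $\Sigma_{T,M}$ as described (merely non-decreasing $h$ with $h(t)-h_0\le Mt$) is \emph{not} a subset of $\mathbb{H}_{h_0,T}$, which demands a strictly positive lower bound on the difference quotient; without that, Lemma~\ref{lem3.2} does not apply to elements of $\Sigma_{T,M}$, and conversely any genuine subset of $\mathbb{H}_{h_0,T}$ fails to be closed in $C([0,T])$. The paper fixes this by first proving a quantitative lower bound $\frac{d}{dt}\hat\Gamma(h)(t)\ge\tilde c_0>0$ (combining \eqref{V>cu_0} and the local positivity \eqref{J>delta_0} of $\hat J$ near the diagonal) and then \emph{building the constraint} $\inf\frac{h(t_2)-h(t_1)}{t_2-t_1}\ge\tilde c_0$ into $\Sigma_s$, so that $\Sigma_s$ is simultaneously closed in $C([0,s])$ and contained in $\mathbb{H}_{h_0,s}$. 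You allude to strict monotonicity of $\Gamma(h)$ at the end, but it is the uniform lower slope bound that is actually required.

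Second, the claimed uniform bound $h'(t)\le C\mu K_1$ is false under \textbf{(J)} alone: the free-boundary equation only yields $[h^N]'\le C\mu K_1\, h^N$, hence $h(t)\le h_0\,e^{Ct}$ (this is exactly \eqref{h-g} in the paper). Exponential growth still rules out finite-time blow-up, so your global-extension conclusion survives, but the intermediate estimate you invoke, $\int_{B_h}\int_{\R^N\setminus B_h}J\,dy\,dx\le Ch^{N-1}$, is not available without a moment condition such as \textbf{(J1)} (cf.\ Theorem~\ref{prop1}).
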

\begin{proof}
By Lemma \ref{lem3.2},
for any $T>0$ and $h\in\mathbb H_{h_0,T}$, we can find a
unique $V_{h}\in\mathbb{X}_{u_0,h}$ that solves (\ref{1.4e}), and it has the property
\begin{equation}\label{M_0}
0<V_{h}(t,r)\le M_0:=\max\big\{\|u_0\|_\infty,~K_0\big\} \mbox{ for  } (t,r)\in
\Omega_{h}.
\end{equation}

Using such a $V_{h}(t,r)$, we define the mapping $\hat \Gamma$ by 
\begin{equation}
\left\{
\begin{aligned}
&\hat \Gamma(h):=(\tilde h)^{1/N} \ \mbox{ for } h\in \mathbb H_{h_0,T}, \mbox{ with }\\
&\tilde h(t):= h_0^N+N\mu\int_0^t\int_{-h(\tau)}^{h(\tau)}\int_{h(\tau)
}^{+\infty}\hat J(\xi,\rho)|\xi|^{N-1}V_{h}(\tau,\rho)d\rho d\xi d\tau \ \ \mbox{ for $0<t\leq T$. }
\end{aligned}
\right.
\label{2003}
\end{equation}

To prove this theorem, we will show
that if $T$ is small enough, then $\hat\Gamma$ maps  a suitable closed subset $\Sigma_T$ of $\mathbb{H}_{h_0,T}$ into itself,
and  is a contraction mapping. This clearly implies that $\hat \Gamma$ has a unique fixed point in $\Sigma_T$,
which gives a solution $(V_{h},  h)$ of \eqref{1.4e} defined for $t\in (0, T]$. We will show that any solution $(u, h)$ of \eqref{1.4e}  with  $h\in \mathbb{H}_{h_0,T}$ must satisfy $h\in \Sigma_T$, and hence $h$ must coincide with the unique fixed point of $\hat\Gamma$ in $\Sigma_T$, which then implies that the solution $(u,h)$ of \eqref{1.4e} is unique.  We will finally  show that this unique solution
defined locally in time can be extended uniquely for all $t>0$.

This plan will be carried out in 4 steps, as in the proof of Theorem 2.1 in \cite{cdjfa}.
\medskip

\noindent
{\bf Step 1:} {\it Properties of $ \tilde h(t)$ and a closed subset of $\mathbb{H}_{h_0,T}$.}

Let $h\in\mathbb{H}_{h_0,T}$.
The definition of $\tilde h(t)$  indicates that it belongs to $C^1([0, T])$  and for $0<t\le T$,
\begin{equation}
\tilde h'(t)= N\mu \int_{-h(t)}^{h(t)}\int_{h(t)}^{+\infty}\hat J(\xi,\rho )|\xi|^{N-1}V_{h}(t,\rho)d\rho d\xi.
\label{2006}
\end{equation}
From this and the definition of $\td h$ we see that $\hat\Gamma(h)=(\tilde h)^{1/N}\in  \mathbb{H}_{h_0,T}$, but in order to show $\hat \Gamma$ is a contraction mapping, we need to
prove some further properties of  $\tilde h$, and then choose a suitable closed subset of  $\mathbb{H}_{h_0,T}$, which
is invariant under $\hat \Gamma$, and on which $\hat \Gamma$ is a contraction mapping.

Since  $v=V_{h}$ solves (\ref{201})  we  obtain by using \eqref{f} and  \eqref{M_0} that
\begin{equation}
\left\{
\begin{aligned}
&\left(V_{h}\right)_t(t,r)\ge-dV_{h}(t,r)-K(M_0)V_{h}(t,r), & &0<t\le T,~r\in(-h(t),h(t)),\\
&V_{h}(t,\pm h(t))=0,& &0<t\le T,\\
&V_{h}(0,r)=u_0(|r|),& &r\in[-h_0,h_0].
\end{aligned}
\right.
\label{2007}
\end{equation}
It follows that
\begin{equation}
\label{V>cu_0}
V_{h}(t,r)\ge e^{-(d+K(M_0))t}u_0(|r|)\ge e^{-(d+K(M_0))T}u_0(|r|) \mbox{ for } r\in [-h_0, h_0],\; t\in (0, T].
\end{equation}
By the properties of $\hat J$  there
exist constants $\epsilon_0\in (0, h_0/4)$ and $\delta_0>0$ such that
\begin{equation}
\label{J>delta_0}
\mbox{$\hat J(\xi,\rho)|\xi|^{N-1}
\ge\delta_0$ if $|\xi-\rho|\le\epsilon_0$ and $\xi,\, \rho\in [h_0-\frac{\epsilon_0}2, h_0+\frac{\epsilon_0}2]$.}
\end{equation}
Using \eqref{2006} we easily see
\[
0< \tilde h'(t)\leq N\mu M_0 h(t)^N \mbox{ for } t\in [0, T].
\]
Assume  that $h$ has the extra property that
\[\mbox{
    $h(T)\leq h_0+\frac{\epsilon_0}{4}$.}
    \]
    Then
\[
\tilde h(t)\leq h^N_0 +TN\mu M_0(h_0+\frac{\epsilon_0}{2})\leq \Big[h_0+\frac{\epsilon_0}{4}\Big]^N \mbox{ for } t\in [0, T],
\]
provided that $T>0$ is small enough, depending on $ (\mu, M_0, h_0, \epsilon_0)$.
We fix such a  $T$ and notice   that
\[
h(t)\in [h_0,  h_0+ \frac{\epsilon_0}{4}] \mbox{ for } t\in [0, T].
\]
Combining this with \eqref{V>cu_0} and \eqref{J>delta_0} we obtain, for such $T$ and $t\in (0, T]$,
\begin{align*}
\int_{-h(t)}^{h(t)}\int_{h(t)}^{+\infty}\hat J(\xi,\rho)|\xi|^{N-1}V_{h}
(t,\rho)d\rho d\xi &\ge\int_{h(t)-\frac{\epsilon_0}{2}}^{h(t)}\int_{
h(t)}^{h(t)+\frac{\epsilon_0}{2}}\hat J(\xi,\rho)|\xi|^{N-1}V_{h}
(t,\rho)d\rho d\xi \\
&\ge e^{-(d+K(M_0))T}\int_{h_0-\frac{\epsilon_0}{4}}^{h_0}\int_{
h_0+\frac{\epsilon_0}{4}}^{h_0+\frac{\epsilon_0}{2}}\hat J(\xi,\rho)|\xi|^{N-1} u_0(|\rho|)d\rho d\xi \\
&\ge\frac 14\epsilon_0\delta_0e^{-(d+K(M_0))T}\int_{h_0-\frac{
\epsilon_0}{4}}^{h_0}u_0(\rho)d\rho=:c_0>0,
\end{align*}
with $c_0$ depending only on $(J, u_0, f)$. Thus, for sufficiently small $T=T(\mu, M_0, h_0, \epsilon_0)>0$,
\begin{equation*}
\label{tilde-h'}
\tilde h'(t)\geq N\mu c_0 \mbox{ for } t\in [0, T].
\end{equation*}
Therefore
\begin{equation}
\label{hat-h'}
\frac{d}{dt}\hat \Gamma(h)(t)=\frac 1N \tilde h^{\frac{1-N}N}(t)\tilde h'(t)\geq  \td c_0:=\left(h_0+\frac\epsilon 2\right)^{1-N} \mu c_0 \mbox{ for } t\in [0, T].
\end{equation}

We now define, for $s\in (0, T_0]:=(0, T(\mu, M_0, h_0, \epsilon_0)]$,
\begin{align*}
\Sigma_s:=&\Big\{\hat h\in \mathbb H_{h_0,s}: \sup_{0\leq t_1<t_2\leq s}\frac{\hat h(t_2)-\hat h(t_1)}{t_2-t_1}\geq  \tilde c_0,\;
 \hat h(t)\leq h_0+\frac{\epsilon_0}{4} \mbox{ for } t\in [0, s]\Big\}.
\end{align*}
Our analysis above shows that
\[
\hat \Gamma (\Sigma_s)\subset \Sigma_s \mbox{ for } s\in (0, T_0].
\]

\medskip

\noindent {\bf Step 2:} {\it $\hat\Gamma$ is a
contraction mapping on $\Sigma_s$ for sufficiently small $s>0$.}

Let $s\in (0, T_0]$, $h_1, h_2\in\Sigma_s$,
 and note that $\Sigma_s$ is a complete metric space
under the metric
$$
d\left(h_1,h_2)\right)=\|h_1-h_2\|_{C([0,s])}.
$$
The analysis in Step 2 of the proof of Lemma 2.3 in \cite{cdjfa} can be repeated here to show that, for any $h_1, h_2\in \Sigma_s$,
\begin{align*}
~\|\tilde h_1-\tilde h_2\|_{C([0,s])}\le~  Cs\|h_1-h_2\|_{C([0,s])} \mbox{ for some $C>0$ independent of $h_1$ and $h_2$.}
\end{align*}
Hence
\begin{align*}
\|\hat\Gamma(h_1)-\hat\Gamma(h_2)\|_{C([0,s])}&\le \frac 1{N h_0^{(N-1)/N}} \|\tilde h_1-\tilde h_2\|_{C([0,s])}\\
&\le  \frac {Cs}{N h_0^{(N-1)/N}}\|h_1-h_2\|_{C([0,s])}\leq \frac 12 \|h_1-h_2\|_{C([0,s])},
\end{align*}
provided that $s>0$ is sufficiently small, say $s \in (0, T^*]$. Therefore $\hat\Gamma$ is a contraction mapping on $\Sigma_s$ for such $s$.

\medskip

\noindent
{\bf Step 3:} {\it Local existence and uniqueness.}

By Step 2 and the Contraction Mapping Theorem we know that \eqref{1.4e} has a solution $(u, h)$ for $t\in (0, T^*]$. If we can show that
$h\in \Sigma_{T^*}$  holds for any solution $(u,h)$ of \eqref{1.4e} defined over $t\in (0, T^*]$, then it is the unique fixed point of $\hat \Gamma$ in $\Sigma_{T^*}$ and the uniqueness of $(u,h)$ follows.

So let $(u,h)$ be an arbitrary solution of \eqref{1.4e} defined for $t\in (0, T^*]$. Then
\[
[h^N(t)]'=N\mu\int_{-h(t)}^{h(t)}\int_{h(t)}^{\infty}\hat J(\xi,\rho)|\xi|^{N-1}  u(t,\xi)\rd \rho\rd \xi \leq 2\mu M_0 h^N (t) \mbox{ for } t\in (0, T^*].
\]
We thus obtain
\begin{equation}
\label{h-g}
h^N(t)\leq h^N_0 e^{2\mu M_0 t} \mbox{ for } t\in (0, T^*].
\end{equation}
Therefore if we shrink $T^*$  if necessary so that
\[
h_0e^{\mu M_0 T^*/N}\leq h_0+\frac{\epsilon_0}{4},
\]
then
\[
h(t)\leq h_0+\frac{\epsilon_0}{4} \mbox{ for } t\in [0, T^*].
\]
Moreover, the proof of \eqref{hat-h'}  gives
\[
h'(t)\geq  \tilde c_0 \mbox{ for } t\in (0, T^*].
\]
Thus indeed $h\in\Sigma_{T^*}$, as we wanted.
This proves the local existence and uniqueness of the  solution to \eqref{1.4e}.

\medskip

\noindent
{\bf Step 4:} {\it Global existence and uniqueness.}

This is identical to the corresponding proof in \cite{cdjfa}, and we omit the details here.
\end{proof}
\medskip

\subsection{Comparison principle}
We now use Lemma \ref{lemma4.4} to obtain a comparison principle which will be useful for our later analysis.

\begin{lemma}[Comparison principle]\label{lemma3.4a}    Suppose  $(\mathbf{J})$ and \eqref{f}  hold,  and $(u,h)$ solves \eqref{1.4} for $t\in [0, T]$ with some $T>0$. For convenience we extend $ u$ by $ u(t,r)=0$ for $t\in[0, T]$ and $r> h(t)$.  Let  $r_*, h_*\in C([0,T])$ be nondecreasing functions  satisfying $0\leq r_*(t)<h_*(t)$, and 
	\begin{align*}
	\Omega_T:=\{(t,r): t\in (0,T], r \in(0,h_*(t))\},\ 	\Theta_T:=\{(t,r): t\in (0,T], r\in(r_*(t),h_*(t))\}.
	\end{align*}
 Suppose   $v\in C(\ol\Omega_T)$ is nonnegative with $v_t\in C(\ol\Theta_T)$, and
		\[
		\hat v(t,r):=\begin{cases}u(t,r) \mbox{ for }  r\in [0, r_*(t)],\ t\in [0,T], \\ 
		v(t,r) \mbox{ for }  r\in (r_*(t), h_*(t)],\ t\in [0,T].
		\end{cases}
		\]
	\begin{itemize}
		\item[{\rm (i)}]     If $(v,r_*,h_*)$ satisfy $h_*(0)\geq h(0)$, 
		\begin{equation}\label{3.8d}
		\begin{cases}
		v(0,r)\geq  u(0,r), \ \ r\in [0,h_*(0)],\\
		v(t,r)\geq   u(t,r),\ \  \  t\in [0,T],\; r\in [0,r_*(t)]
		\end{cases}
		\end{equation}
		and
		\begin{equation*}
		\begin{cases}
		v_t\geq   d \Big[\dd\int_{0}^{h_*(t)}\td J(r,\rho)\hat v(t,\rho)\rd \rho-v(t,r)\Big]+f(t,r,v), &t\in (0,T],\; r\in {(r_*(t),h_*(t))},\\[2mm]
		v(t,h_*(t))\geq  0, & t\in (0,T],\\[2mm]
		\dd h_*'(t)\geq  \frac{\mu}{h_*^{N-1}(t)}\dd\int_{0}^{h_*(t)}\int_{h_*(t)}^{+\yy} \td J(r,\rho) r^{N-1}v(t,r)\rd \rho\rd r, & t\in [0,T],
		\end{cases}
		\end{equation*}
				then  
		\begin{align*}
		 h_*(t)\geq h(t),\ \ v(t,r)\geq  u(t,r) \ \ \ \ \ {\rm for}\  t\in (0, T],\; r\in [0, h(t)].
		\end{align*}
		
		\item[{\rm (ii)}]   If $(v,r_*,h_*)$   satisfy $h_*(0)\leq h(0)$,
		\begin{align*}\begin{cases}
		v(0,r)\leq  u(0,r), \ \ r\in [0,h(0)],\\
		v(t,r)\leq   u(t,r),\ \  \  t\in [0,T],\; r\in [0,r_*(t)]
		\end{cases}
		\end{align*}
		and
		\begin{equation*}
		\begin{cases}
		v_t\leq   d \Big[\dd\int_{0}^{h_*(t)}\td J(r,\rho) \hat v(t,\rho)\rd \rho-v(t,r)\Big]+f(t,r,v), &t\in (0,T],\; r\in {(r_*(t),h_*(t))},\\[2mm]
		v(t,h_*(t))\leq  0, & t\in (0,T],\\[2mm]
		\dd h_*'(t)\leq  \frac{\mu}{h_*^{N-1}(t)}\dd\int_{0}^{h_*(t)} \int_{h_*(t)}^{+\yy} J(r,\rho)r^{N-1}v(t,r)\rd \rho\rd r, & t\in [0,T],
				\end{cases}
		\end{equation*}
		then  
		\begin{align*}
		h_*(t)\leq h(t),\ \ v(t,r)\leq  u(t,r) \ \  {\rm for }\  t\in (0, T],\; r\in [0, h_*(t)].
		\end{align*}
	\end{itemize}
\end{lemma}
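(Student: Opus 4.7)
The plan is to prove part (i); part (ii) follows by symmetric arguments with reversed inequalities. The strategy is to show both $h_*(t) \geq h(t)$ and $v(t, r) \geq u(t, r)$ on the overlap $[0, h(t)]$ simultaneously, via a continuation argument coupling the maximum principle of Lemma \ref{lemma4.4} (controlling the pointwise comparison) with the free-boundary flux integrals (controlling the boundary comparison). First I would reduce to the strict case by setting $h_*^\epsilon := h_* + \epsilon$ and extending $v$ by zero to $[h_*(t), h_*^\epsilon(t)]$; the nonnegativity of $\tilde J$ and the monotonicity of $(h_* + \epsilon)^{1-N}$ in $\epsilon$ ensure that $(v, r_*, h_*^\epsilon)$ still satisfies the four upper-solution inequalities. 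After proving the conclusions with strict initial gap $h_*^\epsilon(0) > h_0$, sending $\epsilon \to 0^+$ recovers the general statement.

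With the strict initial gap, define by continuity
\[
t^* := \sup\bigl\{t \in (0, T] : h_*^\epsilon(s) > h(s) \text{ for all } s \in [0, t]\bigr\} > 0,
\]
and assume for contradiction that $t^* < T$. I would first show $v \geq u$ on $\{(t, r) : t \in [0, t^*],\ r \in [0, h(t)]\}$. On the subregion $r \in [r_*(t), h(t)]$ (the complement is handled by hypothesis), set $w := v - u$. Subtracting the equation for $u$ from the differential inequality for $v$ and decomposing $\int_0^{h_*^\epsilon(t)} \tilde J(r, \rho) \hat v(t, \rho)\, d\rho$ according to the definition of $\hat v$ (equal to $u$ on $[0, r_*(t)]$ and to $v$ on $[r_*(t), h_*^\epsilon(t)]$), the part on $[0, r_*(t)]$ cancels the corresponding piece of $\int_0^{h(t)} \tilde J u\, d\rho$, the part on $[r_*(t), h(t)]$ contributes $\int_{r_*(t)}^{h(t)} \tilde J(r, \rho) w(t, \rho)\, d\rho$, and the tail $\int_{h(t)}^{h_*^\epsilon(t)} \tilde J(r, \rho) v(t, \rho)\, d\rho$ is nonnegative. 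Combined with the local Lipschitz bound on $f$, this yields
\[
w_t \geq d \int_{r_*(t)}^{h(t)} \tilde J(r, \rho) w(t, \rho)\, d\rho + c(t, r) w
\]
for a bounded coefficient $c$. The boundary conditions $w(t, r_*(t)) \geq 0$ (hypothesis) and $w(t, h(t)) = v(t, h(t)) \geq 0$ (nonnegativity of $v$ on $\overline{\Omega_T}$) hold, and $r_*$ is nondecreasing so $\Sigma_{\min}^{r_*} = \emptyset$ in Lemma \ref{lemma4.4}. Applying Lemma \ref{lemma4.4} (with $g = r_*$ and upper boundary $h$) yields $w \geq 0$, hence $v \geq u$ on $[0, h(t)]$ for all $t \in [0, t^*]$.

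The contradiction then comes from the free-boundary speeds at $t^*$. By definition $h_*^\epsilon(t^*) = h(t^*)$ and $h_*^\epsilon(t) > h(t)$ for $t < t^*$, so the left derivative satisfies $(h_*^\epsilon)'(t^*-) \leq h'(t^*)$. On the other hand, substituting $h_*^\epsilon(t^*) = h(t^*)$ and $v(t^*, \cdot) \geq u(t^*, \cdot)$ on $[0, h(t^*)]$ into the flux inequality for $(h_*^\epsilon)'$ and the equation for $h'$ gives $(h_*^\epsilon)'(t^*) \geq h'(t^*)$; tracing the strict initial gap $h_*^\epsilon(0) > h_0$ through the flux integral upgrades this to a strict inequality, producing the contradiction. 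Hence $t^* = T$, and letting $\epsilon \to 0^+$ completes the proof. The main obstacle is precisely the borderline case $h_*(0) = h(0)$, which is resolved by the perturbation above; the delicate points are verifying that $(v, r_*, h_*^\epsilon)$ retains all four upper-solution inequalities after perturbation, and that the strict inequality in the boundary comparison survives the limit $\epsilon \to 0^+$ so that the contradiction step is genuine rather than vacuous.
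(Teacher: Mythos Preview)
Your perturbation goes in the wrong direction and breaks the comparison. By setting $h_*^\epsilon=h_*+\epsilon$ and extending $v$ by zero on $(h_*(t),h_*^\epsilon(t)]$, you do \emph{not} obtain a supersolution on the enlarged domain: for $r\in(h_*(t),h_*^\epsilon(t))$ one has $v=0$, $v_t=0$, while the right-hand side $d\int_0^{h_*(t)}\tilde J(r,\rho)\hat v(t,\rho)\,d\rho+f(t,r,0)$ is generically strictly positive, so the differential inequality fails there. More fatally, at the touching time $t^*$ you have $h(t^*)=h_*^\epsilon(t^*)=h_*(t^*)+\epsilon>h_*(t^*)$, hence by continuity $h(t)>h_*(t)$ on an interval just before $t^*$. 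On that interval, for $r\in(h_*(t),h(t))$ the extended $v$ equals $0$ while $u(t,r)>0$, so $w=v-u<0$: the pointwise comparison $v\geq u$ on $[0,h(t)]$ that you claim to derive from Lemma~\ref{lemma4.4} is simply false near $t^*$. The flux comparison at $t^*$ fails for the same reason: the $v$-integral misses the positive contribution of $u$ over $(h_*(t^*),h(t^*))$, so you cannot conclude $(h_*^\epsilon)'(t^*)\geq h'(t^*)$. Finally, ``tracing the strict initial gap through the flux integral'' is not a mechanism; nothing in your setup converts a gap at $t=0$ into a strict flux inequality at $t^*$.

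The paper's proof perturbs the \emph{solution} instead: replace $(u,h)$ by $(u_\epsilon,h_\epsilon)$ solving \eqref{1.4} with $h_0^\epsilon=(1-\epsilon)h_0$, $\mu_\epsilon=(1-\epsilon)\mu$, and initial data $u_0^\epsilon<u_0$. This yields $h_\epsilon(0)<h_0\leq h_*(0)$, so the continuation argument starts; the maximum principle is applied on $[r_*(t),h_\epsilon(t)]$, where both $u_\epsilon$ (an exact solution) and the unperturbed $v$ satisfy their respective relations; and the strict inequality $h_*'(t_1)>h_\epsilon'(t_1)$ at the touching time comes cleanly from $\mu>\mu_\epsilon$. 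Letting $\epsilon\to0$ and invoking continuous dependence of the solution on the data finishes the proof. The asymmetry is essential: shrinking the solution preserves the exact equation, whereas enlarging the supersolution's domain by zero-extension destroys the supersolution inequality.
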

\begin{proof} We just prove (i) since the proof of (ii) is similar.  For small $\epsilon>0$, 
	let $h_0^\epsilon:=(1-\epsilon)h_0$, $\mu_\epsilon:=(1-\epsilon)\mu$ and $u_0^\epsilon\in C([0,h_0])$ be a function satisfying
	 $0\leq u_0^\epsilon(r)< u(0,r)$ for $r\in [0,h_0^\epsilon]$,  	 
	 $u_0^\epsilon(r)=0$ for $r \in [h_0^\epsilon,h_0]$ and $\lim_{\epsilon\to 0}u_0^\epsilon(\cdot)= u(0,\cdot)$  in $C([0,h_0])$. Denote by  $(u_\epsilon,h_\epsilon)$  the unique solution of \eqref{1.4} with $h_0$ replaced by $h_0^\epsilon$,  $\mu$ replaced by  $\mu_\epsilon$ and $ u(0,\cdot)$ replaced by $u_0^\epsilon$. 
	 
	 We next show that 
	 \begin{align}\label{3.9d}
	 h_\epsilon(t)\leq h_*(t) \ \mbox{ for } \   t\in [0,T].
	 \end{align}
Due to  $h_\epsilon(0)=(1-\epsilon)h_0<h_0\leq h_*(0)$,    
	\begin{align*}
	t_1:=\max\{t\in (0,T]:h_\epsilon(s)<h_*(t)\ {\rm for\ all}\ s\in [0,t]\}
	\end{align*}
	is well defined. 
	If $t_1=T$, then \eqref{3.9d} immediately holds. On the other hand, if $t_1<T$, then 
	\begin{align}\label{3.10d}
	h_\epsilon(t_1)=h_*(t_1), \  h_\epsilon'(t_1)\geq h_*'(t_1),\ h_\epsilon(t)<h_*(t)\ \mbox{ for } \ t\in [0,t_1),
	\end{align}
	and	from $r_*(t)<h_*(t)$ for $t\in [0,T]$  there are two possible cases:
\begin{itemize}
	\item[{\rm (a)}]  $h_\epsilon(t)\in (r_*(t),h_*(t)]$ for $t\in [0,T]$,
	\item[{\rm (b)}] there exits  $t_2\in [0,t_1)$ such that 
	\begin{align*}
	r_*(t_2)=h_\epsilon(t_2) \  {\rm and}\  r_*(t)<h_\epsilon(t) \ \mbox{ for} \ t\in (t_2,t_1].
	\end{align*}
\end{itemize}
We thus always have 
\begin{align*}
	r_*(t_2)\leq  h_\epsilon(t_2)\  {\rm and}\  r_*(t)<h_\epsilon(t) \ \mbox{ for  all } \ t\in (t_2,t_1] \mbox{ and some $t_2\in [0,t_1)$.}
\end{align*}
  
  Note that for $t\in (t_2,t_1]$ and $r\in (r_*(t),h_\epsilon(t))\subset (r_*(t), h_*(t))$,
\begin{align*}
&v_t(t,r)\geq   d \lf[\int_{r_*(t)}^{ h_\epsilon(t)}\td J(r,\rho) v(t,\rho)\rd \rho-v(t,r)\rr]+d\Lambda(t,r)+f(t,r,v),\\
&\dd  (u_\epsilon)_t(t,r)=d\lf[\int_{r_*(t)}^{ h_\epsilon(t)} \td J(r,\rho)  u_\epsilon(t,\rho)\rd \rho- u_\epsilon(t,r)\rr]	+ d\Lambda(t,r)+f(t,r, u_\epsilon)
\end{align*}
with $\Lambda(t,r):=\dd\int_{0}^{r_*(t)}\td J(t,\rho)  u_\epsilon(t,\rho) \rd \rho$. Hence for $w:=v- u_\epsilon$, we have
\begin{align*}
w_t(t,r)\geq   d \lf[\int_{r_*(t)}^{ h_\epsilon(t)}\td J(r,\rho) w(t,\rho)\rd \rho-w(t,r)\rr]+c(t,r) w(t,r)\ \mbox{ for }  t\in (t_2,t_1],\; r\in (r_*(t),h_\epsilon(t))
\end{align*}
where 
\begin{equation*}
c(t,r):=
\begin{cases}
0,& v(t,r)= u_\epsilon(t,r),\\
\dd \frac{f(t,r, v(t,r))-f(t,r, u_\epsilon (t,r))}{v(t,r)- u_\epsilon(t,r)}, &v(t,r)\neq  u_\epsilon(t,r),
\end{cases}
\end{equation*}
is a bounded function. Besides, by our assumptions,  we have
\begin{align*}
&w(t_2,r)=w(0,r)=v(0,r)- u_\epsilon(0,r)\geq 0\ {\rm for}\ r\in [r_*(0),h_\epsilon(0)]&&  {\rm if\ Case\ (a)\ holds }, \\
&w(t_2,r)=w(t_2,r_*(t_2))=v(t_1,r_*(t_2))\geq 0\ {\rm for}\ r\in [r_*(t_2),h_\epsilon(t_2)]=\{r_*(t_2)\}&& {\rm if\ Case\ (b)\ holds},
\end{align*}
  and
\begin{align*}
w(t,r_*(t))\geq 0, \
w(t,h_\epsilon(t))=v(t,h_\epsilon(t))- u_\epsilon(t,h_\epsilon(t))=v(t,h_\epsilon(t))\geq 0\ \mbox{ for }\ t\in [t_2,t_1]. 
\end{align*}
Therefore,  we can use  Lemma \ref{lemma4.4} to conclude that 
	\begin{align*}
	 u_\epsilon(t,r)\leq  v(t,r) \ \mbox{ for } \ t\in  [t_2,t_1],\ r\in [r_*(t), h_\epsilon(t)].
	\end{align*}
	This combined with \eqref{3.8d} gives
	\begin{align*}
	 u_\epsilon (t,r)\leq   v(t,r)\ \mbox{ for } \ t\geq [t_2,t_1],\ r\in [ 0, h_\epsilon (t)].
	\end{align*}
	Thus 
	\begin{align*}
	\dd h_*'(t_1)&\geq \frac{\mu}{h_*^{N-1}(t_1)}\dd\int_{0}^{h_* (t_1)} \int_{h_*(t_1)}^{+\yy} \td J(r,\rho)r^{N-1}v(t_1,r)\rd \rho\rd r\\
	&=\frac{\mu}{h_\epsilon^{N-1}(t_1)}\dd\int_{0}^{ h_\epsilon(t_1)} \int_{ h_\epsilon(t_1)}^{+\yy} \td J(r,\rho)r^{N-1}v(t_1,r)\rd \rho\rd r\\
	&>\frac{\mu_\epsilon}{h_\epsilon^{N-1}(t_1)}\dd\int_{0}^{ h_\epsilon(t_1)} \int_{ h_\epsilon(t_1)}^{+\yy} \td J(r,\rho)r^{N-1}u_\epsilon (t_1,r)\rd \rho\rd r
=h_\epsilon'(t_1),
	\end{align*}
	which contradicts with \eqref{3.10d}. Hence $t_1=T$, and \eqref{3.9d} holds.

	 Since the unique solution of \eqref{1.4}   depends continuously on the parameters in \eqref{1.4}, the desired result then follows by letting $\epsilon\to0$.
\end{proof}

\begin{remark}\label{rmk2.6}  In Lemma \ref{lemma3.4a}, if $r_*(t)\equiv 0$, then the conclusions hold without requiring
\[\begin{cases}
v(t,r)\geq u(t,r) \mbox{ for $t\in [0, T],\ r\in [0, r_*(t)]=\{0\}$ in part $(i)$},\\
v(t,r)\leq u(t,r) \mbox{ for $t\in [0, T],\ r\in [0, r_*(t)]=\{0\}$ in part $(ii)$}.
\end{cases}
\]
\end{remark}
\begin{proof}
When $r_*(t)\equiv 0$, $\Sigma_{\rm min}^{r_*}=\emptyset$, and the conclusion follows directly from the proof of Lemma \ref{lemma3.4a} when Lemma \ref{lemma4.4} is used for $w$
over $t\in [t_2, t_1]$ and $r\in [r_*(t), h_\epsilon(t)]$.
\end{proof}

\begin{remark}\label{rmk3.6} In Lemma \ref{lemma3.4a}, 
if $v_t(t,r)$ has  jumping discontinuities over $r=\sigma_j(t)$ $(1\leq j\leq m)$, with $\sigma_1(t)<\sigma_2(t)<...<\sigma_m(t)$ continuous functions of $t$, and the inequalities involving $v_t(t,r)$
are satisfied away from these curves, then the conclusions remain valid.  A reasoning for this can be found in Remark 2.4 of \cite{dn-speed} for a similar situation.
This observation also applies to Lemma \ref{lemma4.4}.
\end{remark}

The rest of this paper will focus on \eqref{1.4} with a Fisher-KPP nonlinearity, namely $f=f(u)$ which satisfies {\bf (f)}.

\section{Spreading-vanishing dichotomy}
The purpose of this section is to prove Theorems \ref{th1.2} and \ref{th1.3}, which follows the approach of \cite{cdjfa} where the one space dimension case was treated.
\subsection{Some preparations} Let $a, d$ be positive constants, $J(r)$ a kernel function satisfying ({\bf J}), and $\Omega\subset\R^N$ a bounded domain. 
We first consider the eigenvalue problem 
\begin{align}\label{7.1}
d\int_{\Omega}J(|x-y|) \phi(y) \rd y-d\phi(x)+a\phi(x)=\lambda \phi(x),\; x\in\Omega.
\end{align}
  By  \cite{bcv2016} and \cite{lcw2017},  problem \eqref{7.1} has a principal eigenvalue $\lambda_1(\Omega)$ equipped with a positive  eigenfunction $\phi_\Omega$. 

\begin{proposition}\label{proposition3.1}
	Assume {\rm (\textbf{J})} holds. Then the following statements are true.
	\begin{itemize}
		\item[{\rm (a)}] $\lambda_1(\Omega_1)\leq \lambda_1(\Omega_2)$ if $\Omega_1\subset \Omega_2$. 
		\item[{\rm (b)}] Denote $\lambda_1(L):=\lambda_1(B_L)$ with $B_L:=\{x\in\R^N: |x|<L\}$. Then
			\begin{itemize}
			\item[{\rm (i)}] $\lambda_1(L)$ is strictly increasing and continuous with respect to $L\in (0,\yy)$,
			\item[{\rm (ii)}] $\lim_{L\to\yy} \lambda_1(L)=a$,
			\item[{\rm (iii)}]  $\lim_{L\to0} \lambda_1(L)=a-d$.
		\end{itemize}
	\end{itemize}

\end{proposition}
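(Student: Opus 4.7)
The plan is to exploit two characterisations of the principal eigenvalue that the cited results \cite{bcv2016, lcw2017} make available for this symmetric nonlocal operator: the Rayleigh quotient
\[
\lambda_1(\Omega)=\sup_{\phi\in L^2(\Omega)\setminus\{0\}}\frac{d\int_\Omega\int_\Omega J(|x-y|)\phi(x)\phi(y)\rd y\rd x+(a-d)\int_\Omega\phi^2\rd x}{\int_\Omega\phi^2\rd x},
\]
together with the elementary ``maximum-point'' bound $\lambda_1(\Omega)\leq a$, obtained by evaluating the eigenvalue equation $\mathcal{L}_\Omega\phi=\lambda_1(\Omega)\phi$ at a maximiser $x_0\in\overline\Omega$ of the positive eigenfunction $\phi$, so that $\int_\Omega J(|x_0-y|)\phi(y)\rd y\leq\phi(x_0)$.

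For part (a), given $\Omega_1\subset\Omega_2$, extending any test function $\phi$ on $\Omega_1$ by zero to $\Omega_2$ leaves both the numerator and the denominator of the Rayleigh quotient unchanged, so $\lambda_1(\Omega_1)\leq\lambda_1(\Omega_2)$. The strict monotonicity in (b)(i) is then obtained by perturbing the positive eigenfunction $\phi_{L_1}$ of $B_{L_1}$ to a strictly positive extension on $B_{L_2}$ for $L_1<L_2$; since $J(0)>0$ and $J$ is continuous, the extra positive mass strictly increases the Rayleigh quotient. For the continuity in (b)(i), I would combine monotonicity with a two-sided semicontinuity argument: for $L_n\to L$ I dilate the eigenfunction $\phi_L$ of $B_L$ via $\phi_n(x):=\phi_L(Lx/L_n)$ onto $B_{L_n}$, compute $R_{L_n}(\phi_n)$ by changing variables, and use the continuity of $J$ together with dominated convergence (since $J\in L^\infty$) to conclude $R_{L_n}(\phi_n)\to\lambda_1(L)$, yielding lower semicontinuity; the reverse inequality follows by the analogous rescaling of a normalised eigenfunction of $B_{L_n}$ onto $B_L$, together with monotonicity.

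For (b)(ii), the max-point bound gives $\lambda_1(L)\leq a$ uniformly in $L$. The matching lower bound comes from testing with $\phi\equiv 1$ on $B_L$, which yields
\[
\lambda_1(L)\geq a-d+\frac{d}{|B_L|}\int_{B_L}\int_{B_L}J(|x-y|)\rd y\rd x.
\]
Given $\epsilon>0$, choose $R$ so that $\int_{|z|>R}J(|z|)\rd z<\epsilon$; for $L\geq 2R$, every $x$ in the ``deep interior'' $\{|x|<L-R\}$ satisfies $\int_{B_L}J(|x-y|)\rd y\geq 1-\epsilon$, and this interior constitutes a proportion of $B_L$ tending to $1$ as $L\to\infty$. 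Hence the double-integral average tends to $1$ and $\lambda_1(L)\to a$. For (b)(iii), the trivial bound $\int_{B_L}J(|x-y|)\phi(y)\rd y\leq\|J\|_\infty|B_L|\|\phi\|_\infty$ inserted into the max-point argument gives $\lambda_1(L)\leq a-d+d\|J\|_\infty|B_L|\to a-d$ as $L\to 0$, while testing $\phi\equiv 1$ gives $\lambda_1(L)\geq a-d$ directly; the two bounds pinch $\lambda_1(L)\to a-d$.

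The main obstacle is the continuity claim in (b)(i): the rescaling argument requires that the Rayleigh quotients of the dilated eigenfunctions converge, which in turn needs either uniform $L^\infty$ control on the eigenfunctions (available from the equation $d\int_\Omega J\phi=(d-a+\lambda_1)\phi$ combined with $\lambda_1\leq a$) or a weak-compactness argument exploiting the compactness of the convolution operator with kernel $J\in L^\infty$. All other statements reduce to the two elementary characterisations above once monotonicity and continuity are in hand.
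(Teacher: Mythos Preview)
Your proposal is correct and follows essentially the same strategy as the paper: the variational characterisation, testing with $\phi\equiv 1$ for the lower bounds in (b)(ii)--(iii), and bounding the nonlocal term by $d\|J\|_\infty|B_L|$ for (b)(iii). The only minor difference is that for the upper bounds $\lambda_1(L)\leq a$ and $\lambda_1(L)\leq a-d+d\|J\|_\infty|B_L|$ the paper stays entirely within the Rayleigh quotient---using $J(|x-y|)\phi(x)\phi(y)\leq \tfrac12 J(|x-y|)(\phi^2(x)+\phi^2(y))$ for the former and Cauchy--Schwarz $(\int\phi)^2\leq|B_L|\int\phi^2$ applied at the maximiser $\phi_{B_L}$ for the latter---rather than your pointwise max-point argument; and the paper defers (a) and (b)(i) entirely to \cite{cdjfa}, so your rescaling sketch for continuity actually goes beyond what is spelled out here.
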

\begin{proof}  The conclusions in (a) and part (i) of (b) follows similarly as \cite[Proposition 3.4]{cdjfa} part (i). It remains to prove (b)(ii) and (b)(iii).
\smallskip
	
	(b)(ii).  From the variational characterization of $\lambda_1(L)$ (see, e.g., \cite{bcv2016}), we have
	\begin{align}\label{7.2}
	\dd\lambda_1(L)=\sup_{0\not\equiv\phi\in L^2(B_L)} \frac{\dd d\int_{B_L}\int_{B_L} J(|x-y|)\phi(x)\phi(y)\rd y\rd x}{\dd\int_{B_L}\phi^2(x)\rd x}-d+a.
	\end{align}
	Since
	\begin{align*}
	\dd \int_{B_L}\int_{B_L} J(|x-y|)\phi(x)\phi(y)\rd y\rd x\leq& \int_{B_L}\int_{B_L} J(|x-y|)\frac{\phi^2(x)+\phi^2(y)}{2}\rd y\rd x\\
	\leq&  \int_{B_L}\phi^2(x)\rd x,
	\end{align*}
	we immediately get
	\begin{align*}
	\lambda_1(L)\leq d-d+a=a. 
	\end{align*}
	
	For small $\epsilon>0$, it follows from $\int_{\R^N}J(|x|) \rd x=1$ that  there exists $L_\epsilon$ such that for $L\geq L_\epsilon$,
	\begin{align*}
	\int_{B_L} J(|x|) \rd x\geq 1-\epsilon. 
	\end{align*}
	It is clear that for $L\geq 2L_\epsilon$, 
	\begin{align*}
	B_{L_\epsilon}(0)\subset B_L(x) \ \ \forall x\in B_{L-L_\epsilon}(0).
	\end{align*}
	Then using  \eqref{7.2} with $\phi\equiv 1$, we deduce
	\begin{align*}
	\dd\lambda_1(L)& \geq \dd d\, |B_L|^{-1}\int_{B_L}\int_{B_L} J(|x-y|)\rd y\rd x-d+a\\
	&\geq \dd d \, |B_L|^{-1} \int_{B_{L-L_\epsilon}}\int_{B_L(x)} J(|y|)\rd y\rd x -d+a\\
	&\geq \dd d\, |B_L|^{-1} \int_{B_{L-L_\epsilon}}\int_{B_{L_\epsilon}(0)} J(|y|)\rd y\rd x-d+a\\
	&\geq d(1-\epsilon){|B_{L-L_\epsilon}|}{|B_L|^{-1}}-d+a\\
	&=d(1-\epsilon)\frac{|L-L_\epsilon|^N}{L^N}-d+a\to -\epsilon d+a \ \ \ \ {\rm as }\ L\to\yy,
	\end{align*}
	which implies 
	\begin{align*}
	\liminf_{L\to\yy}\lambda_1(L)\geq -\epsilon d+a. 
	\end{align*}
	The arbitrariness of $\epsilon$ yields $\liminf_{L\to\yy}\lambda_1(L)\geq a$, which combined with $\lambda_1(L)\leq a$ gives 
	\[
	\lim_{L\to\yy}\lambda_1(L)= a.\]
	
	(b)(iii). Since the supremum in \eqref{7.2} is attained by $\phi_{B_L}$, we deduce
	\begin{align*}
	|\dd\lambda_1(L)+d-a|     &= \frac{\dd d\int_{B_L}\int_{B_L} J(|x-y|)\phi_{B_L}(x)\phi_{B_L}(y)\rd y\rd x}{\dd\int_{B_L}\phi_{B_L}^2(x)\rd x}\\
	&\leq  d\|J\|_{L^\yy(\R)}  \frac{\lf(\dd \int_{B_L}\phi_{B_L}(x)\rd x\rr)^2}{\dd\int_{B_L}\phi_{B_L}^2(x)\rd x}\leq d\|J\|_{L^\yy(\R)}  \frac{|B_L|\dd \int_{B_L}\phi_{B_L}^2(x)\rd x}{\dd\int_{B_L}\phi_{B_L}^2(x)\rd x}\\
	&= d\|J\|_{L^\yy(\R)}|B_L|\to0\ \ \ {\rm as}\ L\to 0.
	\end{align*}
	The proof is complete.
\end{proof}
\begin{corollary}\label{coro3.2}
Assume {\rm(\textbf{J})} holds. Let $\lambda_1(L)$ be given by Proposition \ref{proposition3.1}. Then 
\begin{itemize}
	\item[{\rm (i)}]  $\lambda_1(L)>0$ for all $L>0$ if $a\geq d$. 
	\item[{\rm (ii)}] If $0<a< d$, then there exists  $L_*>0$ such that $\lambda_1(L_*)=0$, $\lambda_1(L)<0$ for $0<L<L_*$ and $\lambda_1(L)>0$ for $L>L_*$.
\end{itemize}
\end{corollary}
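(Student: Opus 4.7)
The plan is to deduce both statements directly from the three properties of $\lambda_1(L)$ established in Proposition \ref{proposition3.1}(b): strict monotonicity and continuity in $L\in(0,\infty)$, the limit $a$ as $L\to\infty$, and the limit $a-d$ as $L\to 0$. There is nothing to build beyond these, so no auxiliary construction is needed and no obstacle is anticipated; the argument is essentially a one-variable real-analysis exercise on the scalar function $L\mapsto\lambda_1(L)$.

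For part (i), I would argue as follows. Suppose $a\geq d$, so that $\lim_{L\to 0}\lambda_1(L)=a-d\geq 0$. By strict monotonicity of $\lambda_1(\cdot)$, for any $L>0$ and any $L'\in(0,L)$ we have $\lambda_1(L)>\lambda_1(L')$; letting $L'\to 0$ yields $\lambda_1(L)\geq a-d\geq 0$, and then strict monotonicity (applied once more between, say, $L/2$ and $L$, combined with $\lambda_1(L/2)\geq a-d\geq 0$) gives $\lambda_1(L)>0$. Hence $\lambda_1(L)>0$ for every $L>0$.

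For part (ii), assume $0<a<d$. Then
\[
\lim_{L\to 0}\lambda_1(L)=a-d<0<a=\lim_{L\to\infty}\lambda_1(L).
\]
Continuity of $\lambda_1(\cdot)$ on $(0,\infty)$ and the intermediate value theorem therefore produce some $L_*>0$ with $\lambda_1(L_*)=0$. Strict monotonicity makes $L_*$ unique and, for any $L\in(0,L_*)$ respectively $L\in(L_*,\infty)$, forces $\lambda_1(L)<\lambda_1(L_*)=0$ respectively $\lambda_1(L)>\lambda_1(L_*)=0$. This yields the stated trichotomy and completes the proof. The main (and only) point to be careful about is extracting strict inequalities from strict monotonicity when combining it with the boundary limits, which is handled by inserting an auxiliary intermediate value of $L$ as indicated above.
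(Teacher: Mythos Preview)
Your proof is correct and is exactly the intended argument; the paper states this result as an immediate corollary of Proposition~\ref{proposition3.1} without supplying a proof, since the conclusion follows directly from the strict monotonicity, continuity, and the two limits of $\lambda_1(L)$ established there. Your care in extracting strict positivity in part~(i) via an intermediate point is appropriate, though one could equally note that strict monotonicity plus $\lambda_1(L')\to a-d$ as $L'\to 0$ already gives $\lambda_1(L)>\lambda_1(L/2)\geq a-d\geq 0$ in one line.
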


We next consider the fixed boundary problem
\begin{equation}\label{fix1}
\begin{cases}
\dd w_t=d \int_{\Omega}J(|x-y|)w(t,y)\rd y-dw(t,x)+f(w), & t>0,\; x\in \Omega,\\
w(0,x)=w_0(x)\geq,\not\equiv 0, & x\in \Omega.
\end{cases}
\end{equation}
The following two results are well known (see, for example, \cite{bz2007}).

\begin{lemma}\label{lemma3.4}
Assume   {\rm (\textbf{J})}  holds, $f$ satisfies {\rm (\bf{f})},   and $w_i$, $\partial_t w_i\in C([0,\yy)\times \ol\Omega)$ for $i=1,2$. If 
 \begin{equation*}
 \begin{cases}
 \dd \partial_t w_1\leq d \int_{\Omega}J(|x-y|)w_1(t,y)\rd y-dw_1(t,x)+f(w_1), & t>0,\; x\in \Omega,\\[3mm]
  \dd \partial_t w_2\geq  d \int_{\Omega}J(|x-y|)w_2(t,y)\rd y-dw_2(t,x)+f(w_2), & t>0,\; x\in \Omega,\\
 w_1(0,x)\leq w_2(0,x), & x\in \ol\Omega,
 \end{cases}
 \end{equation*}
 then $w_1(t,x)\leq w_1(t,x)$ for $(t,x)\in [0,\yy)\times \ol\Omega$. If additionally $w_1(0,x)\leq, \not\equiv w_2(0,x)$ for $x\in \ol\Omega$, then $w_1(t,x)< w_1(t,x)$ for $(t,x)\in  (0,\yy)\times \Omega$.
\end{lemma}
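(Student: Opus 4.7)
The plan is to reduce to showing positivity of $v := w_2 - w_1$, prove this weak comparison by a perturbation-plus-maximum-principle argument, and then promote it to strict inequality via a Duhamel representation combined with the nonlocal strong maximum principle driven by $J(0)>0$. First I would linearise: since $f\in C^1$ under $\mathbf{(f)}$ and each $w_i$ is continuous on the compact set $[0,T]\times\ol\Omega$ for every $T>0$, the $w_i$ are uniformly bounded, and the mean value theorem supplies a bounded measurable $c(t,x)$ with $K:=\|c\|_{L^\yy([0,T]\times\ol\Omega)}<\yy$ and $f(w_2)-f(w_1)=c(t,x)\,v$. Subtracting the two differential inequalities in the lemma then gives
\[
v_t\ge d\int_\Omega J(|x-y|)v(t,y)\rd y-dv(t,x)+c(t,x)v(t,x),\quad v(0,\cdot)\ge 0.
\]

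For the weak comparison I would perturb to $w_\epsilon(t,x):=v(t,x)+\epsilon e^{Bt}$ with small $\epsilon>0$ and $B>K$, substitute back into the previous inequality, and use $\int_\Omega J(|x-y|)\rd y\le 1$ to obtain
\[
\prt w_\epsilon\ge d\int_\Omega J(|x-y|)w_\epsilon(t,y)\rd y-(d-c)w_\epsilon+\epsilon e^{Bt}\Big[B-c+d\Big(1-\int_\Omega J(|x-y|)\rd y\Big)\Big],
\]
whose bracket is bounded below by $B-K>0$. Supposing $w_\epsilon$ vanishes somewhere on $(0,T]\times\ol\Omega$, let $t^*$ be the first such time; continuity and $w_\epsilon(0,\cdot)\ge\epsilon>0$ force $t^*>0$ and a point $x^*\in\ol\Omega$ with $w_\epsilon(t^*,x^*)=0$, $w_\epsilon(t^*,\cdot)\ge 0$ on $\ol\Omega$, and $\prt w_\epsilon(t^*,x^*)\le 0$. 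Evaluating the displayed inequality at $(t^*,x^*)$, the first two right-hand terms are $\ge 0$ and the last is $\ge\epsilon(B-K)e^{Bt^*}>0$, a contradiction. Hence $w_\epsilon>0$ on $[0,T]\times\ol\Omega$, and letting $\epsilon\to 0$ and then $T\to\yy$ yields $v\ge 0$ on $[0,\yy)\times\ol\Omega$.

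For the strict inequality under $v(0,\cdot)\not\equiv 0$, I would multiply $v_t+(d-c)v\ge d\int_\Omega J(|x-y|)v(t,y)\rd y$ by the integrating factor $\mu(t,x):=\exp\bigl(\int_0^t(d-c(s,x))\rd s\bigr)$ and integrate in time, using $v\ge 0$, to obtain the Duhamel-type bound
\[
v(t,x)\ge e^{-(d+K)t}v(0,x)+d\int_0^t e^{-(d+K)(t-s)}\!\int_\Omega J(|x-y|)v(s,y)\,\rd y\,\rd s.
\]
By continuity, $v(0,\cdot)\ge\eta>0$ on some open ball $B_r(x_0)\subset\Omega$, so the first term gives $v>0$ on $[0,\yy)\times B_r(x_0)$; since $J(0)>0$ and $J$ is continuous, there is $\delta>0$ with $J\ge J(0)/2$ on $[0,\delta]$, and the integral term then forces $v(t,x)>0$ for all $t>0$ whenever $x\in\Omega$ lies within distance $\delta$ of $B_r(x_0)$. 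Iterating this spreading finitely many times covers $\ol\Omega$ since $\Omega$ is bounded. The main obstacle is this final spreading step: we only have $J(0)>0$ rather than strict positivity of $J$ on $\R^N$, so positivity propagates only by steps of size $\sim\delta$ through the nonlocal convolution, and one must check that connectedness and boundedness of $\Omega$ permit covering the whole domain in finitely many iterations.
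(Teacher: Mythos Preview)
Your proof is correct. The paper does not supply its own argument for this lemma --- it records the result as well known and refers to \cite{bz2007} --- and the route you take (linearise to $v=w_2-w_1$, perturb by $\epsilon e^{Bt}$ and run a first-touching-time contradiction for $v\ge 0$, then use a Duhamel lower bound together with $\delta$-step propagation driven by $J(0)>0$ for $v>0$) is precisely the standard proof found in that literature. The concern you flag about the final spreading step dissolves once you use that $\Omega$ is a bounded \emph{domain}, hence open and connected: any point of $\Omega$ is joined to $B_r(x_0)$ by a path in $\Omega$, and a compactness argument covers that path with finitely many overlapping $\delta$-balls, so the iteration terminates.
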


\begin{proposition}\label{proposition3.3}
	Suppose  {\rm (\textbf{J})}  holds and $f$ satisfies {\rm (\bf{f})}.     Then  problem \eqref{fix1} admits a unique  solution $w(t,x)$ defined for all $t>0$. Moreover, if $\lambda_1(\Omega)$ is the principal eigenvalue of \eqref{7.1} wit $a=f'(0)$, then the following statements hold.
	\begin{itemize}
		\item[{\rm (i)}]  Problem \eqref{fix1} has a unique positive steady state $w^*\in C(\ol\Omega)$ if and only if $\lambda_1(\Omega)> 0$. 
		\item[{\rm (ii)}]  If $\lambda_1(\Omega)\leq  0$, then $w(t,x)$ converges to $0$ as $t\to\yy$ uniformly for $x\in \ol\Omega$.
		\item[{\rm (iii)}]  If $\lambda_1(\Omega)>0$, then $w(t,x)$ converges to $w^*$ as $t\to\yy$ uniformly for $x\in \ol\Omega$.
	\end{itemize}
\end{proposition}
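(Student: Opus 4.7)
The plan is to combine a standard fixed-point argument for well-posedness with monotone iteration governed by the comparison principle of Lemma \ref{lemma3.4} and the variational properties of $\lambda_1(\Omega)$ established in Proposition \ref{proposition3.1}. The key algebraic consequence of $(\mathbf{f})$ that I will use repeatedly is the strict sub-linearity $f(u) < f'(0)u$ for $u > 0$, obtained by dividing $\sigma f(u) \leq f(\sigma u)$ by $\sigma$ and sending $\sigma \to 0^+$. Local existence and uniqueness of \eqref{fix1} follow by Banach contraction in $C([0,T]\times\ol\Omega)$ for small $T > 0$ using the local Lipschitz continuity of $f$; assumption $(\mathbf{f})$ gives $f(u) \leq 0$ for $u \geq u^*$, so the constant $M := \max\{\|w_0\|_\infty, u^*\}$ is a supersolution and $0$ is a subsolution, and Lemma \ref{lemma3.4} traps the solution in $[0, M]$ and extends it globally, with uniqueness also via Lemma \ref{lemma3.4}.

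For part (i), I would first prove the forward direction: if a positive steady state $w^*$ exists, then by the strict sub-linearity $w^*$ is a strict supersolution of the linear eigenvalue problem \eqref{7.1} with $a = f'(0)$, and a Krein--Rutman / test-against-$\phi_\Omega$ argument yields $\lambda_1(\Omega) > 0$. For the converse, assume $\lambda_1(\Omega) > 0$ and choose $\epsilon > 0$ small enough that
\[
\epsilon(\lambda_1(\Omega) - f'(0))\phi_\Omega(x) + f(\epsilon\phi_\Omega(x)) \geq 0 \text{ on } \ol\Omega,
\]
which is possible since $f(\epsilon\phi_\Omega)/(\epsilon\phi_\Omega) \to f'(0)$ uniformly as $\epsilon \to 0^+$. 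Then $\epsilon\phi_\Omega$ is a stationary subsolution of \eqref{fix1}, while any constant $K \geq u^*$ is a supersolution because $\int_\Omega J(|x-y|)\rd y \leq 1$ and $f(K) \leq 0$. Monotone iteration between these ordered sub- and super-solutions produces a positive steady state $w^*$. Uniqueness follows from the classical sliding argument: if $w_1^*, w_2^*$ were two distinct positive steady states and $\sigma^* := \sup\{\sigma > 0 : \sigma w_1^* \leq w_2^*\} < 1$, then $\sigma^* f(w_1^*) \leq f(\sigma^* w_1^*)$ combined with the strong-positivity content of Lemma \ref{lemma3.4} contradicts the maximality of $\sigma^*$; hence $\sigma^* \geq 1$, and by symmetry $w_1^* = w_2^*$.

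Parts (ii) and (iii) are then established by a common squeeze based on time-monotone envelopes. Let $\ol W$ solve \eqref{fix1} with initial datum $K \geq \max\{\|w_0\|_\infty, u^*\}$. Because $d\int_\Omega J(|x-y|)K\rd y - dK + f(K) \leq 0$, comparing $\ol W(\cdot+s,x)$ with $\ol W(\cdot,x)$ via Lemma \ref{lemma3.4} shows that $\ol W(t,x)$ is nonincreasing in $t$, so it converges as $t \to \yy$ to some $w_+ \geq 0$, uniformly in $x$ by Dini's theorem, and $w_+$ is a nonnegative steady state by passing to the limit in \eqref{fix1} via dominated convergence. If $\lambda_1(\Omega) \leq 0$, part (i) forbids $w_+ > 0$, forcing $w_+ \equiv 0$; the comparison $0 \leq w \leq \ol W$ then gives (ii). If $\lambda_1(\Omega) > 0$, I would build the analogous nondecreasing lower envelope $\underline W$ starting from $\delta\phi_\Omega$ with $\delta > 0$ small enough that $\delta\phi_\Omega$ is both a subsolution and dominated by $w(t_0,\cdot)$ for some small $t_0 > 0$ (the latter using that $w(t_0,\cdot) > 0$ on $\ol\Omega$, a nonlocal strong-positivity consequence of Lemma \ref{lemma3.4}). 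Then $\underline W \nearrow w_-$ with $w_-$ a positive steady state, so $w_- = w^*$ by the uniqueness in part (i); the same argument applied to $\ol W$ gives $w_+ = w^*$, and the squeeze $\underline W \leq w \leq \ol W$ yields (iii). The main obstacle is the borderline case $\lambda_1(\Omega) = 0$ in (ii), where the linearized bound $w \leq M\phi_\Omega e^{\lambda_1 t}$ gives no decay; the monotone $\omega$-limit argument above bypasses it by reducing decay to the non-existence of a positive steady state established in part (i).
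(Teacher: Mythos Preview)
The paper does not prove this proposition; it introduces it (together with Lemma~\ref{lemma3.4}) with the sentence ``The following two results are well known (see, for example, \cite{bz2007})'' and gives no argument. Your sketch --- contraction for local well-posedness, $\epsilon\phi_\Omega$ and a large constant as ordered sub/supersolutions, sliding for uniqueness, and monotone time-envelopes for convergence --- is exactly the standard route recorded in Bates--Zhao and similar references, so you are in effect writing out what the citation stands for rather than departing from it.

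One point to tighten: dividing $\sigma f(u)\le f(\sigma u)$ by $\sigma$ and sending $\sigma\to 0^+$ yields only $f(u)\le f'(0)u$, not the strict inequality you claim. Condition~$(\mathbf{f})$ as written does not forbid $f$ from being exactly linear on an initial interval $[0,u_0]$, and in that degenerate scenario with $\lambda_1(\Omega)=0$ every small multiple $c\phi_\Omega$ is a positive steady state; then your implication ``existence of a positive steady state $\Rightarrow \lambda_1>0$'' breaks down, and part (ii) would fail as stated too. This is really an ambiguity in how literally one reads $(\mathbf{f})$: the cited source \cite{bz2007} works under strict sublinearity, and the paper is tacitly importing that. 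If you add the standard hypothesis that $u\mapsto f(u)/u$ is strictly decreasing on $(0,\infty)$ --- satisfied by every concrete KPP example --- your argument goes through verbatim.
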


\begin{remark}\label{remar3.4}
	If $\Omega=B_L$ for some $L>0$, we then denote $w$ and $w^*$ by $w_L$ and $w_L^*$, respectively. Since $J$ is radially symmetric, we see that $w_L^*$ is radially symmetric in $ B_L$. 
\end{remark}

\begin{lemma}\label{lemma3.6}
	Suppose  {\rm (\textbf{J})}  holds and $f$ satisfies {\rm (\bf{f})}.  Then
	\begin{align*}
	\lim_{L\to\yy}w_L^*=u^*\ \ {\rm locally\ uniformly\ in}\ \mathbb{R^N}.
	\end{align*}
\end{lemma}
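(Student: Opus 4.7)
The plan is to exploit the monotonicity of the family $\{w_L^*\}$ in $L$, pass to a pointwise limit $w_\infty$, and identify $w_\infty$ with $u^*$ by reducing to the nonlocal Cauchy problem on $\R^N$.

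First, I would show that extending $w_L^*$ by zero outside $B_L$ (call this $\tilde w_L^*$) yields a steady-state subsolution of \eqref{fix1} on $B_{L'}$ for every $L' > L$. On $B_L$ this is precisely the defining equation of $w_L^*$; for $x \in B_{L'} \setminus B_L$ one simply checks
\begin{align*}
d\int_{B_{L'}}J(|x-y|)\tilde w_L^*(y)\,dy - d\cdot 0 + f(0) = d\int_{B_L}J(|x-y|)w_L^*(y)\,dy \geq 0.
\end{align*}
Treating $\tilde w_L^*$ as the initial datum for the time-dependent problem on $B_{L'}$ and invoking Lemma \ref{lemma3.4} with $w_{L'}^*$ as stationary supersolution, I get $\tilde w_L^*(x) \leq \tilde w_{L'}^*(x)$ on $\R^N$. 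Since the constant $u^*$ is a steady-state supersolution on any $B_L$, one also has $w_L^* \leq u^*$ uniformly in $L$. Hence $w_\infty(x) := \lim_{L\to\infty}\tilde w_L^*(x)$ is a well-defined pointwise monotone limit on $\R^N$, and monotone convergence applied to the steady-state equation gives
\begin{align*}
d\int_{\R^N}J(|x-y|)w_\infty(y)\,dy - dw_\infty(x) + f(w_\infty(x)) = 0, \quad x \in \R^N.
\end{align*}
By Corollary \ref{coro3.2} and Proposition \ref{proposition3.3}, $w_L^*(0) > 0$ for all large $L$, so $w_\infty(0) > 0$ and hence $w_\infty \not\equiv 0$.

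The crux is to show $w_\infty \equiv u^*$. Let $u(t,x)$ solve the nonlocal Cauchy problem
\begin{align*}
u_t = d\!\left[\int_{\R^N}J(|x-y|)u(t,y)\,dy - u(t,x)\right] + f(u),\quad t>0,\ x\in\R^N,
\end{align*}
with initial datum $w_\infty$; since $w_\infty$ is a stationary solution, $u(t,x)\equiv w_\infty(x)$. Let $v(t,x)$ solve the same Cauchy problem with compactly supported initial datum $\tilde w_{L_0}^*$ for some $L_0$ large enough that $\tilde w_{L_0}^*\not\equiv 0$; by construction $\tilde w_{L_0}^* \leq w_\infty$, so the comparison principle on $\R^N$ gives $v(t,x)\leq u(t,x) = w_\infty(x)$ for all $t>0$. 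The Fisher-KPP spreading result \eqref{u-u^*} for the nonlocal Cauchy problem, applied to $v$, yields $v(t,x) \to u^*$ locally uniformly as $t\to\infty$. Hence $w_\infty(x)\geq u^*$ on $\R^N$, and combined with $w_\infty \leq u^*$ this forces $w_\infty\equiv u^*$. Since each $w_L^*$ is continuous on any fixed compact set $K$ (once $L$ is large enough that $K\subset B_L$) and the monotone convergence $w_L^*\to u^*$ has a continuous (constant) limit, Dini's theorem promotes the pointwise convergence to locally uniform convergence.

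The main obstacle is the step identifying $w_\infty$ with $u^*$: a priori $w_\infty$ could be a nontrivial bounded stationary solution of the nonlocal equation on $\R^N$ strictly below $u^*$, and the spreading property \eqref{u-u^*} for the Cauchy problem is the essential ingredient that rules this out.
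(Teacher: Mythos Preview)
Your argument is correct, but the identification step $w_\infty\equiv u^*$ takes a genuinely different route from the paper. The paper shows directly that $w_\infty$ is constant: for any $x_0\in\R^N$ with $|x_0|=L_0$, the inclusions $B_{L-L_0}\subset B_L(-x_0)\subset B_{L+L_0}$ together with uniqueness of the positive steady state (which gives $w^*(\,\cdot\,;B_L(-x_0))=w_L^*(\,\cdot\,+x_0)$) and the domain monotonicity yield $w^*_{L-L_0}(0)\le w^*_L(x_0)\le w^*_{L+L_0}(0)$; sending $L\to\infty$ forces $w_\infty(x_0)=w_\infty(0)$, and since $u^*$ is the only positive constant solution of the whole-space equation, $w_\infty=u^*$. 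This is entirely self-contained. Your route instead feeds a compactly supported minorant into the nonlocal Cauchy problem and invokes the spreading property \eqref{u-u^*}; this is a legitimate and efficient shortcut, but it imports a nontrivial external result that the paper deliberately avoids. One minor technical caveat: the zero extension $\tilde w_L^*$ is in general discontinuous across $\partial B_L$ (the nonlocal steady state of \eqref{fix1} need not vanish on the boundary), so to stay within the continuity hypotheses of Lemma~\ref{lemma3.4} and of the Cauchy-problem result you cite, you should replace $\tilde w_{L_0}^*$ by a continuous, compactly supported, nontrivial minorant --- a routine fix that does not affect the logic.
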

\begin{proof} To emphasize the dependence on $\Omega$, we rewrite $w^*(x)$ as  $w^*(x; \Omega)$. 
	
	{\bf Step 1}. We show that $w^*(x;\Omega_1)\leq w^*(x;\Omega_2)$ if $\ol B_{L_*}\subset \Omega_1\subset \Omega_2$, which then implies
	$ w_{L_1}^*\leq  w_{L_2}^*$ for $L_*<L_1\leq L_2$, where  $w_{L_i}^*$ is defined as in Remark \ref{remar3.4}. 
	
	Let  $C>0$ be a constant. Denote by $w_i$  the positive solution of \eqref{fix1} with $\Omega=\Omega_i$ and initial function  $w_{i}(0,x)=C$ for $i=1,2$.
	Since
	\begin{align*}
	\int_{\Omega_2}J(x-y)w_{2}(t,y)dy\geq 	\int_{\Omega_1}J(x-y)w_{2}(t,y)dy,
	\end{align*}
	we see that the restriction of $w_{2}$ over $\Omega_1$ is an upper solution, and by Lemma \ref{lemma3.4}, $w_{1}(t,x)\leq w_{2}(t,x)$ for $t\geq 0$ and $x\in \Omega_1$. Then from  Proposition \ref{proposition3.3} (iii), 
	\begin{align*}
	w^*(x; \Omega_1)\leq w^*(x;\Omega_2)\ \ \  \ \ \ {\rm for}\ x\in \Omega_1.
	\end{align*}
	This completes Step 1. 
	
	Due to the monotonicity of $w_L^*$ in $L$, 	we could  define
	\begin{align*}
	{w}_\yy^*(x):=\lim_{L\to \yy} w_L^*(x).
	\end{align*}
	By the dominated convergence theorem,
	it is easy to see that $w_\yy^*$ satisfies
	\begin{align}\label{7.4}
	d \int_{\R^N}J(|x-y|)w_\yy^*(y)\rd y-dw_\yy^*(x)+f(w_\yy^*)=0 \mbox{ for } x\in\R^N.
	\end{align}
	
	{\bf Step 2}. We show that $w_\yy^*$ is a positive constant. 
	
	It suffices to prove that $w_\yy^*(x_0)=w_\yy^*(0)$ for any given $x_0\in \R^N$. 
	Denote   $L_0:=|x_0|$. Then for $L\gg L_0$, 
	\begin{align*}
	B_{L-L_0}\subset  B_L(-x_0)\subset B_{L+L_0},
	\end{align*}
	and from Step 1, 
	\begin{align*}
	w^*_{L-L_0}(x)\leq w^*(x; B_L(-x_0))\leq w^*_{L+L_0}(x). 
	\end{align*}
	We claim that $w^*(x; B_L(-x_0))=w^*_L(x+x_0)$. In fact,  $\td w(x):= w^*(x-x_0; B_L(-x_0))$ for $x\in B_L$ 
	also satisfies 
	\begin{align}\label{7.5}
	d \int_{B_L}J(|x-y|)w(y)\rd y-dw(x)+f(w)=0,\ \ x\in B_L,
	\end{align}
	and then $\td w=w^*_L$ follows from the uniqueness of the positive solution to \eqref{7.5}, which implies $w^*(x; B_L(-x_0))=w^*_L(x+x_0)$.  Hence
	\begin{align*}
	w^*_{L-L_0}(x)\leq w^*_L(x+x_0)\leq w^*_{L+L_0}(x). 
	\end{align*}
	Letting $x=0$ and $L\to \yy$, we obtain
	\begin{align*}
	w_\yy^*(0)\leq w_\yy^*(x_0)\leq w_\yy^*(0),
	\end{align*}
	which gives $w_\yy^*(x_0)=w_\yy^*(0)$.

	{\bf Step 3.} Since $u^*$ is the only constant positive solution of \eqref{7.4}, the conclusion in Step 2 clearly implies
	$w_\yy^*=u^*$, and so $\lim_{L\to\yy} w_L(x)=u^*$.
	By Dini's theorem, the convergence is locally uniform in $x\in\R$.
\end{proof}

\subsection{Proof of Theorem \ref{th1.2} and Theorem \ref{th1.3}}
Throughout this subsection, we assume that   {\rm (\textbf{J})}  holds and $f$ satisfies {\rm (\bf{f})}. Let $(u, h)$ be the unique solution of \eqref{1.4}.

\begin{lemma}\label{lemma3.7}  If $\lim_{t\to\yy} h(t)=h_{\yy}<\yy$, then
	\begin{align}\label{7.6}
	\lim_{t\to\yy}\|u\|_{C[-h(t),h(t)]}=0.
	\end{align}
\end{lemma}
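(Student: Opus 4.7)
The argument proceeds in two stages. First I would prove that $\lambda_1(B_{h_\infty}) \leq 0$, where $\lambda_1$ is the principal eigenvalue of Proposition \ref{proposition3.1} with $a = f'(0)$. Suppose for contradiction that $\lambda_1(B_{h_\infty}) > 0$. By continuity and strict monotonicity of $L \mapsto \lambda_1(B_L)$ (Proposition \ref{proposition3.1}(b)(i)), one can pick $L_1 \in (h_0, h_\infty)$ with $\lambda_1(B_{L_1}) > 0$ and $h_\infty - L_1$ as small as needed. Since $h(t) \uparrow h_\infty$, there is $T_0$ with $h(t) > L_1$ for $t \geq T_0$. On $B_{L_1} \subset B_{h(t)}$, dropping the nonnegative piece of the nonlocal integral over $B_{h(t)} \setminus B_{L_1}$, the restriction $u|_{B_{L_1}}$ is a supersolution of the fixed-domain problem \eqref{fix1} on $B_{L_1}$ with initial datum $u(T_0,\cdot)|_{B_{L_1}}$. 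Lemma \ref{lemma3.4} and Proposition \ref{proposition3.3}(iii) then give
\[
\liminf_{t\to\infty} u(t,r) \geq w^*_{L_1}(r) > 0 \quad\text{uniformly for } r \in \ol B_{L_1}.
\]

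Next I would feed this lower bound into the free-boundary equation in \eqref{1.4}. Since $J(0)>0$ and $J$ is continuous, there exist $\delta_0, c_J>0$ with $J\geq c_J$ on $[0,\delta_0]$. Choosing $L_1$ so that $h_\infty - L_1 < \delta_0/8$, the explicit formula for $\td J$ in Lemma \ref{lemma2.2} shows $\td J(r,\rho) \geq c_* > 0$ on the set $\{r\in[L_1-\delta_0/8,L_1],\ \rho\in[h(t),h(t)+\delta_0/8]\}$ for all large $t$ (since then $\rho-r\leq 3\delta_0/8 < \delta_0$, making the integrand in $\td J$ positive on a set of uniformly positive measure). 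Combined with the lower bound on $u$, this yields
\[
h'(t) = \frac{\mu}{h^{N-1}(t)}\int_0^{h(t)}\int_{h(t)}^{\infty}\td J(r,\rho)\,r^{N-1}u(t,r)\,\rd\rho\,\rd r \geq c_1 > 0
\]
for all large $t$, contradicting $h(t)\to h_\infty<\infty$. Therefore $\lambda_1(B_{h_\infty}) \leq 0$.

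To conclude, let $\bar u$ be the solution of \eqref{fix1} on $\Omega=B_{h_\infty}$ with initial datum $u_0(|x|)\chi_{\ol B_{h_0}}(x)$. Let $\td u$ be the zero extension of $u$ to $B_{h_\infty}$. On $B_{h(t)}$, $\td u$ satisfies the fixed-domain equation with equality since $\int_{B_{h(t)}}J(|x-y|)u\,\rd y = \int_{B_{h_\infty}}J(|x-y|)\td u\,\rd y$; on $B_{h_\infty}\setminus B_{h(t)}$, $\td u\equiv 0$ while the nonlocal integral is nonnegative, so $\td u$ is a subsolution there. Handling the jump of $\partial_t\td u$ across $\{|x|=h(t)\}$ via Remark \ref{rmk3.6} (or by smooth approximation of $h(t)$), Lemma \ref{lemma3.4} gives $\td u \leq \bar u$ on $[0,\infty)\times \ol B_{h_\infty}$. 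Proposition \ref{proposition3.3}(ii) gives $\bar u \to 0$ uniformly on $\ol B_{h_\infty}$, hence $u\to 0$ uniformly, which is \eqref{7.6}.

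The main technical obstacle is the kernel bound in the contradiction step: one needs to translate the uniform lower bound on $u$ in an inner ball into a uniform positive lower bound on $h'(t)$. This hinges on choosing $L_1$ close enough to $h_\infty$ (relative to the scale $\delta_0$ of the positivity of $J$ near zero) and on the Lemma \ref{lemma2.2} representation of $\td J$, which ensures the relevant integrand stays bounded below by a positive constant independent of $t$.
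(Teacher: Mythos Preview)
Your proof is correct and follows essentially the same two-step strategy as the paper: first show $\lambda_1(B_{h_\infty})\leq 0$ by deriving a uniform positive lower bound on $u$ near the boundary (via a fixed-domain comparison) and feeding it into the $h'$-equation to contradict $h_\infty<\infty$, then compare with the fixed-domain problem on $B_{h_\infty}$ to conclude. The paper differs only in implementation details: it bounds $h'$ using the original $N$-dimensional integral $\int_{B_{h(t)}}\int_{\R^N\setminus B_{h(t)}}J(|x-y|)u\,\rd y\,\rd x$ directly rather than the $\td J$ representation from Lemma~\ref{lemma2.2}, and it applies the final comparison to $w_2-u$ on the moving domain $B_{h(t)}$ via Lemma~\ref{lemma4.4} rather than zero-extending $u$ to $B_{h_\infty}$, thereby sidestepping the regularity issue you handle with Remark~\ref{rmk3.6}.
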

\begin{proof}
	We first show that 
	\begin{align*}
	\lambda_1({h_\yy})\leq 0,
	\end{align*}
	where $\lambda_1({h_\yy})$ is the principal eigenvalue of \eqref{7.1} with $\Omega=B_{h_\yy}$ and $a=f'(0)$. Suppose, on the contrary, $\lambda_1(h_\yy)>0$. 
	By Proposition \ref{proposition3.1} and $\lim_{t\to\yy} h(t)=h_\yy$, for any small $\epsilon>0$, there exits a  large constant $T=T_\epsilon>0$ such that for all $t\geq T$, 
	\begin{align}\label{7.7a}
	h(t)\geq h_\yy-\epsilon\ {\rm and}\  \lambda_1({h(t)})>0.
	\end{align}
	
	Let $w_1(t,x)$ be the solution of \eqref{fix1} with $\Omega$ replaced by $B_{h(T)}$,
	and initial function $w_1(0,l)=  u(T,x)$ for $x\in B_{h(T)}$.   By the comparison principle in Lemma \ref{lemma3.4}  we obtain
	\begin{align*}
	w_1(t,x)\leq 	 u(t+T,x)	\ \   \ \  {\rm for}\  (t,x)\in [0,\yy)\times B_{h(T)},
	\end{align*}
	Recalling that $\lambda_1(h(T))>0$, we can use Proposition \ref{proposition3.3} to conclude that 
	\begin{align*}
	w_1^*(x):=\lim_{t\to\yy} w_1(t,x)>0\ \ {\rm uniformly\ for\ } x\in \ol B_{h(T)}.
	\end{align*}
	Hence
	\begin{align*}
	  \inf_{x\in B_{h(T)}} w_1^*(x)\leq \liminf_{t\to\yy} \inf_{x\in B_{h(T)}}u(t,x),
	\end{align*}
	and there exists $T_1\geq T$ such that for $t\geq T_1$,
	\begin{align}\label{7.7}
	0<C_1:=\frac 12 \min_{x\in B_{h(T)}}w_1^*(x)	\leq \frac 12 w_1^*(x)< u(t,x) \ \ \ \ \mbox{ for } x\in B_{h(T)}.
	\end{align}
	Due to $J(0)>0$, by choosing $ \epsilon$ small enough we may assume that 
	\begin{align*}
	C_2:=\min_{r\in [0, 4\epsilon]}J(r)>0.
	\end{align*}
	We also have 
	\begin{align*}
	h'(t)&= \frac{\mu} {|\partial B_{h(t)}|} \dd  \int_{B_{h(t)}} u(t,x)\lf[\int_{\R^N\backslash B_{h(t)}}J(|x-y|)\rd y\rr]\rd x\\
	&\geq \frac{\mu}{|\partial B_{h_\yy}|} \dd \int_{B_{h(t)-\epsilon} \backslash B_{h(t)-2\epsilon}} u(t,x)\lf[\int_{B_{h(t)+\epsilon}\backslash B_{h(t)}}J(|x-y|)\rd y\rr]\rd x.
	\end{align*}
	Denote $	\Omega_x(t):=[B_{h(t)+\epsilon}\backslash B_{h(t)}]\cap B_{4\epsilon}(x)$. It is clear that for $x\in B_{h(t)-\epsilon} \backslash B_{h(t)-2\epsilon}$ and $t\geq T$,
	\begin{align*}
	|\Omega_x(t)|\geq C_3>0
	\end{align*}
	for some $C_3$ depending on $\epsilon$ but not on $x$ or $t$.
	Hence for $t\geq T_1$,
	\begin{equation}\begin{aligned}\label{7.8}
	h'(t)& \geq \frac{\mu}{|\partial B_{h_\yy}|}\dd  \int_{B_{h(t)-\epsilon} \backslash B_{h(t)-2\epsilon}} u(t,x)\lf[\int_{\Omega_x(t)}J(|x-y|)\rd y\rr]\rd x\\
	&\geq\frac{\mu}{|\partial B_{h_\yy}|}  \dd \int_{B_{h(t)-\epsilon} \backslash B_{h(t)-2\epsilon}} u(t,x)C_2C_3\rd x\\
	&\geq \frac{\mu}{|\partial B_{h_\yy}|} \dd \mu |B_{h(t)-\epsilon} \backslash B_{h(t)-2\epsilon}| C_1C_2C_3\\
	&\to \frac{\mu}{|\partial B_{h_\yy}|} \dd \mu |B_{h_\infty-\epsilon} \backslash B_{h_\infty-2\epsilon}| C_1C_2C_3>0 \mbox{ as } t\to\infty.
	\end{aligned}
	\end{equation}
		However, \eqref{7.8} contradicts with the fact $h_\yy<\yy $. Therefore, $\lambda_1(h_\yy)\leq 0$.
	
	Let $w_2$ be the solution of \eqref{fix1} with $\Omega=B_{h_\yy}$ and $w_2(0,x)=u_0(x)$ for $x\in B_{h_\yy}$, where we extend the domain of $u_0(x)$  to  $B_{h_\yy}$ by defining $u_0(x)=0$ for $x\in B_{h_\yy}\backslash B_{h_0}$. 	By Lemma \ref{lemma4.4} it is easily seen that
	\begin{align*}
	u(t,x)\leq w_2(t,x),\ \ \ \ t\geq 0,\; x\in B_{h(t)}.
	\end{align*}
	Because of  $\lambda_1(h_\infty)\leq  0$, it follows from  Proposition \ref{proposition3.3} that 
	$\lim_{t\to\infty}w_2(t,x)=0$ uniformly for $x\in \ol B_{h_\yy}$, which implies 
	\eqref{7.6}. 
\end{proof}

\begin{lemma}\label{lemma3.8} 
	If $\lambda_1(h_0)<0$ and  $\mu$ is sufficiently small, then
	\begin{align}\label{7.10a}
	\lim_{t\to\yy}\|u\|_{C(\ol B_{h(t)})}=0.
	\end{align}
	
\end{lemma}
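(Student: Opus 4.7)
The plan is to construct an explicit radial upper solution whose amplitude decays to zero and whose moving boundary stays bounded, thus forcing $h_\infty<\infty$ and invoking Lemma \ref{lemma3.7}. First, since $\lambda_1(h_0)<0$ and $L\mapsto\lambda_1(L)$ is continuous and strictly increasing by Proposition \ref{proposition3.1}, I fix $L_1>h_0$ with $\lambda_1(L_1)<0$, and let $\phi$ be the positive radial principal eigenfunction of \eqref{7.1} on $\Omega=B_{L_1}$ with $a=f'(0)$, normalized so that $\|\phi\|_\infty=1$; $\phi$ attains a positive minimum on $\overline{B_{h_0}}$. Then pick $\delta\in(0,L_1-h_0)$, $\gamma\in(0,-\lambda_1(L_1))$, and $M>0$ large enough that $M\phi(r)\geq u_0(r)$ for $r\in[0,h_0]$.

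Next, I set
\[
\bar u(t,r):=Me^{-\gamma t}\phi(r),\qquad \bar h(t):=h_0+\delta(1-e^{-\gamma t}),
\]
and verify that $(\bar u,\bar h)$ is an upper solution in the sense of Lemma \ref{lemma3.4a}. Using the eigenvalue identity
\[
d\int_{B_{L_1}}J(|x-y|)\phi(|y|)\,dy=\bigl(d-f'(0)+\lambda_1(L_1)\bigr)\phi(|x|),
\]
together with $B_{\bar h(t)}\subset B_{L_1}$ and the KPP bound $f(s)\leq f'(0)s$ (which follows from $\sigma f(s)\leq f(\sigma s)$ by letting $\sigma\to 0^+$), a direct computation gives
\[
\bar u_t-d\int_{B_{\bar h(t)}}J(|x-y|)\bar u(t,|y|)\,dy+d\bar u-f(\bar u)\geq Me^{-\gamma t}\phi(r)\bigl[-\gamma-\lambda_1(L_1)\bigr]\geq 0,
\]
by the choice of $\gamma$. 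The initial and boundary inequalities are immediate from the choice of $M$ and $\phi>0$.

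It remains to verify the free-boundary inequality, which is the only place the smallness of $\mu$ enters. Using $\bar h(t)\geq h_0$ and $\phi\leq 1$, I bound
\[
\int_0^{\bar h(t)}\!\int_{\bar h(t)}^{\infty}\!\tilde J(r,\rho)\,r^{N-1}\bar u(t,r)\,d\rho\,dr\leq Me^{-\gamma t}\int_0^{L_1}\!r^{N-1}\!\left[\int_0^{\infty}\!\tilde J(r,\rho)\,d\rho\right]dr=:C_1Me^{-\gamma t},
\]
with $C_1$ finite (since $\int_{\R^N}J=1$ implies $\int_0^\infty\tilde J(r,\rho)\,d\rho\leq 1$ after rescaling, independent of $\mu$). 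The inequality
\[
\bar h'(t)=\delta\gamma e^{-\gamma t}\geq \frac{\mu}{\bar h^{N-1}(t)}\,C_1Me^{-\gamma t}
\]
then holds provided $\mu\leq \mu_*:=\delta\gamma h_0^{N-1}/(C_1M)$. Applying Lemma \ref{lemma3.4a} together with Remark \ref{rmk2.6} (with $r_*\equiv 0$), I conclude $h(t)\leq \bar h(t)\leq h_0+\delta<L_1$ for all $t\geq 0$, so $h_\infty<\infty$, and \eqref{7.10a} follows from Lemma \ref{lemma3.7}.

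The delicate point is ensuring that the constants $\delta,\gamma,M,C_1$ depend only on $(J,f,u_0,h_0,L_1)$ and not on $\mu$, so that the $\mu$-smallness cleanly isolates the free-boundary condition while the supersolution inequality requires no smallness; this is handled by fixing $L_1,\delta,\gamma,\phi$ first and then choosing $M$ to dominate $u_0$, independently of $\mu$.
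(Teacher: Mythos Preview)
Your proof is correct and follows essentially the same route as the paper's: both construct an upper solution of the form $\bar u(t,r)=Me^{-\gamma t}\phi(r)$, with $\phi$ the principal eigenfunction on a slightly enlarged ball where the eigenvalue is still negative, paired with a bounded $\bar h(t)=h_0+\delta(1-e^{-\gamma t})$; the PDE inequality comes from the eigenvalue identity plus $f(s)\le f'(0)s$, and the free-boundary inequality is where smallness of $\mu$ enters. The only cosmetic difference is that the paper parametrizes $M$ through $\mu$ (so $M\to\infty$ as $\mu\to 0^+$) while you fix $M$ first and then shrink $\mu$; your organization makes the separation of roles slightly cleaner, but the argument is the same.
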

\begin{proof} By Lemma \ref{lemma3.7}, it suffices  to show $h_\yy<\yy$. 
	Since $\lambda_1(L)$ is continuous in $L$ by Proposition \ref{proposition3.3}, we obtain from $\lambda_1(h_0)<0$ that  $\lambda_1(h_1)<0$ for some $h_1:=h_0+\epsilon$
	with $\epsilon>0$ small. Let $\phi$ be a positive eigenfunction  corresponding to $\lambda_1(h_1)<0$.  Define
	\begin{align*}
	&\delta:=-\lambda_1(h_1)/2, \; C:= (h_1-h_0)/\mu,\; M:= \delta C|\partial B_{h_0}|\Big(\!\int_{B_{h_1}}\phi(x)\rd x\Big)^{-1},\\
	&\ol h(t):=h_0+\mu C [1-e^{-\delta t}],\ \ \ol u(t,x):=Me^{-\delta t} \phi(x),\ \ \ \ t\geq 0,\; x\in B_{h_1}.
	\end{align*}
	Clearly, $\ol h(t)\in [h_0, h_1)$ for $t\geq 0$.  By ({\bf f}\,) we have
	\[
	f(\bar u)\leq f'(0) \bar u.
	\]
	
	We next use Remark \ref{rmk2.6} to show that 
	\begin{align}\label{7.10}
	B_{h(t)}\subset B_{\bar h(t)}, \ \  \ t\geq 0
	\end{align}
	for all small $\mu>0$, which then implies
	\begin{align*}
	h(t)\leq \bar h(t)\leq h_1<\yy. 
	\end{align*}
	Using the equation satisfied by $\phi$, we  see
	\begin{align*}
	&\dd \bar u_t-d \int_{B_{\bar h(t)}}J(|x-y|)\bar u(t,y)\rd y+d\bar u(t,x)-f(\bar u)\\
	=&Me^{-\delta t}\lf[ -\delta \phi(x)-d \int_{B_{\bar h(t)}}J(|x-y|) \phi(y)\rd y+d \phi(x)\rr]-f(\bar u)\\
	\geq &Me^{-\delta t}[f'(0)-\lambda_1(h_1)-\delta]\phi(x)-f(\bar u)\\
	=& \delta Me^{-\delta t}\phi(x)+f'(0)\bar u -f(\bar u)
	\geq 0\ \ \ \ \ \ \ {\rm for}\  t>0, \; x\in  B_{\bar h(t)}.
	\end{align*}
	Since   $h_0\leq \bar h(t)\leq h_1$ and $\dd\int_{\R^N}J(|y|)\rd y=1$, we have, for $t>0$,
	\begin{align*}
	&\frac{\mu}{|\partial B_{ h(t)}|}\dd \int_{B_{\bar h(t)}}\bar  u(t,x)\lf[\int_{\R^N\backslash B_{\bar h(t)}}J(|x-y|)\rd y\rr]\rd x\\
	\leq& \frac{\mu}{|\partial B_{ h_0}|} \dd \int_{B_{ h_1}}\bar  u(t,x)\rd x =\frac{\mu Me^{-\delta t}} {|\partial B_{ h_0}|}\int_{B_{ h_1}}  \phi(x)\rd x=\mu C \delta e^{-\delta t}=\bar h'(t).
	\end{align*}
	It is clear that 
	\begin{align*}
	\bar u(t,x)>0 \ \mbox{ for } \ t>0,\;  x\in \partial B_{\bar h(t)}.
	\end{align*}
	Moreover, we may choose $\mu>0$ small so that $M$ is large enough such that  	
	\begin{align*}
	u_0(x)\leq M\phi(x)= \bar u(0,x) \ \mbox{ for } \ x\in B_{h_0},
	\end{align*}
	which would allow us to apply Remark \ref{rmk2.6}  to obtain \eqref{7.10}. 
\end{proof}

\begin{lemma}\label{lemma3.9}
	If $\lambda_1(h(t_0))\geq 0$ for some $t_0\geq 0$, then  $h_\yy=+\yy$ and 
	\begin{align}\label{7.12}
	\lim_{t\to\yy}u(t,r)=u^*\ \ \ \ {\rm locally\ uniformly\ in}\ \R_+,
	\end{align}
	where $\lambda_1(h(t_0))$ is the eigenvalue of \eqref{7.1} with $a=f'(0)$ and $\Omega=B_{h(t_0)}$.
\end{lemma}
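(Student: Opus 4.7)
The plan is to exploit the monotonicity of $h(t)$ and of $\lambda_1(L)$ in Proposition \ref{proposition3.1}: since $h'(t)>0$ and $\lambda_1$ is strictly increasing in $L$, the hypothesis $\lambda_1(h(t_0))\ge 0$ propagates to $\lambda_1(h(t))>0$ for all $t>t_0$. The proof then splits into two independent steps: first showing $h_\infty=\infty$, and then sandwiching $u$ between a fixed-boundary subsolution (which converges to $w_L^*$) and a spatially constant ODE supersolution (which converges to $u^*$), and using Lemma \ref{lemma3.6} to pass to the limit.

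For the first step, I would argue by contradiction. Suppose $h_\infty<\infty$; then Lemma \ref{lemma3.7} forces $u(t,\cdot)\to 0$ uniformly. On the other hand, pick any $t_1>t_0$, so that $h(t_1)>h(t_0)$ and hence $\lambda_1(h(t_1))>0$. For $t\ge t_1$, the actual solution $u$, restricted to $B_{h(t_1)}$, satisfies
\[
u_t\ge d\int_{B_{h(t_1)}}J(|x-y|)u(t,y)\,dy-du+f(u),
\]
because $u\ge 0$ on $B_{h(t)}\setminus B_{h(t_1)}$. Thus $u$ is a supersolution of the fixed-boundary problem \eqref{fix1} on $B_{h(t_1)}$ with initial data $u(t_1,\cdot)>0$. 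Comparison (Lemma \ref{lemma3.4}) together with Proposition \ref{proposition3.3}(iii) gives $\liminf_{t\to\infty}u(t,x)\ge w^*_{h(t_1)}(x)>0$ on $B_{h(t_1)}$, contradicting $u\to 0$. Hence $h_\infty=\infty$.

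For the second step (convergence to $u^*$), the lower bound is obtained by iterating the previous argument: for any $L>h(t_0)$ (so $\lambda_1(L)>0$), pick $T_L$ with $h(T_L)\ge L$; the same subsolution comparison gives $\liminf_{t\to\infty}u(t,x)\ge w_L^*(x)$ on $B_L$. Letting $L\to\infty$ and applying Lemma \ref{lemma3.6} yields $\liminf_{t\to\infty}u(t,x)\ge u^*$ locally uniformly. For the upper bound, let $U(t)$ solve the ODE $U'=f(U)$ with $U(0)=M_0:=\max(\|u_0\|_\infty,K_0)$, which by $\mathbf{(f)}$ decays monotonically to $u^*$. Since $\int_{B_L}J(|x-y|)\,dy\le 1$, the spatially constant $U(t)$ is a supersolution of the fixed-domain equation \eqref{fix1} on any ball $B_L$. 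Fix $T>0$ arbitrary and choose $L\ge h(T)$; then on $[0,T]\times B_L$ the extension of $u$ by zero satisfies the equation on $B_L$ with $u(0,\cdot)\le M_0=U(0)$, so Lemma \ref{lemma3.4} yields $u(t,x)\le U(t)$. Since $T$ was arbitrary, $\limsup_{t\to\infty}u(t,x)\le u^*$ for all $x$. Combining gives the desired locally uniform convergence.

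The main obstacle is the correct deployment of the comparison principles when the underlying spatial domain is moving: in particular, ensuring that $u$ genuinely acts as a super- (or sub-)solution of an appropriate fixed-domain problem, and that the ODE supersolution $U(t)$ can be validly compared to $u$ on an arbitrarily large fixed ball once $h_\infty=\infty$ has been established. A minor subtlety is handling the boundary case $\lambda_1(h(t_0))=0$, which is immediately resolved by the strict monotonicity in $L$ after an arbitrarily small shift in time.
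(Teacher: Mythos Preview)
Your proposal is correct and follows essentially the same strategy as the paper. The only differences in execution are that the paper proves $h_\infty=\infty$ more directly---citing the fact, established inside the proof of Lemma \ref{lemma3.7}, that $h_\infty<\infty$ forces $\lambda_1(h_\infty)\leq 0$, which immediately contradicts $\lambda_1(h_\infty)>\lambda_1(h(t_0))\geq 0$---and obtains the upper bound $u\leq Z(t)$ via the free boundary comparison (Remark \ref{rmk2.6}) rather than a fixed-domain one (your route works too, but note that the zero-extension of $u$ is a \emph{subsolution}, not an exact solution, on $B_L$, and $u_t$ is not continuous across $r=h(t)$, so strictly speaking one should invoke Remark \ref{rmk3.6}).
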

\begin{proof}
	We first show that $h_\yy=\yy$. Otherwise,  $h_\yy<\yy$ and by Lemma \ref{lemma3.7}, $\lambda_1(h_\yy)\leq 0$, which contradicts with 
	$\lambda_1(h_\yy)>\lambda_1(h(t_0))\geq 0$. Thus  $h_\yy=\yy$.

	It remains to verify \eqref{7.12}, which would follow if we can show  
	\begin{align}
	&\limsup_{t\to\yy} \max_{r\in [0, h(t)]} u(t,x)\leq u^*,\label{7.13}\\
	&\liminf_{t\to\yy} \inf_{r\in [0, R]} u(t,r)\geq u_* \ \ {\rm for\ any\ } R>0.\label{7.14}
	\end{align}
	
	 Let $Z(t)$ is the unique solution of the ODE problem $Z'=f(Z)$ with  $Z(0)=\|u_0\|_\infty$. By Remark \ref{rmk2.6} we have 
	 $u(t,r)\leq Z(t)$ for $t>0$ and $r\in [0, h(t)]$, and so \eqref{7.13} follows from  $\lim_{t\to\yy} Z(t)=u^*$.
	
	By Lemma \ref{lemma3.6}, 
	\begin{align*}
	\lim_{L\to\yy}w_L^*(r)=u^*\ \ \ \ {\rm locally\ uniformly\ in}\ \mathbb{R}_+.
	\end{align*}
	Therefore in order to show \eqref{7.14}, it suffices to show that for any given large constant $K>h(t_0)$, 
	\begin{align}\label{7.15b}
	\liminf_{t\to\yy} \inf_{r\in [0, K]} u(t,r)\geq \inf_{r\in [0, K]}w_L^*(r)\ \ \ \  {\rm for\ any\ } L\geq K.
	\end{align}
	In fact,  for such $L$ there is $t_L>0$ such that $L=h(t_L)$,  and then from  $\lambda_1(L)>\lambda_1(h(t_0))\geq 0$ and Proposition \ref{proposition3.3}, we have
	\begin{align*}
	\lim_{t\to\yy} w_L(t,r)= w_L^* \ \ {\rm uniformly\ for}\ r\in [0, K],
	\end{align*}
	where  $w_L$ is the solution of \eqref{fix1} with initial function $w_L(0,r)=u(t_L,r)$  for $r\in [0, L]=[0, h(t_L)]$. By Lemma \ref{lemma3.4}, we obtain
	\begin{align*}
	u(t+t_L, r)\geq w_L(t,r) \ \mbox{ for } \ t\geq 0,\; r\in [0, K],
	\end{align*}
	which yields \eqref{7.15b}. 
\end{proof}

\noindent {\bf Proof of Theorem \ref{th1.2}\,:}   If $\lim_{t\to\yy}h(t)=h_\yy=\yy$, by Corollary  \ref{coro3.2},  there is $t_0\geq 0$ such that    $\lambda_1(h(t_0))>0$.  By  Lemma \ref{lemma3.9}, we see that spreading happens. 

If $h_\infty<\infty$, then \eqref{7.6} holds by Lemma \ref{lemma3.7}, which implies that vanishing happens. 
$\hfill \Box$

\noindent {\bf Proof of Theorem \ref{th1.3}\,:} (1) By Proposition \ref{proposition3.1}, $\lambda_1(h_0)> 0$ for any $h_0>0$, and the conclusion in part (1) follows directly from Lemma \ref{lemma3.9}.

(2) From Corollary  \ref{coro3.2}, $\lambda_1({h_0})\geq 0$ for $h_0\geq L_*$, and so we can use Lemma \ref{lemma3.9} to conclude that  spreading happens.  This proves (2)(i).

Next we consider (2)(ii). Under the assumptions for this case,  by Corollary  \ref{coro3.2} we have $\lambda_1(h_0)<0$.  From  Lemma  \ref{lemma3.8}, for any given admissible initial function  $u_0$, vanishing happens for all small $\mu>0$, say $\mu\in (0, \underline\mu]$.

To stress the dependence of the unique positive solution $(u, h)$ of \eqref{1.4} on $\mu$, we will denote it by
$(u_\mu,h_\mu)$. 
We show next that there exists $\ol\mu>0$ such that spreading happens for $\mu\geq \ol\mu$. To this end, we first prove that there exists $\mu_0>0$ such that
\bes\label{7.16b}
h_{\mu_0}(t_0)> L_* \mbox{ for  some } t_0>0.
\ees
 If \eqref{7.16b} is not true, then
\begin{align*}
h_{\mu}(t)\leq L_*\ {\rm for\ all}\ t>0\ {\rm and}\ \mu>0. 
\end{align*}
By Remark  \ref{rmk2.6}, both $u_\mu$ and $h_\mu$ are nondecreasing in $\mu$. Hence 
$h_\yy(t):=\lim_{\mu\to \yy}h_\yy(t)$ is well-defined and $h_\yy(t)\leq L_*$. Besides, since for each $\mu>0$,  $h_\mu(t)$ is nondecreasing in $t$, the function $h_\yy(t)$ is also nondecreasing in $t$. 
Define  $\td h_\yy=\lim_{t\to\yy}h_\yy(t)$. Then from the monotonicity of $h_\mu(t)$ in $\mu$ and $t$, for small $\epsilon>0$ there are $t_1>0$ and $\mu_1>0$ such that 
\begin{align*}
h_\mu(t)\in (\td h_\yy-\epsilon, \td h_\yy] \mbox{ for } \ t\geq t_1,\; \mu\geq \mu_1,
\end{align*}
and from the monotonicity of $u_\mu$ with respect to $\mu$, we obtain for all $\mu\geq \mu_1$,
\begin{align*}
0<u_{\mu_1}(t,r)\leq  u_{\mu}(t,r) \ \ \ \ \mbox{ for } t\geq t_1,\ r\in [0, \td h_\yy-2\epsilon]\subset  [0, h_\mu(t_1)-\epsilon]\subset  [0, h_{\mu_1}(t_1)].
\end{align*}
Let
\begin{align*}
C_1:={\rm min}_{\{t\in [t_1,t_1+1],\; r\in[0, \td h_\yy-2\epsilon]\}}u_{\mu_1}(t,r)>0.
\end{align*}
Due to $J(0)>0$, if $\epsilon$ is sufficiently small, we have
\begin{align*}
C_2:=\min_{r\in [0,4\epsilon]}J(r)>0.
\end{align*}
Then for  $\mu\geq \mu_1$,
\begin{align*}
h_\mu(t_1+1)-h_\mu(t_1)&=\dd \int_{t_1}^{t_1+1}\frac{\mu}{|\partial B_{h(s)}|}\int_{B_{h(s)}} u(s,|x|)\lf[\int_{\R^N\backslash B_{h(s)}}J(|x-y|)\rd y\rr]\rd x\rd s\\
&\geq \frac{\mu}{|\partial B_{\td h_\yy}|} \int_{t_1}^{t_1+1}\int_{B_{\td h_\yy-2\epsilon} \backslash B_{\td h_\yy-3\epsilon}} u(s,x)\lf[\int_{B_{\td h_\yy+\epsilon}\backslash B_{\td h_\yy}}J(|x-y|)\rd y\rr]\rd x\rd s\\
&\geq \frac{ \mu C_1}{|\partial B_{\td h_\yy}|} \int_{B_{\td h_\yy-2\epsilon} \backslash B_{\td h_\yy-3\epsilon}} \lf[\int_{B_{\td h_\yy+\epsilon}\backslash B_{\td h_\yy}}J(|x-y|)\rd y\rr]\rd x.
\end{align*}
It is clear that for $x\in B_{\td h_\yy-2\epsilon} \backslash B_{\td h_\yy-3\epsilon}$, there exists $C_3>0$ independent of $x$ such that
\begin{align*}
|\Omega_x|\geq C_3, \ \mbox{where $\Omega_x:=[ B_{\td h_\yy+\epsilon}\backslash B_{\td h_\yy}]\cap B_{4\epsilon}(x)$.}
\end{align*}
Hence
\begin{align}
h_\mu(t_1+1)-h_\mu(t_1)&\geq  \frac{ \mu C_1}{|\partial B_{\td h_\yy}|}  \int_{B_{\td h_\yy-2\epsilon} \backslash B_{\td h_\yy-3\epsilon}} C_2C_3\rd x\to \yy\ {\rm as}\ \mu\to\yy.
\end{align}
which  contradicts with the fact $h_\mu(t_1+1)-h_\mu(t_1)<\td h_\yy\leq L_*$. 
This proves \eqref{7.16b}.

Making use of \eqref{7.16b} and Corollary  \ref{coro3.2},  we see $\lambda_1(h_{\mu_0}(t_0))>0$, and then by Lemma \ref{lemma3.9} we see that  spreading happens when $\mu=\mu_0$. 

Note that  both $u_\mu$ and $h_\mu$ are nonincreasing in $\mu>0$.   The above proved facts then indicate that   vanishing happens for all small $\mu>0$ and  spreading happens for all large $\mu>0$. Define
\[
\mu_*:=\sup\{\mu^0>0: \mbox{ vanishing happens for } \mu\in (0, \mu^0]\}.
\]
Then we can follow the simple argument in the proof of  \cite[Theorem 3.14] {cdjfa} to deduce that vanishing happens for $0<\mu\leq \mu_*$ and spreading happens for $\mu>\mu_*$. The proof is now complete.
$\hfill \Box$

\section{Spreading speed }
In this section, we prove Theorem \ref{th1.6}. The analysis is presented in three subsections.
\subsection{Semi-wave }
If $f$ satisfies ({\bf f}), then it is easily seen that for all small $\sigma>0$, say $\sigma\in (-\epsilon_0,\epsilon_0)$,
\[
f_\sigma (u):=\sigma  u+f(u)
\]
also satisfies ({\bf f}), with $u^*$ replaced by some uniquely determined $u^*_\sigma$.

The  conclusions below about semi-wave solutions will be used frequently in the rest of this paper. We will use the following assumptions.
\begin{enumerate}[leftmargin=3.8em] 
	\item[\textbf{(P):}] $P\in C(\mathbb R)\cap L^\infty(\mathbb R)$,\; $P$ is even and nonnegative,\ $P(0)>0$ and $d(\|P\|_{L^1}-1)\in (-\epsilon_0,\epsilon_0)$.
	\item[\textbf{(P1):}] $\dd\int_{0}^{\yy} xP(x)\rd x<\yy$.
\end{enumerate}
\begin{proposition}\label{prop4.1} Let  $d$  and $\mu$ be positive constants and $f$ be a function satisfying {\rm ({\bf f})}, with $\epsilon_0>0$ given as above.
	\begin{itemize} 
		\item[{\rm (1)}] Assume $P$ satisfies {\rm \textbf{(P)}}. Then the following problem 
		\begin{equation}\label{semi1}
		\begin{cases}
		\dd d\int_{-\yy}^{0}P(x-y) \phi(y) {\rm d}y-d\phi+c\phi'(x)+f(\phi) =0,&
		x<0,\\[2mm]
		\dd	\phi(-\yy)=\hat u^*,\ \ \phi(0)={0},
		\end{cases}
		\end{equation}
		with
		\begin{align}\label{semi2}
		c=\mu\int_{-\yy}^{0} \phi(x)\int_{0}^{\yy}P(x-y)\rd y\rd x. 
		\end{align}
		admits a unique solution $(c,\phi)$ with $\phi$ monotone
		 if and only if  {\rm \textbf{(P1)}} holds, where $\hat u^*$ is the unique positive root of the equation $\hat f(u):=d(\|P\|_{L^1}-1)u+f(u)=0$.
		\item[{\rm (2)}] Let $\{P_n\}_{n=1}^{\yy}$ be a  sequence  with each  $P_n$ satisfying {\rm \textbf{(P)} }and {\rm \textbf{(P1)}}. Denote by $(c_n,\phi_n)$  the unique solution of \eqref{semi1}-\eqref{semi2} with $P=P_n$. Suppose $P_n(x)$  converges to a function $\wtd P(x)$ satisfying {\rm \textbf{(P)}}, locally uniformly for $x\in \R$. 
		\begin{itemize}
			\item[{\rm (i)}]    If $\wtd P(x)$ satisfies {\rm \textbf{(P1)}} {and  there is $Q\in L^1(\R)$ satisfying $\dd\int_{0}^\yy Q(x)x\rd x<\yy$ such that $P_n\leq Q$},
			then 
			\begin{align*}
			\lim_{n\to\yy} \phi_n=\td\phi,\ \lim_{n\to \yy} c_n=\td c.
			\end{align*} 
			where $(\td c,\td \phi)$ is the unique solution of \eqref{semi1}-\eqref{semi2} with $P$ replaced by  $\wtd P$.
			\item[{\rm (ii)}] If  $\wtd P$  does not  satisfy {\rm \textbf{(P1)}},  then 
			\begin{align*}
			\lim_{n\to\yy} c_n=\yy.
			\end{align*}
		\end{itemize}
	\end{itemize}
\end{proposition}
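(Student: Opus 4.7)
The strategy is to reduce every assertion to the probability-kernel semi-wave result \cite[Theorem 1.2]{dlz2019} that underlies Proposition \ref{prop1.4}, via a normalization that turns $P$ into a probability density. Set $\kappa:=\|P\|_{L^1(\R)}$, $\ol P:=\kappa^{-1}P$, and $\ol d:=d\kappa$. The hypothesis $d(\kappa-1)\in(-\epsilon_0,\epsilon_0)$ ensures that the shifted nonlinearity $\hat f(u):=d(\kappa-1)u+f(u)$ still satisfies ({\bf f}), with a unique positive root $\hat u^*$; this is exactly why $\epsilon_0$ was introduced at the start of Section 5. A direct rearrangement (using $-d\phi=-\ol d\phi+d(\kappa-1)\phi$) gives
\[d\int_{-\yy}^{0}P(x-y)\phi(y)\rd y-d\phi+c\phi'+f(\phi)=\ol d\int_{-\yy}^{0}\ol P(x-y)\phi(y)\rd y-\ol d\phi+c\phi'+\hat f(\phi),\]
while \eqref{semi2} becomes $c=(\kappa\mu)\int_{-\yy}^{0}\phi(x)\int_0^\yy \ol P(x-y)\rd y\rd x$. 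Thus \eqref{semi1}--\eqref{semi2} is equivalent to the semi-wave problem with probability kernel $\ol P$, diffusion $\ol d$, nonlinearity $\hat f$, and coefficient $\kappa\mu$. Since (P1) for $P$ is equivalent to $\int_0^\yy x\ol P(x)\rd x<\yy$, part (1) follows immediately from \cite[Theorem 1.2]{dlz2019}.

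For part (2)(i), I would normalize each $P_n$ and $\wtd P$ to probability kernels $\ol P_n$, $\ol{\wtd P}$ with $\kappa_n\to\wtd\kappa:=\|\wtd P\|_{L^1}$, and let $\hat f_n$ denote the correspondingly shifted nonlinearities, so $\hat f_n\to\hat{\wtd f}$ in $C^1_{\rm loc}$ and $\hat u^*_n\to\wtd u^*$. The profiles $\phi_n$ are monotone between $0$ and $\hat u^*_n$, hence uniformly bounded. Writing the equation as $c_n\phi_n'=\ol d_n\phi_n-\ol d_n\int_{-\yy}^{0}\ol P_n(x-y)\phi_n(y)\rd y-\hat f_n(\phi_n)$ gives local uniform bounds on $\phi_n'$ once $c_n$ is shown to be bounded above and below; the upper bound is immediate from \eqref{semi2} and $\phi_n\le M$, while the lower bound comes from the positivity argument of \cite{dlz2019} together with a uniform lower envelope of the $\ol P_n$. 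Arzel\`a--Ascoli extracts a subsequence $(c_{n_k},\phi_{n_k})\to(c^*,\phi^*)$; dominated convergence, with dominating function derived from $Q$, lets me pass to the limit in the integral equation, and monotonicity preserves the boundary data $\phi^*(0)=0$ and $\phi^*(-\yy)=\wtd u^*$. Uniqueness from part (1) applied to $\wtd P$ forces $(c^*,\phi^*)=(\td c,\td\phi)$, so the full sequence converges.

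For part (2)(ii), I argue by contradiction: if some subsequence $c_{n_k}$ remains bounded, the same compactness scheme produces a limit $(c^*,\phi^*)$ satisfying the semi-wave equation for $\ol{\wtd P}$ with $\phi^*(0)=0$. The main obstacle I anticipate is ensuring $\phi^*(-\yy)=\wtd u^*$ rather than a degenerate value in $[0,\wtd u^*)$, because local uniform convergence does not a priori control the behaviour at $-\yy$. To secure this I would establish an $n$-uniform tail estimate of the form $\phi_n(x)\ge \hat u^*_n-\epsilon$ for $x\le x_\epsilon$, obtained by evaluating the semi-wave equation at large negative $x$ (where the convolution term is close to $\ol d_n\hat u^*_n$ uniformly in $n$) and invoking the uniform convergence $\hat f_n\to\hat{\wtd f}$ near $\wtd u^*$. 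Passing to the limit then yields $\phi^*(-\yy)=\wtd u^*$, so $(c^*,\phi^*)$ is a genuine monotone semi-wave for $\ol{\wtd P}$; but part (1) applied to $\wtd P$ forbids the existence of such a semi-wave, since $\wtd P$ violates (P1). This contradiction gives $c_n\to\yy$.
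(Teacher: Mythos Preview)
Your treatment of part (1) is exactly the paper's: normalize $P$ to a probability kernel, absorb the defect into $\hat f$, and invoke \cite[Theorem 1.2]{dlz2019}. For part (2)(i) the compactness-plus-uniqueness scheme also matches, but the paper makes your ``uniform lower envelope'' precise via a concrete device that does double duty. It constructs a compactly supported $P_0$ lying below $\wtd P$ (hence below $P_n$ for large $n$ by the locally uniform convergence) and applies the kernel-monotonicity Lemma~\ref{lemma5.2} to obtain simultaneously $c_n\ge\hat c>0$ \emph{and} the pointwise barrier $\phi_n\ge\hat\phi$, where $(\hat c,\hat\phi)$ is the semi-wave for $P_0$. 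It is this barrier, not ``monotonicity'', that prevents $\phi_\infty(-\infty)$ from degenerating to $0$; the paper then identifies $\phi_\infty(-\infty)$ as the positive root of $\hat f$ by sending $x\to-\infty$ in the limit equation.

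For part (2)(ii) your plan has a gap. Part (1) asserts nonexistence only for the \emph{coupled} system \eqref{semi1}--\eqref{semi2}; it does not forbid monotone solutions of \eqref{semi1} alone for some finite $c$ when \textbf{(P1)} fails. So once you have produced a limit $(c^*,\phi^*)$ solving \eqref{semi1}, you cannot invoke part (1) until \eqref{semi2} is also checked---and that constraint is where the real contradiction sits: with $\phi^*$ bounded below by a positive constant on a left half-line and $\int_0^\infty x\wtd P(x)\,\rd x=\infty$, the right side of \eqref{semi2} is $+\infty$, contradicting $c^*<\infty$. The paper sees this and bypasses the contradiction frame entirely. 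It argues directly that for each $L>0$,
\[
c_n\ \ge\ \mu\int_{-L}^0\phi_n(x)\int_0^L P_n(x-y)\,\rd y\,\rd x,
\]
passes to the limit on this compact rectangle (where $\phi_n\to\phi_\infty\ge\hat\phi$ and $P_n\to\wtd P$ locally uniformly), and then sends $L\to\infty$ to force $\liminf_n c_n=\infty$. No limit semi-wave is needed. Your proposed tail estimate is also shaky: asserting that the convolution is ``close to $\ol d_n\hat u^*_n$ uniformly in $n$'' presupposes $\phi_n\approx\hat u^*_n$ on the effective support of $\ol P_n$ shifted to $x$, which is the very thing to be shown; the comparison $\phi_n\ge\hat\phi$ from Lemma~\ref{lemma5.2} is the clean, non-circular way to secure the needed positive lower bound.
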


To prove  Proposition \ref{prop4.1}, we will need \cite[Lemma 2.8]{dn-speed}, which we restate below.
\begin{lemma}\label{lemma5.2}
	Let  $(c_1,\phi_1,P_1)$ and $(c_2,\phi_2,P_2)$ be given as in Proposition \ref{prop4.1} {\rm (2)}.   If $P_1\leq P_2$, then 
	\begin{align*}
c_1\leq c_2\ {\rm and}\ \phi_1\leq \phi_2.
	\end{align*} 
\end{lemma}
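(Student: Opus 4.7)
My plan is to derive the monotonicity of the semi-wave pair $(c_i,\phi_i)$ in the kernel $P_i$ from the comparison principle for the associated one-dimensional nonlocal free boundary problem, together with the long-time convergence of that problem to the semi-wave profile.

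For $i=1,2$, let $(u_i,g_i,h_i)$ denote the unique solution of the one-dimensional free boundary problem \eqref{f1} with kernel $P_i$, nonlinearity $f$ (absorbing the defect $d(\|P_i\|_{L^1}-1)u$ of $\int P_i$ from $1$ into the nonlinearity via $\hat f_i(u)=d(\|P_i\|_{L^1}-1)u+f(u)$, which still satisfies \textbf{(f)} for the small perturbations allowed by \textbf{(P)}), starting from a common admissible initial datum $(u_0,-h_0,h_0)$ chosen so that spreading occurs for both kernels (possible by taking $h_0$ or $\mu$ sufficiently large, via the spreading criterion applied to each $P_i$). The well-posedness and comparison machinery of Section 3 transcribe to this 1D setting; moreover, the comparison principle admits a formulation in which the sub- and supersolution use different kernels $P_1\leq P_2$, because the difference $w:=u_2-u_1$ then satisfies a nonlocal parabolic inequality driven by a nonnegative extra source $d\int(P_2-P_1)u_1\geq 0$, to which the maximum principle Lemma \ref{lemma4.4} applies. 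This yields
\[u_1(t,x)\leq u_2(t,x),\quad g_1(t)\geq g_2(t),\quad h_1(t)\leq h_2(t)\qquad\text{for all }t\geq 0.\]

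Invoking \cite[Theorem 1.2]{dlz2019} for each kernel (legitimate since $P_i$ satisfies \textbf{(P)} and \textbf{(P1)}), we get $h_i(t)/t\to c_i$ and $u_i(t,\,\xi+h_i(t))\to\phi_i(\xi)$ locally uniformly for $\xi\leq 0$; so $h_1(t)\leq h_2(t)$ immediately forces $c_1\leq c_2$. For the profile ordering, fix $\xi<0$ and set $\rho(t):=h_2(t)-h_1(t)\geq 0$. Then
\[u_1(t,\xi+h_1(t))\leq u_2(t,\xi+h_1(t))=u_2(t,(\xi-\rho(t))+h_2(t)).\]
Along any subsequence $t_n\to\infty$ with $\rho(t_n)\to\rho_\infty\in[0,\infty]$, the left side tends to $\phi_1(\xi)$, while the right tends to $\phi_2(\xi-\rho_\infty)$ when $\rho_\infty<\infty$ (and to $\hat u^*_2$ when $\rho_\infty=\infty$); by monotonicity of $\phi_2$ one gets $\phi_1(\xi)\leq\phi_2(\xi-\rho_\infty)$. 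This is then upgraded to $\phi_1(\xi)\leq\phi_2(\xi)$ by an auxiliary sliding argument that aligns the asymptotic translation constants of $u_1$ and $u_2$ through a suitable shift of initial data, as in \cite[Lemma 2.8]{dn-speed}.

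The main obstacle is precisely this translation-matching step: since $c_1$ may be strictly smaller than $c_2$, the two free boundaries $h_1(t),h_2(t)$ drift apart at a linear rate, so the pointwise inequality $u_1\leq u_2$ cannot be transferred to the pointwise inequality $\phi_1\leq \phi_2$ without carefully exploiting $\phi_2'\leq 0$ and an appropriate sliding or subsequence argument to close the potential gap $\rho_\infty>0$. A secondary technical point is the extension of the 1D comparison framework to kernels with $\|P\|_{L^1}\neq 1$, which is handled by the $\hat f_i$ reduction mentioned above.
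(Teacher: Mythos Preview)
The paper does not prove this lemma; it is quoted verbatim from \cite[Lemma~2.8]{dn-speed} and used as a black box. So there is no in-paper argument to compare against, and the question is simply whether your proposal stands on its own.

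Your route via the time-dependent free boundary problem cleanly yields $c_1\le c_2$: the kernel-ordered comparison principle gives $h_1(t)\le h_2(t)$, and dividing by $t$ does the rest. The profile inequality $\phi_1\le\phi_2$, however, is not obtained. You correctly identify the obstruction: once $c_1<c_2$ (which you have no way to rule out a priori), $\rho(t)=h_2(t)-h_1(t)\to\infty$ and the chain
\[
u_1(t,\xi+h_1(t))\le u_2\bigl(t,(\xi-\rho(t))+h_2(t)\bigr)\longrightarrow \phi_2(-\infty)=\hat u_2^*
\]
collapses to the trivial bound $\phi_1(\xi)\le \hat u_2^*$. Shifting the initial data cannot repair this, since the gap $\rho(t)$ grows at rate $c_2-c_1>0$ regardless of any finite translation. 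Your final sentence then defers to ``an auxiliary sliding argument \dots\ as in \cite[Lemma~2.8]{dn-speed}''---but that is precisely the lemma you are asked to prove, so the appeal is circular.

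To close the gap you must work directly with the semi-wave equations rather than pass through the free boundary dynamics. One standard option is a homotopy: set $P_s:=(1-s)P_1+sP_2$, use Proposition~\ref{prop4.1}(1) to get a unique pair $(c_s,\phi_s)$ for each $s$, and show $s\mapsto(c_s,\phi_s)$ is continuous and nondecreasing by differentiating (or by a sliding comparison at the ODE level, exploiting $\phi_s'<0$ together with the sign of $c_s\phi_s'$ and the monotone dependence of the integral term on $P_s$). Alternatively one can run a direct sliding argument between $\phi_1$ and $\phi_2$ on $(-\infty,0]$, using $\phi_1(-\infty)=\hat u_1^*\le\hat u_2^*=\phi_2(-\infty)$, $\phi_1(0)=\phi_2(0)=0$, and the strong maximum principle for the stationary nonlocal operator. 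Either way, the essential input is a comparison at the level of the semi-wave ODE, not of the evolving fronts.
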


\begin{proof}[{\bf Proof of Proposition \ref{prop4.1}:}]  The conclusion in part  (1)  follows from  \cite[Theorem 1.2]{dlz2019} once we rewrite the first equation in \eqref{semi1} as
\[
\hat d\int_{-\yy}^{0}\hat P(x-y) \phi(y) {\rm d}y-\hat d\phi+c\phi'(x)+\hat f(\phi) =0,
\]
with $\hat d:=d\|P\|_{L^1},\; \hat P(x):=P(x)/\|P\|_{L^1}$ and $\hat f(u):=d(\|P\|_{L^1}-1)+f(u)$.
	
	To prove part (2), we first show
	\begin{align}\label{3.3d}
	\inf_{n\geq 1} c_n>0.
	\end{align}
	Fix $\epsilon>0$ sufficiently small and let $P_0$ be a continuous  function satisfying 
	\begin{align*}
	&P_0(x)=\max\{\wtd P(x)-\epsilon^2, 0\}\ {\rm for}\ |x|\leq 1/\epsilon,\ \ \ \ 
	P_0(x)=0\ {\rm for}\ |x|\geq  1/\epsilon+1,\\
		&P_0(x)\leq \max\{\wtd P(x)-\epsilon^2, 0\}\ {\rm for}\  x\in \R.
	\end{align*}
 Then
	\begin{align*}
\|P_0\|_{L^1(\R)}\geq& \int_{-1/\epsilon}^{1/\epsilon} P_0(x) \rd x\geq  \int_{-1/\epsilon}^{1/\epsilon} \wtd P(x) \rd x-\int_{-1/\epsilon}^{1/\epsilon} \epsilon^2 \rd x\\
=&\int^{1/\epsilon}_{-1/\epsilon} \wtd P(x) \rd x-2\epsilon \to \|\wtd P\|_{L^1(\R)} \mbox{ as } \epsilon\to 0,
	\end{align*}
	and $\|P_0\|_{L^1(\R)}\leq \|\wtd P\|_{L^1(\R)}$. Hence $P_0$ satisfies \textbf{(P)} for such $\epsilon$. Since	$P_n$  converges to  $\wtd P$  locally uniformly for $x\in \R$,  there is $n_\epsilon>0$ such that for $n\geq n_\epsilon$, 
	\begin{align*}
P_n\geq \max\{P-\epsilon^2,0\}\geq P_0 \ \mbox{ for} \ |x|\leq 1/\epsilon+1.  
	\end{align*}
Let $(\hat c,\hat \phi)$ be the solution of \eqref{semi1}-\eqref{semi2} with $P=P_0$. Then by Lemma \ref{lemma5.2}, we have
\begin{align}\label{3.4d}
c_n\geq \hat c,\ \phi_n\geq \hat \phi,\ \ \ \ \ n\geq n_\epsilon.
\end{align} 
Thus 
\begin{align*}
c_n\geq \min\{c_1,c_2,\cdots,c_{n_\epsilon}, \hat c\}>0,
\end{align*}
which proves \eqref{3.3d}. 
	
By \eqref{semi1} and \eqref{3.3d},  
\begin{align*}
\sup_{n\geq 1}\phi_n'(x)\leq&\frac{1}{(\inf_{n\geq 1}c_n)}\sup_{n\geq 1} \lf(d[\|P_n\|_{L^\yy(\R)}+1]\phi_n(-\yy)+\max_{u\in [0,\phi_n(-\yy)]}f(u)\rr)<\yy.
\end{align*}	
Assume that 
\begin{align}\label{4.5a}
K:=\sup_{n\geq 1} c_n<\yy. 
\end{align}
Then by the Arzela-Ascoli Theorem and a strand argument including a diagonal process of choosing subsequences, there are $\phi_\yy\in C(\R_-)$ and a subsequence of $\{\phi_n\}_{n\geq 1}$, still denoted by itself, such that $\phi_{n}$ converges to $ \phi_\yy$ locally uniformly in $\R_-$. Moreover, $ \phi_\yy(x)$ is nonincreasing in $x$.
	
By \eqref{4.5a}, without loss of generality, we assume that $c_n\to c_\yy$ as $n\to\yy$. Then we claim that  $(c_\yy,\phi_\yy)$ satisfies 
	\begin{align}\label{3.6d}
	d \int_{-\yy}^{0}\wtd  P (x-y) \phi_\yy(y) {\rm d}y-d\phi_\yy(x)+c_\yy\phi_\yy'(x)+f(\phi_\yy(x))=0,\ \ \ \ x\in \R_-.
	\end{align}
	In fact, from \eqref{semi1},
	\begin{align*}
	c_n\phi_{n}(x)-c_n\phi_{n}(0)=&- \int_{0}^{x} \lf[d\int_{-\yy}^{0}{P_n} (\xi -y) \phi_{n}(y) {\rm d}y-d\phi_{n}(\xi)+f(\phi_{n}(\xi))\rr] {\rm d}\xi.
	\end{align*}
	For given $x\in \R_-$, it then follows from the dominated convergence theorem that 
	\begin{align*}
	c_\yy\phi_\yy(x)-c_\yy\phi_\yy(0)=&- \int_{0}^{x} \lf[d\int_{-\yy}^{0}{\wtd P} (\xi -y) \phi_\yy(y) {\rm d}y-d\phi_\yy(\xi)+f(\phi_\yy(\xi))\rr] {\rm d}\xi,
	\end{align*}
	and hence \eqref{3.6d} holds by differentiating this equation.  
	
We claim that $\phi_\yy(-\yy)>0$ satisfies
\begin{align}\label{3.7b}
f(\phi_\yy(-\yy))+d(\|\wtd P\|_{L^{1}(\R)}-1)\phi_\yy(-\yy)=0.
\end{align}

  	Due to the monotonicity and boundedness of $\phi_\yy(x)$, one could easily get
	\begin{align*}
	\lim_{x\to -\yy} \lf[d \int_{-\yy}^{\yy}\wtd P(x-y) \phi_\yy(y) {\rm d}y-d\|\wtd P\|_{L^{1}(\R)}\phi_\yy(x)\rr]=0,
	\end{align*}
	and so 
	\begin{align*}
	\lim_{x\to -\yy} [c\phi_\yy'(x)+f(\phi_\yy(x))+d(\|\wtd P\|_{L^{1}(\R)}-1)\phi_\yy(x)]=0.
	\end{align*}
 From again the monotonicity and boundedness  of $\phi_\yy(x)$ in $x$,  we deduce that $\lim_{x\to -\yy}\phi_\yy'(x)=\lim_{x\to-\yy}$
 $ f(\phi_\yy(x))+d(\|\wtd P\|_{L^{1}(\R)}-1)\phi_\yy(x)=0$. Recalling that  $\phi_n\geq \hat \phi$ for $n\geq n_\epsilon$ and $\phi_n\to \phi_\yy$ as $n\to\yy$, we see that $\phi_\yy(-\yy)>0$. Hence, \eqref{3.7b} holds. 
 
(i)  We now show that 
\begin{align*}
c_\yy=\lim_{n\to \yy} c_n=\lim_{n\to \yy}\mu\int_{-\yy}^{0} \phi_n(x)\int_{0}^{\yy} P_n(x-y)\rd y\rd x=\mu\int_{-\yy}^{0} \phi_\yy(x)\int_{0}^{\yy}\wtd P(x-y)\rd y\rd x.
\end{align*}
In fact, due to
\begin{align*}
&\int_{-\yy}^{0} \phi_n(x)\int_{0}^{\yy} P_n(x-y)\rd y\rd x=\int_{0}^\yy P_n(y)\int_{-y}^0 \phi_n(x)\rd x\rd y,\\
&\int_{-\yy}^{0} \phi_\yy(x)\int_{0}^{\yy}\wtd P(x-y)\rd y\rd x=\int_{0}^\yy \wtd P(y)\int_{-y}^0 \phi_\yy(x)\rd x\rd y,
\end{align*}
and $P_n\leq Q$, the desired identity follows from
 the dominated convergence theorem. Therefore,  $(c_\yy, \phi_\yy)$ is a solution of \eqref{semi1} and \eqref{semi2} with $P=\wtd P$.    It then follows from the conclusion in part (1) that $(c_\yy,\phi_\yy)=(\td c,\td \phi)$. The above discussions  also indicate that 
\begin{align*}
\lim_{n\to\yy} c_n=\td c,\   \lim_{n\to\yy} \phi_n=\td \phi \mbox{ for the entire original sequence } (c_n,\phi_n).
\end{align*}

(ii) If $\wtd P$  does not  satisfy {\rm \textbf{(P1)}}, then for any constants  $L>L_0>0$, we have 
\begin{align*}
\lim_{n\to \yy}\int_{-L}^{0} \phi_n(x)\int_{0}^{L} P_n(x-y)\rd y\rd x&=\int_{-L}^{0} \phi_\yy(x)\int_{0}^{L}\wtd P(x-y)\rd y\rd x\\
&\geq \int_{-L/2}^{-L_0/2}\phi_\infty(x)\int_{-x}^{L-x}\wtd P(y)dydx\\
&\geq c_0   \int_{-L/2}^{-L_0/2} \int_{-x}^{L-x}\wtd P(y)dydx\\
&\geq c_0 \int_{L_0/2}^{L/2}(y-\frac{L_0}2)\wtd P(y)dy \to\infty \mbox{ as } L\to\infty,
\end{align*}
where $c_0:=\inf_{x\leq -L_0/2} \phi_\infty(x)>0$  due to $\phi_\infty\geq \hat\phi $.
Hence,
\begin{align*}
\liminf_{n\to \yy} c_n&=\mu \liminf_{n\to \yy}\int_{-\yy}^{0} \phi_n(x)\int_{0}^{\yy} P_n(x-y)\rd y\rd x\\
&\geq \mu \lim_{n\to \yy}\int_{-L}^{0} \phi_n(x)\int_{0}^{L} P_n(x-y)\rd y\rd x\to\infty \mbox{ as } L\to\infty,
\end{align*}
which implies $\lim_{n\to\yy} c_n=\yy$.
\end{proof}

In the rest of this section, we make use of Proposition \ref{prop4.1} to determine the spreading speed of \eqref{1.4} according to the behaviour of the kernel function $J$.
 
\subsection{Infinite speed}

\begin{theorem}\label{thm5.3}
Suppose that {\rm {\bf (J)}}, {\rm {\bf (f)}} and \eqref{u_0} hold, and spreading happens to the unique positive solution $(u, h)$ of \eqref{1.4}.
	 If  {\rm \textbf{(J1)}} is not satisfied, then 
	\begin{align}\label{4.1d}
	\lim_{t\to\yy} \frac{h(t)}{t}=\yy.
	\end{align}
\end{theorem}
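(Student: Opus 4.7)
\medskip

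\noindent\textbf{Proof proposal.}
The plan is to show that $\liminf_{t\to\infty} h(t)/t \geq c^*$ for every prescribed $c^* > 0$, which immediately gives \eqref{4.1d}. The strategy is kernel truncation: approximate $J$ from below by compactly supported kernels whose associated one-dimensional semi-wave speeds diverge to $\infty$, then transfer each such speed into a lower bound on $h(t)$ via a radial subsolution.

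For each $M>0$, set $J^M(r) := J(r)\mathbf{1}_{[0,M]}(r)$ and $J^M_*(l) := \int_{\R^{N-1}} J^M(|(l,x')|)\,dx'$. Then $J^M_*\leq J_*$, $J^M_*\nearrow J_*$ pointwise, and $J^M_*$ has compact support in $[-M,M]$, so it satisfies $\mathbf{(P1)}$. For $M$ large, $\|J^M\|_{L^1}$ is close to $1$, so (absorbing the defect into the nonlinearity as in the proof of Proposition~\ref{prop4.1}(1)) the semi-wave problem \eqref{semi1}--\eqref{semi2} with kernel $J^M_*$ admits a unique solution $(c^M,\phi^M)$. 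By Theorem~\ref{prop1}, $\mathbf{(J1)}$ failing is equivalent to $\int_0^\infty l J_*(l)\,dl = \infty$, i.e.\ $J_*$ fails $\mathbf{(P1)}$. Hence Proposition~\ref{prop4.1}(2)(ii) gives $c^M\to\infty$ as $M\to\infty$. Fix $M$ so large that $c^M>2c^*$, and note that the asymptotic state $\hat u^*_M:=\phi^M(-\infty)$ converges to $u^*$ as $M\to\infty$ (since $\hat u^*_M$ is determined by $d(\|J^M_*\|_{L^1}-1)\hat u^*_M + f(\hat u^*_M) = 0$).

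Since spreading occurs, Theorem~\ref{th1.2} gives $h(t)\to\infty$ and $u(t,r)\to u^*$ locally uniformly. Pick $T_0$ and $R_0=h(T_0)$ so large that $u(T_0,r)\geq \hat u^*_M - \delta$ on $[0,R_0-K_*]$ (with $K_*=M$ the support size), and so large that the comparison $\tilde J^M_+(r,\rho)\leq (1+\delta)J^M_*(r-\rho)$ from Lemma~\ref{lemma2.4} applies uniformly on the relevant front region $\rho\in[r-M,r+M]$, $r\geq R_0-M$. Define
\begin{equation*}
\underline h(t) := R_0 + (c^M-\eta)(t-T_0), \qquad \underline u(t,r) := (1-\delta)\phi^M\bigl(r-\underline h(t)\bigr),
\end{equation*}
for $r\in[r_*(t),\underline h(t)]$, where $r_*(t):=\underline h(t) - L$ for a fixed large $L$ chosen so that $(1-\delta)\phi^M(-L)\geq \hat u^*_M-2\delta$, while on $[0,r_*(t)]$ we use the already-known bound $u(t,r)\geq \hat u^*_M - 2\delta$ (this extends forward in $t$ using the lower bound part of Lemma~\ref{lemma3.9}). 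Invoking the semi-wave equation \eqref{semi1} for $\phi^M$ and the $\rho^{N-1}$-weighted estimate $\tilde J^M(r,\rho)\leq (1+\delta)J^M_*(r-\rho)$ together with Lemma~\ref{lemma2.6d} for the backward and curvature-correction terms, one verifies that $(\underline u,\underline h)$ satisfies the differential inequalities in Lemma~\ref{lemma3.4a}(ii) (in the generalized sense of Remarks~\ref{rmk2.6}--\ref{rmk3.6}) on $\{t\geq T_0,\ 0\leq r\leq \underline h(t)\}$; the factor $(r/\underline h(t))^{N-1}\leq 1$ in the free-boundary integral is harmless since the semi-wave speed $c^M$ is extracted by integration against $J^M_*$ with weight $1$, and the $\eta$-correction absorbs the $O(\delta)$ errors. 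Comparison then yields $h(t)\geq \underline h(t)$ for $t\geq T_0$, so $\liminf_{t\to\infty} h(t)/t \geq c^M-\eta > c^*$. Letting $c^*\to\infty$ finishes the proof.

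\medskip

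\noindent\textbf{Main obstacle.} The delicate step is the subsolution verification. The one-dimensional semi-wave equation for $\phi^M$ uses the kernel $J^M_*(r-\rho)$ on $\R$, whereas \eqref{1.4} uses the radial kernel $\tilde J^M(r,\rho)$ and carries the spherical weight $r^{N-1}$ in the free-boundary integral. Controlling their discrepancy requires the sharp estimates from Section~2: Lemma~\ref{lemma2.4} bounds $\tilde J^M_+(r,\rho)$ by $(1+\delta)J_\epsilon^M(r-\rho)$ for $r$ large, and Lemma~\ref{lemma2.6d} shows that the backward half $\tilde J^M_-$ and the error terms from the $(r/\underline h)^{N-1}$ factor are negligible when integrated over the front region. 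Coordinating the $\delta$-loss in amplitude, the $\eta$-loss in speed, the $L$-width of the semi-wave band, and the threshold radius $R_0$ in a mutually consistent way is the main bookkeeping task.
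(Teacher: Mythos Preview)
Your overall strategy---truncate $J$ to compact support, invoke Proposition~\ref{prop4.1}(2)(ii) so that the truncated semi-wave speed $c^M\to\infty$, and transfer each $c^M$ into a lower bound on $h(t)/t$---is exactly the paper's plan. But the kernel inequality you cite points the wrong way, and this breaks the subsolution verification.

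For $(\underline u,\underline h)$ to be a lower solution in Lemma~\ref{lemma3.4a}(ii), after inserting the semi-wave equation for $\phi^M$ into the PDE inequality one needs
\[
\int J^M_*(r-\rho)\,\phi^M(\rho-\underline h)\,d\rho \ \leq\ \int \td J(r,\rho)\,\phi^M(\rho-\underline h)\,d\rho,
\]
i.e.\ a \emph{lower} bound on $\td J$ in terms of $J^M_*$. Lemma~\ref{lemma2.4} supplies only the \emph{upper} bound $\td J_+(r,\rho)\leq (1+\delta)J_\epsilon(r-\rho)$; that is the tool for \emph{super}solutions (see the proof of Theorem~\ref{thm5.5}), not subsolutions. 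The correct ingredient for a compactly supported kernel is Proposition~\ref{coro2.5}, which gives the two-sided estimate $|\td J(r,\rho)-J_*(r-\rho)|\leq Cr^{-1}$; and even then one must \emph{shrink} the one-dimensional kernel slightly (take $J_n:=\max\{J^M_*-\xi_n,0\}$ as in Lemma~\ref{lemma3.8a}) to force $J_n(r-\rho)\leq \td J(r,\rho)$ for large $r$. Your raw $J^M_*$ will not satisfy this pointwise comparison. A related sign slip: saying ``$(r/\underline h)^{N-1}\leq 1$ is harmless'' is backwards---this makes the right side of the free-boundary inequality smaller, which is bad for a subsolution; what is needed is the \emph{lower} bound $(r/\underline h)^{N-1}\geq 1-\epsilon_n$ on the $M$-neighbourhood of the front, as in \eqref{3.18d}.

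The paper sidesteps most of this by a two-layer argument: first compare \eqref{1.4} to the same problem with $\td J$ replaced by $\td J^n$ (only the monotonicity $\td J^n\leq\td J$ is used, no fine kernel estimate), giving $h(t+T)\geq h_n(t)$ by Lemma~\ref{lemma3.4a}; then apply Lemma~\ref{lemma3.8a} to the compactly supported problem $(u_n,h_n)$ to get $\liminf h_n(t)/t\geq c_n$. The delicate $\td J$-vs-$J_*$ comparison is thus confined to Lemma~\ref{lemma3.8a}, where Proposition~\ref{coro2.5} applies directly.
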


To prove this theorem we will use the following lemma.

\begin{lemma}\label{lemma3.8a}
	If in Theorem \ref{thm5.3} the kernel function $J$ has compact support $($and so {\rm {\bf (J1)}} is satisfied$)$, then 
	\begin{align}\label{3.16d}
	\liminf_{t\to\yy}\frac{h(t)}{t}\geq   c_0,
	\end{align}
	where $c_0>0$ is given by Proposition \ref{prop1.4}. 
\end{lemma}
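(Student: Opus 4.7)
I would prove $\liminf_{t\to\yy} h(t)/t\geq c_0$ by constructing, for each small $\epsilon>0$, a lower solution to \eqref{1.4} propagating at speed $c_\epsilon := (1-2\epsilon)c_0$, and then applying the comparison principle (Lemma \ref{lemma3.4a}(ii) with $r_*(t)\equiv 0$, as allowed by Remark \ref{rmk2.6}). Since spreading occurs, Theorem \ref{th1.2}(i) permits me to pick $T_0$ and $R_0$ both large such that $h(T_0)>R_0$ and $u(T_0,r)\geq (1-\epsilon)u^*$ on $[0,R_0]$, with the size of $R_0$ to be fixed in the verification below. Setting
\[
\underline h(t) := R_0 + c_\epsilon(t-T_0), \qquad \underline u(t,r) := (1-\epsilon)\phi_0(r-\underline h(t)), \quad r\in[0,\underline h(t)],\ t\geq T_0,
\]
with $(c_0,\phi_0)$ the semi-wave from Proposition \ref{prop1.4}, the initial and boundary comparisons $\underline u(T_0,\cdot)\leq u(T_0,\cdot)$, $\underline u(t,\underline h(t))=0$, and $\underline h(T_0)\leq h(T_0)$ hold automatically.

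The KPP subhomogeneity $\sigma f(u)\leq f(\sigma u)$ from \textbf{(f)}, combined with $\phi_0'\leq 0$ and $c_\epsilon<c_0$, upgrades the semi-wave identity for $(c_0,\phi_0)$ into a one-dimensional subsolution inequality for $(1-\epsilon)\phi_0$ at speed $c_\epsilon$ with nonnegative slack
\[
\mathcal S_\epsilon(x) := (1-\epsilon)(c_0-c_\epsilon)|\phi_0'(x)| + \big[f((1-\epsilon)\phi_0(x))-(1-\epsilon)f(\phi_0(x))\big] \geq 0.
\]
Strict monotonicity of $\phi_0$ on $(-\yy,0)$ gives $|\phi_0'|$ a positive minimum on compact subintervals, while $f((1-\epsilon)\phi_0)-(1-\epsilon)f(\phi_0)\to f((1-\epsilon)u^*)>0$ as $x\to-\yy$; together these yield a uniform lower bound $\mathcal S_\epsilon(x)\geq\delta_0(\epsilon)>0$ on $(-\yy,0]$. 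This margin will absorb the discrepancy between $\td J$ and $J_*$.

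Substituting $x := r-\underline h(t)$ and $y := \rho-\underline h(t)$ in the PDE for $\underline u$, the subsolution inequality reduces to bounding
\begin{align*}
\mathcal E(t,x) := & d\int_{-\underline h(t)}^0 \big[\td J(\underline h(t)+x,\underline h(t)+y)-J_*(x-y)\big]\phi_0(y)\,dy\\
& - d\int_{-\yy}^{-\underline h(t)} J_*(x-y)\phi_0(y)\,dy
\end{align*}
by $\mathcal S_\epsilon(x)$. Proposition \ref{coro2.5} gives $|\td J(r,\rho)-J_*(r-\rho)|\leq C/r$ for $r\geq L_0$ and $|r-\rho|\leq K_*$, and both kernels vanish outside this band because $J$ has compact support; hence $|\mathcal E(t,x)|\leq C_1/\underline h(t)$ uniformly for $r\geq L_0+K_*$. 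For the remaining small-$r$ range $r\in[0,L_0+K_*]$, since $x\to-\yy$ as $\underline h(t)\to\yy$, both $\phi_0(x)$ and the convolution integrals approach $u^*$ (using $\int_0^\yy\td J(r,\rho)\,d\rho=1$ and compact support of $J_*$), giving $\mathcal E(t,x)=o(1)$ as well. Choosing $R_0$ large so that $\underline h(T_0)=R_0\gg 1$ therefore ensures $\mathcal S_\epsilon$ dominates $|\mathcal E|$ throughout $t\geq T_0$. The free boundary inequality is handled analogously: its integrand is supported in $r\in[\underline h(t)-K_*,\underline h(t)]$ where $r^{N-1}/\underline h(t)^{N-1}\to 1$ and $\td J\to J_*$ by Proposition \ref{coro2.5}, so the right-hand side converges to $\mu(1-\epsilon)\int_{-\yy}^0\phi_0(x)\int_0^\yy J_*(x-y)\,dy\,dx = (1-\epsilon)c_0 > c_\epsilon$ as $\underline h(t)\to\yy$.

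Lemma \ref{lemma3.4a}(ii) then yields $h(t)\geq\underline h(t)$ for $t\geq T_0$, so $\liminf h(t)/t\geq c_\epsilon=(1-2\epsilon)c_0$, and sending $\epsilon\to 0$ gives \eqref{3.16d}. The principal obstacle is verifying the PDE inequality uniformly across all $r\in(0,\underline h(t))$: Proposition \ref{coro2.5}'s sharp $O(1/r)$ estimate applies only in the boundary band $r\gtrsim L_0$, so the near-origin regime $r\in[0,L_0]$ requires the separate reasoning indicated above, exploiting the normalization $\int_0^\yy \td J(r,\rho)\,d\rho = 1$ together with the KPP positivity $f((1-\epsilon)u^*)>0$ to show that the high-dimensional and one-dimensional operators balance at leading order as $\underline h(t)\to\yy$.
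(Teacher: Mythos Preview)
Your argument is correct and reaches the conclusion, but via a route different from the paper's. The paper does not work with $(c_0,\phi_0)$ directly; it approximates $J_*$ from below by $J_n:=\max\{J_*-\xi_n,0\}$, takes the corresponding semi-waves $(c_n,\phi_n)$ with $c_n\to c_0$ (Proposition~\ref{prop4.1}), and applies Lemma~\ref{lemma3.4a} with a positive buffer $r_*(t)\equiv L_n$, so the PDE inequality need only be checked for $r\in(L_n,\underline h(t))$. Proposition~\ref{coro2.5} together with the $\xi_n$-subtraction then yields the \emph{one-sided} inequality $J_n(r-\rho)\leq\td J(r,\rho)$ for $r\geq L_n$, and the verification is clean with no error term to absorb. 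You instead take $r_*\equiv 0$, use $J_*$ directly, and must absorb the two-sided discrepancy $|\td J-J_*|\leq C/r$ into the slack $\mathcal S_\epsilon$; this forces a separate treatment of the near-origin range and requires a uniform positive lower bound on $\mathcal S_\epsilon$. Your method avoids the kernel approximation and the limit passage $c_n\to c_0$, at the cost of that extra case analysis.

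Two small corrections are needed. First, Proposition~\ref{coro2.5} gives $|\mathcal E|\leq C_1/r$, not $C_1/\underline h(t)$; so you should first fix $M_\epsilon$ with $C_1/M_\epsilon<\delta_0(\epsilon)$ and then handle $r\in[0,M_\epsilon]$ by your convergence argument (which is indeed uniform on bounded $r$-sets as $\underline h(t)\to\infty$, since both integrals converge to $u^*$). Second, strict monotonicity of $\phi_0$ alone does not give $|\phi_0'|>0$ on compact subintervals; this is a property of the semi-wave established in \cite{dlz2019} and used tacitly elsewhere in the paper (e.g.\ the constant $C_L>0$ in the proof of Lemma~\ref{lem-lb}).
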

\begin{proof} Suppose the supporting set of $J$ is contained in $B_{K_*}$.
	Let $\{\epsilon_n\}_{n=1}^\yy$ be a sequence satisfying $0<\epsilon_n\to 0$ as $n\to\yy$.	Define
	\begin{align*}
	J_n(l):=\max\{J_*(l)-\xi_n(l),0\},
	\end{align*}
	where $\xi_n$ is given by 
	\begin{equation*}
\xi_n(l):=\begin{cases}
\epsilon_n,& |l|\leq K_*,\\
\epsilon_n(K_*+1-|l|), & K_*\leq |l|\leq  K_*+1,\\
=0,& |l|\geq   K_*+1.
\end{cases}
	\end{equation*}
	Clearly $J_n$ has compact support and $J_n\to J_*$ locally uniformly in $\R$ and in $L^1(\R)$.
	
	For  fixed $\epsilon_n$,  by Proposition \ref{coro2.5} and the definition of $\xi_n$, there is $L_n\gg K_*$ such that  
	\begin{align}\label{3.17d}
	J_n(l-\rho)\leq \td J(l,\rho) \mbox{ for } \rho>0,\ l\geq L_n,
	\end{align} 
	and
	\begin{align}\label{3.18d}
	\lf(1-\frac{K_*}{L_n}\rr)^{N-1}>1-\epsilon_n
	\end{align}
	By Proposition \ref{prop4.1},  for all large $n$, problem \eqref{semi1}-\eqref{semi2} with $P=J_n$ admits a solution $(c_n,\phi_n)$ satisfying
	\begin{align*}
	\dd\phi_n(-\yy)=:u_n^*< u^*,
	\
	\lim_{n\to\yy} c_n=c_0,
	\end{align*}
	where we have used  $c_n\leq c_{n+1}\leq c_0$, which implies that $\{c_n\}$  is bounded. 
		
	For fixed large $n$ define 
	\begin{align*}
	&\underline h(t):=c_n(1-2\epsilon_n) t+2L_n,  &&\ \ \ t\geq 0,\\
	&\underline u(t,r):=(1-\epsilon_n) \phi_n(r-\bar h(t)), &&\ \ \ t\geq 0,\; L_n\leq r\leq \bar h(t).
	\end{align*}
	We show that there is $t_1>0$ such that 
	\begin{equation}\label{3.20d}
	\begin{cases}
	\dd \underline u_t(t,r)\leq  d \int_{0}^{\underline h(t)} \td J(r,\rho) \underline u(t,\rho)\rd \rho-d\underline u(t,r)+f(\rho,\underline u), & t>0,\; r\in  (L_n,\underline h(t)),\\[3mm]
	\dd \underline h'(t)\leq  \frac{\mu}{\underline h^{N-1}(t)}\dd\int_{0}^{\underline h(t)} r^{N-1}\underline u(t,r)\int_{\underline h(t)}^{+\yy} \td J(r,\rho)\rd \rho\rd r, & t>0,\\
	\underline u(t,r)\leq  u(t+t_1,r), \; \underline u(t,\underline h(t))=0, & t>0,\; r\in [0,L_n],\\
	\underline u(0,r)\leq \ u(t_1,r),\ h(t_1)>\underline h(0), & r\in [0,\underline h(0)].
	\end{cases}
	\end{equation}
	
	Since spreading happens, there is $t_1>0$ such that $h(t)\geq 3L_n$ for $t\geq t_1$, and 
	\begin{align*}
	 u(t,r)\geq u_n^*,\ \ \ \ (t,r)\in [t_1,\yy)\times [0,3L_n]. 
	\end{align*}
	Hence, 
	\begin{align*}
	\underline u(t,r)\leq (1-\epsilon_n) u_n^*\leq u_n^*\leq  u(t+t_1,r) \ \mbox{ for }\  (t,r)\in [0,\yy)\times [0,L_n].
	\end{align*}
	and 
	\begin{align*}
	\underline u(0,r)\leq (1-\epsilon_n) u_n^*\leq u_n^*\leq  u(t_1,r ) \mbox{ for }\  r\in [0,\underline h(0)]. 
	\end{align*}
	It is clear that  $\underline u(t,\underline h(t))=0$. Hence, the last two inequalities  of \eqref{3.20d} are satisfied.
	
	Next, we check the first two inequalities of \eqref{3.20d}.  Making use of \eqref{3.17d}, \eqref{3.18d} and $\td J(r,\rho)=0$ for $|r-\rho|\geq K_*$,   
	we have
	\begin{align*}
	&\frac{\mu}{\underline h^{N-1}(t)}\dd\int_{0}^{\underline h(t)} r^{N-1}\underline u(t,r)\int_{\underline h(t)}^{+\yy} \td J(r,\rho)\rd \rho\rd r\\
	=&\frac{\mu}{\underline h^{N-1}(t)}\dd\int_{\underline h(t)-K_*}^{\underline h(t)} r^{N-1}(1-\epsilon_n) \phi(r-\underline h(t))\int_{\underline h(t)}^{+\yy} \td J(r,\rho)\rd \rho\rd r\\
	\geq  &\frac{\mu}{\underline h^{N-1}(t)}\dd\int_{\underline h(t)-K_*}^{\underline h(t)} r^{N-1}(1-\epsilon_n) \phi(r-\underline h(t))\int_{\underline h(t)}^{+\yy}  J_n(r-\rho)\rd \rho\rd r\\
	= &\frac{\mu}{\underline h^{N-1}(t)}\dd\int_{-K_*}^{0} (r+\underline h(t))^{N-1}(1-\epsilon_n) \phi(r)\int_{0}^{+\yy}  J_n(r-\rho)\rd \rho\rd r\\
	\geq  &\lf(1-\frac{K_*}{\underline h(t)}\rr)^{N-1}\mu\dd\int_{-K_*}^{0} (1-\epsilon_n) \phi(r)\int_{0}^{+\yy}  J_n(r-\rho)\rd \rho\rd r\\
	= &\lf(1-\frac{K_*}{\underline h(t)}\rr)^{N-1}\mu\dd\int_{-\yy}^{0} (1-\epsilon_n) \phi(r)\int_{0}^{+\yy}  J_n(r-\rho)\rd \rho\rd r\\
	=&(1-\epsilon_n)\lf(1-\frac{K_*}{\underline h(t)}\rr)^{N-1} c_n\geq (1-\epsilon_n)\lf(1-\frac{K_*}{L_n}\rr)^{N-1} c_n\\
	\geq &(1-\epsilon_n)^2c_n\geq (1-2\epsilon_n)c_n=\underline h'(t).
	\end{align*}
	From the equation satisfied by $\phi$, \eqref{3.17d} and
	\begin{align*}
	J_n(r-\rho)=0 \ \mbox{ for } \ r\in [L_n,\underline h(t)],\; \rho\leq 0,
	\end{align*}
	we deduce for $t>0$ and $r\in [L_n,\underline h(t)]$,
	\begin{align*}
	\underline u_t(t,r)&=-(1-\epsilon_n)c_n(1-2\epsilon_n)\phi_n'(r-\underline h(t))\leq -(1-\epsilon_n)c_n\phi_n'(r-\underline h(t))\\ 
	&= (1-\epsilon_n) \lf[d  \int_{- \yy}^{\underline h(t)}    J_n (r-\rho)   \phi_n(\rho-\underline h(t))\rd \rho -d \phi_n(r-\underline h(t))+ f(\phi_n(r-\underline h(t)))\rr]\\ 
	&=d  \int_{0}^{\underline h(t)}    J_{n} (r-\rho)  \underline u(t,\rho)\rd \rho -d  \underline u(t,r)+(1-\epsilon_n) f(\phi_n(r-\underline h(t)))\\
	&\leq   d  \int_{0}^{\underline h(t)}    J_{n} (r-\rho)  \underline u(t,\rho)\rd \rho -d \underline u(t,r)+ f(\underline u(t,r))\\
	&\leq  d  \int_{0}^{\underline h(t)}   \td J (r,\rho)  \underline u(t,\rho)\rd \rho -d \underline u(t,r)+ f(\underline u(t,r)).
	\end{align*}
	Hence \eqref{3.20d} holds.

	By Lemma \ref{lemma4.4} (2) with $r_*(t)=L_n$, we obtain
	\begin{align*}
	& h(t+t_1)\geq  \underline h(t) \ \ \ \ \ \ \ \ \mbox{ for } t\geq 0,\\
	& u(t+t_1,r)\geq  \underline u(t,r)\ \ \ \mbox{ for } t\geq 0,\ r\in [ 0,\underline h(t)].
	\end{align*}
	Hence 
	\begin{align*}
	\liminf_{t\to\yy} \frac{h(t)}{t}\geq    \liminf_{t\to\yy} \frac{\underline h(t-t_1)}{t}=c_n(1-2\epsilon_n),
	\end{align*}
	which  gives \eqref{3.16d} by letting $n\to\infty$.  
\end{proof}

\begin{proof}[{\bf Proof of Theorem \ref{thm5.3}}]	Define 
	\begin{align*}
	J_n(r):=\zeta\big(\frac{r}{n}\big) J(r),\ \ n=1,2,\cdots, 
	\end{align*}
where  
\begin{align*}
\zeta(\xi)=
\begin{cases}
0,&|\xi|\geq 2,\\
2-|\xi|,&1\leq|\xi|\leq 2,\\
1,&|\xi|\leq 1.
\end{cases}
\end{align*}
Clearly,   $J_n$ has compact support,  is nondecreasing in $n$, $J_n\leq J$ and 
\begin{align}\label{4.2d}
\lim\limits_{n\rightarrow\infty}J_n|x|)=J(|x|)\ \text{\ locally uniformly in\ } \R^N.
\end{align}

Similarly to $\td J$ and $J_*$, we  define
\begin{align*}
&\td J^n(r,\rho)=\int_{\partial B_\rho} J_n(|x-y|) \rd y\ \ {\rm with }\ |x|=r,\\
&J^n_*(l):=\int_{\R^{N-1}} J_n(|(l,x')|)dx'.
\end{align*}
For large $n$ and some $T>0$ to be determined, let $( u_n,h_n)$ be the solution of \eqref{1.4} with $\td J$ replaced by $\td J^n$ and
\begin{align*}
h_n(0)=h(T),\   u_n(0,r)= u(T,r) \mbox{ for }  r\in [0,h(t)].
\end{align*}
 Due to $\td J^n\leq \td J$ and the comparison principle in Lemma \ref{lemma3.4a} (2) with $r_*(t)\equiv 0$, we have
\begin{align*}
h_n(t)\leq h(t+T),\ u_n(t,r)\leq  u(t+T,r) \ \mbox{ for }\ t\in [0,\yy),\; r\in [0,h_n(t)].
\end{align*}

By Theorem \ref{th1.2}, spreading happens for $(u_n,h_n)$ if $h_n(0)$ is greater than a constant $L_n$ determined by  $J_n$.  Since spreading happens for \eqref{1.4},  there is a constant $t_n>0$ such that  $h(t)>L_n$ for all $t\geq t_n$. Choosing $T=t_n$,  we then have
\begin{align*}
\lim_{t\to\yy} h_n(t)=\yy.
\end{align*}

Since $J_n$ has compact support,  by Lemma \ref{lemma3.8a}  we have
\begin{align}\label{5.14}
\liminf_{t\to\yy} \frac{h_n(t)}{t}\geq c_n.
\end{align}
where $c_n$, associated with a function $\phi_n$, is the unique solution of \eqref{semi1}-\eqref{semi2} with $P=J^n_*$.
By the monotonicity of $J_n$ and \eqref{4.2d}, we see that $J_n^*$ is nondecreasing in $n$ and 
\begin{align*}
\lim_{n\to\yy}  J^n_*(l)=J_*(l)\ \ {\rm locally\ uniformly\ for}\ l\in \R.
\end{align*} 
Therefore, in view of the assumption that {\bf (J1)} is not satisfied,  we can use Proposition \ref{prop4.1} to conclude that
\begin{align}\label{4.3d}
\lim_{n\to\yy} c_n=\yy.
\end{align}
Since
\begin{align*}
\liminf_{t\to\yy} \frac{h(t)}{t}\geq \liminf_{t\to\yy} \frac{h_n(t-t_n)}{t}\geq c_n,
\end{align*}
we see that \eqref{4.1d} follows from \eqref{4.3d}.
\end{proof}

\subsection{Finite speed} In this subsection, we prove the following result.

\begin{theorem}\label{thm5.5}
Suppose that {\rm {\bf (J)}}, {\rm {\bf (f)}} and \eqref{u_0} hold, and spreading happens to the unique positive solution $(u, h)$ of \eqref{1.4}.
	 If  {\rm \textbf{(J1)}} is satisfied, then 
	\begin{align}\label{4.1d}
	\lim_{t\to\yy} \frac{h(t)}{t}=c_0,
	\end{align}
	where $c_0$ is given by Proposition \ref{prop1.4}.
\end{theorem}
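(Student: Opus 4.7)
The plan is to prove Theorem \ref{thm5.5} by establishing $\liminf_{t\to\infty} h(t)/t \ge c_0$ and $\limsup_{t\to\infty} h(t)/t \le c_0$ separately, each by comparison with a suitably engineered barrier built from a one-dimensional semi-wave.

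\emph{Lower bound via truncation.} I would reuse the truncation scheme already used in the proof of Theorem \ref{thm5.3}. With $J_n(r):=\zeta(r/n)J(r)$ of compact support and $J_n\nearrow J$ locally uniformly, choose $t_n$ so large that $h(t_n)$ exceeds the vanishing threshold $L_n$ associated to $J_n$, and let $(u_n,h_n)$ solve \eqref{1.4} with $\td J$ replaced by $\td J^n$ and initial data $(u(t_n,\cdot),h(t_n))$. Because $\td J^n\le\td J$, Lemma \ref{lemma3.4a}(ii) (applied with $r_*\equiv 0$ via Remark \ref{rmk2.6}) gives $h_n(t)\le h(t+t_n)$. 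Since $J_n$ has compact support, Lemma \ref{lemma3.8a} yields $\liminf_{t\to\infty}h_n(t)/t\ge c_n$, where $c_n$ is the semi-wave speed for the one-dimensional kernel $J^n_*$ furnished by Proposition \ref{prop4.1}(1). Now $J^n_*\nearrow J_*$ locally uniformly with integrable dominant $J_*$, and \textbf{(J1)} makes $J_*$ satisfy \textbf{(P1)}, so Proposition \ref{prop4.1}(2)(i) gives $c_n\to c_0$. Therefore $\liminf h(t)/t\ge c_n$ for every $n$, whence $\ge c_0$.

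\emph{Upper bound via a semi-wave barrier.} For small $\sigma,\delta,\epsilon>0$ (to be coupled, with $\delta<\sigma$ and $\epsilon$ very small), let $(\td c_\epsilon,\td\phi_\epsilon)$ be the semi-wave pair from Proposition \ref{prop4.1}(1) for the enlarged kernel $J_\epsilon$ of \eqref{2.5d}. Since $J_\epsilon\to J_*$ as $\epsilon\to 0$ with the uniform dominant $J_\epsilon\le 3J_*$ (which satisfies \textbf{(P1)} thanks to \textbf{(J1)}), Proposition \ref{prop4.1}(2)(i) gives $\td c_\epsilon\to c_0$ and $\hat u^*_\epsilon:=\td\phi_\epsilon(-\infty)\to u^*$. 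I would then try the upper barrier
\[
\bar h(t):=(1+\sigma)\td c_\epsilon\,t+L_0,\qquad \bar u(t,r):=(1+\sigma)\td\phi_\epsilon(r-\bar h(t))\quad\text{on } 0\le r\le\bar h(t),
\]
with $L_0$ large and a time shift in $t$ arranged so that $\bar u(0,\cdot)\ge u(T_0,\cdot)$ and $\bar h(0)\ge h(T_0)$. The differential inequality of Lemma \ref{lemma3.4a}(i), after dividing by $1+\sigma$ and using the semi-wave equation for $\td\phi_\epsilon$, reduces to
\[
\int_0^{\bar h(t)}\td J(r,\rho)\,\td\phi_\epsilon(\rho-\bar h(t))\,\rd\rho\;\le\;(1+\delta)\int_{-\infty}^0 J_\epsilon(r-\bar h(t)-s)\,\td\phi_\epsilon(s)\,\rd s+o(1),
\]
plus the KPP-type concavity slack $f(\td\phi_\epsilon)-(1+\sigma)^{-1}f((1+\sigma)\td\phi_\epsilon)\ge 0$ which is free from assumption \textbf{(f)}. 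The displayed inequality follows from Lemma \ref{lemma2.4} on the effective range $\rho\in[r/2,(1+\delta^2)r]$, which covers the support of $\td\phi_\epsilon(\rho-\bar h(t))$ once $\bar h(t)$ is large, combined with Lemma \ref{lemma2.6d} to discard the $\td J_-$ contribution and the contribution of very large $\rho$. For the flux inequality, Theorem \ref{prop1}(iv) is the crucial tool:
\[
\lim_{t\to\infty}\int_0^{\bar h(t)}(r/\bar h(t))^{N-1}\!\int_{\bar h(t)}^\infty\td J(r,\rho)\,\rd\rho\,\rd r=\int_0^\infty lJ_*(l)\,\rd l,
\]
which, combined with Lemma \ref{lemma2.4} again (to pass from $\td J$ to $J_\epsilon$ inside the $\rho$-integral against $\td\phi_\epsilon$), matches the one-dimensional identity $\td c_\epsilon=\mu\int_{-\infty}^0\td\phi_\epsilon(x)\int_0^\infty J_\epsilon(x-y)\,\rd y\,\rd x$ from \eqref{semi2} up to a factor $(1+\delta)+o(1)$. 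Since $\bar h'(t)=(1+\sigma)\td c_\epsilon$ and $\sigma>\delta$, the required strict inequality holds for all $t$ large. Applying Lemma \ref{lemma3.4a}(i) then gives $h(t+T_0)\le\bar h(t)$ for all $t\ge 0$, so $\limsup h(t)/t\le(1+\sigma)\td c_\epsilon$. Letting $\sigma\to 0$ (and thus $\delta\to 0$) and $\epsilon\to 0$ forces $\limsup h(t)/t\le c_0$.

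\emph{Main obstacle.} The delicate step is the upper-bound barrier construction: all errors created by swapping the radial kernel $\td J(r,\rho)$ for the one-dimensional $J_\epsilon(r-\rho)$ and the geometric weight $(r/\bar h(t))^{N-1}$ for $1$ must be absorbed by the single slack parameter $\sigma$, uniformly in $t$ large. The preparatory estimates of Section 2 --- principally Lemma \ref{lemma2.4}, Lemma \ref{lemma2.6d} and Theorem \ref{prop1} --- are tailor-made to effect this reduction, and the coupling $\sigma>\delta\to 0$ with $\epsilon\to 0$ must be arranged so that the Lemma \ref{lemma2.4} approximation error, the tail error from Lemma \ref{lemma2.6d}, and the perturbation $\td c_\epsilon-c_0$ are all dominated by the $\sigma$-slack built into $\bar h$.
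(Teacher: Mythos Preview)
Your lower bound argument is correct and coincides with the paper's.

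For the upper bound, your overall strategy (semi-wave barrier built from the enlarged one-dimensional kernel $J_\epsilon$, then $\epsilon\to 0$ via Proposition~\ref{prop4.1}(2)(i)) is the paper's strategy, but the execution you sketch has two concrete gaps.

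First, your claim that the range $\rho\in[r/2,(1+\delta^2)r]$ ``covers the support of $\td\phi_\epsilon(\rho-\bar h(t))$'' is false: $\td\phi_\epsilon(\rho-\bar h(t))>0$ for \emph{every} $\rho<\bar h(t)$, down to $\rho=0$. Lemma~\ref{lemma2.4} only bounds $\td J_+(r,\rho)$, not $\td J=\td J_++\td J_-$, and only on that limited $\rho$-range; it says nothing about $\rho\in[0,r/2]$ or about $\td J_-$. Second, Lemma~\ref{lemma2.6d} concerns the double integral $\int_0^R\int_R^\infty\td J_-$ with $\rho>R$; here you need to control $\int_0^{\bar h}\td J_-(r,\rho)\td\phi_\epsilon\,d\rho$ for a \emph{fixed} $r$ with $\rho<\bar h$, and that lemma does not apply. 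With these holes, your displayed inequality carries an uncontrolled $o(1)$, and since the only slack term $-\sigma\td c_\epsilon\td\phi_\epsilon'$ vanishes as $r-\bar h(t)\to-\infty$, there is nowhere to absorb it.

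The paper closes these gaps by three devices you do not mention. It takes $r_*(t)=\bar g(t):=\bar h(t)/(1+\epsilon_n/2)>0$ in Lemma~\ref{lemma3.4a}(i), so the differential inequality is only checked for $r\in(\bar g(t),\bar h(t))$ where $r$ and $\bar h(t)$ are comparable; on $[0,\bar g(t)]$ it verifies $\bar u\ge u(\cdot+T)$ directly via $\bar u\ge u_n^*>u^*\ge u(\cdot+T)$. It uses the hybrid $\hat u$ built into Lemma~\ref{lemma3.4a}(i), so the integral over $[0,2\bar h/3]$ involves the \emph{true} solution $u\le u_n^*$ rather than $\bar u$. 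Finally, for the $\td J_-$ and far-field pieces it uses a direct geometric containment, observing that for $r\in(\bar g,\bar h)$ the sets $\Omega_2:=B_{2\bar h/3}$ and $\Omega_3:=\{2\bar h/3<|z|<\bar h,\ z_1<0\}$ lie inside the half-space $\Omega_1:=\{z_1<2\bar h/3\}$, whence $\int_{\Omega_2}J+\int_{\Omega_3}J\le\int_{\Omega_1}J=\int_{-\infty}^{2\bar h/3}J_*(r-\rho)\,d\rho$, which is dominated by the corresponding tail of $\int J_n$ (here the paper's choice $J_n=(1+\sqrt{\epsilon_n})J_{\epsilon_n}$ and the identity $J_{\epsilon_n}(l)=3J_*(l)$ for $|l|$ large supply the needed factor). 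This replaces your appeal to Lemma~\ref{lemma2.6d} and eliminates the $o(1)$ entirely.
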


\begin{proof}

We first show that
\begin{align}\label{geq-c_0}
	\liminf_{t\to\yy}\frac{h(t)}{t}\geq  c_0.
	\end{align}
Note that we can obtain \eqref{5.14} by repeating the argument in the proof of Theorem \ref{thm5.3}. Since {\bf (J1)} holds, now we have, by Proposition \ref{prop4.1}, $\lim_{n\to\infty} c_n=c_0$, and so
\eqref{geq-c_0} follows by letting $n\to\infty$ in \eqref{5.14}.

To prove \eqref{4.1d}, it remains to show
	\begin{align}\label{leq-c_0}
	\limsup_{t\to\yy}\frac{h(t)}{t}\leq  c_0.
	\end{align}
		
	Let $\{\epsilon_n\}$ be a sequence with $0<\epsilon_n\ll 1$ and $\epsilon_n\to 0$ as $n\to\yy$,  and let the function $J_{\epsilon_n}$ be given by \eqref{2.5d} with $\epsilon=\epsilon_n$. Define 
	\begin{align*}
	J_n:=(1+\sqrt{\epsilon_n})J_{\epsilon_n}.
	\end{align*}
From the definition of $J_{\epsilon_n}$, it is clear that $J_n\leq 3(1+\sqrt{\epsilon_n})J_*\leq 4J_*$, and so each $J_n$  satisfies \textbf{(J1)}. 
By Proposition \ref{prop4.1},  the problem \eqref{semi1}-\eqref{semi2} with $P=J_n$ admits a unique solution $(c_n,\phi_n)$.  Since $J_n$ converges to $J$ and   $J\leq J_n\leq 4J_*$, we could apply Proposition \ref{prop4.1} to conclude 
	\begin{equation}\label{4.5d}
	\begin{cases}
		\dd\phi_n(-\yy)=:u_n^*>u^*,\\
			c_n\geq c_0,\ \ \lim_{n\to\yy} c_n=c_0.
	\end{cases}
	\end{equation}

For fixed $n\geq 1$, define 	
\begin{align*}\begin{cases}
	\bar g(t):=c_n(1+2\epsilon_n) t+K,\ \bar h(t):=(1+\epsilon_n/2)\bar g(t),  & t\geq 0,\\
	\ol u(t,r):=(1+\epsilon_n) \phi_n(r-\bar h(t)), & t\geq 0,\;0\leq r\leq \bar h(t),
	\end{cases}
	\end{align*}
	where  $K>0$  is a large constant to be determined. 
	For convenience, we extend $u(t,r)$ to $[0,\yy)\times [0,\yy)$ by defining $ u(t,r)=0$ for $t\in [0,\yy)$ and $r\in (h(t),\yy)$. 
	
	Next we show that for large fixed $n$,  there are $K:=K_n$  and $T:=T_n>0$ such that $(\bar u,\bar h)$ satisfies
	\begin{equation}\label{4.6d}
	\begin{cases}
	\dd \bar u_t(t,r)\geq d \int_{0}^{2\bar h(t)/3} \td J(r,\rho)  u(t,\rho)\rd \rho+d \int_{2\bar h(t)/3}^{\bar h(t)} \td J(r,\rho) \bar u(t,\rho)\rd \rho\\
\ \ \ \ \ \  \ \ \ \ \ \  \ -d\bar u(t,r)	+f(\bar u), & t>0,\; r\in  (\bar g(t),\bar h(t)),\\[3mm]
	\dd \bar h'(t)\geq \frac{\mu}{\bar h^{N-1}(t)}  \dd\int_{0}^{\bar h(t)} r^{N-1}\bar u(t,r)\int_{\bar h(t)}^{+\yy} \td J(r,\rho)\rd \rho\rd r, & t>0,\\
	\bar u(t,r)\geq  u(t+T,r), \; \bar u(t,\bar h(t))=0, & t\geq 0,\; r\in [0,\bar g(t)],\\
	\bar u(0,r)\geq  u(T,r),\ \bar h(0)\geq h(T), & r\in [0,h(T)].
	\end{cases}
	\end{equation}
	Let us note that, since $\bar g(t)>2\bar h(t)/3$, if we define 
	\[
	\hat u(t,r):=\begin{cases} u(t+T, r), & t>0,\ r\in [0, \bar g(t)],\\
	\bar u(t,r), & t>0,\ r\in (\bar g(t), \bar h(t)],
	\end{cases}
	\]
	then the third inequality in \eqref{4.6d} implies
	\[
	d \int_{0}^{2\bar h(t)/3} \td J(r,\rho)  u(t,\rho)\rd \rho+d \int_{2\bar h(t)/3}^{\bar h(t)} \td J(r,\rho) \bar u(t,\rho)\rd \rho\geq d \int_{0}^{\bar h(t)} \td J(r,\rho)  \hat u(t,\rho)\rd \rho.
	\]
	Therefore, when \eqref{4.6d} holds, we can apply Lemma \ref{lemma3.4a} to conclude that 
	\begin{align}\label{4.9d}
	h(t+T)\leq \bar h(t) \mbox{ for all } t>0,
	\end{align}
which yields
	 \begin{align*}
	 	\limsup_{t\to\yy} \frac{h(t)}{t}\leq    \limsup_{t\to\yy} \frac{\bar h(t-T)}{t}=(1+\epsilon_n/2)c_n(1+2\epsilon_n),
	 \end{align*}
	 and \eqref{leq-c_0} then follows by letting $n\to\infty$.
	 
	 Therefore, to complete the proof of the theorem, it suffices to prove \eqref{4.6d}, which is carried out in the following three steps.
	
	\textbf{Step 1}. We check the  last two inequalities of \eqref{4.6d}.
	
	Since $\phi(-\yy)=u_n^* $, there is $K_0>0$ such that for any $K\geq K_0$, 
	\begin{align*}
(1+\epsilon_n)	\phi_n(-K)\geq u_n^*.
	\end{align*}
	Hence for $K\geq \frac{4K_0}{\epsilon_n}$, we have for $t\in [0,\yy)$ and $r\in [0,[\bar g(t)+\bar h(t)]/2]$,
	\begin{align}\label{4.7d}
	\ol u(t,r)&=(1+\epsilon_n) \phi_n(r-\bar h(t))\geq (1+\epsilon_n) \phi\lf(\frac{\bar g(t)+\bar h(t)}{2}-\bar h(t)\rr)\nonumber\\
	&=(1+\epsilon_n) \phi_n\lf(-\frac{\epsilon_n}{4}\bar  g(t)\rr)\geq (1+\epsilon_n) \phi_n\lf(-\frac{\epsilon_n}{4}K\rr)\geq (1+\epsilon_n) \phi(-K_0)\\
	&\geq u_n^*.\nonumber
	\end{align}
It is easily seen that
	\begin{align*}
	\limsup_{t\to\yy} \max_{r\in [0,h(t)]}  u(t,r)\leq {u}^*<u_n^*.
	\end{align*}
	Since   $h(t)\to\yy$ as $t\to\yy$ by assumption, there is a  $T>0$ such that 
	\begin{align*}
	 u(t+T,r)\leq  u_n^*\ \mbox{ for }   t\geq 0,\; r\in [0,h(t)].
	\end{align*}
By further enlarging $K$, we may assume that $K>h(T)$ and hence, from \eqref{4.7d}  we see
	\begin{align*}
&\bar u(0,r)\geq u_n^*\geq   u(T,r)\ \mbox{ for } \  r\in [0,h(T)].
	\end{align*}
	Clearly, $\bar u(t,\bar h(t))=0$. Therefore,  the last two inequalities of \eqref{4.6d} hold. 
	
		\textbf{Step 2}. We verify the  first   inequality of \eqref{4.6d}. 
		
From the equation satisfied by $\phi_n$, 
	we deduce for $t>0$ and $r\in (\bar g(t),\bar h(t))$,
	\begin{align*}
	\bar u_t(t,r)&=-(1+\epsilon_n)c_n(1+2\epsilon_n)\phi_n'(r-\bar h(t))\geq -(1+\epsilon_n)c_n\phi_n'(r-\bar h(t))\\ 
	&= (1+\epsilon_n) \lf[d  \int_{- \yy}^{\bar h(t)}    J_n (r-\rho)   \phi_n(\rho-\bar h(t))\rd \rho -d \phi_n(r-\bar h(t))+ f(\phi_n(r-\bar h(t)))\rr]\\ 
	&= d  \int_{-\yy}^{\bar h(t)}    J_{n} (r-\rho)  \bar u(t,\rho)\rd \rho -d  \bar u(t,r)+(1+\epsilon_n) f(\phi_n(r-\bar h(t)))\\
	&\geq   d  \int_{-\yy}^{\bar h(t)}    J_{n} (r-\rho)  \bar u(t,\rho)\rd \rho -d \bar u(t,r)+ f(\bar u(t,r)).
	\end{align*}
	In order to get the first inequality of \eqref{4.6d}, it remains to prove for $t>0$ and $r\in  (\bar g(t),\bar h(t))$,
	\begin{align}\label{4.8d}
	\int_{-\yy}^{\bar h(t)}    J_n (r-\rho)  \bar u(t,\rho)\rd \rho-\int_{0}^{2\bar h(t)/3}   \td J (r,\rho)   u(t,\rho)\rd \rho-  \int_{2\bar h(t)/3}^{\bar h(t)}   \td J (r,\rho)  \bar u(t,\rho)\rd \rho\geq 0.
	\end{align}
	A direct computation gives
	\begin{align*}
	&\int_{-\yy}^{\bar h(t)}    J_n (r-\rho)  \bar u(t,\rho)\rd \rho-  \int_{2\bar h(t)/3}^{\bar h(t)}   \td J (r,\rho)  \bar u(t,\rho)\rd \rho-\int_0^{2\bar h(t)/3}   \td J (r,\rho)  u(t,\rho)\rd \rho\\
	=&\int_{-\yy}^{\bar h(t)}    J_n (r-\rho)  \bar u(t,\rho)\rd \rho- \int_{2\bar h(t)/3}^{\bar h(t)}  [ \td J_+ (r,\rho)+\td J_- (r,\rho)]  \bar u(t,\rho)\rd \rho
-\int_0^{2\bar h(t)/3}   \td J (r,\rho)  \td u(t,\rho)\rd \rho\\
=&\ Q_1+Q_2,
\end{align*}
with
\begin{align*}
Q_1:&=\int_{2\bar h(t)/3}^{\bar h(t)}    [J_n (r-\rho)-\td J_+ (r,\rho) ]  \bar u(t,\rho)\rd \rho\\
Q_2:&=\int_{-\yy}^{2\bar h(t)/3}    J_n (r-\rho)  \bar u(t,\rho)\rd \rho-  \int_{0}^{2\bar h(t)/3}   \td J (r,\rho)   u(t,\rho)\rd \rho-\int_{2\bar h(t)/3}^{\bar h(t)} \td J_-(r,\rho)  \bar u(t,\rho)\rd \rho.
	\end{align*}
	where $\td J_+$ and $\td J_-$ are defined as in Lemma \ref{lemma2.2}.

For $r\in (\bar g(t),\bar h(t))$ and $\rho \in (2\bar h(t)/3,\bar h(t))$ we have $r\geq \bar g(t)\geq K$ and
	\begin{align*}
	\frac{\rho}{r}<\frac{\bar h(t)}{\bar g(t)}=1+\epsilon_n/2, \ \    \frac{\rho}{r}\geq \frac{2\bar h(t)/3}{\bar h(t)}=\frac{2}{3}>\frac{1}{2},
	\end{align*}
	which allows us to apply Lemma \ref{lemma2.4} to conclude that 
	\begin{align*}
	J_n (r-\rho)-\td J_+ (r,\rho)\geq 0 \mbox{ for }  r\in (g(t),h(t)),\; \rho \in (2\bar h(t)/3,\bar h(t)),
	\end{align*}
	provided that $K$ is sufficiently large, say $K\geq L_{\epsilon_n}$; and so $Q_1\geq 0$.
	
	We now examine $Q_2$.  Using the facts that $\bar u(t,\rho)$ is decreasing in $\rho\leq \bar h(t)$ and
	\begin{align*}
	(1+\epsilon_n)u_n^*\geq \bar u(t,\rho)\geq \bar u(t,2\bar h(t)/3) \geq u_n^*\geq  u(t,\rho) \ \mbox{ for } \ 0\leq \rho\leq \frac{2\bar h(t)}{3},
	\end{align*}
	we obtain 
	\begin{align*}
	Q_2&\geq u_n^*\lf[\int_{-\yy}^{2\bar h(t)/3}    J_n (r-\rho)  \rd \rho-  \int_{0}^{2\bar h(t)/3}   \td J (r,\rho)  \rd \rho-(1+\epsilon_n)\int_{2\bar h(t)/3}^{\bar h(t)} \td J_-(r,\rho)  \rd \rho\rr]\\ 
	&= u_n^*\lf[\int_{-\yy}^{2\bar h(t)/3}    (1+\epsilon_n)J_{\epsilon_n} (r-\rho)  \rd \rho-  \int_{0}^{2\bar h(t)/3}   \td J (r,\rho)  \rd \rho-
	(1+\epsilon_n)\int_{2\bar h(t)/3}^{\bar h(t)} \td J_-(r,\rho)  \rd \rho\rr].
	\end{align*}
	From the definition of $J_{\epsilon_n}$, we have 
	\begin{align*}
	J_{\epsilon_n}(l)=3J(l) \ \mbox{ when }  |l|\geq 1+\epsilon_n^{-1}.
	\end{align*}
	Without loss of generality we may assume that $L_{\epsilon_n}\geq 1+\epsilon_n^{-1}$. Choosing $K\geq 4 L_{\epsilon_n}$, then for all large $n$ we obtain
	\begin{align*}
	r-3\bar h(t)/2>\lf(1-\frac{2(1+\epsilon_n)}{3}\rr)\bar g(t)>\frac{1}{6}\bar g(t)\geq \frac{K}{4}\geq L_{\epsilon_n} \ \ \forall\ r\in (\bar g(t),\bar h(t)),\ t\geq 0,
	\end{align*}
	and so, for such $r$ and $t$, 
	\begin{align*}
	Q_2&\geq u_n^*\lf[\int_{-\yy}^{2\bar h(t)/3}   \frac{1}{2}J_{\epsilon_n}(r-\rho)  \rd \rho-  \int_{0}^{2\bar h(t)/3}   \td J (r,\rho)  \rd \rho\rr]\\
	&\ \ \ \ \ \ +u_n^*(1+\epsilon_n)\lf[\int_{-\yy}^{2\bar h(t)/3}   \frac{1}{2}J_{\epsilon_n}(r-\rho)  \rd \rho-\int_{2\bar h(t)/3}^{\bar h(t)} \td J_-(r,\rho) \rd \rho\rr]\\
	&= u_n^*\lf[\int_{\Omega_1}  \frac 32 J(|x_1^r-y|) \rd y- \int_{\Omega_2}  J(|x_1^r-y|) \rd y \rr] \\
	&\ \ \ \ \ \ +u_n^*(1+\epsilon_n)\lf[\int_{\Omega_1} \frac 32J(|x_1^r-y|) \rd y- \int_{\Omega_3}  J(|x_1^r-y|) \rd y \rr] 
	\end{align*}
where $x_1^r:=(r,0,\cdots,0)$ and
	\begin{align*}
	&\Omega_1:=\{z=(z_1,z_2,\cdots,z_N):z_1<{2\bar h(t)}/{3}\},\ \Omega_2:=\{z: |z|<{2\bar h(t)}/{3}\},\\
	&\Omega_3:=\{z=(z_1,z_2,\cdots,z_N): {2\bar h(t)}/{3}< |z|<\bar h(t),\ z_1<0\}.
	\end{align*}
	Clearly, $\Omega_2\subset \Omega_1$ and $\Omega_3\subset \Omega_1$. Hence $Q_2\geq 0$. Therefore, \eqref{4.8d} holds.

	\textbf{Step 3}. We verify the  second   inequality of \eqref{4.6d}. 
			
	By Lemmas \ref{lemma2.6d} and \ref{lemma2.7d}, 
	we have for fixed large $n$ and all large $K>0$,
	\begin{align*}
	&\frac{\mu}{\bar h^{N-1}(t)}\dd\int_{0}^{\bar h(t)} r^{N-1}\bar u(t,r)\int_{\bar h(t)}^{+\yy} \td J(r,\rho)\rd \rho\rd r\\
	\leq &\ \mu\dd\int_{0}^{\bar h(t)} \bar u(t,r)\int_{\bar h(t)}^{+\yy} \td J(r,\rho)\rd \rho\rd r\\
	= &\ \mu \int_{(1-\epsilon_n/4)\bar h(t)}^{\bar h(t) }\bar u(t,r) \int_{\bar h(t)}^{(1+\epsilon_n/8)\bar h(t)} \td J_+(r,\rho) \rd \rho \rd r+o_K(1),
	\end{align*}
where $o_K(1)\to 0$ as $K\to\infty$.
This, together with Lemma \ref{lemma2.4}, gives  
	\begin{align*}
&	\frac{\mu}{\bar h^{N-1}(t)}\dd\int_{0}^{\bar h(t)} r^{N-1}\bar u(t,r)\int_{\bar h(t)}^{+\yy} \td J(r,\rho)\rd \rho\rd r\\
\leq &\ \mu \int_{(1-\epsilon_n/4)\bar h(t)}^{\bar h(t)} \bar u(t,r)\int_{\bar h(t)}^{(1+\epsilon_n/8)\bar h(t)} (1+\sqrt{\epsilon_n})J_{\epsilon_n}(r-\rho) \rd \rho \rd r+o_K(1)\\
=&\ \mu (1+\epsilon_n) \int_{(1-\epsilon_n/4)\bar h(t)}^{\bar h(t)} \phi_n(r-\bar h(t))\int_{\bar h(t)}^{(1+\epsilon/8)\bar h(t)} J_n(r-\rho) \rd \rho \rd r+o_K(1)\\
=&\ \mu (1+\epsilon_n) \int_{-\epsilon_n \bar h(t)/4}^{0} \phi_n(r)\int_{0}^{\epsilon_n\bar h(t)/8} J_n(r-\rho) \rd \rho \rd r+o_K(1)\\
\leq &\ \mu (1+\epsilon_n)  \int_{-\yy}^{0} \phi_n(r)\int_{0}^{\yy} J_n(r-\rho) \rd \rho \rd r+o_K(1)\\
	=&\ c_n(1+\epsilon_n)+o_K(1)\leq c_n(1+2\epsilon_n)= \bar h'(t),
	\end{align*}
	provided that $K$ is sufficiently large.
This proves the second inequality of \eqref{4.6d}. 	
\end{proof}

Clearly Theorem \ref{th1.6} follows directly from Theorems \ref{thm5.3} and \ref{thm5.5}.

\section{Logarithmic shift}
In this section, we prove Theorem \ref{th1.7}. So throughout this section, we assume that the kernel function $J$ satisfies {\bf (J)} and has compact support contained in the ball $B_{K_*}$, the function $f$ satisfies {\bf (f)} and is $C^2$, the initial function satisfies \eqref{u_0}, and spreading happens for the unique positive solution $(u,h)$ of \eqref{1.4}. Hence by Theorem \ref{th1.6} we have $h(t)/t\to c_0$ as $t\to\infty$. We are going to show that
$c_0t-h(t)\approx \ln t$ as $t\to\infty$.

\subsection{Upper bound of $h(t)-c_0t$}
\begin{lemma}\label{lemma3.10}
	There exists $C>0$ such that
	\begin{align}\label{3.20}
		h(t)-c_0t\leq  -C\ln t \mbox{ for } t\gg 1, 
	\end{align}
	where $c_0>0$ is given by Proposition \ref{prop1.4}. 
\end{lemma}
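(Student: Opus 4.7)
The plan is to construct an upper solution $(\bar u, \bar h)$ of \eqref{1.4} for $t$ large, where $\bar h(t) = c_0 t - K \ln(t + T_0) + M$ with constants $K, M, T_0 > 0$ to be chosen, and $\bar u(t, r) = \phi_0(r - \bar h(t))$ is the shifted semi-wave profile from Proposition \ref{prop1.4} (extended by $u^*$ on the ``left''). Once the differential and free boundary inequalities are verified for $t$ large, the comparison principle in Lemma \ref{lemma3.4a}(i) (with $r_*(t)\equiv 0$, as in Remark \ref{rmk2.6}) gives $h(t) \leq \bar h(t)$, yielding $h(t) - c_0 t \leq -K \ln(t + T_0) + M \leq -C \ln t$ for any $C < K$ and $t$ large.

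Two inequalities must be verified. For the PDE inequality, since $J$ is supported in $[0, K_*]$, both $\tilde J(r,\rho)$ and $J_*(r-\rho)$ vanish for $|r - \rho| > K_*$, so only $r$ in the strip $[\bar h(t) - K_*, \bar h(t)]$ requires careful work. On this strip, I would apply Proposition \ref{coro2.5} to write $\tilde J(r,\rho) = J_*(r-\rho) + O(1/r)$, and then use the semi-wave identity satisfied by $\phi_0$,
\begin{equation*}
d\!\int_{-\infty}^0\! J_*(x-y)\phi_0(y)\, dy - d\phi_0(x) + c_0 \phi_0'(x) + f(\phi_0(x)) = 0,
\end{equation*}
to obtain
\begin{equation*}
\bar u_t - d\!\int_0^{\bar h(t)} \tilde J(r,\rho) \bar u(t,\rho)\, d\rho + d\bar u - f(\bar u) = \frac{K}{t+T_0}\phi_0'(r-\bar h(t)) + \mathcal R(t,r),
\end{equation*}
where $|\mathcal R(t,r)| \leq C_1/\bar h(t)$. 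Since $\phi_0' < 0$ on $[-K_*, 0]$ with $|\phi_0'|$ bounded below by some $\delta_0 > 0$ there, the first term dominates for $K$ large, making the right-hand side nonpositive. For $r \in [0, \bar h(t) - K_*]$, $\bar u \approx u^*$, and the inequality reduces to one controlled by $f(u^*) = 0$ together with the KPP condition; a small perturbation of $\bar u$ (e.g.\ a multiplicative factor $(1 + \epsilon(t))$ with $\epsilon(t) \sim 1/\ln t$) handles this interior region cleanly.

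The most subtle step is the free boundary inequality $\bar h'(t) \geq \mu \bar h^{1-N}(t) \int_0^{\bar h(t)} r^{N-1} \bar u(t,r) \int_{\bar h(t)}^{\infty} \tilde J(r,\rho) \, d\rho\, dr$. After the changes of variable $x = r - \bar h(t)$, $y = \rho - \bar h(t)$ and applying Proposition \ref{coro2.5} together with the expansion $(r/\bar h)^{N-1} = 1 - (N-1)(\bar h - r)/\bar h + O(\bar h^{-2})$, the right-hand side equals $c_0 + \mathcal E(t)$, where the geometric factor $(r/\bar h)^{N-1} \leq 1$ contributes a strictly negative term of order $-c_0^{\mathrm{geo}}/\bar h(t)$, while the kernel discrepancy $\tilde J - J_*$ contributes $O(\bar h^{-1})$ of a priori indefinite sign. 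I would use the explicit formula from Lemma \ref{lemma2.2} to expand $\tilde J(r,\rho) - J_*(r-\rho)$ to order $1/r$, identify the leading constant, and show that the combined correction satisfies $\mathcal E(t) \leq -c_2/\bar h(t)$ for some $c_2 > 0$; alternatively, the auxiliary multiplicative factor $(1 + \epsilon(t))$ introduced above can be used to manufacture this negative sign. Since $\bar h'(t) = c_0 - K/(t+T_0)$ and $\bar h(t) \sim c_0 t$, the required inequality $K/(t+T_0) \geq c_2/\bar h(t)$ then follows for $K$ large enough. Initial-time matching $\bar h(0) \geq h(t_1)$ and $\bar u(0,r) \geq u(t_1, r)$ is arranged by taking the reference time $t_1$ large (so that $u(t_1, \cdot)$ is close to $u^*$ on a big ball by the spreading assumption and $h(t_1)$ is large) and choosing $M$ correspondingly large.

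The main obstacle will be extracting the correct sign of the kernel-discrepancy contribution to $\mathcal E(t)$ and combining it with the geometric term to confirm a strictly negative leading $1/\bar h$ asymptotic. This is the point at which the fine analysis of $\tilde J$ versus $J_*$ developed in Section 2, particularly the explicit representation in Lemma \ref{lemma2.2} and the sharp $1/r$ bound in Proposition \ref{coro2.5}, becomes indispensable.
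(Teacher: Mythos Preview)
Your overall architecture matches the paper's: take $\bar u(t,r)=(1+\epsilon(t))\phi_0(r-\bar h(t))$ with $\bar h(t)=c_0t-K_2\ln(t+\theta)+\mathrm{const}$ and apply comparison. But several crucial signs and scalings are reversed. For the PDE step near the front, an \emph{upper} solution requires $\bar u_t-d\int\td J\,\bar u+d\bar u-f(\bar u)\geq 0$, not $\leq 0$. The term $\frac{K}{t+T_0}\phi_0'$ is negative (since $\phi_0'<0$) and is therefore the \emph{bad} term; taking $K$ large makes the inequality fail, not succeed. The good term comes from the signed leading part of $\mathcal R=d\int_0^{\bar h}\big(J_*(r-\rho)-\td J(r,\rho)\big)\phi_0(\rho-\bar h)\,d\rho$: expanding $\td J(r,r+s)-J_*(s)=\frac{(N-1)s}{2r}J_*(s)+O(r^{-2})$ and using that $J_*$ is even while $\phi_0$ is strictly decreasing yields $\int_{-K_*}^{K_*}sJ_*(s)\phi_0(s+r-\bar h)\,ds<0$, so $\mathcal R>0$ of exact order $1/\bar h$. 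One then needs $K$ \emph{small} so that $\frac{K}{t+T_0}|\phi_0'|$ is beaten by this positive contribution; this is the content of the paper's Claims~4 and~5.

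For the free boundary inequality, the factor $(1+\epsilon)$ cannot ``manufacture a negative sign'': it multiplies the whole boundary integral and hence adds $+c_0\epsilon(t)$ to the right-hand side, working against you. Your proposed scale $\epsilon\sim 1/\ln t$ would make this positive contribution swamp any $O(1/\bar h)=O(1/t)$ negative correction, and the inequality would fail; the paper takes $\epsilon(t)=K_1/(t+\theta)$. The genuine negative $1/\bar h$ term is the combined geometric--kernel correction: after the sharp bound $\td J(r,\rho)\leq(\rho/r)^{(N-1)/2}J_*(r-\rho)$ it becomes proportional to $B:=\int_{-K_*}^{0}\int_0^{K_*}(r+\rho)\phi_0(r)J_*(r-\rho)\,d\rho\,dr$, and the paper's Claim~3 shows $B<0$ precisely from the strict monotonicity of $\phi_0$. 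One then needs $K_1c_0+K_2\leq -\mu C_5B/(2c_0)$, so both constants are \emph{small}; your inequality $K/(t+T_0)\geq c_2/\bar h(t)$ has the wrong direction. Finally, the paper does not use $r_*(t)\equiv 0$ but $r_*(t)=\bar h(t)/2$, checking $\bar u(t,\cdot)\geq u(t+t_0,\cdot)$ directly on $[0,\bar h(t)/2]$ (where $(1+\epsilon)\phi_0\geq u^*$ and $u\leq u^*+o(1)$), which avoids kernel estimates near $r=0$.
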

\begin{proof}
Let $(c_0,\phi_0)$ be the solution of the semi-wave problem given in Proposition \ref{prop1.4}.	We define
	\begin{align*}\begin{cases}
	\bar h(t):=c_0 t+\delta(t),  & t\geq 0,\\
	\bar u(t,r):=(1+\epsilon(t)) \phi_0(r-\bar h(t)), & t\geq 0,\  0\leq r\leq \bar h(t),
	\end{cases}
	\end{align*}
	with 
	\begin{align*}
	\epsilon(t):=K_1(t+\theta)^{-1}, \ 	\ \  \delta(t) :=c_0\theta -K_2[\ln (t+\theta)-\ln \theta]
	\end{align*}
	for some positive constants $\theta$, $K_1$ and $K_2\in (0,1)$ to be  determined.   Clearly, for large $\theta>0$,
	\begin{align}\label{3.21}
c_0(t+\theta)	 \geq \bar h(t)\geq \frac{c_0}{2}(t+\theta) \ \mbox{ for all } \ t\geq 0.
	\end{align}

	Next we choose suitable $\theta$,  $K_1$, $K_2$ and $t_0>0$ such that $(\bar u,\bar h)$ satisfies
	\begin{equation}\label{3.22}
	\begin{cases}
	\dd \bar u_t(t,r)\geq d \int_{0}^{\bar h(t)} \td J(r,\rho) \bar u(t,\rho)\rd \rho-d\bar u(t,r)+f(\bar u(t,r)), & t>0,\; r\in  (\bar h(t)/2,\bar h(t)),\\[3mm]
	\dd \bar h'(t)\geq  \frac{\mu}{\bar h^{N-1}(t)}\dd\int_{0}^{\bar h(t)} r^{N-1}\bar u(t,r)\int_{\bar h(t)}^{+\yy} \td J(r,\rho)\rd \rho\rd r, & t>0,\\
	\bar u(t,r)\geq  u(t+t_0,r), \; \bar u(t,\bar h(t))=0, & t>0,\; r\in [0,\bar h(t)/2],\\
	\bar u(0,r)\geq  u(t_0,r),\ h(t_0)\leq \bar h(0), & r\in [0,h(t_0)].
	\end{cases}
	\end{equation}
	
	If \eqref{3.22} is proved, then we can use Lemma \ref{lemma3.4a} to obtain
	\begin{align*}\begin{cases}
	 h(t+t_0)\leq  \bar h(t) & \mbox{ for } t\geq 0,\\
	 u(t+t_0,r)\leq  \ol u(t,r)& \mbox{ for } t\geq 0,\ r\in [ 0,h(t+t_0)],
	 \end{cases}
	\end{align*}
which implies \eqref{3.20}.

Therefore to prove the lemma, it suffices to show \eqref{3.22}. For clarity we break the proof of \eqref{3.22}  into several steps.
	
	{\bf Step 1}. We choose $t_0=t_0(\theta)$ and $K_2$ such that the last two inequalities of \eqref{3.22} hold. 
	
	For the ODE problem
	\begin{align*}
	v'=f(v),\ \ \ v(0)=u^*+\epsilon_1
	\end{align*}
	with small $\epsilon_1>0$, from $f'(u^*)<0$, we see that  
	\begin{align*}
	u^*<v(t)\leq u^*+\epsilon_1e^{\wtd F t} \mbox{ for all } t\geq 0,
	\end{align*} 
	with $\wtd F=\max_{u\in [u^*,u^*+\epsilon_1]}f'(u)<0$. A simple comparison argument shows that there is $t_*>0$ such that $ u(t,r)\leq u^*+\epsilon_1$ for $t\geq t_*$ and $r\in [0, h(t)]$. Using comparison again we obtain 
	\begin{align*}
	 u(t+t_*,r)\leq v(t)\leq u^*+\epsilon_1e^{\wtd F t}\ \mbox{ for all } t\geq 0,\ r\in [0, h(t)].
	\end{align*}
On the other hand, by \cite[Theorem 1.7 (iii)]{dn-speed}, there is $\beta>0$ and $C_1>0$ such that 
\begin{align*}
u^*-\phi_0(r)<C_1e^{\beta \, r} \mbox{ for } r<0,
\end{align*}	
and hence, by \eqref{3.21}, for $t\geq 0$ and $r\in [0,(\bar h(t)/2]$,
\begin{align*}
\bar u(t,r)& =(1+\epsilon(t)) \phi_0(r-\bar h(t))\geq (1+\epsilon(t)) \phi_0(-\bar h(t)/2)\\
&\geq (1+K_1 (t+\theta)^{-1})(u^*-C_1e^{ -\beta\bar h(t)/2})\\
&\geq  (1+K_1 (t+\theta)^{-1})u^*-2C_1e^{ -\beta\bar h(t)/2}\\
&\geq(1+K_1 (t+\theta)^{-1})u^*-2C_1e^{ -\beta c_0(t+\theta)/4}\\
&\geq u^*+\epsilon_1 e^{\wtd F (t+t_0-t_*)}
\geq \td u(t+t_0,r)
\end{align*}
 provided 
\begin{align}\label{3.23}
2C_1e^{ -\beta c_0(t+\theta)/4}+\epsilon_1e^{\wtd F (t+t_0-t_*)}\leq K_1 (t+\theta)^{-1}u^* \mbox{ for all } t\geq 0,
\end{align}
which holds true when $\theta$ and $t_0$ are sufficiently large. We may at the same time also require
 \begin{align}\label{3.24}
 h(t_0)\leq \bar h(0)/2=c_0\theta/2.
 \end{align}
  So in particular we have
\begin{align*}
\bar u(0,r)\geq u(t_0,r) \mbox{ for } r \in [0,h(t_0)]. 
\end{align*}

To be more precise, 
by Theorem \ref{th1.6}, there is $C_2>0$ such that $h(t)\leq 2c_0 t+C_2$ for all $t\geq 0$.
Then \eqref{3.23} and \eqref{3.24} hold if 
\begin{align}
t_0=\frac{c_0\theta-2C_2}{4c_0}
\end{align}
and $\theta$ is sufficient large.

{\bf Step 2}. We check the second inequality of \eqref{3.22}.  

Using  Lemma \ref{lemma2.2} and \eqref{1.10},  for $r+\rho> K_*$, we have
\begin{equation}\label{3.26}
\begin{cases}
\dd\td J(r,\rho)\leq \lf(\frac{\rho}{r}\rr)^{(N-1)/2}J_*(r-\rho),& N\geq 3,\\
\dd \td J(r,\rho)\leq \lf(\frac{\rho}{r}\rr)^{1/2}\lf[\frac{(\rho+r)^2-K_*^2}{4r\rho }\rr]^{-1/2}J_*(r-\rho),& N=2.
\end{cases}
\end{equation}
Using \eqref{3.26},
	we deduce for $N\geq 3$,
	\begin{align*}
	&\frac{\mu}{\bar h^{N-1}(t)}\dd\int_{0}^{\bar h(t)} r^{N-1}\bar u(t,r)\int_{\bar h(t)}^{+\yy} \td J(r,\rho)\rd \rho\rd r\\
	&=\frac{\mu}{\bar h^{N-1}(t)}\dd\int_{0}^{\bar h(t)} (1+\epsilon(t)) r^{N-1}\phi_0(r-\bar h(t))\int_{\bar h}^{+\yy} \td J(r,\rho)\rd \rho\rd r\\
	&\leq \frac{\mu (1+\epsilon(t))}{\bar h^{N-1}(t)}\dd\int_{0}^{\bar h(t)}  r^{(N-1)/2}\phi_0(r-\bar h(t))\int_{\bar h(t)}^{+\yy}  \rho^{(N-1)/2} J_*(r-\rho)\rd \rho\rd r\\
	&=\frac{\mu(1+\epsilon(t))}{\bar h^{N-1}(t)}\dd\int_{-\bar h(t)}^{0} (r+\bar h(t))^{(N-1)/2}\phi_0(r)\int_{0}^{+\yy}  (\rho+\bar h(t))^{(N-1)/2} J_*(r-\rho)\rd \rho\rd r\\
	&=\mu(1+\epsilon(t))\dd\int_{-\bar h(t)}^{0} \int_{0}^{+\yy} \phi_0(r) J_*(r-\rho)\rd \rho\rd r\\
	&\ \ \ \ \ \ +\mu(1+\epsilon(t))\dd\int_{-\bar h(t)}^{0} \int_{0}^{+\yy}  \lf[\Big(1+\frac r{\bar h(t)}\Big)^{(N-1)/2}\Big(1+\frac{\rho}{\bar h(t)}\Big)^{(N-1)/2}-1\rr]\phi_0(r)  J_*(r-\rho)\rd \rho\rd r\\
	&=:\mu(1+\epsilon(t))\left[\dd\int_{-\bar h(t)}^{0}\int_{0}^{+\yy}    \phi_0(r)J_*(r-\rho)\rd \rho\rd r+A(N)\right]\\
	&\leq (1+\epsilon(t))[c_0+\mu A(N)].
	\end{align*}
	
	For $N=2$, we similarly obtain
	\begin{align*}
		&\frac{\mu}{\bar h^{N-1}(t)}\dd\int_{0}^{\bar h(t)} l^{N-1}\bar u(t,r)\int_{\bar h(t)}^{+\yy} \td J(r,\rho)\rd \rho\rd r\\
		\leq&\ (1+\epsilon(t))\left[c_0+ \mu A(2)\right],
			\end{align*}
			with
			\begin{align*}
		 A(2):=\!\!\dd\int_{-\bar h(t)}^{0}  \int_{0}^{+\yy}\!\!  \lf(\Big[1+\frac r{\bar h(t)}\Big]^{\frac 12}\Big[1+\frac \rho{\bar h(t)}\Big]^{\frac 12}\lf[\frac{(\rho+r+2\bar h(t))^2-K_*^2}{4(r+\bar h(t))(\rho+\bar h(t)) }\rr]^{-\frac 12}\!\!\!-\!1\!\rr)\phi_0(r) J_*(r-\rho)\rd \rho\rd r.
	\end{align*}
	
	{\bf Claim 1}.  There exist constants $C_{i,j}\geq 0$, with $C_{0,1}=C_{1,0}>0$, such that  for   $x, y\in \R$ close to 0,
	\begin{align}\label{3.27}
	(1+x)^{(N-1)/2}(1+y)^{(N-1)/2}-1\leq \sum_{1\leq i+j\leq N-1}C_{i,j}x^iy^j.
	\end{align}
	If $N\geq 2$ is an odd integer, \eqref{3.27} follows by expanding the product in its left side.
	 When $N\geq 2$ is an even integer,  for  $x\in \R$ and $y\in \R$ close to 0, we have
	\begin{align*}
	(1+x)^{(N-1)/2}(1+y)^{(N-1)/2}-1
	&=(1+x)^{(N-2)/2}(1+y)^{(N-2)/2}(1+x)^{1/2}(1+y)^{1/2}-1\\
	&\leq(1+x)^{(N-2)/2}(1+y)^{(N-2)/2}(1+x/2)(1+y/2) -1,
	\end{align*}
	since 
	\[
	(1+x)^{1/2}=1+\frac 12 x-\frac 18 (1+\xi)^{-3/2}x^2\leq 1+\frac 12 x
	\]
	for some $\xi$ satisfying $|\xi|\leq |x|$.
	Then \eqref{3.27} is obtained by expanding the last term of the earlier inequality.

	{\bf Claim 2}.  For integer $N\geq 2$, there are  constants $C_4$ and $C_5$ depending on $N$ such that 
	\begin{align*}
	A(N)\leq \frac{C_4}{\bar h^{2}(t)}+\frac{C_5B}{\bar h(t)} \mbox{ for all large $\theta$ and $t\geq 0$},
	\end{align*}
	where
	\begin{align*}
	B:=\dd\int_{-K_*}^{0}\int_{0}^{K_*}  (r+\rho) \phi_0(r) J_*(r-\rho)\rd \rho\rd r.
	\end{align*}
	
By our assumption, the supporting set of $J_*$ in contained in $ [-K_*,K_*]$, and so  for $N\geq 3$ and large $\bar h(t)$, by \eqref{3.27} we have
	\begin{align*}
	A(N)\leq &\dd\sum_{1\leq  i+j\leq N-1}C_{i,j}\int_{-\bar h(t)}^{0} \frac{ r^{i}\rho^{j}}{\bar h^{i+j}(t)}\phi_0(r)\int_{0}^{+\yy}  J_*(r-\rho)\rd \rho\rd r\\
	=&\dd\sum_{1\leq  i+j\leq N-1}C_{i,j}\int_{-K_*}^{0}\int_{0}^{K_*} \frac{ r^{i}\rho^{j}}{\bar h^{i+j}(t)}\phi_0(r) J_*(r-\rho)\rd \rho\rd r\\
	=&\dd\sum_{2\leq  i+j\leq N-1}C_{i,j}\int_{-K_*}^{0}\int_{0}^{K_*}\frac{ r^{i}\rho^{j}}{\bar h^{i+j}(t)}\phi_0(r)J_*(r-\rho)\rd \rho\rd r\\
	&+C_{0,1}\int_{-K_*}^{0}\int_{0}^{K_*} \frac{ r+\rho}{\bar h(t)}\phi_0(r)\int_{0}^{+\yy}  J_*(r-\rho)\rd \rho\rd r\\
	\leq&\sum_{2\leq  i+j\leq N-1}C_{i,j}\frac{K_*^{i+j+2}\|J_*\|_{L^\yy}u^*}{\bar h^{2}(t)}+C_{0,1}\dd\int_{-K_*}^{0}\int_{0}^{K_*}  \frac{(r+\rho)  }{\bar h(t)}\phi_0(r) J_*(r-\rho)\rd \rho\rd r\\
	=:&\frac{C_4}{\bar h^{2}(t)}+\frac{C_5B}{\bar h(t)}.
	\end{align*}

Next we consider $A(2)$. Clearly, for small $x\geq 0$, there is $\xi\in [0,x]$ such that 
	\begin{align}\label{3.28}
	(1-x)^{-1/2}=1+\frac{1}{2} (1-\xi)^{-3/2}x \leq 1+\frac{1}{2} (1-1/2)^{-3/2}x= 1+\sqrt{2} x.
	\end{align}
	Using this and \eqref{3.27} we obtain, for large $\bar h(t)$,
	\begin{align*}
	 A(2)=&\int_{-K_*}^{0}\int_{0}^{K_*}  \lf(\Big[1+\frac r{\bar h(t)}\Big]^{\frac 12}\Big[1+\frac \rho{\bar h(t)}\Big]^{\frac 12} \lf[\frac{(\rho+r+2\bar h)^2-K_*^2}{4(r+\bar h)(\rho+\bar h) }\rr]^{-1/2}-1\rr)\phi_0(r)J_*(r-\rho)\rd \rho\rd r\\
	\leq&\dd\int_{-K_*}^{0}\int_{0}^{K_*} \lf(\Big[1+\frac r{\bar h(t)}\Big]^{\frac 12}\Big[1+\frac \rho{\bar h(t)}\Big]^{\frac 12}\lf[1-\frac{K_*^2}{4(r+\bar h(t))(\rho+\bar h(t)) }\rr]^{-1/2}-1\rr)
	\phi_0(r)  J_*(r-\rho)\rd \rho\rd r\\
	\leq&\dd\int_{-K_*}^{0}\int_{0}^{K_*} \lf(\Big[1+\frac r{2\bar h(t)}\Big]\Big[1+\frac \rho{2\bar h(t)}\Big] \lf[1+\frac{\sqrt{2}K_*^2}{4(r+\bar h(t))(\rho+\bar h(t)) }\rr]-1\rr)\phi_0(r)  J_*(r-\rho)\rd \rho\rd r\\
	=&\dd\int_{-K_*}^{0}\int_{0}^{K_*} \lf(\frac{r+\rho}{2\bar h(t)}+\frac{r\rho}{4\bar h^2(t)}\rr)\phi_0(r))  J_*(r-\rho)\rd \rho\rd r\\
	&+\dd\int_{-K_*}^{0}\int_{0}^{K_*} \Big[1+\frac r{2\bar h(t)}\Big]\Big[1+\frac\rho{2\bar h(t)}\Big]\frac{\sqrt{2}K_*^2}{4(r+\bar h(t))(\rho+\bar h(t)) }\phi_0(r) J_*(r-\rho)\rd \rho\rd r\\
	\leq&\ \frac{B}{2\bar h(t)}+\frac{K_*^{4}\|J_*\|_{L^\yy}u^*}{4\bar h^{2}(t)}+\dd\int_{-K_*}^{0}\int_{0}^{K_*} \frac{\sqrt{2}K_*^2}{2(r+\bar h(t))(\rho+\bar h(t)) }\phi_0(r)
	 J_*(r-\rho)\rd \rho\rd r\\
	\leq&\ \frac{B}{2\bar h(t)}+\frac{K_*^{4}\|J_*\|_{L^\yy}u^*}{4\bar h^{2}(t)}+\frac{\sqrt{2}K_*^{4}\|J_*\|_{L^\yy}u^*}{\bar h^{2}(t)}\\
	=&\ \frac{(1+4\sqrt{2})K_*^{4}\|J_*\|_{L^\yy}u^*}{4\bar h^{2}(t)}+\frac{B}{2\bar h(t)}=:\frac{C_4}{\bar h^{2}(t)}+\frac{C_5B}{\bar h(t)}.
	\end{align*}
	Thus Claim 2 is proved.
	
	{\bf Claim 3}. $B<0$. 

Since $J_*$ is even, we have
\begin{align*}
	B&=\dd\int_{-K_*}^{0}\int_{0}^{K_*} (r+\rho) \phi_0(r) J_*(r-\rho)\rd \rho\rd r\\
	&=\int_0^{K_*}\int_0^{K_*}(\rho-r)\phi_0(-r)J_*(\rho+r)d\rho dr\\
	&=\int_0^{K_*}\int_0^{r}(\rho-r)\phi_0(-r)J_*(\rho+r)d\rho dr+\int_0^{K_*}\int_r^{K_*}(\rho-r)\phi_0(-r)J_*(\rho+r)d\rho dr\\
	&=\int_0^{K_*}\int_\rho^{K_*}(\rho-r)\phi_0(-r)J_*(\rho+r)dr d\rho +\int_0^{K_*}\int_r^{K_*}(\rho-r)\phi_0(-r)J_*(\rho+r)d\rho dr\\
	&=\int_0^{K_*}\int_r^{K_*}(\rho-r)[\phi_0(-r)-\phi_0(-\rho)]J_*(\rho+r)d\rho dr<0,
\end{align*}  
since $r\to\phi_0(-r)$ is strictly increasing. Claim 3 is thus proved.

In view of $\bar h(t)\geq c_0\theta$, from Claims 2 and 3 we obtain
	\[
	A(N)\leq \frac{ C_5 B}{2\bar h(t)} \mbox{ for all large $\theta$.}
	\]
	 It follows that
	\begin{align*}
	\frac{\mu}{\bar h^{N-1}(t)}\dd\int_{0}^{\bar h(t)} r^{N-1}\bar u(t,r)\int_{\bar h(t)}^{+\yy} \td J(r,\rho)\rd \rho\rd r
	\leq(1+\epsilon(t))c_0+\frac{\mu C_5 B}{2\bar h(t)}
\end{align*}
provided that $\theta$ is chosen large enough.
  Then from $\bar h(t)\leq c_0(t+\theta)$, we obtain
\begin{align*}
&\frac{\mu}{\bar h^{N-1}(t)}\dd\int_{0}^{\bar h(t)} r^{N-1}\bar u(t,r)\int_{\bar h(t)}^{+\yy} \td J(r,\rho)\rd \rho\rd r
\leq c_0+\frac{K_1c_0}{t+\theta}+\frac{\mu C_5 B}{2c_0(t+\theta)}\\
\leq&c_0-K_2(t+\theta)^{-1}=\bar h'(t)
\end{align*}
if $K_1$ and $K_2$ are  small such that 
\begin{align*}
K_1c_0+K_2\leq \frac{-\mu C_5 B}{2c_0}.
\end{align*}
 
	{\bf Step 3}. We verify the first inequality of \eqref{3.22}, namely,  for $t>0$ and $r\in (\overline h(t)/2,\overline h(t))$, 
	\begin{align}\label{3.30}
	\bar u_t(t,r)\geq d \int_{0}^{\bar h(t)} \td J(r,\rho) \bar u(t,\rho)\rd \rho -d\bar u(t,x)+f(\bar u(t,r)).
	\end{align}
	
We start with a claim.

{\bf Claim 4.} There exist positive constants $C_6$ and $C_7$ such that for  all large $\theta$ and $r\in (\bar h(t)/2,\bar h(t))$, $t>0$,
\begin{equation}\begin{aligned}\label{3.31}
\int_{0}^{\bar h(t)}  \td J(r,\rho) \phi_0(\rho-\bar h(t))\rd \rho\leq &\ \int_{0}^{\bar h(t)}  J_*(r-\rho) \phi_0(\rho-\bar h(t))\rd \rho \\ & +	\frac{C_6}{r}\int_{-K_*}^{K_*}  \rho J_*(\rho) 
\phi_0(\rho+r-\bar h(t))\rd \rho+\frac{C_7}{r^2 }.
\end{aligned}
\end{equation}

We prove \eqref{3.31} for the cases $N\geq 3$ and $N=2$ separately.  Note that $r\geq \bar h(t)/2\geq c_0\theta/2$ is large for all large $\theta$.

For $N\geq 3$, $r\in (\underline h(t)/2,\underline h(t))$ and $t>0$,  by \eqref{3.26}, when $\theta$ is chosen sufficiently large,
	\begin{align*}
	&\int_{0}^{\bar h(t)}  \td J(r,\rho) \phi_0(\rho-\bar h(t))\rd \rho\leq 	\int_{0}^{\bar h(t)} \lf(\frac{\rho}{r}\rr)^{(N-1)/2}  J_*(r-\rho) \phi_0(\rho-\bar h(t))\rd \rho\\
	=&\int_{0}^{\bar h(t)} J_*(r-\rho) \phi_0(\rho-\bar h(t))\rd \rho+\int_{0}^{\bar h(t)} \lf[\lf(\frac{\rho}{r}\rr)^{(N-1)/2}-1\rr] J_*(r-\rho) \phi_0(\rho-\bar h(t))\rd \rho\\
	=&\int_{0}^{\bar h(t)} J_*(r-\rho) \phi_0(\rho-\bar h(t))\rd \rho+\int_{-r}^{\bar h(t)-r} \lf[\lf(1+\frac{\rho}{r}\rr)^{(N-1)/2}-1\rr]J_*(\rho) \phi_0(\rho+r-\bar h(t))\rd \rho\\
	=&\int_{0}^{\bar h(t)}  J_*(r-\rho) \phi_0(\rho-\bar h(t))\rd \rho+\int_{-K_*}^{\min\{K_*,\bar h(t)-r\}} \lf[\lf(1+\frac{\rho}{r}\rr)^{(N-1)/2}-1\rr] J_*(\rho) \phi_0(\rho+r-\bar h(t))\rd \rho\\
	=&\int_{0}^{\bar h(t)}  J_*(r-\rho) \phi_0(\rho-\bar h(t))\rd \rho+\int_{-K_*}^{K_*} \lf[\lf(1+\frac{\rho}{r}\rr)^{(N-1)/2}-1\rr] J_*(\rho) \phi_0(\rho+r-\bar h(t))\rd \rho,
	\end{align*}
where we have assumed that $\phi$ is extended by $\phi(r)\equiv 0$ for $r>0$.  By elementary calculus, 
there exist positive constants $D_1$ and $D_2$, depending on $N$, such that for all $x\in \R$ close to 0,
	\begin{align*}
\lf(1+x\rr)^{(N-1)/2}-1\leq  D_1 x+D_2 x^2.
	\end{align*}
	 Hence,  for all large $\theta$ we have
	\begin{align*}
	&\int_{-K_*}^{K_*} \lf[\lf(1+\frac{\rho}{r}\rr)^{(N-1)/2}-1\rr] J_*(\rho) \phi_0(\rho+r-\bar h(t))\rd \rho\\
	\leq& \ \frac{D_1}{r} 	\int_{-K_*}^{K_*} \rho J_*(\rho) \phi_0(\rho+r-\bar h(t))\rd \rho+\frac{2D_2K_*^3\|J_*\|_{L^\yy}u^*}{r^2}.
	\end{align*}
Therefore \eqref{3.31} holds when $N\geq 3$.
	
	When $N=2$,  from \eqref{3.26}, we have for $r\in (\bar h(t)/2,\bar h(t))$, $t>0$ and large $\theta$,
	\begin{align*}
	&\int_{0}^{\bar h(t)}  \td J(r,\rho) \phi_0(\rho-\bar h(t))\rd \rho\leq 	\int_{0}^{\bar h(t)}\lf(\frac{\rho}{r}\rr)^{1/2}\lf[\frac{(\rho+r)^2-K_*^2}{4r\rho }\rr]^{-1/2} J_*(r-\rho) 
	\phi_0(\rho-\bar h(t))\rd \rho\\
	=& \int_{\bar h(t)/2-K_*}^{\bar h(t)}\lf(\frac{\rho}{r}\rr)^{1/2}\lf[\frac{(\rho+r)^2-K_*^2}{4r\rho }\rr]^{-1/2} J_*(r-\rho) 
	\phi_0(\rho-\bar h(t))\rd \rho\\
	\leq& \int_{\bar h(t)/2-K_*}^{\bar h(t)}\lf(\frac{\rho}{r}\rr)^{1/2}\lf(1-\frac{K_*^2}{4r\rho }\rr)^{-1/2} J_*(r-\rho) \phi_0(\rho-\bar h(t))\rd \rho\\
	=&\int_{0}^{\bar h(t)}J_*(r-\rho) \phi_0(\rho-\bar h(t))\rd \rho+\int_{0}^{\bar h(t)}\lf[\lf(\frac{\rho}{r}\rr)^{1/2}\lf(1-\frac{K_*^2}{4r\rho }\rr)^{-1/2}-1\rr]J_*(r-\rho) \phi_0(\rho-\bar h(t))\rd \rho\\
	=&\int_{0}^{\bar h(t)}J_*(r-\rho) \phi_0(\rho-\bar h(t))\rd \rho\\
	&+\int_{-K_*}^{K_*}  \lf[\lf(1+\frac{\rho}{r}\rr)^{1/2}\lf(1-\frac{K_*^2}{4r(\rho+r) }\rr)^{-1/2}-1\rr] J_*(\rho) \phi_0(\rho+r-\bar h(t))\rd \rho.
	\end{align*}
	Thus, by \eqref{3.28}, for $r\in (\underline h(t)/2,\underline h(t))$, $t>0$ and large $\theta$,
	\begin{align*}
	&\int_{-K_*}^{K_*}  \lf[\lf(1+\frac{\rho}{r}\rr)^{1/2}\lf(1-\frac{K_*^2}{4r(\rho+r) }\rr)^{-1/2}-1\rr] J_*(\rho) \phi_0(\rho+r-\bar h(t))\rd \rho\\
	\leq& \int_{-K_*}^{K_*}  \lf[\lf(1+\frac{\rho}{2r}\rr)\lf(1+\frac{\sqrt{2}K_*^2}{4r(\rho+r) }\rr)-1\rr] J_*(\rho) \phi_0(\rho+r-\bar h(t))\rd \rho\\
	\leq&\int_{-K_*}^{K_*}  \lf[\lf(1+\frac{\rho}{2r}\rr)\lf(1+\frac{\sqrt{2}K_*^2}{2r^2 }\rr)-1\rr] J_*(\rho) \phi_0(\rho+r-\bar h(t))\rd \rho\\
	=&\int_{-K_*}^{K_*}  \lf[\frac\rho {2r}+\frac{\sqrt{2}K_*^2}{2r^2 }\left(1+\frac{\rho}{2r}\rr)\rr] J_*(\rho) \phi_0(\rho+r-\bar h(t))\rd \rho\\
	\leq &\ \frac{1}{2r}\int_{-K_*}^{K_*}  \rho J_*(\rho) \phi_0(\rho+r-\bar h(t))\rd \rho+\frac{2\sqrt{2}K_*^3\|J_*\|_{L^\yy}u^*}{r^2 },
	\end{align*}
	which gives \eqref{3.31} for $N=2$. Claim 4 is thus proved.
	
With the above estimates, we are ready to prove \eqref{3.30}.	By the definition of $\bar u$ and \eqref{3.31}, we have
	\begin{align*}
	\bar u_t(t,r)=&-(1+\epsilon(t))[c_0+\delta'(t)]\phi_0'(r-\bar h(t))+\epsilon'(t)\phi_0(r-\underline h(t)),
	\end{align*}
	and for $t>0$, $r\in (\bar h(t)/2,\bar h(t))$ and large $\theta$,
	\begin{align*}
	& -(1+\epsilon(t))c_0\phi'(r-\bar h(t))\\ 
	= & (1+\epsilon(t)) \lf[d \int_{-\yy}^{\bar h(t)}  J_*(r-\rho)  \phi_0(\rho-\bar h(t))\rd \rho-d\phi(r-\bar h(t))+f(\phi_0(r-\bar h(t)))\rr]\\ 
	\geq & (1+\epsilon(t)) \bigg[d \int_{0}^{\bar h(t)}  \td J(r,\rho)  \phi_0(\rho-\bar h(t))\rd \rho-d\phi_0(r-\bar h(t))+f(\phi_0(r-\bar h(t)))\\
	&\ \ \ \ \ \ \ \ -\frac{C_6}{r}\int_{-K_*}^{K_*}  \rho J_*(\rho) \phi_0(\rho+r-\bar h(t))\rd \rho-\frac{C_7}{r^2 }\bigg]\\
	= & \  d \int_{0}^{\bar h(t)}  \td J(r,\rho)  \bar u(t,\rho)\rd \rho-d\bar u(t,r)+(1+\epsilon(t))f(\phi_0(r-\bar h(t)))\\
	&\ -(1+\epsilon(t))\left[\frac{C_6}{r}\int_{-K_*}^{K_*}  \rho J_*(\rho) \phi_0(\rho+r-\bar h(t))\rd \rho+\frac{C_7}{r^2 }\right].
	\end{align*}
	Therefore
	\begin{align*}
	\bar u_t(t,r)\geq \ d \int_{0}^{\bar h(t)}  \td J(r,\rho)  \bar u(t,\rho)\rd \rho-d \bar u(t,r)+f(\bar u(t,r))+E,
	\end{align*}
	with
		\begin{align*}	
		E:=&(1+\epsilon(t))f(\phi_0(r-\bar h(t)))-f(\bar u(t,r))-(1+\epsilon(t))\left[\frac{C_6}{r}\int_{-K_*}^{K_*}  \rho J_*(\rho) \phi_0(\rho+r-\bar h(t))\rd \rho+\frac{C_7}{r^2 }\right]
		\\
		&-(1+\epsilon(t))\delta'(t)\phi_0'(r-\bar h(t))+\epsilon'(t)\phi_0(r-\underline h(t)).
	\end{align*}
	
	Clearly to complete the proof of \eqref{3.30}, it suffices to show the following claim.

{\bf Claim 5}. For suitably chosen small $K_1$ and $K_2$, and large $\theta$,
\begin{align*}
E\geq 0 \mbox{ for } r\in (\underline h(t)/2,\underline h(t)),\ t>0. 
\end{align*}

 Define
\begin{align*}
G(u):=(1+\epsilon)f(u)- f((1+\epsilon)u).
\end{align*}
Then for $u\in  [{0},{u}^*]$, there exists some $\td u\in [u,{u}^*]$ such that
\begin{align*}
G(u)=&G({u}^*)+ G'(\td u) (u-{u}^*)\\
=&-f((1+\epsilon){u}^*)+(1+\epsilon) f'(\td u) (u-{u}^*)-(1+\epsilon) f'((1+\epsilon)\td u) (u-{u}^*)\\
=&-f((1+\epsilon){u}^*)+(1+\epsilon)\bigg[ f'(\td u)- f'((1+\epsilon)\td u)\bigg] (u-{u}^*).
\end{align*}
 Since $f\in C^2$, there exists $C_f>0$ such that 
 \[
  f'(\td u)- f'((1+\epsilon)\td u\geq - C_f\td u\epsilon\geq -C_f u^*\epsilon.
  \]
  Hence,
\begin{align*}
G(u)\geq& -f((1+\epsilon){u}^*)-(1+\epsilon)C_f u^*\epsilon |u-{u}^*|\\
\geq&-\epsilon f'(u^*) u^*+o(\epsilon)-2 C_f u^* ({u}^*-u)\epsilon.
\end{align*}
Therefore, in view of $f'(u^*)<0$ and $\phi_0(-\infty)=u^*$, there exists large $L>0$ such that for $\xi\leq -L$ and $\theta$ large (and hence $\epsilon(t)$ small),
\begin{align}\label{3.33a}
(1+\epsilon(t))f(\phi_0(\xi))- f((1+\epsilon(t))\phi_0(\xi))\geq \frac{- f'(u^*) u^*}{2}\epsilon(t)>0.
\end{align}

Since $\phi_0$ is decreasing and $J_*$ is even, for $r\in [\bar h(t)/2, \bar h(t)]$ and $t>0$,
\begin{align}\label{6.13}
\int_{-K_*}^{K_*}  \rho J_*(\rho) \phi_0(\rho+r-\bar h(t))\rd \rho=\int_0^{K_*}\rho J_*(\rho)[\phi_0(\rho+r-\bar h(t))-\phi_0(-\rho+r-\bar h(t))]d\rho<0.
\end{align}
Denote
\begin{align*}
&C_8:=\dd\max_{r\leq 0}|\phi_0'(r)|,\\
&\tilde\phi_0(\rho):=\max_{\xi \in [-L, 0]}[\phi_0(\rho+\xi)-\phi_0(-\rho+\xi)].
\end{align*}
Then $\tilde\phi_0(\rho)$ is continuous and $\tilde\phi_0(\rho)<\tilde\phi_0(0)=0$ for $\rho\in (0, K_*]$. Therefore
\begin{align*}
\int_{-K_*}^{K_*}  \rho J_*(\rho) \phi_0(\rho+r-\bar h(t))\rd \rho&=\int_0^{K_*}\rho J_*(\rho)[\phi_0(\rho+r-\bar h(t))-\phi_0(-\rho+r-\bar h(t))]d\rho\\
&\leq \int_0^{K_*}\rho J_*(\rho)\tilde \phi_0(\rho)d\rho=:-C_9<0 \mbox{ for } r\in [\bar h(t)-L, \bar h(t)].
\end{align*}
We may now use $(1+\epsilon(t))f(\phi_0(r-\bar h(t)))\geq f(\bar u(t,r))$  and \eqref{3.21} to obtain, for $r \in [\bar h(t)-L,\bar h(t)]$, $t>0$ and large $\theta$,
	\begin{align*}
	E\geq &\ \frac{C_6C_9}{r}-\frac{2C_7}{r^2 }-2\delta'(t)C_8+u^*\epsilon'(t)\\
	\geq &\ 	\frac{C_6C_9}{\bar h(t)}-\frac{4C_7}{\bar h^2(t) }-2K_2C_8(t+\theta)^{-1}-u^*K_1(t+\theta)^{-2}\\
	\geq&\  \frac{C_6C_9}{c_0(t+\theta)}-\frac{16C_7}{c_0^2(t+\theta)^2 }-2K_2C_8(t+\theta)^{-1}-u^*K_1(t+\theta)^{-2}\\
	=&\ \frac{1}{t+\theta}[C_6C_9/c_0-16C_7c_0^{-2}(t+\theta)^{-1} -2K_2C_8-u^*K_1(t+\theta)^{-1}]\geq 0
	\end{align*}
if $K_1$ and $K_2$ are small and $\theta$ is large.

We next estimate $E$ for $r \in [\bar h(t)/2,\bar h(t)-L]$. For such $r$ and $t>0$, by \eqref{6.13} and \eqref{3.33a}, we obtain
\begin{align*}
E\geq& \ (1+\epsilon(t) )f(\phi_0(r-\bar h(t)))-f(\bar u(t,r))-\frac{4C_7}{\bar h^2(t) }-2K_2C_8(t+\theta)^{-1}-u^*K_1(t+\theta)^{-2}\\
\geq&\ \frac{- f'(u^*) u^*}{2} K_1(t+\theta)^{-1}-\frac{16C_7}{c_0^2(t+\theta)^2 }-2K_2C_8(t+\theta)^{-1}-u^*K_1(t+\theta)^{-2}\\
=&\ (t+\theta)^{-1}[- f'(u^*) u^*K_1/2-16C_7c_0^{-2}(t+\theta)^{-1} -2K_2C_8-u^*K_1(t+\theta)^{-1}]\geq 0
\end{align*}
if $\theta$ is large and
\begin{align*}
K_1=\frac{8C_8K_2}{- f'(u^*) u^*}.
\end{align*}
This finishes the proof of Claim 5 and hence the lemma.	
\end{proof}

\subsection{Lower bound of $h(t)-c_0t$}

This subsection is devoted to the proof of the following lemma, which, combined with Lemma \ref{lemma3.10}, gives Theorem \ref{th1.7}.

\begin{lemma}\label{lem-lb}
		There exists $\td C>0$ such that for $t\gg 0$,
		\begin{align}\label{3.36a}
		h(t)-c_0t \geq -\td C \ln t,
		\end{align}
		where $c_0>0$ is given by Proposition \ref{prop1.4}. 
\end{lemma}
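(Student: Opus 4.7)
The plan is to mirror the proof of Lemma~\ref{lemma3.10} by constructing a lower solution. Define
\[
\underline h(t):=c_0 t+\sigma(t),\quad \underline u(t,r):=(1-\epsilon(t))\phi_0(r-\underline h(t)),\quad r\in[0,\underline h(t)],
\]
with $\sigma(t):=K_2\ln\theta-K_2\ln(t+\theta)$ and $\epsilon(t):=K_1/(t+\theta)$, so that $\sigma'(t)=-K_2/(t+\theta)<0$ and $\underline h(t)\leq c_0 t$. The goal is to apply Lemma~\ref{lemma3.4a}(ii) with inner radius $r_*(t)=\underline h(t)/2$ and a time-shift $t_0>0$, obtaining $\underline h(t)\leq h(t+t_0)$, which is equivalent to \eqref{3.36a}. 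The positive constants $\theta,K_1,K_2,t_0$ will be chosen in this order.

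For the \emph{free boundary inequality}, expanding $\td J(r,\rho)\approx(\rho/r)^{(N-1)/2}J_*(r-\rho)$ (from Lemma~\ref{lemma2.2} and Proposition~\ref{coro2.5}) and paralleling Step~2 of the proof of Lemma~\ref{lemma3.10} yields
\[
\frac{\mu}{\underline h^{N-1}(t)}\int_0^{\underline h(t)} r^{N-1}\underline u(t,r)\int_{\underline h(t)}^{+\infty}\td J(r,\rho)\,\rd\rho\,\rd r=(1-\epsilon(t))\left[c_0+\frac{\mu(N-1)B}{2\underline h(t)}\right]+O(\underline h^{-2}),
\]
with $B<0$ the same constant identified in Claim~3 of the proof of Lemma~\ref{lemma3.10}. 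Since $\underline h'(t)=c_0-K_2/(t+\theta)$ and $\underline h(t)\leq c_0(t+\theta)$, the required inequality $\underline h'(t)\leq \text{RHS}$ holds for large $\theta$ as soon as $K_2\geq c_0 K_1+\mu(N-1)|B|/(2c_0)+1$. For the \emph{PDE inequality} on $(\underline h(t)/2,\underline h(t))$, using $\underline u_t=-(1-\epsilon)(c_0+\sigma')\phi_0'-\epsilon'\phi_0$ together with the semi-wave equation, the residual reduces to
\[
E_{\rm low}:=d\!\int_0^{\underline h}\!(J_*(r-\rho)-\td J(r,\rho))\underline u\,\rd\rho+[(1-\epsilon)f(\phi_0)-f(\underline u)]-(1-\epsilon)\sigma'\phi_0'-\epsilon'\phi_0,
\]
which must be $\leq 0$. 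Exploiting that $J_*$ is even, so that the leading $(N-1)/(2r)$-correction involves $\int_{-K_*}^{K_*}\xi J_*(\xi)\phi_0(\xi+r-\underline h(t))\,\rd\xi$, which is exponentially small in the bulk and negative near the front: in the bulk $\{r\leq\underline h(t)-L\}$ the first term is negligible while the second is $\leq -c_2 K_1/(t+\theta)$ (by Fisher-KPP and $f'(u^*)<0$); in the boundary layer $\{r\in[\underline h(t)-L,\underline h(t)]\}$ the third term $(1-\epsilon)K_2\phi_0'/(t+\theta)$ is $\leq -c_3 K_2/(t+\theta)$ (since $|\phi_0'|$ is bounded below on $[-L,0]$); the remainder is $O((t+\theta)^{-2})$. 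Thus $E_{\rm low}\leq 0$ for suitably large $K_1,K_2,\theta$.

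The most delicate condition is the \emph{inner comparison} $\underline u(t,r)\leq u(t+t_0,r)$ for $r\in[0,\underline h(t)/2]$. Since $\underline u\leq(1-\epsilon(t))u^*=u^*-K_1 u^*/(t+\theta)$, this requires a \emph{quantitative} rate $u^*-u(t,r)=o(1/t)$ uniform in $r$ behind the front, whereas Lemma~\ref{lemma3.9} provides only locally uniform convergence $u\to u^*$. This is the main obstacle and must be handled separately. The plan is to establish such a rate by combining (a) the upper bound $h(t)\leq c_0 t-C\ln t$ from Lemma~\ref{lemma3.10}, (b) the exponential decay $u^*-\phi_0(\xi)=O(e^{\beta\xi})$ as $\xi\to-\infty$ already used in the proof of Lemma~\ref{lemma3.10} (cf.~\cite[Theorem~1.7(iii)]{dn-speed}), and (c) an interior asymptotic stability argument for the steady state $u^*$ of the associated fixed-domain nonlocal equation, exploiting $f'(u^*)<0$ to yield exponential interior convergence $u\to u^*$ on regions kept bounded away from the free boundary. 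Once this rate is in hand, $K_1$ is fixed sufficiently large, then $K_2$ via the constraint above, then $\theta$ and $t_0$ large enough to secure the inner comparison; Lemma~\ref{lemma3.4a}(ii) then yields $\underline h(t)\leq h(t+t_0)$ for all $t\geq 0$, and the explicit form of $\underline h$ gives \eqref{3.36a} with $\td C=K_2+1$.
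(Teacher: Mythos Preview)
Your overall strategy matches the paper's: the same lower-solution ansatz $(1-\epsilon(t))\phi_0(r-\underline h(t))$ with $\epsilon(t)=K_1/(t+\theta)$ and a logarithmically shifted front, the same decomposition into a free-boundary inequality, a PDE inequality on an outer annulus, and an inner comparison on $[0,r_*(t)]$. Your treatment of the first two pieces is essentially correct (the free-boundary expansion with the negative constant $B$ and the bulk/front dichotomy for the PDE residual are exactly what the paper does). Two minor issues: your choice $\sigma(0)=0$ gives $\underline h(0)=0$, so the comparison region is degenerate at $t=0$ and the working estimate $\underline h(t)\asymp c_0(t+\theta)$ fails near $t=0$; the paper avoids this by taking $\delta(t)=c_0\theta-K_2[\ln(t+\theta)-\ln\theta]$ so that $\underline h(0)=c_0\theta\gg1$. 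Also, the paper uses $r_*(t)=\widetilde E\,\underline h(t)$ with $\widetilde E=E_1/c_0<1/2$, not $\underline h(t)/2$, for a reason explained next.

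The real gap is the inner comparison. You correctly flag that $\underline u\leq u^*-K_1u^*/(t+\theta)$ forces a quantitative rate $u^*-u(t+t_0,r)\leq C/(t+\theta)$ uniformly on a region growing linearly in $t$, and that Lemma~\ref{lemma3.9} does not give this. Your plan (a)--(c) does not close this: (a) is an upper bound on $h$, not a lower bound on $u$; (b) concerns the profile $\phi_0$, not the solution $u$; and (c) (``exponential interior convergence'') is not an available off-the-shelf result here---on a domain growing at speed $c_0$ the boundary layer keeps feeding error into the interior, so one cannot simply invoke ODE stability of $u^*$. The paper resolves this by an independent preparatory lemma (Lemma~\ref{lemma3.12}): it constructs an explicit piecewise-linear subsolution on $[0,2E_1(t+\theta)]$ with $2E_1<c_0$, showing $u(t+t_0,r)\geq u^*-E_2/(t+\theta)$ for $r\in[0,E_1(t+\theta)]$. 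That constant $E_1$ then dictates the inner radius $\widetilde E\,\underline h(t)$ with $\widetilde E=E_1/c_0$, and the condition $K_1\geq E_2/u^*$ links the two constructions. Without an argument of this type, the inner comparison---and hence the application of Lemma~\ref{lemma3.4a}(ii)---is not justified.
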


The proof of Lemma \ref{lem-lb} is rather involved, and requires some preliminary results, given in the following two lemmas.
\begin{lemma}\label{lemma3.11}
There exists $H_*>0$ large so that for $h\geq r\geq H_*$,  the inequality
\begin{equation}\begin{aligned}\label{3.32}
&\int_{0}^{h}  \td J(r,\rho) \psi(\rho)\rd \rho- \int_{0}^{ h}  J_*(r-\rho) \psi(\rho)\rd \rho\\
&\geq \frac{N-1}r\int_{-K_*}^{\min\{h-r, K_*\}} 
\rho
J_*(\rho) \psi(\rho+r)\rd \rho -\frac{NK_*^2\|\psi\|_{L^\infty}}{r^2}
\end{aligned}
\end{equation}
holds for any nonnegative function $\psi\in C([0,h])$, where $N\geq 2$ is the dimension.
\end{lemma}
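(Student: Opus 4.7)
The plan is to prove the inequality pointwise in the shifted variable $\sigma=\rho-r$, by Taylor-expanding the explicit integral representations of $\tilde J(r,\rho)$ from Lemma \ref{lemma2.2} and of $J_*$ from \eqref{1.10} in the small parameter $1/r$, and then integrating against $\psi$.

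First I would exploit that $J$ has compact support in $[0,K_*]$: for $r\geq H_*>K_*$, both $\tilde J(r,\rho)$ and $J_*(r-\rho)$ vanish unless $|\rho-r|\leq K_*$, so in particular $\rho>0$ automatically. After the change of variable $\sigma=\rho-r$ and using evenness of $J_*$, the LHS of the claim becomes $\int_{-K_*}^{\min(K_*,h-r)}[\tilde J(r,r+\sigma)-J_*(\sigma)]\psi(r+\sigma)\,d\sigma$. Combining Lemma \ref{lemma2.2} with \eqref{1.10} and factoring out the common kernel $[\eta^2-\sigma^2]^{(N-3)/2}\eta J(\eta)$ appearing in both representations over $\eta\in[|\sigma|,K_*]$ gives the exact identity
\begin{equation*}
\tilde J(r,r+\sigma)-J_*(\sigma) = \omega_{N-1}\int_{|\sigma|}^{K_*}\bigl[A(r,\sigma,\eta)-1\bigr][\eta^2-\sigma^2]^{(N-3)/2}\eta J(\eta)\,d\eta,
\end{equation*}
where $A(r,\sigma,\eta) = \lf(1+\tfrac{\sigma}{r}\rr)\lf(1+\tfrac{\sigma}{2r}\rr)^{N-3}\lf(1-\tfrac{\eta^2}{(2r+\sigma)^2}\rr)^{(N-3)/2}$.

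The core analytic step is to establish a sharp lower bound of the form
\begin{equation*}
A(r,\sigma,\eta)-1 \geq \frac{N-1}{r}\sigma - \frac{C(N)K_*^2}{r^2}
\end{equation*}
uniformly for $|\sigma|\leq K_*$, $\eta\in[|\sigma|,K_*]$, and $r\geq H_*$. Each of the three factors of $A$ is $1+O(r^{-1})$: the third factor contributes only an $O(r^{-2})$ correction that is absorbed into the error, while careful Bernoulli-type inequalities $(1+x)^\alpha\geq 1+\alpha x$ applied to the first two factors in appropriate sign-regimes of $\sigma$ supply the linear term. Substituting this bound back, using $\omega_{N-1}\int_{|\sigma|}^{K_*}[\eta^2-\sigma^2]^{(N-3)/2}\eta J(\eta)\,d\eta = J_*(\sigma)$ which is also bounded by $\|J_*\|_\infty$, multiplying by $\psi(r+\sigma)\geq 0$ and integrating over $\sigma\in[-K_*,\min(K_*,h-r)]$ (an interval of length at most $2K_*$) then delivers the inequality, all residual $O(r^{-2})$ terms being absorbed into $NK_*^2\|\psi\|_\infty/r^2$ after an appropriate choice of $H_*$.

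The main obstacle is the third step: extracting the precise coefficient $(N-1)/r$ in the lower bound for $A(r,\sigma,\eta)-1$. A naive symmetric Taylor expansion of $(1+\sigma/r)(1+\sigma/(2r))^{N-3}$ only yields the apparent first-order coefficient $\tfrac{N-1}{2r}$, so recovering the stronger stated constant will require splitting into the cases $\sigma\geq 0$ and $\sigma< 0$ separately, using in each case the correct one-sided form of the Bernoulli inequality and exploiting the sign of the $O(\sigma^2/r^2)$ cross-terms to close the gap while keeping the remainder controlled by $K_*^2/r^2$. This sign-sensitive bookkeeping of error terms is the one nontrivial piece of an otherwise routine calculation.
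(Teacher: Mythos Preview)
Your approach---localize via the compact support of $J$, write the exact difference $\tilde J(r,r+\sigma)-J_*(\sigma)$ from the formulas in Lemma~\ref{lemma2.2} and \eqref{1.10}, and Taylor-expand the factor $A(r,\sigma,\eta)$---is the same idea as the paper's. The paper packages it as a multiplicative pointwise bound (for $N\ge3$)
\[
\tilde J(r,\rho)\ge\Big(\frac\rho r\Big)^{(N-1)/2}\Big[\frac{(\rho+r)^2-K_*^2}{4r\rho}\Big]^{(N-3)/2}J_*(r-\rho)
\]
and then expands the prefactor minus $1$, but the substance is the same expansion, and in both routes the first-order coefficient comes out as $\tfrac{N-1}{2r}$, exactly as you observe.

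The problem is your last paragraph. Your plan to upgrade $\tfrac{N-1}{2r}$ to the stated $\tfrac{N-1}{r}$ by sign-sensitive handling of the quadratic remainder cannot work. The exact expansion is $A(r,\sigma,\eta)-1=\tfrac{N-1}{2r}\sigma+O(K_*^2/r^2)$ uniformly for $|\sigma|,\eta\le K_*$; if a bound $A-1\ge\tfrac{N-1}{r}\sigma-C/r^2$ held, subtracting would force $O(1/r^2)\ge\tfrac{N-1}{2r}\sigma$, which fails for any fixed $\sigma>0$ as $r\to\infty$. The gap between the two coefficients is of order $1/r$, and no $O(1/r^2)$ error budget---whatever its sign---can close it. At the integrated level, taking $h-r>K_*$ and any continuous nonnegative $\psi$ that is strictly increasing near $r$ makes $\int_{-K_*}^{K_*}\sigma J_*(\sigma)\psi(\sigma+r)\,d\sigma$ positive and bounded away from $0$, and then the stated inequality with coefficient $\tfrac{N-1}{r}$ fails for large $r$.

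In fact the paper's own proof yields only $\tfrac{N-1}{2r}$ (and $\tfrac{1}{2r}$ when $N=2$) and then simply asserts that this ``implies \eqref{3.32}''; the coefficient $\tfrac{N-1}{r}$ in the statement is a misprint for $\tfrac{N-1}{2r}$. This is harmless downstream: every application of the lemma (in Lemmas~\ref{lemma3.12}, \ref{lem-lb}, and \ref{lemma5.4}) only needs some positive multiple of $r^{-1}$ in front of the integral. So your computation with $\tfrac{N-1}{2r}$ is already the complete proof---just stop there.
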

\begin{proof}  By Lemma \ref{lemma2.2} and \eqref{1.10}, we easily deduce, for $r+\rho> K_*$, 
	\begin{equation}\label{3.33}
\begin{cases}
\dd	\td J(r,\rho)\geq \lf(\frac{\rho}{r}\rr)^{(N-1)/2}\lf[\frac{(\rho+r)^2-K_*^2}{4r\rho }\rr]^{(N-3)/2}J_*(r-\rho),&N\geq 3,\\[2mm]
\dd \td J(r,\rho)\geq \lf(\frac{\rho}{r}\rr)^{1/2}J_*(r-\rho),&N=2.
\end{cases}
	\end{equation}
Therefore, for $N\geq 3$,
\begin{align*}
&\int_{0}^{ h}  \td J(r,\rho) \psi(\rho)\rd \rho\geq  	\int_{0}^{ h} \lf(\frac{\rho}{r}\rr)^{(N-1)/2}  
\lf[\frac{(\rho+r)^2-K_*^2}{4r\rho }\rr]^{(N-3)/2}
J_*(r-\rho) \psi(\rho)\rd \rho\\
=&\int_{0}^{ h} J_*(r-\rho) \psi(\rho)\rd \rho\\
&+\int_{0}^{ h} 
\lf[\lf(\frac{\rho}{r}\rr)^{(N-1)/2}  
\lf(\frac{(\rho+r)^2-K_*^2}{4l\rho }\rr)^{(N-3)/2}-1\rr]
J_*(r-\rho) \psi(\rho)\rd \rho\\
=&\int_{0}^{ h} J_*(r-\rho) \psi(\rho)\rd \rho\\
&+\int_{-K_*}^{\min\{h-r, K_*\}} 
\lf[\lf(1+\frac{\rho}{r}\rr)^{(N-1)/2}  
\lf(\frac{(\rho+2r)^2-K_*^2}{4r(r+\rho) }\rr)^{(N-3)/2}-1\rr]
J_*(\rho) \psi(\rho+r)\rd \rho.
\end{align*}
 A simple calculation gives, for $N\geq 3$ and $x\in  \R$ close to 0,
\begin{align*}
\lf(1+x\rr)^{(N-1)/2}=1+\frac{N-1}{2}x+\frac{(N-1)(N-3)}{8}(1+\xi)^{(N-5)/2}x^2\geq 1+\frac{N-1}{2}x
\end{align*}
for some $\xi$ lying between $0$ and $x$; and for $N\geq 3$ and small $x\geq 0$,
\begin{align*}
(1-x)^{(N-3)/2}= 1-\frac{N-3}{2}(1-\eta)^{(N-5)/2}x \geq 1-(N-3)x
\end{align*}
for some  $\eta\in [0,x]$. Moreover, for $N\geq 3$, $r\gg 1$ and $\rho\in [-K_*, K_*]$, 
\begin{align*}
\lf[\frac{(\rho+2r)^2-K_*^2}{4r(r+\rho) }\rr]^{(N-3)/2}\geq \lf[1-\frac{K_*^2}{4r(r+\rho) }\rr]^{(N-3)/2}.
\end{align*}
Therefore, for such $N, r$ and $\rho$, we have
\begin{align*}
&\lf(1+\frac{\rho}{r}\rr)^{(N-1)/2}  
\lf[\frac{(\rho+2l)^2-K_*^2}{4r(r+\rho) }\rr]^{(N-3)/2}-1\\
\geq& \lf(1+\frac{N-1}{2} \frac{\rho}{r}\rr)\lf(1-\frac{(N-3)K_*^2}{4r(r+\rho) }\rr)-1\\
=&\frac{N-1}{2} \frac{\rho}{r}-\lf(1+\frac{N-1}{2} \frac{\rho}{r}\rr)\frac{(N-3)K_*^2}{4r(r+\rho)}\\
\geq&\frac{N-1}{2} \frac{\rho}{r}-\frac{(N-3)K_*^2}{r^2}.
\end{align*}
It follows that, for $h\geq r\gg 1$,
\begin{align*}
&\int_{-K_*}^{\min\{h-r, K_*\}} 
\lf(\lf(1+\frac{\rho}{r}\rr)^{(N-1)/2}  
\lf[\frac{(\rho+2r)^2-K_*^2}{4r(r+\rho) }\rr]^{(N-3)/2}-1\rr)
J_*(\rho) \psi(\rho+r)\rd \rho\\
\geq&\int_{-K_*}^{\min\{h-r, K_*\}} 
\lf(\frac{N-1}{2} \frac{\rho}{r}-\frac{(N-3)K_*^2}{r^2}\rr)
J_*(\rho) \psi(\rho+r)\rd \rho\\
=&\ \frac{N-1}{2}\int_{-K_*}^{\min\{h-r, K_*\}} 
\frac{\rho}{r}
J_*(\rho) \psi(\rho+r)\rd \rho-\frac{(N-3)K_*^2}{r^2}\int_{-K_*}^{\min\{h-r, K_*\}} J_*(\rho) \psi(\rho+r)\rd \rho,\\
\geq &\ \frac{N-1}{2r}\int_{-K_*}^{\min\{h-r, K_*\}} 
{\rho}J_*(\rho) \psi(\rho+r)\rd \rho-\frac{(N-3)K_*^2\|\psi\|_{L^\yy}}{r^2},
\end{align*}
which implies \eqref{3.32} (for $N\geq 3$).

When $N=2$,  from \eqref{3.33} we obtain, for $h\geq r\gg 1$, 
\begin{align*}
&\int_{0}^{ h}  \td J(r,\rho) \psi(\rho)\rd \rho\geq  	\int_{0}^{ h}\lf(\frac{\rho}{r}\rr)^{1/2} J_*(r-\rho) \psi(\rho)\rd \rho\\
=&\int_{0}^{ h}J_*(r-\rho) \psi(\rho)\rd \rho+\int_{0}^{ h}\lf[\lf(\frac{\rho}{r}\rr)^{1/2}-1\rr]J_*(r-\rho) \psi(\rho)\rd \rho\\
=&\int_{0}^{ h}J_*(r-\rho) \psi(\rho)\rd \rho+\int_{-K_*}^{\min\{h-r, K_*\}}  \lf[\lf(1+\frac{\rho}{r}\rr)^{1/2}-1\rr] J_*(\rho) \psi(\rho+r)\rd \rho.
\end{align*}

For $x\in \R$ close to 0, we have
\begin{align*}
(1+x)^{1/2}=1+\frac{1}{2}x-\frac{1}{8}(1+\xi)^{-3/2}x^2\geq 1+\frac{1}{2}x-\frac{x^2}{4}
\end{align*}
for some $\xi$ lying between $0$ and $x$. Therefore, for $h\geq r\gg 1$, 
\begin{align*}
& \int_{-K_*}^{\min\{h-r, K_*\}}  \lf[\lf(1+\frac{\rho}{r}\rr)^{1/2}-1\rr] J_*(\rho) \psi(\rho+r)\rd \rho\\
&\geq \int_{-K_*}^{\min\{h-r, K_*\}} \lf[\frac{\rho}{2r}-\left(\frac{\rho}{2r}\right)^2\rr] J_*(\rho) \psi(\rho+r)\rd \rho\\
&\geq\frac{1}{2r}\int_{-K_*}^{\min\{h-r, K_*\}} \rho J_*(\rho) \psi(\rho+r)\rd \rho
-\frac{K_*^2\|\psi\|_{L^\yy}}{4r^2}.
\end{align*}
 This finishes the proof of the Lemma.
\end{proof}

Let us note that by the assumption \textbf{(f)}, there is $D_*>0$ such that 
\begin{align}\label{3.36}
f(u)\geq D_* \min\{u,u^*-u\} \mbox{ for } u\in [0,u^*].
\end{align}

Our next lemma gives a crucial first estimate for the solution $u(t,r)$ of \eqref{1.4}.
\begin{lemma}\label{lemma3.12}  
	Suppose spreading happens to the solution $(u,h)$ of \eqref{1.4}. Then there exist positive constants $E_1$,  $E_2$ and $\theta_1$  such that for any $\theta\geq \theta_1$, we can find $t_0>0$ depending on $\theta$,  such that
\begin{align*}
u(t+t_0,r)\geq u^*-\frac{E_2}{t+\theta}\ \mbox{ for all } \ t\geq 0,\ r\in [0,E_1(t+\theta)].
\end{align*}
\end{lemma}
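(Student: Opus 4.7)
The plan is to construct, for each sufficiently large $\theta$, a subsolution $\underline u$ to \eqref{1.4} with moving support $[0,\underline h(t)]$ modelled on a time-perturbed semi-wave, apply Lemma \ref{lemma3.4a}(ii) (together with Remark \ref{rmk2.6}) to deduce $u(t+t_0,r)\geq \underline u(t,r)$ on that support, and then evaluate $\underline u$ on the smaller region $[0,E_1(t+\theta)]$ to obtain the quantitative bound. Explicitly, fix $E_1^*\in(E_1,c_0)$ and set
\[
\underline h(t):=E_1^*(t+\theta),\qquad \epsilon(t):=\frac{E_0}{t+\theta},\qquad \underline u(t,r):=(1-\epsilon(t))\phi_0\bigl(r-\underline h(t)\bigr),
\]
for $r\in[0,\underline h(t)]$, where $(c_0,\phi_0)$ is the semi-wave from Proposition \ref{prop1.4} and $E_0>0$ is a constant to be chosen. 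Step one is initial alignment: by Theorems \ref{th1.2} and \ref{th1.6}, $u(t,\cdot)\to u^*$ locally uniformly and $h(t)/t\to c_0$, so for $\theta\geq\theta_1$ large one can find $t_0=t_0(\theta)>0$ with $h(t_0)>\underline h(0)$ and $u(t_0,r)\geq\underline u(0,r)$ on $[0,\underline h(0)]$ (extending $\underline u(0,\cdot)$ by zero beyond $\underline h(0)$, and noting $\underline u(0,\cdot)\leq u^*$).

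Step two is the verification of Lemma \ref{lemma3.4a}(ii) with $v=\underline u$, $r_*(t)\equiv 0$, $h_*(t)=\underline h(t)$. The endpoint equality $\underline u(t,\underline h(t))=0$ is automatic since $\phi_0(0)=0$. Writing $\xi=r-\underline h(t)$ and using the semi-wave equation for $\phi_0$ to eliminate the $c_0\phi_0'$ term, the pointwise subsolution inequality reduces to
\[
d\!\int_0^{\underline h(t)}\!\![\tilde J(r,\rho)-J_*(r-\rho)]\,\underline u(t,\rho)\,d\rho\;-\;(c_0-E_1^*)(1-\epsilon(t))\phi_0'(\xi)\;+\;[f((1-\epsilon(t))\phi_0(\xi))-(1-\epsilon(t))f(\phi_0(\xi))]\;-\;\tfrac{E_0}{(t+\theta)^2}\phi_0(\xi)\;\geq\;0.
\]
Three of the four terms have favourable sign: the second is $\geq 0$ since $\phi_0'\leq 0$ and $c_0>E_1^*$; the third is $\geq 0$ by assumption (\textbf{f}); the fourth is $O(1/(t+\theta)^2)$ and hence negligible. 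The first term, controlled by Lemma \ref{lemma3.11}, is $O(1/(t+\theta))$ of indeterminate sign. The decisive point is that for $\xi\ll 0$ (the plateau regime where $\phi_0\approx u^*$), a Taylor expansion gives the third term $\approx|f'(u^*)|u^*\epsilon(t)=|f'(u^*)|u^*E_0/(t+\theta)$, a positive order-$1/(t+\theta)$ contribution that dominates the Lemma \ref{lemma3.11} error once $E_0$ is chosen large enough. In the boundary-layer regime $|\xi|\lesssim K_*$, the term $-(c_0-E_1^*)\phi_0'(\xi)$ is bounded below by a positive constant (since $\phi_0'$ is bounded away from $0$ there), which absorbs all other errors for $\theta$ large. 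The boundary-velocity inequality $\underline h'(t)=E_1^*\leq \frac{\mu}{\underline h^{N-1}(t)}\int_0^{\underline h(t)}r^{N-1}\underline u(t,r)\int_{\underline h(t)}^\infty\tilde J(r,\rho)\,d\rho\,dr$ follows from Lemma \ref{lemma2.7d} combined with the semi-wave characterisation $c_0=\mu\int_{-\infty}^0\phi_0(x)\int_0^\infty J_*(x-y)\,dy\,dx$ of Proposition \ref{prop1.4}: after change of variables, the right-hand side converges to $(1-\epsilon(t))c_0+o_\theta(1)$, strictly exceeding $E_1^*<c_0$ for $\theta$ large.

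Step three is extraction. By Lemma \ref{lemma3.4a}(ii) and Remark \ref{rmk2.6} we get $u(t+t_0,r)\geq\underline u(t,r)$ for $t\geq 0$, $r\in[0,\underline h(t)]$. For $r\in[0,E_1(t+\theta)]$ we have $\xi\leq-(E_1^*-E_1)(t+\theta)$, and by \cite[Theorem 1.7(iii)]{dn-speed} there exist $C_1,\beta>0$ with $u^*-\phi_0(\xi)\leq C_1 e^{\beta\xi}$ for $\xi\leq 0$. Hence
\[
\underline u(t,r)\;\geq\;(1-\epsilon(t))\bigl(u^*-C_1 e^{-\beta(E_1^*-E_1)(t+\theta)}\bigr)\;\geq\;u^*-\tfrac{E_0 u^*+1}{t+\theta},
\]
for $\theta$ large enough that the exponential term is dominated by $1/(t+\theta)$. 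Setting $E_2:=E_0 u^*+1$ finishes the proof.

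The principal obstacle is the delicate verification of the pointwise subsolution inequality across all regimes: the plateau region $\xi\ll 0$ (where the (\textbf{f})-derived term must beat the Lemma \ref{lemma3.11} error), the boundary layer near $\xi=0$ (where the drift $-(c_0-E_1^*)\phi_0'$ takes over), and especially the small-$r$ regime $r\in[0,H_*]$ where the $1/r$ and $1/r^2$ factors in Lemma \ref{lemma3.11} are not small (this can be resolved either by using $r_*(t)\equiv H_*$ and separately verifying $\underline u(t,r)\leq u(t+t_0,r)$ on $[0,H_*]$ by a local convergence argument, or by a direct bound on the $\tilde J$-integral for small $r$). Apart from the bookkeeping, the argument parallels but is technically more delicate than the upper-bound construction in Lemma \ref{lemma3.10}.
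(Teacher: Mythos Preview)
Your approach differs substantially from the paper's. The paper does not use the semi-wave at all here; it builds an explicit piecewise-linear tent
\[
\underline u(t,r)=\begin{cases}u^*-D_1/\underline h(t),& r\in[0,\tfrac12\underline h(t)],\\ 2\bigl(u^*-D_1/\underline h(t)\bigr)\bigl(1-r/\underline h(t)\bigr),& r\in[\tfrac12\underline h(t),\underline h(t)],\end{cases}
\]
with $\underline h(t)=2E_1(t+\theta)$ and $2E_1<c_0$, and checks only the scalar inequality \eqref{3.35} in four $r$-ranges. Since $2E_1<c_0$ guarantees $\underline h(t)<h(t+t_0)$ for all $t$, the paper applies the fixed-region maximum principle (Lemma \ref{lemma4.4}), not the free-boundary comparison; no boundary-velocity inequality is needed. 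And because the tent is \emph{exactly} constant on the inner half, the nonlocal term there equals $d\underline u$ identically (for $r<\tfrac12\underline h(t)-K_*$), so only $f(\underline u)\geq D_*D_1/\underline h(t)$ is required.

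Your semi-wave construction can be made to work, but as written it has a gap. You claim the $\tilde J-J_*$ error is $O(1/(t+\theta))$ via Lemma \ref{lemma3.11}; that lemma gives a lower bound of order $-C/r-C'/r^2$, and for $r$ in any \emph{bounded} interval above $H_*$ this is $O(1)$, not $O(1/(t+\theta))$. Neither the drift term (active only near $\xi=0$, hence $r\approx\underline h(t)$) nor the $(\mathbf{f})$-term (of size $E_0/(t+\theta)$) can absorb an $O(1)$ negative contribution there. Your Fix~1 does not resolve this: even setting aside the circularity, the differential inequality must still hold for $r$ just above $H_*$, where the same obstruction stands. The correct repair is Fix~2 applied over a \emph{growing} range: for $r\in[0,\tfrac12\underline h(t)]$, bypass the semi-wave identity and note directly that $\phi_0(\,\cdot\,-\underline h(t))$ is constant on $[r-K_*,r+K_*]$ up to $O(e^{-\beta\underline h(t)/2})$, so $d\int\tilde J\,\underline u-d\underline u$ is exponentially small and $f(\underline u)\sim|f'(u^*)|u^*E_0/(t+\theta)$ dominates; invoke the semi-wave equation and Lemma \ref{lemma3.11} only for $r\in[\tfrac12\underline h(t),\underline h(t)]$, where indeed $1/r=O(1/(t+\theta))$. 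With this split your argument succeeds, and one sees that the paper's tent is simply an explicit realisation of the same two-zone structure without the semi-wave machinery.
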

\begin{proof}
Define, for some positive constants $E_1$, $\theta$ and $D_1\in (0, 2E_1\theta u^*)$    to be determined, 
\begin{align*}
&\underline h(t):=2E_1(t+\theta) \mbox{ for } t\geq 0,\\
&\underline u(t,r):=\begin{cases}
\dd u^*-\frac{D_1}{\underline h(t)},& r\in [0,\underline h(t)/2],\ t\geq 0,\\[2mm]
\dd 2\lf(u^*-\frac{D_1}{\underline h(t)}\rr)\lf(1-\frac{r}{\underline h(t)}\rr),& r\in [\underline h(t)/2,\underline h(t)],\ t\geq 0.
\end{cases}
\end{align*}
Clearly $\underline u(t,r)$ is continuous, nonnegative, and nonincreasing in $r$.

We will show that $\underline u$ satisfies 
\begin{align}\label{3.35}
	\dd \underline u_t(t,r)\leq d \int_{0}^{\underline h(t)} \td J(r,\rho) \underline u(t,\rho)\rd \rho-d\underline u(t,r)+f(\underline u(t,r))\ \mbox{ for } \  t>0,\; r\in  (0,\underline h(t))\setminus
	\left\{\frac{\underline h(t)}2\right\}.
\end{align}
Clearly $\underline u(0,r)\leq u^*-\frac{D_1}{2E_1\theta}$.
Since $ u(t,r)$ converges to $u^*$ locally uniformly for $r\in [0, \infty)$ as $t\to \yy$, and $\lim_{t\to \yy}h(t)/t=c_0$, if we choose $E_1$ small so that $2E_1<c_0$,
then we can find $t_0>0$, depending on $E_1$ and $\theta$, such that
\[\begin{cases}
h(t_0+t)>2E_1(t+\theta)=\underline h(t) \mbox{ for all } t\geq 0,\\ u(t_0, r)\geq \underline u(0,r) \mbox{ for } r\in [0, \underline h(0)].
\end{cases}
\]
We also have $u(t,\underline h(t))>0=\underline u(t_0+t,\underline h(t))$ for all $t\geq 0$.
Therefore, if \eqref{3.35} holds true, then we can  use the comparison principle (and Remark \ref{rmk3.6})
 over the region $\{(t,r): t\geq 0,\; r\in [0, \underline h(t)]\}$ to obtain
\begin{align*}
u(t_0+t,r)\geq \underline u(t,r) \ \mbox{ for } \ t\geq 0,\ r\in [0,\underline h(t)],
\end{align*}
and the desired estimate thus follows with $E_2:=\frac{D_1}{2E_1}$.

Thus, to complete the proof of the lemma, it remains to prove
 \eqref{3.35}. We do so   according to the following four cases:
\begin{align*}
&{\rm (i)}\ r\in \Big[0,\frac 12\underline h(t)-K_*\Big],\ \ \ \ \ \ \  \ \ \ \ \ \ \ \ \ \ {\rm (ii)}\ r\in \Big[ \frac 12\underline h(t)-K_*,\frac 12\underline h(t)+K_*\Big]\backslash \Big\{\frac 12\underline h(t)\Big\},\\
&{\rm (iii)}\ r\in \Big[\frac 12\underline h(t)+K_*, \underline h(t)-K_*\Big],\ \ \ \ {\rm (iv)}\ r\in [\underline h(t)-K_*,\underline h(t)].
\end{align*}

\underline{Case (i)} For  $r\in \Big[0,\frac 12\underline h(t)-K_*\Big]$ and $t>0$, since the supporting set of $J$ is contained in $B_{K_*}$, we easily see, with  $\underline h(t)\geq 2E_1\theta\gg 1$,
\begin{align*}
\int_{0}^{\underline h(t)} \td J(r,\rho) \rd \rho=\int_{0}^{+\yy} \td J(r,\rho) \rd \rho=1,
\end{align*}
and so by \eqref{3.36}, for such $r$ and $t$, we obtain
\begin{align*}
d \int_{0}^{\underline h(t)} \td J(r,\rho) \underline u(t,\rho)\rd \rho-d\underline u(t,r)+f(\underline u(t,r))=f(\underline u(t,r))\geq \frac{D_1D_*}{\underline h(t)}.
\end{align*}
Clearly
\begin{align*}
\underline u_t(t,r)=& \frac{D_1 \underline h'(t)}{\underline h^2(t)}=\frac{D_1 }{2E_1(t+\theta)^2}\leq  \frac{D_1D_*}{\underline h(t)} \mbox{ for } r \in \left[0,\frac 12\underline h(t)-K_*\right]
\end{align*}
provided $\theta$ is large enough. Thus \eqref{3.35} holds in this case if $\theta$ is chosen sufficiently large.

\underline{Case (ii)} For $r\in \Big[ \frac 12\underline h(t)-K_*,\frac 12\underline h(t)\Big]$ and $t>0$, with $\underline h(t)\geq 2E_1\theta\gg1$,
\begin{align*}
&\int_{0}^{\underline h(t)}  J_*(r-\rho) \underline u(t,\rho)\rd \rho-\underline u(t,r )=\int_{-K_*}^{K_*}  J_*(\rho) \underline u(\rho+r)\rd \rho-\lf(u^*-\frac{D_1}{\underline h(t)}\rr)\\
=&\int_{-K_*}^{\underline h(t)/2-r}  J_*(\rho) \underline u(t,\rho+r)\rd \rho+\int_{\underline h(t)/2-r}^{K_*}  J_*(\rho) \underline u(t,\rho+l)\rd \rho-\int_{-K_*}^{K_*}  J_*(\rho)\lf(u^*-\frac{D_1}{\underline h(t)}\rr)d\rho\\
=&\int_{\underline h(t)/2-r}^{K_*}  J_*(\rho) \underline u(t, \rho+r)\rd \rho-\int^{K_*}_{\underline h(t)/2-r}  J_*(\rho) \lf(u^*-\frac{D_1}{\underline h(t)}\rr)\rd \rho\\
=&\ \lf(u^*-\frac{D_1}{\underline h(t)}\rr)\int_{\underline h(t)/2-r}^{K_*}  J_*(\rho) \lf[2\lf(1-\frac{\rho+r}{\underline h(t)}\rr)-1\rr]\rd \rho\\
\geq &\lf(u^*-\frac{D_1}{\underline h(t)}\rr)\int_{\underline h(t)/2-r}^{K_*}  J_*(\rho) \lf[2\lf(1-\frac{K_*+\underline h(t)/2}{\underline h(t)}\rr)-1\rr]\rd \rho\\
\geq&\frac{-2K_* u^*}{\underline h(t)}.
\end{align*}

For $r\in \left[\frac 12 \underline h(t),\frac 12 \underline h(t)+K_*\right]$ and $t>0$, with $\underline h(t)\geq 2E_1\theta\gg1$, we have
\begin{align*}
\underline u(t,r)&=2\lf(u^*-\frac{D_1}{\underline h(t)}\rr)\lf(1-\frac{r}{\underline h(t)}\rr)\\
&=2\lf(u^*-\frac{D_1}{\underline h(t)}\rr)\lf(1-\frac{r}{\underline h(t)}\rr)\int_{-K_*}^{K_*}  J_*(\rho) \rd \rho-2\lf(u^*-\frac{D_1}{\underline h(t)}\rr)\int_{-K_*}^{K_*}  J_*(\rho) \frac{\rho}{\underline h(t)}\rd \rho\\
&=\int_{-K_*}^{K_*}  J_*(\rho) 2\lf(u^*-\frac{D_1}{\underline h(t)}\rr)\lf(1-\frac{\rho+r}{\underline h(t)}\rr)\rd \rho.
\end{align*}
Therefore,
\begin{align*}
&\int_{0}^{\underline h(t)}  J_*(r-\rho) \underline u(\rho)\rd \rho-\underline u(t,r)\\
=&\int_{-K_*}^{\underline h(t)/2-r}\!\!\! \!\! J_*(\rho) \underline u(\rho+r)\rd \rho+\!\!\!\int_{\underline h(t)/2-r}^{K_*} \!\!\! J_*(\rho) \underline u(\rho+r)\rd \rho-\!\!\!\int_{-K_*}^{K_*} \!\!\! J_*(\rho) 
2\lf(u^*-\frac{D_1}{\underline h(t)}\rr)\!\!\lf(1-\frac{r}{\underline h(t)}\rr)\rd \rho \\
=&\int_{-K_*}^{\underline h(t)/2-r}  J_*(\rho) \lf[\lf(u^*-\frac{D_1}{\underline h(t)}\rr)-2\lf(u^*-\frac{D_1}{\underline h(t)}\rr)\lf(1-\frac{\rho+r}{\underline h(t)}\rr)\rr]\rd \rho\\
=&\ \lf(u^*-\frac{D_1}{\underline h(t)}\rr)\int_{-K_*}^{\underline h(t)/2-r}  J_*(\rho) \lf[1-2\lf(1-\frac{\rho+r}{\underline h(t)}\rr)\rr]\rd \rho\\
\geq&\ \frac{-2K_* u^*}{\underline h(t)}.
\end{align*}

We have now proved that for  $r\in \left[\frac 12 \underline h(t)-K_*,\frac 12 \underline h(t)+K_*\right]$ and $t>0$, with $\underline h(t)\geq 2E_1\theta\gg1$,
\[
\int_{0}^{\underline h(t)}  J_*(r-\rho) \underline u(\rho)\rd \rho-\underline u(t,r)\geq \frac{-2K_* u^*}{\underline h(t)},
\]
which combined with \eqref{3.32} gives
\begin{align*}
&d \int_{0}^{\underline h(t)} \td J(r,\rho) \underline u(t,\rho)\rd \rho-d\underline u(t,r)+f(\underline u(t,r))\\
&\geq d\left[ \int_{0}^{\underline h(t)}  J_*(r-\rho) \underline u(\rho)\rd \rho-\underline u(t,r)\right]+f(\underline u(t,r))\\
& \ \ \ \ \ \ +d\left[\frac{N-1}r\int_{-K_*}^{\min\{h(t)-r, K_*\}} 
\rho
J_*(\rho) \underline u(t, \rho+r)\rd \rho -\frac{NK_*^2\|\underline u\|_{L^\infty}}{r^2}\right]\\
&\geq -\frac{3K_* u^*}{\underline h(t)}d+f(\underline u(t,r))+d\frac{N-1}r\int_{-K_*}^{\min\{h(t)-r, K_*\}} 
\rho
J_*(\rho) \underline u(t, \rho+r)\rd \rho.
\end{align*}

A simple computation gives, for  $r\in \left[\frac 12 \underline h(t)-K_*,\frac 12 \underline h(t)+K_*\right]$ and $t>0$, with $\underline h(t)\geq 2E_1\theta\gg1$,
\begin{align*}
d\frac{N-1}{r}\int_{-K_*}^{\min\{\underline h(t)-r,K_*\}} \rho
J_*(\rho) \underline u(t, \rho+r)\rd \rho\\
\geq d\frac{N-1}{r}(-K_*u^*)\geq -\frac{3d(N-1)K_*u^*}{\underline h(t)}.
\end{align*}
Moreover, since 
\begin{align*}
\min\{u^*,u^*-\underline u(t,r)\}= u^*-\underline u(t,r)\geq u^*-\underline u(t, \underline h(t)/2-K_*)=D_1/\underline h(t),
\end{align*}
by \eqref{3.36} we obtain
\[
f(\underline u(t,r))\geq D_*\min\{u^*,u^*-\underline u(t,r)\}\geq \frac{D_*D_1}{\underline h(t)}.
\]
Therefore, for  $r\in \left[\frac 12 \underline h(t)-K_*,\frac 12 \underline h(t)+K_*\right]$ and $t>0$, with $\underline h(t)\geq 2E_1\theta\gg1$,
\begin{align*}
&d \int_{0}^{\underline h(t)} \td J(r,\rho) \underline u(t,\rho)\rd \rho-d\underline u(t,r)+f(\underline u(t,r))\\
&\geq -\frac{3K_* u^*}{\underline h(t)}d+ \frac{D_*D_1}{\underline h(t)}-\frac{3d(N-1)K_*u^*}{\underline h(t)}\\
&= \frac{D_*D_1-3dNK_*u^*}{\underline h(t)}.
\end{align*}

From the definition of $\underline u$ we obtain, for $r\in [\underline h(t)/2-K_*, \underline h(t)/2)$,
\[
\underline u_t(t,r)=\frac{D_1\underline h'(t)}{\underline h^2(t)}=\frac{2D_1E_1}{\underline h^2(t)},
\]
and for  $r\in (\underline h(t)/2, \underline h(t)/2+K_*]$,
\[
\underline u_t(t,r)=2\frac{D_1\underline h'(t)}{\underline h^2(t)}\left(1-\frac r{\underline h(t)}\right)+2\left(u^*-\frac{D_1}{\underline h(t)}\right)\frac{r \underline h'(t)}{\underline h^2(t)}
\leq \frac{4D_1E_1}{\underline h^2(t)}+\frac{4u^*E_1}{\underline h(t)}.
\]

Thus it is easily seen that \eqref{3.35} holds for Case (ii) 
provided that $D_1\in (0, 2E_1\theta u^*)$ is chosen large enough so that
\[
D_*D_1\geq 4dNK_*u^*+4u^*E_1,
\]
which is possible since $\theta\gg1$.

\underline{Case (iii)} For $r\in [\underline h(t)/2+K_*,\underline h(t)-K_*]$, $t>0$, with $\underline h(t)\geq 2E_1\theta\gg1$,
\begin{align*}
&\int_{0}^{\underline h(t)}  J_*(r-\rho) \underline u(t, \rho)\rd \rho-\underline u(t,r)=\int_{-K_*}^{K_*}  J_*(\rho) \underline u(t,\rho+r)\rd \rho-\underline u(t,r)\\
&=\int_{-K_*}^{K_*}  J_*(\rho) 2\lf(u^*-\frac{D_1}{\underline h(t)}\rr)\lf(1-\frac{\rho+r}{\underline h(t)}\rr)\rd \rho-2\lf(u^*-\frac{D_1}{\underline h(t)}\rr)\lf(1-\frac{r}{\underline h(t)}\rr)\\
=&\int_{-K_*}^{K_*}  J_*(\rho) 2\lf(u^*-\frac{D_1}{\underline h(t)}\rr)\frac{\rho}{\underline h(t)}\rd \rho=0,
\end{align*}
and
\begin{align*}
&\int_{-K_*}^{\min\{\underline h(t)-r,K_*\}} 
{\rho}J_*(\rho) \underline u(t, \rho+r)\rd \rho=\int_{-K_*}^{K_*}  {\rho}J_*(\rho) 2\lf(u^*-\frac{D_1}{\underline h(t)}\rr)\lf(1-\frac{\rho+r}{\underline h(t)}\rr)\rd \rho\\
=&2\lf(u^*-\frac{D_1}{\underline h(t)}\rr)\lf(1-\frac{r}{\underline h(t)}\rr)\int_{-K_*}^{K_*}  {\rho}J_*(\rho) \rd \rho-\int_{-K_*}^{K_*}  {\rho}J_*(\rho) 2\lf(u^*-\frac{D_1}{\underline h(t)}\rr)\frac{\rho}{\underline h(t)}\rd \rho\\
=&-2\lf(u^*-\frac{D_1}{\underline h(t)}\rr)\int_{-K_*}^{K_*}  \frac{\rho^2}{\underline h(t)}J_*(\rho) \rd \rho\geq -\frac{2u^*K_*^2}{\underline h(t)}.
\end{align*}
Thus, by  \eqref{3.32},  for such $r, t$ and $\theta$,
\begin{align*}
&d \int_{0}^{\underline h(t)} \td J(r,\rho) \underline u(t,\rho)\rd \rho-d\underline u(t, r)+f(\underline u(t,r)\\
\geq &-\frac{d(N-1)}{r}\frac{2u^*K_*^2}{\underline h(t)}-\frac{dNK_*^2\|\underline u\|_{L^\infty}}{r^2}+f(\underline u(t,r))\\
\geq& -\frac{16 dNu^*K_*^2}{\underline h^2(t)}+f(\underline u(t,r)).
\end{align*}
Since for such $r, t$ and $\theta$,
\begin{align*}
& \min\{\underline u(t,r)), (u^*-\underline u(t,r))\}\geq \min\{\underline u(t, \underline h(t)-K_*)), (u^*-\underline u(t, \underline h(t)/2))\}\\
 =&\min\lf\{2\lf(u^*-\frac{D_1}{\underline h(t)}\rr)\frac{K_*}{\underline h(t)}, \frac{D_1}{\underline h(t)}\rr\}\geq \min\lf\{\frac{u^*K_*}{\underline h(t)}, \frac{D_1}{\underline h(t)}\rr\}=\frac{u^*K_*}{\underline h(t)}
\end{align*}
provided $D_1\geq u^*K_*$, by  \eqref{3.36} we obtain
\[
f(\underline u(t,r))\geq D_* \min\{\underline u(t,r)), (u^*-\underline u(t,r))\}\geq \frac{D_* u^*K_*}{\underline h(t)}.
\]
Therefore, for $r\in [\underline h(t)/2+K_*,\underline h(t)-K_*]$, $t>0$, with $\underline h(t)\geq 2E_1\theta\gg1$,
\begin{align*}
&d \int_{0}^{\underline h(t)} \td J(r,\rho) \underline u(t,\rho)\rd \rho-d\underline u(t, r)+f(\underline u(t,r)\\
\geq& -\frac{16 dNu^*K_*^2}{\underline h^2(t)}+\frac{D_* u^*K_*}{\underline h(t)}\geq \frac{D_* u^*K_*}{2\underline h(t)}.
\end{align*}

By simple calculation, we have, for $r\in [\underline h(t)/2+K_*,\underline h(t)-K_*]$, $t>0$, with $\underline h(t)\geq 2E_1\theta\gg1$,
\begin{align*}
\underline u_t(t,r)
=&\ 2\frac{D_1 \underline h'(t)}{\underline h^2(t)}\lf(1-\frac{r}{\underline h(t)}\rr)+2\lf(u^*-\frac{D_1}{\underline h(t)}\rr)\frac{r \underline h'(t)}{\underline h^2(t)}\\
\leq&\ \frac{4E_1D_1 }{\underline h^2(t)}+\frac{3E_1u^*}{\underline h(t)}\leq \frac{4E_1u^*}{\underline h(t)}.
\end{align*}
Thus \eqref{3.35} holds for such $r, t$ and $\theta$ if $E_1\in (0, D_*K_*/8]$.

\underline{Case (iv)} For $r\in [\underline h(t)-K_*,\underline h(t)]$, $t>0$, with $\underline h(t)\geq 2E_1\theta\gg1$,
\begin{align*}
&\int_{0}^{\underline h(t)}  J_*(r-\rho) \underline u(t,\rho)\rd \rho-\underline u(t,r)=\int_{-K_*}^{\underline h(t)-r}  J_*(\rho) \underline u(\rho+r)\rd \rho-\underline u(t,r)\\
&=\int_{-K_*}^{K_*}  J_*(\rho) 2\lf(u^*-\frac{D_1}{\underline h(t)}\rr)\lf(1-\frac{\rho+r}{\underline h(t)}\rr)\rd \rho-2\lf(u^*-\frac{D_1}{\underline h(t)}\rr)\lf(1-\frac{r}{\underline h(t)}\rr)\\
&\ \ \ -\int_{\underline h(t)-r}^{K_*}  J_*(\rho) 2\lf(u^*-\frac{D_1}{\underline h(t)}\rr)\lf(1-\frac{\rho+r}{\underline h(t)}\rr)\rd \rho\\
&=-2\lf(u^*-\frac{D_1}{\underline h(t)}\rr)\int_{\underline h(t)-r}^{K_*}  J_*(\rho) \lf(1-\frac{\rho+r}{\underline h(t)}\rr)\rd \rho\\
&\geq -u^*\int_{\underline h(t)-r}^{K_*}  J_*(\rho) \lf(1-\frac{\rho+r}{\underline h(t)}\rr)\rd \rho\geq 0,
\end{align*}
and 
\begin{align*}
&\int_{-K_*}^{\min\{\underline h(t)-r,K_*\}} 
{\rho}
J_*(\rho) \underline u(\rho+r)\rd \rho=\int_{-K_*}^{\underline h(t)-r}  {\rho}J_*(\rho) 2\lf(u^*-\frac{D_1}{\underline h(t)}\rr)\lf(1-\frac{\rho+r}{\underline h(t)}\rr)\rd \rho\\
=&\lf[\int_{-K_*}^{K_*} -\int_{\underline h(t)-r}^{K_*}\rr] {\rho}J_*(\rho) 2\lf(u^*-\frac{D_1}{\underline h(t)}\rr)\lf(1-\frac{\rho+r}{\underline h(t)}\rr)\rd \rho\\
\geq&\int_{-K_*}^{K_*}  {\rho}J_*(\rho) 2\lf(u^*-\frac{D_1}{\underline h(t)}\rr)\lf(1-\frac{\rho+r}{\underline h(t)}\rr)\rd \rho\\
=&\ 2\lf(u^*-\frac{D_1}{\underline h(t)}\rr)\lf(1-\frac{r}{\underline h(t)}\rr)\int_{-K_*}^{K_*}  {\rho}J_*(\rho) \rd \rho-\int_{-K_*}^{K_*}  {\rho}J_*(\rho) 2\lf(u^*-\frac{D_1}{\underline h(t)}\rr)\frac{\rho}{\underline h(t)}\rd \rho\\
=&-2\lf(u^*-\frac{D_1}{\underline h(t)}\rr)\int_{-K_*}^{K_*}  \frac{\rho^2}{\underline h(t)}J_*(\rho) \rd \rho\geq -\frac{2u^*K_*^2}{\underline h(t)}.
\end{align*}

Therefore, by  \eqref{3.32}, for such $r$, $t$ and $\theta$, we obtain
\begin{align*}
&d \int_{0}^{\underline h(t)} \td J(r,\rho) \underline u(t,\rho)\rd \rho-d\underline u(t,r)+f(\underline u(t,r)\\
\geq &-du^*\int_{\underline h(t)-r}^{K_*}  J_*(\rho) \lf(1-\frac{\rho+r}{\underline h(t)}\rr)\rd \rho-d\left[\frac{N-1}r\frac{2u^*K_*^2}{\underline h(t)}+\frac{NK_*^2\|\underline u\|_{L^\infty}}{r^2}\right]+f( \underline u(t,r))\\
\geq &-du^*\int_{\underline h(t)-r}^{K_*}  J_*(\rho) \lf(1-\frac{\rho+r}{\underline h(t)}\rr)\rd \rho-\frac{4dNK_*^2u^*}{\underline h^2(t)}+f( \underline u(t,r)).
\end{align*}
Moreover, for such $r$, $t$ and $\theta$,
\begin{align*}
1\gg \underline u(t,r)= 2\lf(u^*-\frac{D_1}{\underline h(t)}\rr)\lf(1-\frac{r}{\underline h(t)}\rr)\geq u^*\lf(1-\frac{r}{\underline h(t)}\rr),
\end{align*}
which implies, by  \eqref{3.36}
\[
f( \underline u(t,r))\geq D_*u^*\lf(1-\frac{r}{\underline h(t)}\rr).
\]

For  $r\in [\underline h(t)-K_*,\underline h(t)-K_*/2]$, $t>0$ and $\theta\gg1$, we have
\begin{align*}
&-du^*\int_{\underline h(t)-r}^{K_*}  J_*(\rho) \lf(1-\frac{\rho+r}{\underline h(t)}\rr)\rd \rho+D_*u^*\lf(1-\frac{r}{\underline h(t)}\rr)\\
\geq &D_*u^*\lf(1-\frac{r}{\underline h(t)}\rr)\geq \frac{D_*u^*K_*}{2\underline h(t)},
\end{align*}
and for $r\in [\underline h(t)-K_*/2,\underline h(t)]$, $t>0$ and $\theta\gg1$,
\begin{align*}
&-du^*\int_{\underline h(t)-r}^{K_*}  J_*(\rho) \lf(1-\frac{\rho+r}{\underline h(t)}\rr)\rd \rho+u^*\lf(1-\frac{r}{\underline h(t)}\rr)\\
\geq &-du^*\int_{\underline h(t)-r}^{K_*}  J_*(\rho) \lf(1-\frac{\rho+r}{\underline h(t)}\rr)\rd \rho\geq -du^*\int_{ K^*/2}^{K_*}  J_*(\rho) \lf(1-\frac{\rho+r}{\underline h(t)}\rr)\rd \rho\\
\geq& -du^*\int_{ 2K^*/3}^{K_*}  J_*(\rho) \lf(1-\frac{\rho+r}{\underline h(t)}\rr)\rd \rho\geq \frac{dK_*u^*}{6\underline h(t)} \int_{ 2K^*/3}^{K_*}  J_*(\rho) \rd \rho,
\end{align*}
so we always have, for $r\in [\underline h(t)-K_*,\underline h(t)]$, $t>0$, with $\underline h(t)\geq 2E_1\theta\gg1$,
\begin{align*}
&d \int_{0}^{\underline h(t)} \td J(r,\rho) \underline u(t,\rho)\rd \rho-d\underline u(t,r)+f(\underline u(t,r)\\
&\geq \frac{D_2}{\underline h(t)}-\frac{4dNK_*^2u^*}{\underline h^2(t)}\geq \frac{D_2}{2\underline h(t)},
\end{align*}
where
\begin{align*}
D_2:=\min\lf\{\frac{D_*u^*K_*}{2}, \frac{dK_*u^*}{6} \int_{ 2K^*/3}^{K_*}  J_*(\rho) \rd \rho\rr\}>0.
\end{align*}
Here we assume that $K_*>0$ is the minimal number such that the supporting set of $J_*$ is contained in $[-K_*, K_*]$.

For such $r, t$ and $\theta$, from the calculation in Case (iii) we have
\begin{align*}
\underline u_t(t,r)\leq \frac{4E_1u^*}{\underline h(t)},
\end{align*}
and so \eqref{3.35} holds if  
\begin{align*}
4E_1u^*\leq D_2/2.
\end{align*}
The proof is now complete.
\end{proof}

\begin{proof}[{\bf Proof of Lemma \ref{lem-lb}}]
  Let $(c_0,\phi_0)$ be the solution of the semi-wave problem in Proposition \ref{prop1.4}. 	Define
	\begin{align*}\begin{cases}
	 \underline h(t):=c_0 t+\delta(t),  & t\geq 0,\\
	\underline u(t,r):=(1-\epsilon(t)) \phi_0(r-\underline h(t)), & t\geq 0,\  0\leq r\leq \underline h(t),
	\end{cases}
	\end{align*}
	with 
	\begin{align*}
	\epsilon(t):=K_1(t+\theta)^{-1}, \ 	\ \  \delta(t):=c_0\theta -K_2[\ln (t+\theta)-\ln \theta]
	\end{align*}
	for some positive constants $\theta\geq \theta_1$, $K_1\geq E_2/u^*$ and $K_2>0$ to be  determined, where $\theta_1$ and $E_2$ are given by Lemma \ref{lemma3.12}.  We assume that $\theta\gg 1$.   Then it is clear that 
	\begin{align}\label{3.38}
c_0 (t+\theta)\geq 	\underline h(t)\geq c_0 (t+\theta)/2 \ \mbox{ for } \ t\geq 0.
	\end{align}

In the following we choose suitable $\theta$, $K_1,\ K_2$ and $t_0>0$ such that $(\underline u,\underline h)$ satisfies
	\begin{equation}\label{3.39}
	\begin{cases}
	\dd \underline u_t(t,r)\leq d \int_{0}^{\underline h(t)} \td J(r,\rho) \underline u(t,\rho)\rd \rho-d\underline u(t,r)+f(\underline u(t,r)), & 
	t>0,\; r\in  (\wtd E\underline h(t),\underline h(t)),\\[2mm]
	\dd \underline h'(t)\leq  \frac{\mu}{\underline h^{N-1}(t)}\dd\int_{0}^{\underline h(t)} r^{N-1}\underline u(t,r)\int_{\underline h(t)}^{+\yy} \td J(r,\rho)\rd \rho\rd r, & t>0,\\
	\underline u(t,r)\leq  u(t+t_0,r), \; \underline u(t,\underline h(t))=0, & t>0,\; r\in [0,\wtd E\underline h(t)],\\
	\underline u(0,r)\leq  u(t_0,r),\ \underline h(t)\leq  h(t_0), & r\in [0,\underline h(0)],
	\end{cases}
	\end{equation}
		where $\wtd E:=E_1/c_0$ with $E_1$ given by Lemma \ref{lemma3.12}. 

	If \eqref{3.39} is proved, then we can apply Lemma \ref{lemma3.4a} to conclude that
	\[
	h(t_0+t)\geq \underline h(t)=c_0t-K_2[\ln (t+\theta)-\ln \theta] \mbox{ for all } t>0,
	\]
	which implies \eqref{3.36a}. Therefore to complete the proof, it suffices to show \eqref{3.39}, which will be accomplished in three steps below.

	{\bf Step 1.} 	We prove  the second  inequality of \eqref{3.39}.  
	
	Applying \eqref{3.33}, we deduce for $N\geq 3$,
	\begin{align*}
	&\frac{\mu}{\underline h^{N-1}(t)}\dd\int_{0}^{\underline h(t)} r^{N-1}\underline u(t,r)\int_{\underline h(t)}^{+\yy} \td J(r,\rho)\rd \rho\rd r\\
	=&\ \frac{\mu}{\underline h^{N-1}(t)}\dd\int_{0}^{\underline h(t)} (1-\epsilon(t)) r^{N-1}\phi_0(r-\underline h)\int_{\underline h(t)}^{+\yy} \td J(r,\rho)\rd \rho\rd r\\
	\geq &\ \frac{\mu (1-\epsilon(t))}{\underline h^{N-1}(t)}\dd\int_{0}^{\underline h(t)}\!\!\!\int_{\underline h(t)}^{+\yy}  (r\rho)^{(N-1)/2}\lf[\frac{(\rho+r)^2-K_*^2}{4r\rho }\rr]^{(N-3)/2}
	\!\!\!\phi_0(r-\underline h(t))  J_*(r-\rho)\rd \rho\rd r\\
	=&\ \mu (1-\epsilon(t))\left[\dd\int_{-\underline h(t)}^{0} \int_{0}^{+\yy}  \phi_0(r) J_*(r-\rho)\rd \rho\rd r+ A(N)\right]
	\end{align*}
	with 
	\begin{align*}
	A(N)\!:=\!\!\int_{-\underline h(t)}^{0}\!\int_{0}^{+\yy}\!\! \lf(\left[\frac{(r\!+\!\underline h(t))(\rho\!+\!\underline h(t))}{\underline h^2(t)}\right]^{\frac{N-1}2}\!\lf[\frac{(\rho+r+2\underline h(t))^2-K_*^2}{4(r+\underline h(t))(\rho+\underline h(t)) }\rr]^{\frac{N-3}2}\!\!\!\!-\!1\!\rr)\phi_0(r) J_*(r\!-\!\rho)\rd \rho\rd r.
	\end{align*}
	
	For $N=2$, we similarly deduce
	\begin{align*}
	&\frac{\mu}{\underline h(t)}\dd\int_{0}^{\underline h(t)} r^{N-1}\underline u(t,r)\int_{\underline h(t)}^{+\yy} \td J(r,\rho)\rd \rho\rd r\\
	\geq&\ \mu(1-\epsilon(t))\left[\int_{-\underline h(t)}^{0} \int_{0}^{+\yy} \phi_0(r) J_*(r-\rho)\rd \rho\rd r+ A(2)\right]
	\end{align*}
	with
	\begin{align*}
	A(2):=\dd\int_{-\underline h(t)}^{0} \int_{0}^{+\yy}  \lf(\frac{(r+\underline h(t))^{1/2}(\rho+\underline h(t))^{1/2}}{\underline h(t)}-1\rr)\phi_0(r) J_*(r-\rho)\rd \rho\rd r.
	\end{align*}
	
	{\bf Claim 1}.  There are some constants $C_{i,j}\geq 0$ with $C_{1,0}= C_{0,1}>0$ such that   for $N\geq 2$ and  $x, y\in \R$ close to 0,
	\begin{align}\label{3.43}
	(1+x)^{(N-1)/2}(1+y)^{(N-1)/2}\geq 1+\sum_{1\leq  i+j\leq N}C_{i,j}x^{i}y^{j}.
	\end{align}
	Clearly, \eqref{3.43} holds for  odd integer $N\geq 2$. For even integer $N\geq 2$ and  $x, y\in \R$ close to 0,
	\begin{align*}
	&(1+x)^{(N-1)/2}(1+y)^{(N-1)/2}-1\\
	=&(1+x)^{(N-2)/2}(1+y)^{(N-2)/2}(1+x)^{1/2}(1+y)^{1/2}-1\\
	\geq &(1+x)^{(N-2)/2}(1+y)^{(N-2)/2}(1+\frac x 2-\frac{x^2}4)(1+\frac y 2-\frac{y^2}4)-1\\
	=&\sum_{1\leq  i+j\leq N}C_{i,j}x^{i}y^{j} \mbox{ with } C_{0,1}=C_{1,0}>0,
	\end{align*}
	since for $z\in \R$ close to 0,
	\begin{align*}
	(1+z)^{1/2}=1+\frac{z}{2}-\frac{(1+\xi)^{-3/2}z^2}{8}\geq 1+\frac{z}{2}-\frac{z^2}4
	\end{align*}
	with $\xi$ lying between 0 and $z$. 	The claim is proved.
	
	By the mean value theorem,  we obtain for $N\geq 3$ and small $x\geq 0$,
	\begin{align}\label{3.44}
	(1-x)^{(N-3)/2}\geq 1-\frac{N-3}{2}(1-\xi)^{(N-5)/2}x \geq 1-Nx
	\end{align}
	for some  $\xi\in [0,x]$. Therefore, for $r\in [-K_*, 0], \ t\geq 0,\ \rho\geq 0$ and $\theta\gg 1$, we have, when $N\geq 3$,
	\begin{align*}
	\lf[\frac{(\rho+r+2\underline h(t))^2-K_*^2}{4(r+\underline h(t))(\rho+\underline h(t)) }\rr]^{(N-3)/2}&\geq \lf[1-\frac{K_*^2}{4(r+\underline h(t))(\rho+\underline h(t)) }\rr]^{(N-3)/2}\\
	&\geq 1-\frac{NK_*^2}{4(r+\underline h(t))(\rho+\underline h(t)) },
	\end{align*}
	and 
	\begin{align*}
	\left[\frac{(r\!+\!\underline h(t))(\rho\!+\!\underline h(t))}{\underline h^2(t)}\right]^{\frac{N-1}2}&=\left[\left(1+\frac r{\underline h(t)}\right)\left(1+\frac\rho{\underline h(t)}\right)\right]^{\frac{N-1}2}\\
	&\geq 1+ \sum_{1\leq i+j\leq N}C_{i,j}\frac{r^i\rho^j}{\underline h^{i+j}(t)}.
	\end{align*}
	
	Since ${\rm spt}(J_*)\subset [-K_*,K_*]$,  we deduce for $N\geq 3$ and $\theta\gg 1$,
	\begin{align*}
		A(N)\geq&\dd\int_{-K_*}^{0}\!\int_{0}^{K_*} \!\!  \lf[\lf(1+\!\!\!\sum_{1\leq  i+j\leq N}\!\!C_{i,j}\frac{r^i\rho^j}{\underline h^{i+j}(t)}\rr)\!\!\lf(1-\frac{NK_*^2}{4(r+\underline h(t))(\rho\!+\!\underline h(t)) }\rr)-1\rr]\phi_0(r) J_*(r\!-\!\rho)\rd \rho\rd r\\
	=&\dd\sum_{1\leq  i+j\leq N}C_{i,j}\int_{-K_*}^{0}\int_{0}^{K_*} \frac{ r^{i}\rho^{j}}{\underline h^{i+j}(t)}\phi_0(r) J_*(r-\rho)\rd \rho\rd r\\
	&-\int_{-K_*}^{0}\int_{0}^{K_*} \lf(1+\sum_{1\leq  i+j\leq N}C_{i,j}\frac{r^i\rho^j}{\underline h^{i+j}(t)}\rr)\frac{NK_*^2}{4(r+\underline h(t))(\rho+\underline h(t)) }\phi_0(r) J_*(r-\rho)\rd \rho\rd r\\
	\geq &\ C_{0,1}\dd\int_{-K_*}^{0}\int_{0}^{K_*}  \frac{(r+\rho)  }{\underline h(t)}\phi_0(r) J_*(r-\rho)\rd \rho\rd r -\sum_{2\leq  i+j\leq N}|C_{i,j}|\frac{K_*^{i+j+2}\|J_*\|_{L^\yy}u^*}{\underline h^{i+j}(t)}-\frac{NK_*^4\|J_*\|_{L^\yy}u^*}{2\underline h^2(t) }\\
	\geq &\ C_{0,1}\dd\int_{-K_*}^{0}\int_{0}^{K_*}  \frac{(r+\rho)  }{\underline h(t)}\phi_0(r) J_*(r-\rho)\rd \rho\rd r -\frac{\big(\sum_{2\leq  i+j\leq N}C_{i,j}K_*^{i+j+2}+NK_*^4\big)\|J_*\|_{L^\yy}u^*}{\underline h^{2}(t)}\\
	=:&\ \frac{\td B}{\underline h(t)}-\frac{D(N)}{\underline h^{2}(t)};
	\end{align*}
	and when $N=2$, similarly
	\begin{align*}
	A(2)\geq  &\dd\sum_{1\leq  i+j\leq 2}C_{i,j}\int_{-K_*}^{0}\int_{0}^{K_*} \frac{ r^{i}\rho^{j}}{\underline h^{i+j}(t)}\phi_0(r) J_*(r-\rho)\rd \rho\rd r\\
	\geq &C_{0,1}\dd\int_{-K_*}^{0}\int_{0}^{K_*}  \frac{(r+\rho)  }{\underline h(t)}\phi_0(r) J_*(r-\rho)\rd \rho\rd r -\sum_{i+j=2}C_{i,j}\frac{K_*^{i+j+2}\|J_*\|_{L^\yy}u^*}{\underline h^{2}(t)}\\
	=:&\ \frac{\tilde B}{\underline h(t)} -\frac{D(2)}{\underline h^{2}(t)}.
	\end{align*}
	
	From the proof of Lemma \ref{lemma3.10} we see  $\tilde B<0$. Since $\underline h(t)\geq c_0\theta/2\gg 1$,
	\begin{align*}
	\mu\int_{-\underline h(t)}^0\int_0^\infty \phi_0(r)J_*(r-\rho)d\rho dr&=\mu\int_{-K_*}^0\int_0^\infty \phi_0(r)J_*(r-\rho)d\rho dr\\
	&=\mu\int_{-\infty}^0\int_0^\infty \phi_0(r)J_*(r-\rho)d\rho dr=c_0.
	\end{align*}
	
	Putting all these together, we obtain, for $N\geq 2$ and $\theta\gg1$,
	\begin{align*}
	&\frac{\mu}{\underline h^{N-1}(t)}\dd\int_{0}^{\underline h(t)} r^{N-1}\underline u(t,r)\int_{\underline h(t)}^{+\yy} \td J(r,\rho)\rd \rho\rd r\\
	\geq &\  (1-\epsilon(t))\big[c_0+\mu A(N)\big]\geq  (1-\epsilon(t))\Big[c_0+\frac{\mu\td B}{\underline h(t)}-\frac{\mu D(N)}{\underline h^{2}(t)}\Big]
	\\
		\geq &\ (1-2\epsilon(t))c_0=c_0-2c_0K_1(t+\theta)^{-1}\\
	\geq &\ c_0-K_2(t+\theta)^{-1}=\underline h'(t) \  \mbox{ provided that  $2c_0K_1\leq K_2$.}
	\end{align*}
This completes Step 1.
	
	{\bf Step 2.} We prove  the first  inequality of \eqref{3.39}, namely,  for $t>0$ and $r\in (\wtd E \underline h(t),\underline h(t))$, 
	\begin{align}\label{4.28}
	\underline u_t(t,r)\geq &d \int_{0}^{\underline h(t)} \td J(r,\rho) \underline u(t,\rho)\rd \rho - d\underline u(t,r)+f(\underline u(t,r)).
	\end{align}

	By the definition of $\underline u$, we have
	\begin{align*}
	\underline u_t(t,r)=&-(1-\epsilon(t))[c_0+\delta'(t)]\phi_0'(r-\underline h(t))-\epsilon'(t)\phi_0(r-\underline h(t))\\
	=& -(1-\epsilon(t))c_0\phi'_0(r-\underline h(t))-(1-\epsilon(t)) \delta'(t)\phi_0'(r-\underline h(t))-\epsilon'(t)\phi_0(r-\underline h(t)).
	\end{align*}
	For $t>0$, $r\in (\wtd E \underline h(t),\underline h(t))$ and $\theta\gg1$, using Lemma \ref{lemma3.11} we obtain
	\begin{align*}
	& -(1-\epsilon(t))c_0\phi'(r-\underline h(t))\\ 
	= & (1-\epsilon(t)) \lf[d \int_{-\yy}^{\underline h(t)}  J_*(r-\rho)  \phi_0(\rho-\underline h(t))\rd \rho-d\phi_0(r-\underline h(t))+f(\phi_0(r-\underline h(t)))\rr]\\ 
	\leq & (1-\epsilon(t)) \bigg[d \int_{0}^{\underline h(t)}  \td J(r,\rho)  \phi_0(\rho-\underline h(t))\rd \rho-d\phi_0(r-\underline h(t))+f(\phi_0(r-\underline h(t)))\bigg]\\
	&- (1-\epsilon(t))d\left[ \frac{N-1}r\int_{-K_*}^{\min\{h(t)-r, K_*\}} \rho
J_*(\rho) \phi_0(\rho+r-\underline h(t))\rd \rho -\frac{NK_*^2\|\phi_0\|_{L^\infty}}{r^2}\right]\\
	\leq &\  d \int_{0}^{\underline h(t)}  \td J(r,\rho)  \underline u(t,\rho)\rd \rho-d\underline u(t,r)+(1-\epsilon(t))f(\phi_0(r-\underline h(t)))+\frac{2dNK_*u^*}{\wtd E\underline h(t)} .
	\end{align*}
Hence, for such $r,\ t$ and $\theta$,
\begin{align*}
	\underline u_t(t,r)\leq d \int_{0}^{\underline h(t)}  \td J(r,\rho)  \underline u(t,\rho)\rd \rho-d \underline u(t,r)+f(\underline u(t,r))+B(r),
\end{align*}
	with
	\begin{align*}
	B(r)=B(t,r)=:&(1-\epsilon(t))f(\phi_0(r-\underline h(t)))-f(\underline u(t,r))\\
	&-(1-\epsilon(t))\delta'(t)\phi'_0(r-\underline h(t))-\epsilon'(t)\phi_0(r-\underline h(t))+\frac{2dNK_*u^*}{\wtd E\underline h(t)}.
	\end{align*}
	
	To complete the proof, it remains to verify $B(r)\leq 0$, and we do so for   $r\in [\underline h(t)-L,\underline h(t)]$ and $r\in [\td E\underline h(t), \underline h(t)-L]$,  separately, where $L$ is a positive constant to be determined. 
	
For	$r\in [\td E\underline h(t), \underline h(t)-L]$, by  similar arguments to those leading to \eqref{3.33a}, we know that  for large  $L>0$ there exists $\wtd C=\wtd C(L)>0$   such that 
\begin{align*}
(1-\epsilon(t))f(\phi_0(r-\underline h(t)))-f(\underline u(t,r))\leq -\wtd C \epsilon(t) \mbox{ for all } t>0. 
\end{align*}
 Hence, for such $L$, due to $\delta'(t)\phi'_0(r-\underline h(t))\geq 0$ and \eqref{3.38}, we obtain, for $r\in [\td E\underline h(t), \underline h(t)-L]$ and $t>0$,
\begin{align*}
B(r)\leq& -\wtd C\epsilon(t)-\epsilon'(t)\phi_0(r-\underline h(t))+\frac{2dNK_*u^*}{\wtd E\underline h(t)}\\
\leq&- \wtd C\frac{K_1 }{t+\theta}+\frac{K_1u^*}{(t+\theta)^2}+\frac{2dNK_*u^*}{\wtd Ec_0(t+\theta)} \\
=&\ \frac{1 }{t+\theta}\lf(-K_1\wtd C +\frac{K_1u^*}{t+\theta}+\frac{2dNK_*u^*}{\wtd Ec_0} \rr)\leq 0
\end{align*}
if $\theta$ is large and 
\begin{align*}
K_1\wtd C\geq \frac{3dNK_*u^*}{\wtd Ec_0}.
\end{align*}
	
	Denote $C_L=\inf_{\xi\in [-L,0]} [-\phi'_0(\xi)]>0$.	For $r\in [\underline h(t)-L,\underline h(t)]$, $t>0$ and $\theta\gg 1$,  using $1-\epsilon(t)\geq 1/2$, \eqref{3.38},
	and {\bf (f)},
	we deduce
	\begin{align*}
	B(r)\leq &-(1-\epsilon(t))\delta'(t)\phi'_0(r-\underline h(t))-\epsilon'(t)\phi_0(r-\underline h(t))+\frac{2dNK_*u^*}{\wtd E\underline h(t)}\\
	\leq& -\frac{1}{2}\frac{K_2C_L}{t+\theta}+\frac{K_1u^*}{(t+\theta)^2}+\frac{2dNK_*u^*}{\wtd Ec_0(t+\theta)} \\
	=&\ \frac{1}{t+\theta}\lf[-\frac{K_2C_L}{2}+\frac{K_1u^*}{t+\theta}+\frac{2dNK_*u^*}{\wtd Ec_0}\rr]\leq 0
	\end{align*}
for large $\theta$ and 
	\begin{align*}
	K_2C_L\geq \frac{5dNK_*u^*}{\wtd Ec_0}.
	\end{align*}
This concludes Step 2.
	
		{\bf Step 3}. We verify the last two inequalities of \eqref{3.39}.
	
	We now fixed $K_1$, $K_2$ and $\theta$ such that the conclusions in Step 1 and Step 2 hold.  Then from Lemma \ref{lemma3.12}, $K_1\geq E_2/u^*$ and 
	$\wtd E\underline h(t)\leq \wtd Ec_0(t+\theta)=E_1(t+\theta)$, we see that there is $t_0>0$ such that 
	\begin{align*}
		\underline u(t,r)\leq \lf(1-\frac{K_1}{t+\theta}\rr) u^*\leq u^*-\frac{E_2}{t+\theta}\leq  u(t+t_0,r)\ \mbox{ for } \  t>0,\; r\in [0,\wtd E\underline h(t)].
	\end{align*}
	Moreover, since spreading happens,  by enlarging  $t_0$ if necessary we may assume $\underline h(0)=c_0\theta \leq  h(t_0)$ and 
	\begin{align*}
		\underline u(0,r)\leq (1-\epsilon(0))u^*\leq  u(t_0,r)\ \mbox{ for } r\in [0,\underline h(0)].
	\end{align*}
	Evidently $\underline u(t,\underline h(t))\equiv 0$.
	Therefore
	the last two inequalities of \eqref{3.39} are satisfied for the above chosen $t_0$. 
	 The proof of the lemma is now complete. 
\end{proof}

%%%%%%%%%%%%%%%%%%%%%%%%%%%%%%%%%%%%%%%%%%%%%%

\section{Rate of accelerated spreading}

In this section, we consider the case that {\bf (J1)} is not satisfied by the kernel function $J$ and hence accelerated spreading can happen. We will focus on the class of $J$ satisfying
$J(r)\approx r^{-\beta}$ near $\infty$, namely, 
\begin{align}\label{5.1}
C_1r^{-\beta}\leq J(r)\leq C_2r^{-\beta}  
\end{align}
for some positive constants $C_1$, $C_2$, and all $r\gg 1$. For such $J$, clearly {\bf (J1)} holds if and only if $\beta>N+1$, and {\bf (J)} holds if and only if $\beta>N$. So we will consider the case $\beta\in (N, N+1]$, which is the exact range that accelerated spreading can happen for \eqref{1.4} with such a kernel function $J$. We will determine the rate of $h(t)$ as $t\to\infty$ and prove Theorem \ref{th1.8}.

\subsection{Some further estimates on $J_*$ and $\tilde J$.}
\begin{lemma}\label{lemma5.1a} Assume \eqref{5.1} holds with $\beta\in (N,N+1]$. Then for  $h\gg1$,
	\begin{equation}\label{5.2}
	\int_{0}^{h}\int_{h}^{\yy}J_*(r-\rho) \rd \rho\rd r\approx
\begin{cases}
\ h^{N+1-\beta} & {\rm when}\ \beta\in (N,N+1),\\[2mm]
\ \ln h& {\rm when}\ \beta=N+1.
\end{cases}
\end{equation}
		and
	\begin{equation}\label{5.3b}
	\int_{0}^{h}\int_{h}^{\yy}\td J(r,\rho) \rd \rho\rd r\approx
\begin{cases}
\ h^{N+1-\beta}& {\rm when}\ \beta\in (N,N+1),\\[2mm]
\ \ln h& {\rm when}\ \beta=N+1.
\end{cases}
	\end{equation}
	\end{lemma}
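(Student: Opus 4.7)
The plan is to first extract the pointwise decay $J_*(l)\approx l^{N-1-\beta}$ for large $l$, and then reduce both estimates to elementary one-variable integrals. Starting from \eqref{1.10} and substituting $r=lu$ with $u\geq 1$, one rewrites
\[
J_*(l)=\omega_{N-1}\,l^{N-1}\int_1^{\yy} J(lu)\,u\,(u^2-1)^{(N-3)/2}\rd u.
\]
Inserting \eqref{5.1} and noting that $\int_1^{\yy}u^{1-\beta}(u^2-1)^{(N-3)/2}\rd u$ is finite (convergent at $u=1$ because $(N-3)/2>-1$ when $N\geq 2$, and at $\yy$ because $\beta>N-1$) yields the two-sided bound $J_*(l)\approx l^{N-1-\beta}$ for all large $l$.

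With this in hand, the evenness of $J_*$ together with the change of variable $l=\rho-r$ and a switch in the order of integration give
\[
\int_0^h\!\!\int_h^{\yy} J_*(r-\rho)\rd\rho\rd r=\int_0^h l\,J_*(l)\rd l+h\int_h^{\yy}J_*(l)\rd l.
\]
Substituting the asymptotics from the previous step then delivers \eqref{5.2}: the second summand is $\approx h\cdot h^{N-\beta}=h^{N+1-\beta}$ when $\beta\in(N,N+1)$ and $\approx 1$ when $\beta=N+1$, while the first is $\approx h^{N+1-\beta}$ when $\beta\in(N,N+1)$ and $\approx \ln h$ when $\beta=N+1$. The lower bound in \eqref{5.3b} then follows immediately from \eqref{2.12}, which asserts $\int_h^{\yy}\td J(r,\rho)\rd\rho\geq \int_h^{\yy}J_*(r-\rho)\rd\rho$ for $0<r\leq h$, combined with \eqref{5.2}.

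For the upper bound in \eqref{5.3b}, the key observation is the identity
\[
\int_h^{\yy}\td J(r,\rho)\rd\rho=\int_{\R^N\setminus B_h}J(|x-y|)\rd y,\qquad |x|=r,
\]
obtained by unfolding the definition of $\td J$ and applying Fubini. For $r\in[0,h-1]$ and $|y|>h$ we have $|x-y|\geq |y|-r\geq h-r>1$, so \eqref{5.1} gives $J(|x-y|)\leq C|x-y|^{-\beta}$; together with the inclusion $\{y:|y|>h\}\subset\{y:|x-y|>h-r\}$, this bounds the right-hand side by $C_3(h-r)^{N-\beta}$. Integrating in $r$ reduces the problem to $\int_1^h s^{N-\beta}\rd s$, which evaluates to $\approx h^{N+1-\beta}$ when $\beta\in(N,N+1)$ and to $\ln h$ when $\beta=N+1$; the remaining sliver $r\in[h-1,h]$ is absorbed by the trivial bound $\int_h^{\yy}\td J(r,\rho)\rd\rho\leq 1$. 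The proof is not expected to present any genuine obstacle; the only points requiring care are the $N=2$ case in the first step (where $(N-3)/2=-1/2$ creates an integrable singularity at $u=1$) and the need to isolate the tail of $\int_0^h lJ_*(l)\rd l$ from a bounded head, so the asymptotic $J(\eta)\leq C\eta^{-\beta}$ is only invoked on its regime of validity.
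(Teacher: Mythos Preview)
Your argument is correct. The treatment of \eqref{5.2} and of the lower bound in \eqref{5.3b} is essentially identical to the paper's: the substitution $r=lu$ in \eqref{1.10} is the same computation as the paper's \eqref{7.3}, the decomposition $\int_0^h lJ_*(l)\rd l+h\int_h^{\yy}J_*(l)\rd l$ is verbatim, and the lower bound via \eqref{2.12} matches the paper's \eqref{5.6a}.

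The upper bound in \eqref{5.3b} is where your approach genuinely diverges from the paper, and yours is considerably shorter. The paper splits the double integral into three pieces $I=\int_{h/2}^{h}\int_{h}^{2h}$, $II=\int_{0}^{h/2}\int_{h}^{2h}$, $III=\int_{0}^{h}\int_{2h}^{\yy}$, and estimates each via the explicit formula for $\td J(r,\rho)$ from Lemma~\ref{lemma2.2}, with separate treatment of $N\geq 3$ and $N=2$ (the $N=2$ case requiring a further three-way split of the $\eta$-integral to control the factor $[(\rho+r)^2-\eta^2]^{-1/2}$). Your route bypasses all of this by returning to the identity $\int_h^{\yy}\td J(r,\rho)\rd\rho=\int_{\R^N\setminus B_h}J(|x-y|)\rd y$ and using the elementary inclusion $\R^N\setminus B_h\subset\{y:|x-y|>h-r\}$ together with the radial bound $\int_{|z|>s}|z|^{-\beta}\rd z=\frac{\omega_N}{\beta-N}s^{N-\beta}$. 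This avoids the explicit $\td J$ formula entirely and is dimension-uniform. The only minor adjustment is that ``$r\in[0,h-1]$'' should read $r\in[0,h-M]$ where $M$ is the threshold beyond which \eqref{5.1} applies; the remaining strip $[h-M,h]$ contributes at most $M$ by the same trivial bound you use for $[h-1,h]$, so nothing changes. What the paper's longer computation buys is finer pointwise control of $\td J(r,\rho)$ itself (used elsewhere, e.g.\ in \eqref{5.16a}), but for the integrated statement of this lemma your argument is both sufficient and cleaner.
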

\begin{proof}  To prove \eqref{5.2}, we first calculate
\begin{align*}
\int_{0}^{h}\int_{h}^{\yy}J_*(r-\rho) \rd \rho \rd r=\int_{-h}^{0}\int_{-r}^{\yy}J_*(\rho) \rd \rho \rd r=\int_{0}^{h} J_*(\rho) \rho\rd \rho+h\int_{h}^{\yy} J_*(\rho) \rd \rho.
\end{align*}
Moreover, by \eqref{1.10}, for  $\rho\geq 1$ and $\beta\in (N, N+1]$,
\begin{equation}\label{7.3}\begin{aligned}
J_*(\rho) & =\omega_{N-1}\int_{\rho}^{\yy}   (r^2-\rho^2)^{(N-3)/2} rJ(r)\rd r\approx \omega_{N-1}\int_{\rho}^{\yy}   (r^2-\rho^2)^{(N-3)/2} r^{1-\beta}\rd r\\
&=\omega_{N-1}\rho^{N-\beta-1}\int_{1}^{\yy} (\xi^2-1)^{(N-3)/2} \xi^{1-\beta}\rd \xi \approx \rho^{N-\beta-1}.
\end{aligned}
\end{equation}
Hence,
for $\beta\in (N,N+1)$,
\begin{align*}
\int_{0}^{h}\int_{h}^{\yy}J_*(r-\rho) \rd \rho \rd r = \int_{0}^{1} J_*(\rho) \rho\rd \rho+\int_{1}^{h} J_*(\rho) \rho\rd \rho+h\int_{h}^{\yy} J_*(\rho) \rd \rho\approx  h^{N-\beta+1},
\end{align*}
and for $\beta=N+1$,
\begin{align*}
\int_{0}^{h}\int_{h}^{\yy}J_*(r-\rho) \rd \rho \rd r= \int_{0}^{1} J_*(\rho) \rho\rd \rho+\int_{1}^{h} J_*(\rho) \rho\rd \rho+h\int_{h}^{\yy} J_*(\rho) \rd \rho\approx 
\ln h.
\end{align*}
This proves \eqref{5.2}.

We next prove \eqref{5.3b}, which is more involved.  We write
\begin{align*}
&\int_{0}^{h}\int_{h}^{\yy} \td J(r,\rho) \rd \rho \rd r\\
=&\int_{h/2}^{h}\int_{h}^{2h} \td J(r,\rho) \rd \rho \rd r+\int_{0}^{h/2}\int_{h}^{2h} \td J(r,\rho) \rd \rho \rd r+\int_{0}^{h}\int_{2h}^{\yy} \td J(r,\rho) \rd \rho \rd r\\
=:&\ I+II+III.
\end{align*}

{\bf Step 1}. Upper bound for $I$.

{\bf Case 1}. $N\geq 3$.

From \eqref{2.3a} and \eqref{5.1}, for  $|\rho-r|\geq 1$,
\begin{align*}
\td J(r,\rho)&= \omega_{N-1}\int_{|\rho-r|}^{\rho+r}   (\rho/r)^{(N-1)/2}\lf[\frac{(\rho+r)^2-\eta^2}{4r\rho }\rr]^{(N-3)/2}[\eta^2-(\rho-r)^2]^{(N-3)/2}\eta J(\eta) \rd \eta\\
&\leq \omega_{N-1}\int_{|\rho-r|}^{\rho+r}   (\rho/r)^{(N-1)/2}[\eta^2-(\rho-r)^2]^{(N-3)/2}\eta J(\eta) \rd \eta\\
&\leq(\rho/r)^{(N-1)/2}C_2\omega_{N-1}\int_{|\rho-r|}^{\rho+r}   [\eta^2-(\rho-r)^2]^{(N-3)/2}\eta^{1-\beta} \rd \eta\\
&\leq(\rho/r)^{(N-1)/2}C_2\omega_{N-1}\int_{|\rho-r|}^{\rho+r}   \eta^{N-\beta-2} \rd \eta\\
&\leq C\left(\frac{\rho}r\right)^{(N-1)/2}|\rho-r|^{N-\beta-1}
\end{align*}
for some $C>0$ independent of $r$ and $\rho$.
Therefore,
\begin{align*}
I=&\int_{h/2}^{h-1}\int_{h}^{2h} \td J(r,\rho) \rd \rho \rd r+\int_{h-1}^{h}\int_{h}^{2h} \td J(r,\rho) \rd \rho \rd r\\
\leq &\int_{h/2}^{h-1}\int_{h}^{2h} \td J(r,\rho) \rd \rho \rd r+1\leq C\int_{h/2}^{h-1}\int_{h}^{2h}(\rho/r)^{(N-1)/2}(\rho-r)^{N-\beta-1} \rd \rho \rd r+1\\
\leq&\ 4^{(N-1)/2}C \int_{h/2}^{h-1}\int_{h}^{2h}(\rho-r)^{N-\beta-1} \rd \rho \rd r+1\\
=&\ \frac{4^{(N-1)/2}C}{\beta-N} \int_{h/2}^{h-1}[(h-r)^{N-\beta}-(2h-r)^{N-\beta}]  \rd r+1\\
=&\ \frac{4^{(N-1)/2}C}{\beta-N} \int_{1}^{h/2}[\xi^{N-\beta}-(h+\xi)^{N-\beta}] \rd  \xi+1\\
\leq  &\ \td C  h^{N-\beta+1} \ \mbox{ for $h\gg 1$ and some $\tilde C>0$ independent of $h$.}
\end{align*}

{\bf Case 2}. $N= 2$.

By \eqref{2.3a} and \eqref{5.1}, for  $|\rho-r|\geq 1$, $\rho\geq h$ and $r\geq h/2$ with $h\gg1$, we have
\begin{align*}
\td J(r,\rho)=&\ \omega_{1}\int_{|\rho-r|}^{\rho+r}   (\rho/r)^{1/2}\lf[\frac{(\rho+r)^2-\eta^2}{4r\rho }\rr]^{-1/2}[\eta^2-(\rho-r)^2]^{-1/2}\eta J(\eta) \rd \eta\\
\leq&\  C_2\omega_{0}\int_{|\rho-r|}^{\rho+r}   (\rho/r)^{1/2}\lf[\frac{(\rho+r)^2-\eta^2}{4r\rho }\rr]^{-1/2}[\eta^2-(\rho-r)^2]^{-1/2}\eta^{1-\beta} \rd \eta\\
=&\ C_2\omega_{1}\lf(\int_{|\rho-r|}^{\theta_1|\rho-r|}+\int_{\theta_1|\rho-r|}^{\theta_2(\rho+r)} +\int_{\theta_2(\rho+r)}^{\rho+r}\rr ) (\rho/r)^{1/2}\lf[\frac{(\rho+r)^2-\eta^2}{4r\rho }\rr]^{-1/2}[\eta^2-(\rho-r)^2]^{-1/2}\eta^{1-\beta} \rd \eta
\end{align*}
where the constants $\theta_2<1<\theta_1$ are chosen to be close to 1.

Clearly, for such $\rho$ and $r$,
\begin{align*}
&\int_{|\rho-r|}^{\theta_1|\rho-r|} \lf[\frac{(\rho+r)^2-\eta^2}{4r\rho }\rr]^{-1/2}[\eta^2-(\rho-r)^2]^{-1/2}\eta^{1-\beta} \rd \eta\\
\leq&\lf[\frac{(\rho+r)^2-\theta_1^2(\rho-r)^2}{4r\rho }\rr]^{-1/2}|\rho-r|^{1-\beta}\int_{|\rho-r|}^{\theta_1|\rho-r|} [\eta^2-(\rho-r)^2]^{-1/2} \rd \eta\\
=&\lf[\frac{(\rho+r)^2-\theta_1^2(\rho-r)^2}{4r\rho }\rr]^{-1/2}|\rho-r|^{1-\beta}\int_{1}^{\theta_1} [\xi^2-1]^{-1/2} \rd \xi,
\end{align*}
and
\begin{align*}
&\int_{\theta_1|\rho-r|}^{\theta_2(\rho+r)} \lf[\frac{(\rho+r)^2-\eta^2}{4r\rho }\rr]^{-1/2}[\eta^2-(\rho-r)^2]^{-1/2}\eta^{1-\beta} \rd \eta\\
\leq&\ \lf[(1-\theta_2^2)\frac{(\rho+r)^2}{4r\rho }\rr]^{-1/2}\int_{\theta_1|\rho-r|}^{\theta_2(\rho+r)} [(1-1/\theta_1^2)\eta^2]^{-1/2}\eta^{1-\beta} \rd \eta
\\
=&\frac{(1-1/\theta_1^2)^{-1/2}}{\beta-1}\lf[(1-\theta_2^2)\frac{(\rho+r)^2}{4r\rho }\rr]^{-1/2}[\theta_1^{1-\beta}|\rho-r|^{1-\beta}-\theta_2^{1-\beta}(\rho+r)^{1-\beta}]\\
\leq&\ \tilde C_1 \lf[\frac{(\rho+r)^2}{4r\rho }\rr]^{-1/2}|\rho-r|^{1-\beta} \mbox{ for some $\td C_1>0$ independent of $r$ and $\rho$},
\end{align*}
and
\begin{align*}
&\int_{\theta_2(\rho+r)}^{\rho+r} \lf[\frac{(\rho+r)^2-\eta^2}{4r\rho }\rr]^{-1/2}[\eta^2-(\rho-r)^2]^{-1/2}\eta^{1-\beta} \rd \eta\\
\leq&\int_{\theta_2(\rho+r)}^{\rho+r} \lf[\frac{(\rho+r)^2-\eta^2}{4r\rho }\rr]^{-1/2}[(1-1/\theta_1^2)\eta^2]^{-1/2}\eta^{1-\beta} \rd \eta\\
\leq&\ (1-1/\theta_1^2)^{-1/2}\theta_2^{-\beta}(\rho+r)^{-\beta}\int_{\theta_2(\rho+r)}^{\rho+r} \lf[\frac{(\rho+r)^2-\eta^2}{4r\rho }\rr]^{-1/2} \rd \eta\\
=&\ (1-1/\theta_1^2)^{-1/2}\theta_2^{-\beta}(\rho+r)^{-\beta}(4r\rho)^{1/2}\int_{\theta_2}^{1}[1-\xi^2]^{-1/2} \rd \xi\\
=&\ \td C_2(\rho+r)^{-\beta}(4r\rho)^{1/2} \mbox{ for some $\td C_2>0$ independent of $r$ and $\rho$}.
\end{align*}

Moreover, for $(r,\rho)\in [h/2,h-1]\times [h,2h]$ with $h\gg 1$, we have $\rho/r\leq 4$ and
\begin{align*}
\frac{\rho-r}{\rho+r}=1-\frac{2}{\rho/r+1}\leq \frac{3}{5}.
\end{align*}
Therefore
\begin{align*}
&\lf[\frac{(\rho+r)^2-\theta_1^2(\rho-r)^2}{4l\rho }\rr]^{-1/2}\leq \lf[\frac{(1-9\theta_1^2/25)(\rho+r)^2}{4r\rho }\rr]^{-1/2}\\
\leq& \lf[\frac{(1-9\theta_1^2/25)\times 9h^2/4}{8h^2 }\rr]^{-1/2}=\lf[\frac{9(1-9\theta_1^2/25)}{32 }\rr]^{-1/2},
\end{align*}
and
\begin{align*}
\lf[\frac{(\rho+r)^2}{4r\rho }\rr]^{-1/2}\leq \lf[\frac{9}{32 }\rr]^{-1/2}.
\end{align*}
Thus our earlier inequalities yield,  for $(r,\rho)\in [h/2,h-1]\times [h,2h]$ with $h\gg 1$,
\begin{align*}
\td J(r,\rho)\leq A_1(\rho-r)^{1-\beta}+A_2h^{1-\beta}, 
\end{align*}
where $A_1$ and $A_2$ are positive constants independent of $r$ and $\rho$.

We may now use similar calculation as in Case 1 to obtain
\begin{align*}
I=&\int_{h/2}^{h-1}\int_{h}^{2h} \td J(r,\rho) \rd \rho \rd r+\int_{h-1}^{h}\int_{h}^{2h} \td J(r,\rho) \rd \rho \rd r\\
\leq &\int_{h/2}^{h-1}\int_{h}^{2h} \td J(r,\rho) \rd \rho \rd r+1\leq \int_{h/2}^{h-1}\int_{h}^{2h}[A_1(\rho-r)^{1-\beta}+A_2h^{1-\beta}] \rd \rho \rd r+1\\
\leq&\ \td C_3 h^{3-\beta} =\td C_3 h^{N-\beta+1}  \mbox{ for  $h\gg 1$ and some $\td C_3>0$ independent of $h$}.
\end{align*}

{\bf Step 2}. Upper bound for $II$.
	
	Using of the definition of $ \td J$, we have
\begin{align*}
II=&\int_0^{h/2}  \lf[\int_{B_{2h}\backslash B_h} J(|x_r^1-y|) \rd y\rr] \rd r, 
\end{align*}
where 
\begin{align*}
&x_r^1:=(r,0,\cdots,0)\in\R^N.
\end{align*}
 Set
\begin{align*}
\Omega:=\lf\{z=(z_1,z_2,\cdots,z_N): z_1\in [0,h/2],  |z_i|\leq \Lambda h,\ 2\leq i\leq N\rr\}
\end{align*}
with $\Lambda:=\frac{1}{2}\sqrt{\frac 3{N-1}}$.  Then $\Omega\subset B_h$
 because $(h/2)^2+(N-1)\Lambda ^2 h^2< h^2$. Define 
\begin{align*}
&\Omega^{(1)}:=\lf\{z=(z_1,z_2,\cdots,z_N): z_1\in (-\yy,0)\cup (h/2,\yy)\ {\rm and}\ z_i\in \R\ {\rm for}\ 2\leq i\leq N\rr\},\\
& \Omega^{(j)}:=\lf\{z=(z_1,z_2,\cdots,z_N): |z_j|>\Lambda h\ {\rm and}\ z_i\in \R\ {\rm for}\ i\neq j\rr\},\ \ 2\leq j\leq N.
\end{align*}
 Then
 \begin{align*}
B_{2h}\backslash B_h\subset \R^N\backslash B_h\subset \R^N\backslash \Omega\subset \cup_{j=1}^{N} \Omega_2^{(j)}.
 \end{align*}
and so
\begin{align*}
II
\leq \sum_{j=1}^{N} \int_0^{h/2}  \lf[\int_{\Omega^{(j)}} J(|x_r^1-y|) \rd y\rr] \rd r.
\end{align*}
By the definition of $J_*$, we deduce
\begin{align*}
&\int_0^{h/2}  \int_{\Omega^{(1)}} J(|x_r^1-y|) \rd y \rd r\\
=&\int_{0}^{h/2} \int_{h/2}^{\yy}  J_*(r-\rho) \rd \rho \rd r+\int_{0}^{h/2} \int_{-\yy}^{0} J_*(r-\rho) \rd \rho \rd r\\
=&\ 2\int_0^{h/2} \int_{h/2}^{\yy}  J_*(r-\rho) \rd \rho \rd r,
\end{align*}
and for $2\leq j\leq N$,
\begin{align*}
\int_0^{h/2}  \int_{\Omega^{(j)}} J(|x_r^1-y|) \rd y \rd r=h \int_{\Lambda h}^{\yy} J_*(r) \rd r.
\end{align*}
Hence,
\begin{align*}
II\leq 2\int^{h/2}_{0} \int_{h/2}^{\yy}  J_*(r-\rho) \rd \rho \rd r+(N-1)h \int_{\Lambda h}^{\yy} J_*(r) \rd r.
\end{align*}
Making use of  \eqref{5.2} and $J_*(r)\approx r^{N-\beta-1}$, we deduce for $\beta\in (N,N+1)$,
\begin{align*}
 2\int_0^{h/2} \int_{h/2}^{\yy}  J_*(r-\rho) \rd \rho \rd r+(N-1)h \int_{\Lambda h}^{\yy} J_*(r) \rd r
\approx   h^{N+1-\beta},
\end{align*} 
and for $\beta=N+1$,
\begin{align*}
 2\int_0^{h/2} \int_{h/2}^{\yy}  J_*(r-\rho) \rd \rho \rd r+(N-1)h \int_{\Lambda h}^{\yy} J_*(r) \rd r
\approx 
 \ln h.
 \end{align*}
 Therefore there exists $\td C_4>0$ such that for all large $h>0$,
 \[
 II\leq \begin{cases} \td C_4 h^{N+1-\beta} & \mbox{ when } \beta\in (N, N+1),\\
 \td C_4 \ln h & \mbox{ when } \beta =N+1.
 \end{cases}
 \]

{\bf Step 3}. Upper bound of $III$. 

By  similar analysis as in Step 2, we have, for $\beta\in (N,N+1]$ and $h\gg 1$,
\begin{align*}
III\leq &\ 2\int^{h}_{0} \int_{h}^{\yy}  J_*(r-\rho) \rd \rho \rd r+2(N-1)h \int_{2 \Lambda h}^{\yy} J_*(r) \rd r\\
\leq& \begin{cases} \td C_5 h^{N+1-\beta} & \mbox{ when } \beta\in (N, N+1),\\
  \td C_5 \ln h & \mbox{ when } \beta=N+1,
  \end{cases}
  \end{align*}
 for some $\td C_5>0$ independent of $h$.
 
 {\bf Step 4}. Completion of the proof of \eqref{5.3b}. 

Combining the above estimates for $I, II$ and $III$, we obtain 
\[
\int_{0}^{h}\int_{h}^{\yy}\td J(r,\rho) \rd \rho\rd r\leq \begin{cases} \hat C h^{N+1-\beta} & \mbox{ when } \beta\in (N, N+1),\\
  \hat C \ln h & \mbox{ when } \beta=N+1,
  \end{cases}
\]
 for $h\gg 1$ and some $\hat C>0$ independent of $h$.

	To complete the proof of \eqref{5.3b}, it remains to obtain a similar  lower bound for $\dd\int_{0}^{h}\int_{h}^{\yy}\td J(r,\rho) \rd \rho\rd r$. In view  of the definition of $ \td J$ and $J_*$, we have
\begin{equation}\label{5.6a}
\begin{aligned}
\int_{0}^{h}\int_{h}^{\yy}\td J(r,\rho) \rd \rho\rd r =&\int_0^h   \lf[\int_{\R^N\backslash B_h} J(|x_r^1-y|) \rd y\rr] \rd r \\
\geq &\int_0^h  \lf[\int_{\{y_1\geq h\}\cap\R^N} J(|x_r^1-y|) \rd y\rr] \rd r=\int_{0}^{h} \int_{h}^{\yy}  J_*(r-\rho) \rd \rho \rd r,
\end{aligned}
\end{equation}

and the desired lower bound follows from \eqref{5.2}. The proof is complete.
\end{proof}

\subsection{Proof of Theorem \ref{th1.8}}

\begin{lemma}\label{lemma5.2b}
	If spreading happens and \eqref{5.1} holds with $\beta\in (N,N+1]$, then there exits $C=C(\beta,N)>0$ such that for $t\gg 1$,
	\begin{equation}\label{5.3a}
	h(t)\leq \begin{cases}
	 Ct^{1/(\beta-N)}&{\rm if}\ \beta\in (N,N+1),\\
	 Ct\ln t&{\rm if}\ \beta=N+1.
	\end{cases}
	\end{equation}
\end{lemma}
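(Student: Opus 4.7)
The plan is to use the free boundary equation directly, combined with the kernel estimates of Lemma \ref{lemma5.1a}, to derive a differential inequality for $h(t)$ which we then integrate.

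\textbf{Step 1 (A priori bound on $u$).} First I would show that $u$ is bounded uniformly in $(t,r)$. Since $f$ satisfies $\mathbf{(f)}$, by comparison with the spatially homogeneous ODE $Z'=f(Z)$ with $Z(0)=\|u_0\|_\infty$, and recalling Remark \ref{rmk2.6}, we obtain $u(t,r)\le M:=\max\{u^*,\|u_0\|_\infty\}$ for all $t>0$ and $r\in[0,h(t)]$.

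\textbf{Step 2 (Differential inequality for $h$).} Using $r^{N-1}\le h^{N-1}(t)$ and $u(t,r)\le M$ in the free boundary equation of \eqref{1.4} gives
\begin{equation*}
h'(t)\le \mu M\int_0^{h(t)}\int_{h(t)}^{\infty}\tilde J(r,\rho)\,d\rho\,dr.
\end{equation*}
Applying Lemma \ref{lemma5.1a} yields constants $C_1=C_1(\beta,N,\mu,M)>0$ and $T_0>0$ such that, for $t\ge T_0$,
\begin{equation*}
h'(t)\le
\begin{cases}
C_1\,h(t)^{N+1-\beta}, & \beta\in(N,N+1),\\[1mm]
C_1\,\ln h(t), & \beta=N+1.
\end{cases}
\end{equation*}

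\textbf{Step 3 (Integrating the inequality).} For $\beta\in(N,N+1)$, set $p:=N+1-\beta\in(0,1)$, so that $(h^{1-p})'=(1-p)h^{-p}h'\le (1-p)C_1$. Integrating over $[T_0,t]$ gives $h(t)^{\beta-N}\le (\beta-N)C_1 t+h(T_0)^{\beta-N}$, hence $h(t)\le C\,t^{1/(\beta-N)}$ for some $C=C(\beta,N)>0$ and all large $t$.

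For $\beta=N+1$, I would construct the supersolution $\phi(t):=Kt\ln t$ with $K>0$ large. A direct computation gives $\phi'(t)=K\ln t+K$ and
\begin{equation*}
C_1\ln\phi(t)=C_1\bigl(\ln K+\ln t+\ln\ln t\bigr)\le 2C_1\ln t \quad\text{for }t\text{ large}.
\end{equation*}
Choosing $K\ge 2C_1$ ensures $\phi'(t)\ge C_1\ln\phi(t)$ for all large $t$, and further enlarging $K$ guarantees $\phi(T_0)\ge h(T_0)$. Since $F(h):=C_1\ln h$ is increasing, a standard ODE comparison then yields $h(t)\le\phi(t)=Kt\ln t$ for all large $t$.

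\textbf{Main obstacle.} The substantive input is really Lemma \ref{lemma5.1a}, which is already established; once that is in hand the rest is a routine separation-of-variables argument. The only mildly delicate point is the case $\beta=N+1$, where one must verify that $Kt\ln t$ genuinely dominates $C_1\ln\phi$ for large $t$ and not merely asymptotically; this is handled by the crude bound $\ln\ln t\le \ln t$.
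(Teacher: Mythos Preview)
Your proposal is correct and follows essentially the same approach as the paper: bound $u$ uniformly, use $r^{N-1}\le h^{N-1}(t)$ to reduce to the double integral of $\tilde J$, invoke Lemma \ref{lemma5.1a}, and integrate the resulting differential inequality. The paper's proof is in fact terser than yours---it simply states the differential inequality and then says ``\eqref{5.3a} follows easily from Lemma \ref{lemma5.1a}'' without writing out the integration in Step~3---so your version just makes explicit what the paper leaves to the reader.
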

\begin{proof}
Clearly, $\td u(t,r)\leq 2u^*$ for large $t>0$. By  \eqref{1.4},
\begin{align*}
h'(t)=\frac{\mu}{h^{N-1}(t)} \dd\int_{0}^{h(t)} r^{N-1}\td u(t,r)\int_{h(t)}^{+\yy} \td J(r,\rho)\rd \rho\rd r
\leq 2u^* \mu\dd\int_{0}^{h(t)}\int_{h(t)}^{+\yy} \td J(r,\rho)\rd \rho\rd r,
\end{align*}
and so \eqref{5.3a} follows easily from Lemma \ref{lemma5.1a}. 
\end{proof}

\begin{lemma}{\rm \cite[Lemma 5.3]{dn-speed}}\label{lemma5.3a}
	Let $L_1$ and $L_2$ with $0<L_1<L_2$ be two constants, and define
	\begin{align*}
	\psi(x)=\psi(x;L_1,L_2):=\min\lf\{1,\frac{L_2-|x|}{L_1}\rr\},\ \ \ \ x\in \R.
	\end{align*}
	If $J_1\in C(\R)\cap L^\infty(\R)$  satisfies
	\begin{align}\label{5.4a}
	J_1(x)=J_1(-x)\geq 0 ,\ J_1(0)>0,\  \int_{\R}J_1(x)\rd x=1,
	\end{align}
then for any given small $\epsilon>0$,  there exists $L_\epsilon\gg 1$  depending on $J_1$ and $\epsilon$ such that 
	\begin{align*}
	\int_{0}^{L_2} J_1(x-y)\psi(y)\rd y\geq (1-\epsilon) \psi(x) \mbox{ for } x\in  [L_\epsilon,L_2],
	\end{align*}
	provided that  $L_1\geq L_\epsilon$ and $L_2-L_1\geq L_\epsilon$.

\end{lemma}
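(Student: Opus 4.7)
\textbf{Proof proposal for Lemma \ref{lemma5.3a}.}
The plan is to fix an auxiliary small $\eta=\eta(\epsilon)>0$, select $M>0$ so that $\int_{|z|>M}J_1(z)\,dz<\eta$ (possible since $J_1\in L^1(\R)$), and then choose $L_\epsilon$ so large that $L_\epsilon\geq M/\eta$ and $\int_{L_\epsilon-M}^\infty J_1\,dz<\eta$. Under the hypotheses $L_1\geq L_\epsilon$ and $L_2-L_1\geq L_\epsilon$, $\psi$ is Lipschitz on $\R$ with constant $1/L_1\leq \eta/M$, and $L_1-M$ is large. I then split $[L_\epsilon,L_2]$ into four subintervals dictated by the piecewise-linear structure of $\psi$:
\begin{itemize}
\item[(I)] $x\in[L_\epsilon,L_2-L_1-M]$, where $\psi\equiv 1$ on $[x-M,x+M]$;
\item[(II)] $x\in[L_2-L_1-M,L_2-L_1+M]$ (knee region), where $\psi(x)\geq 1-\eta$ and $\psi(y)\geq 1-2\eta$ throughout $[x-M,x+M]$;
\item[(III)] $x\in[L_2-L_1+M,L_2-M]$ (affine region), where $\psi$ is linear on $[x-M,x+M]$;
\item[(IV)] $x\in[L_2-M,L_2]$ (right-boundary region), where $\psi(x)=(L_2-x)/L_1$ is of order $1/L_1$.
\end{itemize}

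In cases (I) and (II), after the change of variable $z=x-y$, I would estimate
\[
\int_0^{L_2} J_1(x-y)\psi(y)\,dy\geq \int_{-M}^M J_1(z)\psi(x-z)\,dz\geq (1-2\eta)(1-\eta)\geq(1-\epsilon)\psi(x),
\]
using $\psi(x-z)\geq \psi(x)(1-O(\eta))$ and $\psi(x)\leq 1$. In case (III), since $\psi(x-z)=\psi(x)+z/L_1$ for $|z|\leq M$, the evenness of $J_1$ annihilates the linear perturbation:
\[
\int_{-M}^M J_1(z)\psi(x-z)\,dz=\psi(x)\int_{-M}^MJ_1(z)\,dz+\frac{1}{L_1}\int_{-M}^M zJ_1(z)\,dz\geq(1-\eta)\psi(x),
\]
which yields the claim.

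The main obstacle is case (IV), where the Lipschitz bound $\psi(x-z)\geq \psi(x)-|z|/L_1$ is useless: the error $M/L_1$ has the same order as $\psi(x)$ itself. To circumvent this, I restrict the integration to $[L_2-L_1,L_2]$, where $\psi(y)=(L_2-y)/L_1$ is affine. Setting $a:=L_2-x\in[0,M)$ and substituting $v=y-(L_2-L_1)+\cdots$ (more precisely $y=L_2-u$, $v=u-a$), I reduce to
\[
\int_0^{L_2}J_1(x-y)\psi(y)\,dy\geq \frac{1}{L_1}\int_{-a}^{L_1-a}(v+a)J_1(v)\,dv
=\frac{a}{L_1}\!\!\int_{-a}^{L_1-a}\!\!J_1(v)\,dv+\frac{1}{L_1}\!\!\int_{-a}^{L_1-a}\!\!vJ_1(v)\,dv.
\]
Splitting the symmetric part $\int_{-a}^a vJ_1=0$ and using $\int_a^{L_1-a}vJ_1(v)\,dv\geq a\int_a^{L_1-a}J_1(v)\,dv$ (because $v\geq a$ in the integrand), a direct combination of the two terms collapses to
\[
\frac{a}{L_1}\Bigl(1-2\!\int_{L_1-a}^\infty\!\!J_1(v)\,dv\Bigr)\geq \frac{a}{L_1}(1-2\eta)\geq(1-\epsilon)\psi(x),
\]
upon choosing $\eta\leq\epsilon/2$ and using $a<M\leq L_\epsilon\leq L_1$ together with the tail estimate $\int_{L_1-M}^\infty J_1<\eta$. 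The crucial point is that the positivity of the first moment on $[a,L_1-a]$ compensates exactly the portion of mass lost outside $[-a,L_1-a]$, giving a factor linear in $a$ rather than a constant additive error. Choosing $\eta=\epsilon/10$ at the outset makes all four cases simultaneously yield the desired inequality.
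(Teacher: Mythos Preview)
The paper does not actually prove Lemma~\ref{lemma5.3a}; it merely quotes it verbatim from \cite[Lemma 5.3]{dn-speed} and uses it as a black box. So there is no in-paper proof to compare against.

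Your argument is correct. The four-case split according to the piecewise-linear structure of $\psi$ is the natural approach, and each case is handled cleanly. The only place where something nontrivial happens is case~(IV), and your treatment there is exactly the right idea: after restricting to the interval where $\psi$ is affine and shifting variables, the identity
\[
\int_{-a}^{L_1-a}(v+a)J_1(v)\,dv = a\int_{-a}^{a}J_1 + \int_a^{L_1-a}(v+a)J_1(v)\,dv \geq a\int_{-a}^{a}J_1 + 2a\int_a^{L_1-a}J_1 = a\int_{-(L_1-a)}^{L_1-a}J_1
\]
(using evenness of $J_1$ and $v\geq a$ on $[a,L_1-a]$) produces the factor $a/L_1=\psi(x)$ with only a tail loss $2\int_{L_1-a}^\infty J_1<2\eta$. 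The choice $\eta=\epsilon/10$ comfortably covers the constants in all four cases. A couple of housekeeping checks you implicitly rely on---$L_\epsilon\geq 2M$ so that $[x-M,x+M]\subset[0,L_2]$ in cases~(I)--(III), and $L_1-a>a$ in case~(IV)---all follow from $L_\epsilon\geq M/\eta$ with $\eta<1/2$.
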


\begin{lemma}\label{lemma5.4}
	Let $L_1$, $L_2$ and $\psi$ be defined as in Lemma \ref{lemma5.3a}.
	If\, {\bf (J1)} holds, 	then for any small $\epsilon>0$, there are $L_\epsilon>0$, $D_1=D_1(\epsilon)>0$ and $D_2=D_2(\epsilon)>0$ such that for  $L_1>L_\epsilon$ and $L_2-L_1>2L_\epsilon$, we have
	\begin{equation}\label{5.8b}
\int_{0}^{L_2} \td J(r,\rho)\psi(\rho)\rd \rho\geq
\begin{cases}
	 (1-\epsilon) \psi(r),& r\in [0,(L_2-L_1)/2],\\[3mm]
 (1-\epsilon) \psi(r)-\frac{D_1}{L_1r}-\frac{D_2}{r^2},& r\in [(L_2-L_1)/2,L_2].
\end{cases}
	\end{equation}
\end{lemma}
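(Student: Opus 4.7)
The plan is to split the argument according to the two ranges of $r$ and use Lemma \ref{lemma5.3a} as the main workhorse, after first reducing the $\td J$-integral to a $J_*$-integral modulo lower-order corrections.

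First I would dispose of the easy range $r\in[0,(L_2-L_1)/2]$. Here $\psi(r)=1$ and $\psi\equiv 1$ on $[0,L_2-L_1]$, so using $\psi\geq 0$,
$$\int_0^{L_2}\td J(r,\rho)\psi(\rho)\rd\rho\geq\int_0^{L_2-L_1}\td J(r,\rho)\rd\rho=1-\int_{L_2-L_1}^\infty\td J(r,\rho)\rd\rho,$$
where the normalisation $\int_0^\infty\td J(r,\rho)\rd\rho=\int_{\R^N}J(|x-y|)\rd y=1$ (with $|x|=r$) has been used. Since $|y|\geq L_2-L_1$ and $|x|\leq(L_2-L_1)/2$ force $|x-y|\geq(L_2-L_1)/2$, the remainder is bounded by $\int_{|z|\geq(L_2-L_1)/2}J(|z|)\rd z$, which can be made smaller than $\epsilon$ by choosing $L_\epsilon$ large enough (since $J\in L^1(\R^N)$) and requiring $L_2-L_1>2L_\epsilon$.

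For $r\in[(L_2-L_1)/2,L_2]$, the requirement $L_2-L_1>2L_\epsilon$ guarantees $r\geq L_\epsilon$ is as large as we wish. I would then execute two steps. Step 1 establishes a non-compactly-supported analog of Lemma \ref{lemma3.11}: there exist constants $D_1',D_2'>0$ such that for $r\geq L_\epsilon$,
$$\int_0^{L_2}\td J(r,\rho)\psi(\rho)\rd\rho\geq\int_0^{L_2}J_*(r-\rho)\psi(\rho)\rd\rho-\frac{D_1'}{L_1r}-\frac{D_2'}{r^2}.$$
The derivation uses the explicit representation in Lemma \ref{lemma2.2}, truncating the $\eta$-integration at a threshold $K_*=K_*(\epsilon)$ chosen so that $\int_{|s|>K_*}|s|J_*(s)\rd s\leq\epsilon$ and $\int_{K_*}^\infty\eta^N J(\eta)\rd\eta\leq\epsilon$ (both finite by (J1)). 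The missing $\eta$-tail $\int_{\rho+r}^\infty[\eta^2-(\rho-r)^2]^{(N-3)/2}\eta J(\eta)\rd\eta$ is bounded by $\int_r^\infty\eta^{N-2}J(\eta)\rd\eta\leq r^{-2}\int_0^\infty\eta^N J(\eta)\rd\eta$, producing the $1/r^2$ term. The factor $2^{3-N}\rho r^{2-N}[(\rho+r)^2-\eta^2]^{(N-3)/2}$ is expanded to first order in $(\rho-r)/r$ and $\eta/r$, contributing a correction of the form $\frac{N-1}{2r}\int sJ_*(s)\psi(r+s)\rd s$; the evenness of $J_*$ gives $\int sJ_*(s)\rd s=0$, so combined with the Lipschitz bound $|\psi(r+s)-\psi(r)|\leq|s|/L_1$ and the above truncation, this yields the $1/(L_1r)$ term. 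Step 2 applies Lemma \ref{lemma5.3a} to the one-dimensional kernel $J_1=J_*$ (which satisfies \eqref{5.4a}) to conclude that, for $r\in[L_\epsilon,L_2]$ and suitably large $L_1$ and $L_2-L_1$,
$$\int_0^{L_2}J_*(r-\rho)\psi(\rho)\rd\rho\geq(1-\epsilon)\psi(r).$$
Combining Steps 1 and 2, and enlarging $L_\epsilon$ if necessary to absorb the threshold from Lemma \ref{lemma5.3a}, yields the second part of the claim with $D_1=D_1'$ and $D_2=D_2'$.

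\textbf{Main obstacle.} The principal difficulty is Step 1 of the second case: obtaining the sharp comparison between the $\td J$- and $J_*$-integrals with correction terms of order $1/(L_1r)+1/r^2$ without the compact-support assumption enjoyed in Lemma \ref{lemma3.11}. Controlling the tail contributions (from large $\eta$ via (J1) and from $\rho$ far from $r$ via the first moment of $J_*$) and carrying out the Taylor expansion of the explicit formula of Lemma \ref{lemma2.2} requires separate bookkeeping for $N=2$ and $N\geq 3$, because the exponent $(N-3)/2$ changes sign between the two cases, and both the evenness of $J_*$ and the Lipschitz estimate on $\psi$ must be used in a coordinated way to extract the announced rate.
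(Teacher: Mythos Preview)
Your treatment of the range $r\in[0,(L_2-L_1)/2]$ is correct and matches the paper's argument.

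For $r\in[(L_2-L_1)/2,L_2]$, however, your Step~1 as stated has a gap. You claim the ``missing $\eta$-tail'' $\int_{\rho+r}^\infty[\eta^2-(\rho-r)^2]^{(N-3)/2}\eta J(\eta)\,\rd\eta$ is bounded by $r^{-2}\int_0^\infty\eta^N J(\eta)\,\rd\eta$, and that this produces the $1/r^2$ correction. The pointwise bound is fine, but you still must integrate it in $\rho$ against $\psi$ over $[0,L_2]$. By Fubini (for $N\ge 3$, say),
\[
\int_0^{L_2}\!\!\int_{\rho+r}^\infty \eta^{N-2}J(\eta)\,\rd\eta\,\rd\rho
=\int_r^\infty \min\{L_2,\eta-r\}\,\eta^{N-2}J(\eta)\,\rd\eta
\le r^{-1}\!\int_r^\infty\eta^N J(\eta)\,\rd\eta,
\]
which is only $o(1/r)$ under \textbf{(J1)}, not $O(1/r^2)$; and it cannot be absorbed into either $D_1/(L_1 r)$ or $D_2/r^2$ with constants depending on $\epsilon$ alone (your pointwise bound multiplied by the $\rho$-range $L_2$ is even worse). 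The same issue arises for $N=2$. So the inequality you propose in Step~1, with the \emph{full} $J_*$ on the right, does not follow from your sketch.

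The paper avoids this obstacle by a truncation trick that you essentially allude to but do not carry through: one fixes $K_*=K_*(\epsilon)$, defines the compactly supported kernel $P(x)=\xi_{K_*}(|x|)J(|x|)$ with $\|P\|_{L^1}\ge(1-\epsilon)/(1-\epsilon/2)$, and lets $\tilde P$, $P_*$ be the associated objects. Then Lemma~\ref{lemma3.11} applies directly to $P$ (compact support), giving $\int_0^{L_2}\tilde P(r,\rho)\psi(\rho)\,\rd\rho\ge\int_0^{L_2}P_*(r-\rho)\psi(\rho)\,\rd\rho+O(1/(L_1 r))+O(1/r^2)$, and Lemma~\ref{lemma5.3a} is applied to $P_*$ (\emph{not} to $J_*$) to bound the latter integral below by $(1-\epsilon/2)\psi(r)$. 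The passage back to $\tilde J$ is the one-line inequality $\tilde J(r,\rho)\ge\|P\|_{L^1}\,\tilde P(r,\rho)$, which costs only the harmless factor $\|P\|_{L^1}$. In short: truncate first, then both your Step~1 and Step~2 should be run on the truncated kernel; your attempt to compare $\tilde J$ directly with the full $J_*$ is where the argument breaks.
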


\begin{proof}
For fixed  $K_*>1$, 	define 
\begin{equation*}
	\xi_{K_*}(x):=\begin{cases}
	1,& |x|\leq K_*-1,\\
	K_*-|x|, & K_*-1\leq |x|\leq  K_*,\\
	0,& |x|\geq   K_*,
	\end{cases}
	\end{equation*}
	and
	\begin{align*}
	P(x):=\xi_{K_*}(|x|)J(|x|),\ \  P_1(x):=\frac{P(x)}{\|P\|_{L^1}}\  \mbox{ for } \ \ x\in \R^N.
	\end{align*}
Since $\|J\|_{L^1}=1$, for given small $\epsilon>0$, we can fix $K_*$ large so that
	\[
	\|P\|_{L^1}=\int_{\R^N}P(|x|) \rd x\geq \frac{1-\epsilon}{1-\epsilon/2}.
	\]
		Clearly $P$ and $P_1$ are compactly supported. 	Define
	\begin{align*}
	P_*(\rho):=\omega_{N-1}\int_{|\rho|}^{\yy}   (\eta^2-\rho^2)^{(N-3)/2} \eta P_1(\eta)\rd \eta, \ \  \rho\in \R.
	\end{align*}
	Then by  \eqref{1.10}, we know that \eqref{5.4a} holds with $J_1=P_*$. It then follows from  Lemma \ref{lemma5.3a}  that there exists  $L_\epsilon\gg 1$ such that for all $L_1\geq L_\epsilon$ and $L_2-L_1\geq   2 L_\epsilon$,
\begin{align}\label{5.9}
\int_{0}^{L_2} P_*(r-\rho)\psi(\rho)\rd \rho\geq (1-\epsilon/2) \psi(r),\ \ \ r\in [L_\epsilon,L_2].
\end{align}
 
Denote $A:=(L_2-L_1)/2$.   We now prove \eqref{5.8b} for $r\in [0,A]$. Using the definition of $\td J$, we see that for $r\in [0,A]$,
\begin{align*}
&\int_{0}^{L_2} \td J(r,\rho)\psi(\rho)\rd \rho\geq \int_{0}^{L_2-L_1} \td J(r,\rho)\psi(\rho)\rd \rho=\int_{0}^{L_2-L_1} \td J(r,\rho)\rd \rho\\
=&\int_{B_{L_2-L_1}(x_r^1)}  J(|x_r^1-y|)\rd y\geq  \int_{B_{L_2-L_1-A}(0)}  J(|y|)\rd y\geq 1-\epsilon=(1-\epsilon)\psi(r),
\end{align*}
where $x_r^1:=(r,0\cdots,0)$, $B_{L_2-L_1}(x_r^1)=\{y\in \R^N: |x_r^1-y|\leq L_2-L_1\}$, and we have used $L_2-L_1-A=( L_2-L_1)/2\gg 1$.

We next show \eqref{5.8b} for $r\in [A,L_2]$. Define
\begin{align*}
\wtd P(r,\rho):=\omega_{N-1}2^{3-N}\frac\rho{r^{N-2}}\int_{|\rho-r|}^{\rho+r}   \lf(\big[(\rho+r)^2-\eta^2\big]\big[\eta^2-(\rho-r)^2\big]\right)^{(N-3)/2}\eta P_1(\eta) \rd \eta.
\end{align*}
 For $r\in [A,L_2]$, since $ A\gg 1$, by Lemma \ref{lemma3.11},   there exist $B_1>0$ and $B_2>0$ such that
\begin{align*}
\int_{0}^{L_2}  \wtd P(r,\rho) \psi(\rho)\rd \rho\geq& \int_{0}^{ L_2}  P_*(r-\rho) \psi(\rho)\rd \rho+\frac{B_1} r\int_{-K_*}^{\min\{L_2-r, K_*\}} 
\rho P_*(\rho) \psi(\rho+l)\rd \rho\nonumber -\frac{B_2}{r^2}\\
\geq& \int_{0}^{ L_2}  P_*(r-\rho) \psi(\rho)\rd \rho+\frac{B_1} r \int_{-K_*}^{ K_*} 
\rho P_*(\rho) \psi(\rho+r)\rd \rho\nonumber -\frac{B_2}{r^2},
\end{align*}
because $\psi(\rho+r)\leq 0$ for $\rho+r\geq L_2$. This combined with \eqref{5.9} yields, for $r\in [A,L_2]$,
\begin{align*}
\int_{0}^{L_2}  \wtd P(r,\rho) \psi(\rho)\rd \rho\geq (1-\epsilon/2) \psi(r)+\frac{B_1}r \int_{-K_*}^{ K_*} 
\rho P_*(\rho) \psi(\rho+r)\rd \rho -\frac{B_2}{r^2}.
\end{align*}
Recalling that $\wtd P(r,\rho)\leq \frac{1}{\|P\|_{L^1}}\td J(r,\rho)$ and $\|P\|_{L^1}\geq (1-\epsilon)/(1-\epsilon/2)$, we deduce,  for $ r\in [A, L_2]$,
\begin{align*}
\int_{0}^{L_2}  \wtd J(r,\rho) \psi(\rho)\rd \rho\geq (1-\epsilon) \psi(r)+\frac{\|P\|_{L^1}B_1} r \int_{-K_*}^{ K_*} 
\rho P_*(\rho) \psi(\rho+r)\rd \rho -\frac{\|P\|_{L^1}B_2}{r^2}.
\end{align*}

To completes the proof of \eqref{5.8b}, it remains to estimate $\int_{-K_*}^{ K_*} 
\rho P_*(\rho) \psi(\rho+r)\rd \rho$.   Obviously, for $r\in [0,(L_2-L_1)-K_*]$, 
\begin{align*}
\int_{-K_*}^{ K_*} 
\rho
P_*(\rho) \psi(\rho+r)\rd \rho=\int_{-K_*}^{ K_*} 
\rho
P_*(\rho) \rd \rho=0,
\end{align*}
and  for $r\in [(L_2-L_1)-K_*,(L_2-L_1)+K_*]$,
\begin{align*}
\int_{-K_*}^{ K_*} 
\rho
P_*(\rho) \psi(\rho+r)\rd \rho=& \int_{-K_*}^{ (L_2-L_1)-r} 
\rho
P_*(\rho) \psi(\rho+r)\rd \rho+\int_{ (L_2-L_1)-r}^{K_*}
\rho
P_*(\rho) \psi(\rho+r)\rd \rho\\
=&\int_{-K_*}^{ (L_2-L_1)-r} 
\rho
P_*(\rho)\rd \rho+\int_{ (L_2-L_1)-r}^{K_*}
\rho
P_*(\rho) \frac{L_2-(\rho+r)}{L_1}\rd \rho\\
=&\int_{ (L_2-L_1)-r}^{K_*}
\rho
P_*(\rho) \lf[\frac{L_2-(\rho+r)}{L_1}-1\rr]\rd \rho\\
=&\int_{ (L_2-L_1)-r}^{K_*}
\rho
P_*(\rho) \frac{L_2-L_1-r}{L_1}\rd \rho+\int_{ (L_2-L_1)-r}^{K_*}
\rho
P_*(\rho) \frac{-\rho}{L_1}\rd \rho\\
\geq&\frac{-K_*|L_2-L_1-r|}{L_1}\int_{ (L_2-L_1)-r}^{K_*}  P_*(\rho)
 \rd \rho-\frac{K_*^2}{L_1}\int_{ (L_2-L_1)-r}^{K_*}
 P_*(\rho) \rd \rho\\
 \geq&\frac{-2K_*^2}{L_1},
\end{align*}
and for $r\in [(L_2-L_1)+K_*, L_2]$
\begin{align*}
\int_{-K_*}^{ K_*} 
\rho
P_*(\rho) \psi(\rho+r)\rd \rho=&\int_{-K_*}^{ K_*} 
\rho
P_*(\rho) \frac{L_2-(\rho+r)}{L_1}\rd \rho\\
=&\int_{-K_*}^{ K_*} 
\rho
P_*(\rho) \frac{-\rho}{L_1}\rd \rho\geq \frac{-K_*^2}{L_1}.
\end{align*}
Thus \eqref{5.8b} holds with $D_1=2K_*^2\|P_1\|_{L^1}B_1$ and $D_2=\|P_1\|_{L^1}B_2$. 
\end{proof}

\begin{lemma}\label{lemma5.5}
	If spreading happens and \eqref{5.1} holds with $\beta\in (N,N+1)$, then there exists $C>0$ such that 
	\begin{align}\label{5.12a}
	h(t)\geq Ct^{1/(\beta-N)}\ \mbox{ for } t\gg 1.
	\end{align}
\end{lemma}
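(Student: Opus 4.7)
The plan is to construct a lower solution $(\underline u,\underline h)$ with $\underline h(t) := C_0(t+\theta)^{1/(\beta-N)}$ for small positive constants $C_0,\delta$ and a large constant $\theta$ to be determined, then invoke Lemma \ref{lemma3.4a}(ii) together with Remark \ref{rmk3.6} (for the jump in $\underline u_t$ at $r=\underline h(t)/2$) to conclude $h(t+t_0)\geq \underline h(t)$ for an appropriate shift $t_0>0$. Following the template of Lemma \ref{lemma3.12}, take
\[
\underline u(t,r):=\begin{cases}\delta, & 0\leq r\leq \underline h(t)/2,\\ 2\delta\bigl(1-r/\underline h(t)\bigr), & \underline h(t)/2\leq r\leq \underline h(t),\end{cases}
\]
with $\delta\in(0,u^*)$. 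The exponent $1/(\beta-N)$ is dictated by the ODE $\underline h'\sim \underline h^{N-\beta+1}$ that emerges from the free-boundary inequality.

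The core inputs are two pointwise lower bounds on $\td J$, derived directly from the hypothesis $J(\eta)\geq C_1\eta^{-\beta}$ for $\eta\gg 1$. Writing $x_r^1=(r,0,\ldots,0)$ and using $\int_a^b\td J(r,\rho)\,d\rho=\int_{a\leq|y|\leq b}J(|x_r^1-y|)\,dy$, elementary geometric estimates on $|z|=|x_r^1-y|$ yield, for $\underline h(t)$ sufficiently large,
\begin{align*}
\int_{\underline h(t)}^\infty \td J(r,\rho)\,d\rho &\geq C_2\,\underline h(t)^{N-\beta}\qquad\text{for }r\in[0,\underline h(t)/2],\\
\int_0^{\underline h(t)/2} \td J(r,\rho)\,d\rho &\geq C_2\,\underline h(t)^{N-\beta}\qquad\text{for }r\in[\underline h(t)/2,\underline h(t)].
\end{align*}
The first estimate, multiplied by $r^{N-1}$ and integrated over $[0,\underline h(t)/2]$, gives
$\frac{\mu}{\underline h^{N-1}(t)}\int_0^{\underline h(t)}r^{N-1}\underline u\int_{\underline h(t)}^\infty\td J\,d\rho\,dr\geq C_3\mu\delta\,\underline h(t)^{N-\beta+1}$, while a direct differentiation shows $\underline h'(t)=\tfrac{C_0^{\beta-N}}{\beta-N}\underline h(t)^{N-\beta+1}$; hence the free-boundary inequality holds as soon as $C_0^{\beta-N}\leq C_3\mu\delta(\beta-N)$.

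The main obstacle is the PDE inequality on the decay layer $r\in(\underline h(t)/2,\underline h(t))$, where both $\underline u$ and $f(\underline u)$ vanish linearly at $r=\underline h(t)$ while $\underline u_t=2\delta r\underline h'/\underline h^2$ remains of uniform order $\delta/(t+\theta)$ throughout. The second pointwise estimate is precisely what provides the needed compensation: retaining only the contribution $\underline u\equiv\delta$ on $[0,\underline h(t)/2]$, bounding the $-d\underline u$ term by combining Lemma \ref{lemma5.4} (whose proof relies on a compactly supported truncation of $J$ and therefore does not actually use \textbf{(J1)}) with the KPP bound $f(\underline u)\geq C_f\underline u$ valid for $\underline u$ small, one obtains a lower bound on $d\int_0^{\underline h}\td J\,\underline u\,d\rho-d\underline u+f(\underline u)$ of order $d\delta C_2\,\underline h(t)^{N-\beta}=dC_2C_0^{N-\beta}\delta\,(t+\theta)^{-1}$, which dominates $\underline u_t\leq 2\delta[(\beta-N)(t+\theta)]^{-1}$ provided $C_0^{\beta-N}\leq\tfrac12 dC_2(\beta-N)$. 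Both constraints on $C_0$ are satisfied simultaneously by taking $C_0$ small. On the flat part $[0,\underline h(t)/2]$ the inequality is straightforward since $\underline u_t=0$ and $\int_0^{\underline h(t)}\td J(r,\rho)\,d\rho\geq 1-\epsilon$ uniformly there, reducing to $f(\delta)\geq d\epsilon\delta$ which is automatic by KPP for $\delta$ small. Finally, Lemma \ref{lemma3.12} furnishes $u(t_0+s,r)\geq u^*-E_2/(s+\theta_1)$ on $[0,E_1(s+\theta_1)]$, so taking $t_0$ large enforces $u(t_0,\cdot)\geq\delta$ on $[0,\underline h(0)]$ and $h(t_0)\geq\underline h(0)$, so Lemma \ref{lemma3.4a}(ii) yields $h(t+t_0)\geq\underline h(t)=C_0(t+\theta)^{1/(\beta-N)}$, which is \eqref{5.12a}.
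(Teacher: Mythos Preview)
Your proposal follows essentially the same architecture as the paper's proof: the same piecewise-linear lower solution $\underline u$ over $[0,\underline h(t)]$ with $\underline h(t)\sim t^{1/(\beta-N)}$, the same use of Lemma~\ref{lemma5.4} (you correctly note that its proof relies only on a compactly supported truncation of $J$, not on {\bf (J1)}), and the same comparison via Lemma~\ref{lemma3.4a}(ii) with Remark~\ref{rmk3.6}. Your geometric estimates $\int_{\underline h}^\infty\td J(r,\rho)\,d\rho\gtrsim\underline h^{N-\beta}$ for $r\le\underline h/2$ and $\int_0^{\underline h/2}\td J(r,\rho)\,d\rho\gtrsim\underline h^{N-\beta}$ for $r\ge\underline h/2$ are a clean alternative to the paper's pointwise bound $\td J(r,\rho)\geq\xi_4|\rho-r|^{N-1-\beta}$ (obtained from the explicit formula in Lemma~\ref{lemma2.2}); both routes yield the same key claim $\int_0^{\underline h}\td J\,\underline u\,d\rho\gtrsim\delta\,\underline h^{N-\beta}$ on the decay layer.

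There is one step that must be made precise. On $(\underline h/2,\underline h)$ you have two lower bounds on the same integral $\int_0^{\underline h}\td J\,\underline u\,d\rho$: the bound $(1-\epsilon)\underline u-O(\underline h^{-2})$ from Lemma~\ref{lemma5.4} and the geometric bound $\delta C_2\underline h^{N-\beta}$. Neither alone suffices near $r=\underline h$: Lemma~\ref{lemma5.4} gives $d\int-d\underline u+f(\underline u)\geq(C_f-d\epsilon)\underline u-O(\underline h^{-2})$, which vanishes at the boundary and cannot dominate $\underline u_t\sim\underline h^{N-\beta}$; using only the geometric bound leaves a deficit $-d\underline u$ that $f(\underline u)\geq C_f\underline u$ absorbs only if $C_f\geq d$, which is not assumed. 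The paper resolves this by the convex split
\[
d\!\int=\min\{f'(0)/2,d\}\!\int+\max\{d-f'(0)/2,0\}\!\int,
\]
applying the geometric bound to the first summand and Lemma~\ref{lemma5.4} to the second; with $\epsilon$ small the resulting $\underline u$-coefficient is nonnegative and the residual is $\gtrsim\underline h^{N-\beta}$ uniformly. Your phrase ``combining Lemma~\ref{lemma5.4} with the KPP bound'' should be expanded to exactly this splitting.

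A minor point: Lemma~\ref{lemma3.12} is proved in Section~6 under the standing hypothesis that $J$ has compact support and is not available here. But you only use it for the initial comparison $u(t_0,\cdot)\geq\delta$ on $[0,\underline h(0)]$ and $h(t_0)\geq\underline h(0)$, which follow directly from the spreading hypothesis via Theorem~\ref{th1.2}.
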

\begin{proof}
	Define 
	\begin{align*}
	&\underline h(t):=(K_1t+\theta)^{1/(\beta-N)},\ \ t\geq 0,\\
	&	\underline u(t,r):=K_2\min\lf\{1,2\frac{\underline h(t)-r}{\underline h(t)}\rr\}, \ \ \ t\geq 0,\ r\in [0, \underline h(t)]
	\end{align*}
	with  $\theta\gg 1$ and $K_1,\ K_2>0$ constants to be determined.

	{\bf Step 1.}
We  prove that by choosing $\theta\gg1$ and $K_1>0$ suitably small,
	\begin{align}\label{5.8a}
	&\underline h'(t)\leq  \frac{\mu}{\underline h^{N-1}(t)}\dd\int_{0}^{\underline h(t)} r^{N-1}\underline u(t,r)\int_{\underline h(t)}^{+\yy} \td J(r,\rho)\rd \rho\rd r \ \mbox{ for }\  t> 0.
	\end{align}
	
By similar argument leading to \eqref{5.6a}, and by \eqref{7.3}, we obtain
	\begin{align*}
	 &\frac{\mu}{\underline h^{N-1}(t)}\dd\int_{0}^{\underline h(t)} r^{N-1}\underline u(t,r)\int_{\underline h(t)}^{+\yy} \td J(r,\rho)\rd \rho\rd r
	 \geq \frac{\mu}{ \underline h^{N-1}(t)}\dd\int_{\underline h(t)/2}^{\underline h(t)} r^{N-1}\underline u(t,r)\int_{\underline h(t)}^{+\yy} J_*(r-\rho)\rd \rho\rd r\\
	 \geq&\ 2^{-(N-1)}\mu\dd\int_{\underline h(t)/2}^{\underline h(t)} \frac{\underline h(t)-r}{\underline h(t)}\int_{\underline h(t)}^{+\yy} J_*(r-\rho)\rd \rho\rd r
	 =\frac{2^{-(N-1)}}{\underline h(t)}\mu\dd\int_{-\underline h(t)/2}^{0} (-r)\int_{0}^{+\yy} J_*(r-\rho)\rd \rho\rd r\\
	 =&\ \frac{2^{-(N-1)}}{\underline h(t)}\mu\dd\int_{-\underline h(t)/2}^{0} (-r)\int_{-r}^{+\yy} J_*(\rho)\rd \rho\rd r
	 =\frac{2^{-(N-1)}}{\underline h(t)}\mu\dd\lf(\int_0^{\underline h(t)/2} \int_{0}^{\rho}+\int_{\underline h(t)/2}^{\yy} \int_{0}^{\underline h(t)/2}\rr) J_*(\rho)r \rd r\rd \rho\\
	 \geq&\ \frac{2^{-(N-1)}}{\underline h(t)}\mu\dd\int_0^{\underline h(t)/2} \int_{0}^{\rho} J_*(\rho)r \rd r\rd \rho\geq  \frac{2^{-(N-1)}}{\underline h(t)}\mu\dd\int_1^{\underline h(t)/2} J_*(\rho)\rho^2\rd \rho\geq \mu A_1 \underline h^{N+1-\beta}(t)
	\end{align*}
	for some $A_1=A_1(N,\beta)>0$ since $\underline h(t)\geq \theta^{1/(\beta-N)}\gg 1$. Hence
	\begin{align*}
	\underline h'(t)&= \frac{K_1}{\beta-N}(K_1t+\theta)^{1/(\beta-N)-1}\leq \mu A_1  (K_1t+\theta)^{(1/(\beta-N)-1}= \mu A_1 \underline h^{N+1-\beta}(t)\\
	&\leq \frac{\mu}{\underline h^{N-1}(t)}\dd\int_{0}^{\underline h(t)} r^{N-1}\underline u(t,r)\int_{\underline h(t)}^{+\yy} \td J(r,\rho)\rd \rho\rd r\ \ \mbox{ provided }
	\frac{K_1}{\beta-N}\leq \mu A_1.
	\end{align*}

	{\bf Step 2.} 
We show  that	by choosing $\theta\gg1$ and $K_1, K_2>0$ suitably small,  for $t>0$, $r\in [0,\underline h(t))\backslash \{\underline h(t)/2\}$, 
	\begin{align}\label{5.14a}
	\underline u_t(t,r)\leq d \int_{0}^{\underline h(t)}  \td{J}(r, \rho) \underline u(t,\rho)\rd \rho -d\,\underline u(t,r)+f(\underline u(t,r)).
	\end{align}

	{\bf Claim}. For $r\in [\underline h(t)/4,\underline h(t)]$, there is $\xi=\xi(N,\beta)>0$ such that
	\begin{align}\label{5.15a}
	\int_{0}^{\underline h(t)}  \td J(r,\rho) \underline u(t,\rho)\rd \rho\geq  K_2\xi
	\underline h^{N-\beta}(t).
	\end{align}
	
	We first estimate $\td J(r,\rho)$. For $r\in [\underline h(t)/4,\underline h(t)]$ and $\rho\in [\underline h(t)/8,\underline h(t)]$, a simple calculation gives
	\begin{align*}
	|\rho-r|\leq \frac{7}{9}(\rho +r).
	\end{align*}
Fix $1<\xi_1<\xi_2< 9/7$. Then from Lemma \ref{lemma2.2} and \eqref{5.1}, we deduce for $r\in [\underline h(t)/4,\underline h(t)]$, $\rho\in [\underline h(t)/8,\underline h(t)]$ and $|\rho-r|\geq 1$,
		\begin{align*}
	\td J(r,\rho)&= \omega_{N-1}\int_{|\rho-r|}^{\rho+r}   (\rho/l)^{(N-1)/2}\lf[\frac{(\rho+r)^2-\eta^2}{4r\rho }\rr]^{(N-3)/2}[\eta^2-(\rho-r)^2]^{(N-3)/2}\eta J(\eta) \rd \eta \\
	&\geq  \xi_3\int_{\xi_1|\rho-r|}^{\xi_2|\rho-r|}   \lf[\frac{(\rho+r)^2-\eta^2}{4r\rho }\rr]^{(N-3)/2}[\eta^2-(\rho-r)^2]^{(N-3)/2}\eta^{1-\beta}\rd \eta \ \ \mbox{for some $\xi_3>0$.}
	\end{align*}

	A simple calculation gives, for $r\in [\underline h(t)/4,\underline h(t)]$, $\rho\in [\underline h(t)/8,\underline h(t)]$, $\eta\in [\xi_1|\rho-r|, \xi_2|\rho-r|]$ and $N\geq 3$,
	\begin{align*}
	&\lf[\frac{(\rho+r)^2-\eta ^2}{4r\rho }\rr]^{(N-3)/2}\geq \lf[\frac{(\rho+r)^2-\xi_2^2(\rho-r)^2}{4r\rho }\rr]^{(N-3)/2}\\
	\geq& \lf[\frac{(\rho+r)^2(1-49\xi_2^2/81)}{4r\rho }\rr]^{(N-3)/2}\geq (1-49\xi_2^2/81)^{(N-3)/2},
	\end{align*}
	and for $N=2$ with such $\rho, r, \eta$,
	\begin{align*}
	\lf[\frac{(\rho+r)^2-\eta^2}{4r\rho }\rr]^{-1/2}\geq \lf[\frac{(\rho+r)^2}{4r\rho }\rr]^{-1/2}\geq  32^{-1/2},
	\end{align*} 
	while for such $\rho, r, \eta$ and $N\geq 2$,
	\begin{align*}
	[\eta^2-(\rho-r)^2]^{(N-3)/2}\geq \min\lf\{(\xi_1^2-1)^{(N-3)/2},(\xi_2^2-1)^{(N-3)/2}\rr\} |\rho-r|^{N-3}.
	\end{align*}
	Hence,
	\begin{equation}\begin{aligned}\label{5.16a}
	\td J(r,\rho)
	\geq &\  \xi_3\int_{\xi_1|\rho-r|}^{\xi_2|\rho-r|}   \lf[\frac{(\rho+r)^2-\eta^2}{4r\rho }\rr]^{(N-3)/2}[\eta^2-(\rho-r)^2]^{(N-3)/2}\eta^{1-\beta}\rd \eta\\
	\geq&\ \xi_4|\rho-r|^{N-1-\beta}\ \mbox{ for some $\xi_4>0$.}
	\end{aligned}
	\end{equation}

	Now using \eqref{5.16a} and
\begin{align*}
\underline u(t,\rho)\geq K_2\frac{\underline h(t)-\rho}{\underline h(t)},
\end{align*}
 we obtain, for $r\in [h(t)/4,h(t)]$,
	\begin{align*}
	&\int_{0}^{\underline h(t)}  \td {J}(r,\rho) \underline u(t,\rho)\rd \rho
	\geq \int_{\underline h(t)/8}^{r-1}  \td {J}(r,\rho) \underline u(t,\rho)\rd \rho
	\geq \int_{\underline h(t)/8}^{r-1}  \xi_4|\rho-r|^{N-1-\beta} \underline u(t,\rho)\rd \rho\\
	=&\ \int_{\underline h(t)/8-r}^{-1}  \xi_4|\rho|^{N-1-\beta}  \underline u(t,\rho+r)\rd \rho
	\geq K_2\xi_4\int_{-\underline h(t)/8}^{-1}  |\rho|^{N-1-\beta} \frac{\underline h(t)-(\rho+r)}{\underline h(t)}\rd \rho\\
	\geq&\  \frac{K_2\xi_4}{\underline h(t)}\int_{-\underline h(t)/8}^{-1}  |\rho|^{N-\beta}\rd \rho= \frac{K_2\xi_4}{\underline h(t)}\int_1^{\underline h(t)/8} \rho^{N-\beta}\rd \rho\\
	=&\ \frac{K_2\xi_4}{(N+1-\beta)\underline h(t)}
	[(\underline h(t)/8)^{N+1-\beta}-1] \geq  \frac{K_2\xi_4}{2(N+1-\beta)\underline h(t)}
	(\underline h/8)^{N+1-\beta},
	\end{align*}
	which gives \eqref{5.15a}.

With the above estimates at hand, we are ready to prove \eqref{5.14a}. 	Due to $\underline u\leq K_2$ and $f'(0)>0$, we see that for small $K_2>0$, 
	\begin{align*}
	f(\underline u(t,r))\geq \min_{v\in [0,K_2]}f'(v)\underline  u(t,r)\geq \frac{3}4 f'(0) \underline  u(t,r).
	\end{align*}
Let $\epsilon\in (0, \frac 3{4d}f'(0))$ be a small constant. From Lemma \ref{lemma5.4}, for $L_1=\underline h(t)/2$ and $L_2=\underline h(t)$ with $\theta\gg 1$, 
	\begin{equation*}
	\int_{0}^{L_2} \td J(r,\rho)\underline u(t,\rho)\rd \rho\geq
\begin{cases}
\dd	 (1-\epsilon) \underline u(t,r),& r\in [0,\underline h(t)/4],\\[3mm]
\dd		 (1-\epsilon)\underline u(t,r)-\frac{2D_1}{\underline h(t)r}-\frac{D_2}{r^2},& r\in [\underline h(t)/4,\underline h(t)].
\end{cases}
\end{equation*}
Hence for $r\in [0,\underline h(t)/4]$,
\begin{align*}
d \int_{0}^{\underline h(t)} \td {J}(r,\rho) \underline u(t,\rho)\rd \rho -d\underline u(t,r)+f(\underline u(t,r))\geq -d\epsilon \underline u(t,r)+\frac{3}4f'(0) \underline  u(t,r)\geq 0,
\end{align*}
and for $r\in [\underline h(t)/4,\underline h(t)]$, from \eqref{5.15a},
\begin{align*}
&d \int_{0}^{\underline h(t)} \td {J}(r,\rho) \underline u(t,\rho)\rd \rho -d\underline u(t,r)+f(\underline u(t,r))\\
\geq&\  d \int_{0}^{\underline h(t)} \td {J}(r,\rho) \underline u(t,\rho)\rd \rho-[d-{3f'(0)}/{4}]\underline u(t,r)\\
=&\ \min\{{f'(0)}/{2},d \}\int_{0}^{\underline h(t)} \td {J}(r,\rho) \underline u(t,\rho)\rd \rho+\max\{d-{f'(0)}/{2},0\} \int_{0}^{\underline h(t)} \td {J}(r,\rho) \underline u(t,\rho)\rd \rho\\
&-[d-{3f'(0)}/{4}]\underline u(t,r)\\
\geq &\ \min\{{f'(0)}/{2},d \}K_2\xi \underline h^{N-\beta}(t) +\max\{d-{f'(0)}/{2},0\} \lf[(1-\epsilon)\underline u(t,r)-\frac{2D_1}{\underline h(t) r}-\frac{D_2}{r^2}\rr]\\
&-[d-{3f'(0)}/{4}]\underline u(t,r)\\
\geq &\ \min\{{f'(0)}/{2},d \}K_2\xi \underline h^{N-\beta}(t)
-\max\{d-{f'(0)}/{2},0\} \lf(\frac{2D_1}{\underline h(t) r}+\frac{D_2}{r^2}\rr)\\
\geq&\ \min\{{f'(0)}/{2},d \}K_2\xi\underline h^{N-\beta}(t)-d\frac{8D_1+16D_2}{\underline h^2(t)}\geq \frac 12\min\{{f'(0)}/{2},d \}K_2\xi\underline h^{N-\beta}(t)
\end{align*}
since  $\underline h(t)\geq \theta^{1/(\beta-N)}\gg 1$ (due to $\theta\gg 1$).  	

In view of  the definition of $\underline u$, we have for $t>0$, $r\in (0,\underline h(t))$, 
\begin{align*}
&\underline u_t(t,r)=0, \ \ \ t>0,\  r\in (0,\underline h(t)/2),\\
&\underline u_t(t,r)=2K_2\frac{r\underline h'(t)}{\underline h^2(t)}\leq 2K_2 \frac{\underline h'(t)}{\underline h(t)}=   \frac{2K_1K_2}{\beta-N}\underline h^{N-\beta}(t),\ \ t>0,\  r\in (\underline h(t)/2,\underline h(t)).
\end{align*}
Therefore, \eqref{5.14a} holds if $K_1$ is small such that 
	\begin{align*}
\frac{2K_1}{\beta-N}\leq 	\frac{\min\{{f'(0)}/{2},d \}\xi}{2}.
	\end{align*}

{\bf Step 3.} Completion of the proof.
	
	Let $\theta$, $K_1$ and $K_2$ meet the above requirements.  	It is clear that 
	\begin{align*}
	\underline u(t,\pm \underline h(t))=0\ \mbox{ for } \ t\geq 0. 
	\end{align*}
	Since spreading happens,  there exists a large  $t_0>0$ such that 
	\begin{align*}
	&[0,\underline h(0)]\subset [0,h(t_0)/2]\ \ {\rm and}\ \ u(0,r)\leq K_2\leq  u(t_0,r)\ \mbox{ for } \  r\in [0,\underline h(0)]. 
	\end{align*}
	It then follows from Lemma \ref{lemma3.4a} (and Remark \ref{rmk3.6}) that
	\begin{align*}
	&\underline h(t)\leq h(t+t_0), \ \ \ \ \ \ \ \ \ t\geq 0,\\
	&\underline u(t,r)\leq  u(t+t_0,r), \ \ \ \  t\geq 0,\ r\in [0,\underline h(t)],
	\end{align*}
	which implies \eqref{5.12a}. The proof is complete. 
\end{proof}

\begin{lemma}\label{lemma4.5}
	If spreading happens and \eqref{5.1} holds with $\beta=N+1$, then there exists $C>0$ such that 
	\begin{align}\label{5.17a}
	h(t)\geq Ct\ln t \ \ \mbox{\rm for } t\gg 1.
	\end{align}
\end{lemma}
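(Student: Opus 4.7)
\medskip

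The plan is to construct a lower solution $(\underline u,\underline h)$ with $\underline h(t)\approx Ct\ln t$ and apply Lemma~\ref{lemma3.4a} (handling the kink of $\underline u$ via Remark~\ref{rmk3.6}), mirroring the architecture of Lemma~\ref{lemma5.5} but with logarithmic rather than power-law scalings. The guiding heuristic comes from Lemma~\ref{lemma5.1a}: for $\beta=N+1$ one has $\int_0^h\!\int_h^\infty\tilde J(r,\rho)\,d\rho\,dr\approx\ln h$ and $J_*(\eta)\approx\eta^{-2}$ (from \eqref{7.3}), so if $\underline u\approx K_2$ on most of $[0,\underline h(t)]$, the free boundary equation forces $\underline h'(t)\sim\ln\underline h(t)$, which integrates to $\underline h(t)\sim t\ln t$.

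For $\theta\gg1$ and small $K_1,K_2\in(0,u^*)$ to be fixed later, I would take
\[
\underline h(t)=K_1(t+\theta)\ln(K_1(t+\theta)),\quad L_1(t):=\underline h(t)^{1/2},\quad \underline u(t,r)=K_2\min\!\Big\{1,\tfrac{\underline h(t)-r}{L_1(t)}\Big\},
\]
together with $r_*(t)=E_1(t+\theta)$ where $E_1,E_2$ are from Lemma~\ref{lemma3.12}. For $K_2$ small, Lemma~\ref{lemma3.12} then yields $u(t+t_0,r)\geq u^*-E_2/(t+\theta)\geq K_2$ on $[0,r_*(t)]$ for a suitably large $t_0$, supplying the inner data required by Lemma~\ref{lemma3.4a}(ii). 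Thus $\underline u$ equals $K_2$ on the long plateau $[0,\underline h(t)-L_1(t)]$ and drops linearly to $0$ over the last $L_1(t)$ units.

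For the free boundary inequality, restrict the integration to $r\in[\underline h(t)/2,\underline h(t)-L_1(t)]$, where $\underline u\equiv K_2$, use $(r/\underline h(t))^{N-1}\geq 2^{-(N-1)}$, and bound $\int\tilde J\,d\rho\geq\int J_*(r-\rho)\,d\rho$ via integration over the hyperplane through $x_r^1$ (as in \eqref{5.6a}). After the change of variable $s=\underline h(t)-r$, Fubini, and the bound $J_*(\eta)\geq C_1'\eta^{-2}$, the right-hand side is seen to be at least
\[
\frac{\mu}{\underline h(t)^{N-1}}\!\int_0^{\underline h(t)}\!\!r^{N-1}\underline u\!\int_{\underline h(t)}^{\infty}\!\!\tilde J\,d\rho\,dr \geq CK_2\mu\ln\!\Big(\tfrac{\underline h(t)}{L_1(t)}\Big)=\tfrac{C}{2}K_2\mu\ln\underline h(t),
\]
which exceeds $\underline h'(t)\leq 2K_1\ln\underline h(t)$ whenever $K_1\leq CK_2\mu/8$. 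In the plateau portion of the interior inequality the same hyperplane bound plus Lemma~\ref{lemma5.4} gives $\int\tilde J\hat u\geq(1-\epsilon)K_2-O(K_2/L_1(t))$, so $\underline u_t\equiv 0$ is dominated by $f(K_2)>0$ up to vanishing corrections.

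The delicate step is the interior inequality in the transition region $r\in[\underline h(t)-L_1(t),\underline h(t)]$, where $\underline u_t\leq K_2\underline h'(t)/L_1(t)$ but $f(\underline u)$ vanishes as $r\to\underline h(t)$. Here I would employ the pointwise lower bound on $\tilde J$ that was derived in \eqref{5.16a}: for $r,\rho\in[\underline h(t)/4,\underline h(t)]$ with $|r-\rho|\geq 1$, the argument there gives $\tilde J(r,\rho)\geq\xi_4|r-\rho|^{N-1-\beta}=\xi_4|r-\rho|^{-2}$ when $\beta=N+1$. Then the self-convolution of the linear tail of $\underline u$ itself contributes logarithmically:
\[
\int_{\underline h-L_1}^{\underline h-1}\!\tilde J(\underline h,\rho)\,\frac{K_2(\underline h-\rho)}{L_1}\,d\rho \geq \frac{\xi_4K_2}{L_1}\!\int_1^{L_1}\!\!\eta^{-2}\cdot\eta\,d\eta=\frac{\xi_4K_2\ln L_1}{L_1}.
\]
Since $\ln L_1=\tfrac12\ln\underline h$, the term $d\xi_4K_2\ln L_1/L_1$ dominates $\underline u_t\leq 2K_1K_2\ln\underline h/L_1$ provided $K_1\leq d\xi_4/8$. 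Intermediate points $r=\underline h(t)-\alpha L_1(t)$ with $\alpha\in(0,1)$ are handled by the same estimate with an additional $\alpha K_2 f'(0)$ contribution from $f(\underline u)\geq\tfrac34f'(0)\underline u$. Fixing $K_1\leq\min\{CK_2\mu/8,\,d\xi_4/8\}$ and then further enlarging $t_0$ so that $\underline h(0)\leq h(t_0)$ and $\underline u(0,\cdot)\leq u(t_0,\cdot)$, Lemma~\ref{lemma3.4a} yields $\underline h(t)\leq h(t+t_0)$ for all $t\geq 0$, which is \eqref{5.17a}.

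The main obstacle is the simultaneous balance of two logarithmic scales: the interior inequality forces $L_1$ to grow polynomially in $\underline h$ (so that $\ln L_1\asymp\ln\underline h$), while the free boundary inequality forces $L_1$ to remain sublinear in $\underline h$ (so that $\ln(\underline h/L_1)\asymp\ln\underline h$). The intermediate choice $L_1=\underline h^{1/2}$ threads this needle, but all constants (in particular $\xi_4$, $C_1'$ and the comparison constant $C$) must be tracked quantitatively rather than absorbed, since the log-vs-log balance is tight.
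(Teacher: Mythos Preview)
Your proposal is correct and follows essentially the same architecture as the paper: a plateau-plus-linear-ramp lower solution, the free-boundary inequality via the hyperplane bound \eqref{5.6a}, and the interior inequality in the ramp via the pointwise tail estimate \eqref{5.16a}, with exactly the logarithmic balance you identify as the crux. The only differences are cosmetic: the paper takes transition width $L_1(t)=(t+\theta)^\alpha$ for a fixed $\alpha\in(0,1)$ rather than your $\underline h(t)^{1/2}$, and it dispenses with Lemma~\ref{lemma3.12} altogether by taking $r_*(t)\equiv 0$ (via Remark~\ref{rmk2.6}), since $\underline u\le K_2<u^*$ everywhere already makes the initial comparison $\underline u(0,\cdot)\le u(t_0,\cdot)$ immediate from spreading.
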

\begin{proof}
Fix $\alpha\in (0,1)$ and define 
	\begin{align*}
	&\underline h(t):=K_1(t+\theta)\ln (t+\theta),\ \ t\geq 0,\\
	&	\underline u(t,r):=K_2\min\lf\{1, \frac{\underline h(t)-r}{(t+\theta)^{\alpha}}\rr\}, \ \ \ t\geq 0,\ r\in[0, \underline h(t)],
	\end{align*}
	where $\theta\gg 1$ and $K_1, K_2>0$ are constants  to be determined. Note that $\theta\gg 1$ implies  
	\[
	(t+\theta)^\alpha\leq \frac{\underline h(t)}{K_1\theta^{1-\alpha}\ln \theta}=o(1)\underline h(t)\ \mbox{ uniformly  for } t\geq 0.
	\]

We first show  that by choosing $\theta\gg 1$ and $K_1, K_2>0$ suitably,
\begin{align}\label{5.18}
&\underline h'(t)\leq  \frac{\mu}{h^{N-1}(t)}\dd\int_{0}^{\underline  h(t)} r^{N-1}\underline u(t,r)\int_{\underline h(t)}^{+\yy} \td J(r,\rho)\rd \rho\rd r\ \mbox{ for } \   t> 0.
\end{align}
	
	By  \eqref{5.6a},
	\begin{align*}
	 &\frac{\mu}{\underline h^{N-1}(t)}\dd\int_{0}^{\underline h(t)} r^{N-1}\underline u(t,r)\int_{\underline h(t)}^{+\yy} \td J(r,\rho)\rd \rho\rd r\\
	 &\geq \frac{\mu}{\underline h^{N-1}(t)}\dd\int_{\underline h(t)/2}^{\underline h(t)-(t+\theta)^\alpha} r^{N-1}\underline u(t,r)\int_{\underline h(t)}^{+\yy} \td J(r,\rho)\rd \rho\rd r
	 \\
	 &\geq 2^{-(N-1)}K_2\mu\dd\int_{\underline h(t)/2}^{\underline h(t)-(t+\theta)^\alpha} \int_{\underline h(t)}^{+\yy} \td J(r,\rho)\rd \rho\rd r\\
	& \geq 2^{-(N-1)}K_2\mu\dd\int_{\underline h(t)/2}^{\underline h(t)-(t+\theta)^\alpha} \int_{\underline h(t)}^{+\yy}  J_*(r-\rho)\rd \rho\rd r
	 =2^{-(N-1)}K_2\mu\dd\int_{-\underline h(t)/2}^{-(t+\theta)^\alpha} \int_{-r}^{+\yy}  J_*(\rho)\rd \rho\rd r\\
	 &=2^{-(N-1)}K_2\mu\dd\lf(\int_{(t+\theta)^\alpha}^{\underline h(t)/2}\int_{(t+\theta)^\alpha}^\rho+\int_{\underline h(t)}^{\yy}\int_{(t+\theta)^\alpha}^{\underline h(t)}\rr)  J_*(\rho)\rd r \rd \rho\\
	 &\geq 2^{-(N-1)}K_2\mu\dd\int_{(t+\theta)^\alpha}^{\underline h(t)/2}\int_{(t+\theta)^\alpha}^\rho J_*(\rho)\rd r \rd \rho=2^{-(N-1)}K_2\mu\dd\int_{(t+\theta)^\alpha}^{\underline h(t)/2}[\rho-(t+\theta)^\alpha] J_*(\rho) \rd \rho\\
	 &\geq2^{-N}K_2\mu\dd\int_{2(t+\theta)^\alpha}^{\underline h(t)/2}\rho J_*(\rho) \rd \rho.
	\end{align*}
	By \eqref{7.3}  there is $A_1>0$ such that 
	\begin{align*}
	&\dd\int_{2(t+\theta)^\alpha}^{\underline h(t)/2}\rho J_*(\rho) \rd \rho\geq 	A_1\dd\int_{2(t+\theta)^\alpha}^{\underline h(t)/2} \rho^{-1} \rd \rho
	=A_1[\ln \underline h(t)-2\ln 2-\alpha\ln (t+\theta)]\\
	&=A_1[\ln (t+\theta)+\ln \ln (t+\theta)+\ln K_1-2\ln 2-\alpha\ln (t+\theta)]\\
	&\geq A_1(1-\alpha) \ln (t+\theta)\ \ \  \mbox{due to $\theta\gg 1$.}
	\end{align*}
	 Hence
	\begin{align*}
	&\frac{\mu}{\underline h^{N-1}(t)}\dd\int_{0}^{\underline h(t)} r^{N-1}\underline u(t,r)\int_{\underline h(t)}^{+\yy} \td J(r,\rho)\rd \rho\rd r\geq \ 2^{-N}K_2\mu A_1(1-\alpha) \ln (t+\theta)\\
	&\geq\  K_1\ln (t+\theta)+K_1=\underline h'(t)
	\end{align*}
	provided 
	\begin{align*}
	K_1\leq 2^{-N-1}K_2\mu A_1(1-\alpha).\ \ 
	\end{align*}
	This proves \eqref{5.18}.

Nest we show that by choosing $\theta\gg1,\; K_1, K_2>0$ suitably, 	 for $t>0$ and $r\in (-\underline h(t),\underline h(t))\backslash \{\underline h(t)-(t+\theta)^\alpha\}$, 
	\begin{align}\label{5.19a}
	\underline u_t(t,r)\leq d \int_{-\underline h(t)}^{\underline h(t)}  \td {J}(r,\rho) \underline u(t,\rho)\rd \rho -d\, \underline u(t,r)+f(\underline u(t,r)).
	\end{align}
	From the definition of $\underline u$, for $t>0$,
	\begin{equation}\label{7.18}
	\underline u_t(t,r)=\begin{cases}\frac{K_1K_2(1-\alpha)\ln (t+\theta)+K_1K_2}{(t+\theta)^{\alpha}} +\frac{K_2\alpha r}{(t+\theta)^{1+\alpha}}& \mbox{ when } r\in (\underline h(t)-(t+\theta)^\alpha, \underline h(t)],\\
	0\ &\mbox{ when } \ r\in [0, \underline h(t)-(t+\theta)^\alpha).
	\end{cases}
	\end{equation}

	 Using \eqref{5.16a} and 
\begin{align*}
\underline u(t,\rho)\geq K_2\frac{\underline h(t)-\rho}{2(t+\theta)^\alpha} \mbox{ for } \rho\in [\underline h(t)-2(t+\theta)^\alpha, \underline h(t)],
\end{align*}
we deduce, for $r\in [\underline h(t)-(t+\theta)^\alpha,h(t)]$,
\begin{align*}
&\int_{0}^{\underline h(t)}  \td {J}(r,\rho) \underline u(t,\rho)\rd \rho
\geq \int_{\underline h(t)-2(t+\theta)^\alpha}^{r-1}  \td {J}(r,\rho) \underline u(t,\rho)\rd \rho\\
\geq &\int_{\underline h(t)-2(t+\theta)^\alpha}^{r-1}  \xi|\rho-r|^{N-1-\beta} \underline u(t,\rho)\rd \rho
=\int_{\underline h-2(t+\theta)^\alpha-r}^{-1}  \xi|\rho|^{N-1-\beta}  \underline u(t,\rho+r)\rd \rho\\
\geq&\ K_2\xi/2\int_{-(t+\theta)^\alpha}^{-1}  |\rho|^{N-1-\beta}  \frac{\underline h(t)-(\rho+r)}{(t+\theta)^\alpha}\rd \rho%%
\geq K_2\xi/2\int_{-(t+\theta)^\alpha}^{-1}  |\rho|^{N-1-\beta}  \frac{-\rho}{(t+\theta)^\alpha}\rd \rho\\
=&\ \frac{K_2\xi}{2(t+\theta)^\alpha}\int_{1}^{(t+\theta)^\alpha} \rho^{N-\beta}  \rd \rho
= \frac{K_2\xi}{2(t+\theta)^\alpha}\int_1^{(t+\theta)^\alpha} \rho^{-1}  \rd \rho
=\frac{K_2\xi\alpha \ln (t+\theta)}{2(t+\theta)^\alpha},
\end{align*}
and for $r\in \lf[\frac{\underline h(t)-(t+\theta)^\alpha}{2},\underline h(t)-(t+\theta)^\alpha\rr]$,
\begin{align*}
&\int_{0}^{\underline h(t)}  \td {J}(r,\rho) \underline u(t,\rho)\rd \rho
\geq \int_{[\underline h(t)-(t+\theta)^\alpha]/4}^{r-1}  \td {J}(r,\rho) \underline u(t,\rho)\rd \rho\\
\geq&\ K_2\int_{[\underline h(t)-(t+\theta)^\alpha]/4}^{r-1}  \td {J}(r,\rho) \rd \rho
\geq K_2 \xi\int_{[\underline h(t)-(t+\theta)^\alpha]/4}^{r-1}  |\rho-r|^{N-1-\beta} \rd \rho\\
\geq&\ K_2 \xi\int_1^{[\underline h(t)-(t+\theta)^\alpha]/4}  \rho^{N-1-\beta}\rd \rho= K_2 \xi\int_1^{[\underline h(t)-(t+\theta)^\alpha]/4}  \rho^{-2} \rd \rho\\
=&\ K_2 \xi\bigg(1-4[\underline h(t)-(t+\theta)^\alpha]^{-1}\bigg)\geq K_2 \xi/2\geq \frac{K_2\xi\alpha \ln (t+\theta)}{2(t+\theta)^\alpha}\ \ \mbox{ since $\theta\gg 1$. }
\end{align*}

For small $\epsilon>0$, using Lemma \ref{lemma5.4} with $L_1=(t+\theta)^\alpha$, $L_2=\underline h(t)$ and $\theta\gg 1$, we have
	\begin{equation*}
	\int_{0}^{L_2} \td J(r,\rho)\underline u(t,\rho)\rd \rho\geq
\begin{cases}
\dd	 (1-\epsilon) \underline u(t,r),& r\in [0, \frac{\underline h(t)-(t+\theta)^\alpha}{2}],\\[3mm]
\dd		 (1-\epsilon)\underline u(t,r)-\frac{2D_1}{\underline h(t)r}-\frac{D_2}{r^2},& r\in [\frac{\underline h(t)-(t+\theta)^\alpha}{2},\underline h(t)].
\end{cases}
\end{equation*}

Hence, similar to the argument in the proof of Lemma \ref{lemma5.5},
 for $r\in [0,\frac{\underline h(t)-(t+\theta)^\alpha}{2}]$,
\begin{align*}
d \int_{0}^{\underline h(t)} \td {J}(r,\rho) \underline u(t,\rho)\rd \rho -d\underline u(t,r)+f(\underline u(t,r))\geq -d\epsilon \underline u(t,r)+\frac{3}4f'(0) \underline  u(t,r)\geq 0,
\end{align*}
and for $r\in [\frac{\underline h(t)-(t+\theta)^\alpha}{2} ,\underline h(t)]$, 
\begin{align*}
&d \int_{0}^{\underline h(t)} \td {J}(r,\rho) \underline u(t,\rho)\rd \rho -d\underline u(t,r)+f(\underline u(t,r))\\
\geq&\  d \int_{0}^{\underline h(t)} \td {J}(r,\rho) \underline u(t,\rho)\rd \rho-[d-{3f'(0)}/{4}]\underline u(t,r)\\
=&\ \min\{{f'(0)}/{2},d \}\int_{0}^{\underline h(t)} \td {J}(r,\rho) \underline u(t,\rho)\rd \rho+\max\{d-{f'(0)}/{2},0\} \int_{0}^{\underline h(t)} \td {J}(r,\rho) \underline u(t,\rho)\rd \rho\\
&-[d-{3f'(0)}/{4}]\underline u(t,r)\\
\geq &\ \min\{{f'(0)}/{2},d \}\frac{K_2\xi\alpha \ln (t+\theta)}{2(t+\theta)^\alpha} +\max\{d-{f'(0)}/{2},0\} \lf[(1-\epsilon)\underline u(t,r)-\frac{2D_1}{\underline h(t) r}-\frac{D_2}{r^2}\rr]\\
&-[d-{3f'(0)}/{4}]\underline u(t,r)\\
\geq &\ \min\{{f'(0)}/{2},d \}\frac{K_2\xi\alpha \ln (t+\theta)}{2(t+\theta)^\alpha}
-\max\{d-{f'(0)}/{2},0\} \lf(\frac{2D_1}{\underline h(t) r}+\frac{D_2}{r^2}\rr)\\
\geq&\ \min\{{f'(0)}/{2},d \}\frac{K_2\xi\alpha \ln (t+\theta)}{2(t+\theta)^\alpha}-d\frac{8D_1+16D_2}{\underline h^2(t)}\geq \frac 12 \min\{{f'(0)}/{2},d \}\frac{K_2\xi\alpha \ln (t+\theta)}{2(t+\theta)^\alpha}
\end{align*}
since	 $\theta\gg 1$. From this and \eqref{7.18}, we see that \eqref{5.19a} holds.

With the above inequalities \eqref{5.18} and \eqref{5.19a}, as in the proof of Lemma \ref{lemma5.5}, we can apply the comparison principle Lemma \ref{lemma3.4a} (and Remark \ref{rmk3.6}) to obtain \eqref{5.17a}. 
\end{proof}

Theorem \ref{th1.8} clearly follows directly from Lemmas \ref{lemma5.2b}, \ref{lemma5.5} and \ref{lemma4.5}.

%Then the free boundary conditions in (1.4) can be interpreted as assuming that the expanding rate of the front is proportional to the outward flux, by a factor $\mu>0$:

%%%%%%%%%%%%%%%%%%

%\begin{footnotesize}
	%\bibliographystyle{plain}%{astron}%{unsrt}% {astron}% {amsplain}

	%\bibliography{West-Nile-Virus-Du-Ni-0729}
%\end{footnotesize}
\end{document}